\documentclass[reqno,12pt,letterpaper]{amsart}
\usepackage{amsmath,amssymb,amsthm,mathrsfs,graphicx,url}
\usepackage[usenames,dvipsnames]{color}
\usepackage[colorlinks=true,linkcolor=Red,citecolor=Green]{hyperref}

\setlength{\textheight}{8.50in} \setlength{\oddsidemargin}{0.00in}
\setlength{\evensidemargin}{0.00in} \setlength{\textwidth}{6.40in}
\setlength{\topmargin}{0.00in} \setlength{\headheight}{0.18in}
\setlength{\marginparwidth}{1.0in}
\setlength{\abovedisplayskip}{0.2in}
\setlength{\belowdisplayskip}{0.2in}
\setlength{\parskip}{0.05in}

\DeclareGraphicsRule{*}{mps}{*}{}

\newtheorem{theo}{Theorem}
\newtheorem{lemm}{Lemma}[section]

\numberwithin{equation}{section}

\DeclareMathOperator{\ad}{ad}
\DeclareMathOperator{\cl}{cl}
\DeclareMathOperator{\comp}{comp}

\DeclareMathOperator{\even}{even}

\DeclareMathOperator{\Imag}{Im}
\DeclareMathOperator{\loc}{loc}
\DeclareMathOperator{\rank}{rank}
\DeclareMathOperator{\Op}{Op}
\DeclareMathOperator{\Real}{Re}
\DeclareMathOperator{\sgn}{sgn}
\DeclareMathOperator{\supp}{supp}
\DeclareMathOperator{\Vol}{Vol}
\DeclareMathOperator{\WF}{WF}

\def\WFh{\WF_{\hbar}}
\def\Hh{H_{\hbar}}
\def\Psic{\Psi^{\comp}}
\def\Psie{\Psi^{\comp}_{1/2}}
\def\Syme{S^{\comp}_{1/2}}
\def\Resh{\mathcal O(h^\infty)_{\Psi^{-\infty}}}
\def\lxir{\langle\xi\rangle}

\title[Fractal Weyl laws for asymptotically hyperbolic manifolds]%
{Fractal Weyl laws for\\
asymptotically hyperbolic manifolds}
\author{Kiril Datchev}
\email{datchev@math.mit.edu}
\address{Department of Mathematics, 77 Massachusetts Avenue, MIT,
Cambridge, MA 02139}
\author{Semyon Dyatlov}
\email{dyatlov@math.berkeley.edu}
\address{Department of Mathematics, Evans Hall, University of California,
Berkeley, CA 94720}

\begin{document}

\begin{abstract}
For asymptotically hyperbolic manifolds with hyperbolic trapped sets
we prove a fractal upper bound on the number of resonances near the essential spectrum,
with power determined by the dimension of the trapped set.
This covers the case of general convex 
cocompact quotients (including the case of connected trapped sets)
where our result implies a bound on the number of zeros of the
Selberg zeta function in disks of arbitrary size along the imaginary axis.
Although no sharp fractal lower bounds are known, the case of quasifuchsian groups,
included here, is most likely to provide them.
\end{abstract}

\maketitle

\addtocounter{section}{1}
\addcontentsline{toc}{section}{1. Introduction}

Let $M=\Gamma \backslash \mathbb{H}^n$, $n\ge 2$, be a convex cocompact quotient 
of hyperbolic space,
i.e. a conformally compact
 manifold of
constant negative curvature. Let
$\delta_\Gamma \in [0,n-1)$ be the Hausdorff dimension of its limit set, which
by Patterson--Sullivan theory equals the abscissa of convergence of
its Poincar\'e series \cite{p1,sul}. Let $Z_\Gamma(s)$ be the Selberg 
zeta function:
\[
Z_\Gamma(s) = \exp\left(- \sum_{\gamma \in \mathcal{P}} \sum_{m=1}^\infty \frac 1 m \frac {e^{-sml(\gamma)}}{\det(\textrm{Id} - P_\gamma)}\right),
\]
where $\mathcal{P}$ is the set of primitive closed geodesics on $M$,
$l(\gamma)$ is the length of $\gamma$, and $P_\gamma$ is the
Poincar\'e one-return map of $\gamma$ in $T^*M$. The sums
converge absolutely for $\Real s > \delta_\Gamma$ (so $Z_\Gamma$ is
nonvanishing there), and the function $Z_\Gamma$ extends
holomorphically to $\mathbb{C}
\setminus(-\mathbb{N}_0 \cup((n-1)/2 - \mathbb{N}))$ \cite{fr,bun,pp}.
Let $m_\Gamma(s)$ be the order of vanishing of $Z_\Gamma$ at $s$.

%
%
\begin{theo}\label{l:theorem-weak}
For any $R>0$ there exists $C>0$ such that if $t \in \mathbb{R}$, then
\begin{equation}\label{e:0}
\sum_{\left|s-it\right|<R} m_\Gamma(s) \le C(1 + |t|)^{\delta_\Gamma}.
\end{equation}
\end{theo}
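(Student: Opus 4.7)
The plan is to deduce Theorem~\ref{l:theorem-weak} from the main fractal Weyl bound on resonances for asymptotically hyperbolic manifolds with hyperbolic trapped set, applied to the asymptotically hyperbolic geometry of $M = \Gamma \backslash \mathbb{H}^n$ and combined with a dimension computation for the trapped set.

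First, I would invoke the divisor correspondence between zeros of the Selberg zeta function and resonances of the Laplacian. By the Patterson--Perry theorem and its extensions by Bunke--Olbrich to higher dimensions, away from the set of trivial/topological zeros contained in $-\mathbb{N}_0 \cup ((n-1)/2 - \mathbb{N})$, the multiplicity $m_\Gamma(s)$ coincides with the multiplicity of $s$ as a pole of the meromorphically continued resolvent $R_M(s) = (\Delta_M - s(n-1-s))^{-1}$. For any fixed $R>0$ the disk $|s-it|<R$ avoids the trivial/topological set once $|t|$ exceeds a constant depending only on $R$, so up to an additive $O(1)$ (absorbed into $C$) the left-hand side of \eqref{e:0} equals the number of resonances of $R_M$ in that disk, counted with multiplicity.

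Second, I would apply the main fractal upper bound of the paper. A convex cocompact quotient $M = \Gamma \backslash \mathbb{H}^n$ is conformally compact of constant curvature $-1$, hence asymptotically hyperbolic, and its geodesic flow is uniformly hyperbolic on its nonwandering set by the classical theory of convex cocompact groups. Thus the hypotheses of the main theorem hold, yielding a bound of the form $C(1+|t|)^{\mu}$ on the resonance count in a disk of radius $R$ about $it$, where $\mu$ is half the Hausdorff dimension of the intersection of the trapped set with a fixed energy surface of $T^*M \setminus 0$.

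Third, I would identify $\mu$ with $\delta_\Gamma$. Parametrizing geodesics in $\mathbb{H}^n$ by their pair of endpoints on the sphere at infinity together with a time parameter, the trapped set at fixed energy lifts to the $\Gamma$-quotient of $(\Lambda_\Gamma \times \Lambda_\Gamma) \setminus \mathrm{diag}$, where $\Lambda_\Gamma$ is the limit set. Since the Patterson--Sullivan measure renders $\Lambda_\Gamma$ Ahlfors $\delta_\Gamma$-regular, the product has Hausdorff dimension $2\delta_\Gamma$, so the trapped set in a fixed energy surface has Hausdorff dimension $2\delta_\Gamma$, giving $\mu = \delta_\Gamma$ and hence \eqref{e:0}. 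The main obstacle is the microlocal fractal Weyl estimate itself, whose proof occupies most of the paper and relies on semiclassical resolvent estimates, complex absorbing barriers, and a covering argument near the trapped set; the reduction above is short once that estimate, together with the classical Patterson--Perry-type divisor identity, is in hand.
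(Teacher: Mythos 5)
Your proposal follows essentially the same route as the paper: reduce to the resonance count via the Patterson--Perry/Bunke--Olbrich divisor correspondence (the exceptional set where the correspondence may fail lies on $\mathbb{R}$ and is irrelevant for $|t|$ large), verify the hypotheses of the main theorem for $M = \Gamma\backslash\mathbb{H}^n$, and identify the exponent with $\delta_\Gamma$ via the dimension of the trapped set. The structure is correct, but the dimension bookkeeping in your third step contains two compensating off-by-one errors and should be tightened.

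Specifically, you state that $\mu$ equals \emph{half the Hausdorff dimension} of $K = \widetilde K \cap p_0^{-1}(0)$, and then that $K$ has dimension $2\delta_\Gamma$. Both assertions are off by one: in Theorem~\ref{l:theorem-strong} the exponent $\nu_0$ is defined by $2\nu_0 + 1 = \dim K$, i.e.\ $\nu_0 = (\dim K - 1)/2$, not $\dim K / 2$; and $K$ itself has dimension $2\delta_\Gamma + 1$, not $2\delta_\Gamma$. The product $\Lambda_\Gamma \times \Lambda_\Gamma$ (minus the diagonal, modulo $\Gamma$) parametrizes the \emph{trapped geodesics} and indeed has dimension $2\delta_\Gamma$; but $K$ is the set of trapped \emph{points} in the cosphere bundle, which fibers over this with one extra dimension in the flow direction, giving $\dim K = 2\delta_\Gamma + 1$. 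Your two errors cancel to produce the correct $\nu_0 = \delta_\Gamma$, but as written the chain of equalities is false at each link. You also state the hypothesis in terms of Hausdorff dimension; the main theorem is stated with \emph{upper Minkowski dimension}, and one needs it to be \emph{pure} to take $\nu = \nu_0$ rather than merely $\nu > \nu_0$. Your appeal to Ahlfors regularity of the Patterson--Sullivan measure is indeed the right tool here (it yields both the coincidence of Minkowski and Hausdorff dimension and purity), but this link should be made explicitly; the paper simply cites Sullivan and Bishop--Jones for this fact.
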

%
%
This was proved by Guillop\'e--Lin--Zworski~\cite{glz} in the case when
$\Gamma$ is Schottky. In this paper we consider general convex
cocompact quotients (see Figure~\ref{f:puredim} for examples), 
and, as we explain below, also give a
generalization to the case of nonconstant curvature.

%
\begin{figure}[h]
\vspace{.1cm}
\includegraphics[width=3cm]{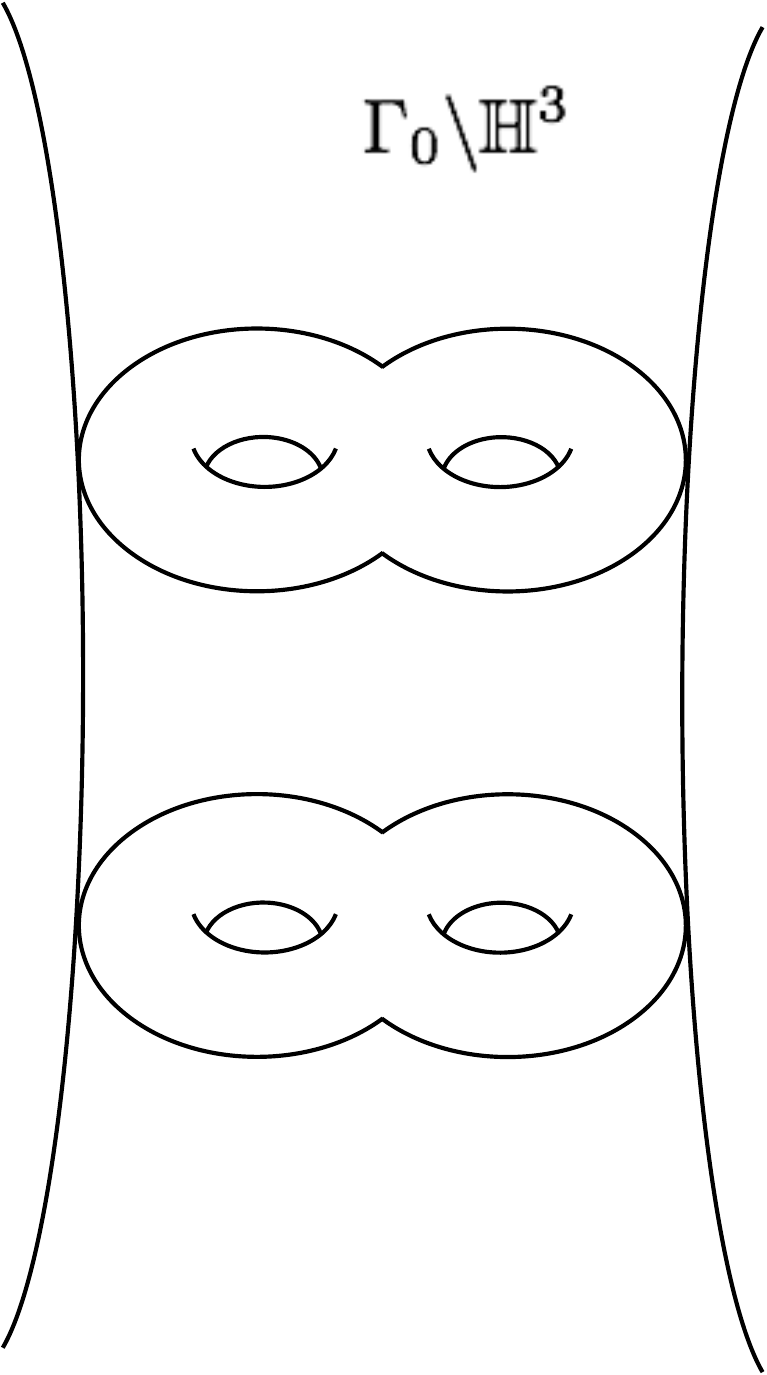}
\hspace{1cm}
\includegraphics[width=4.5cm]{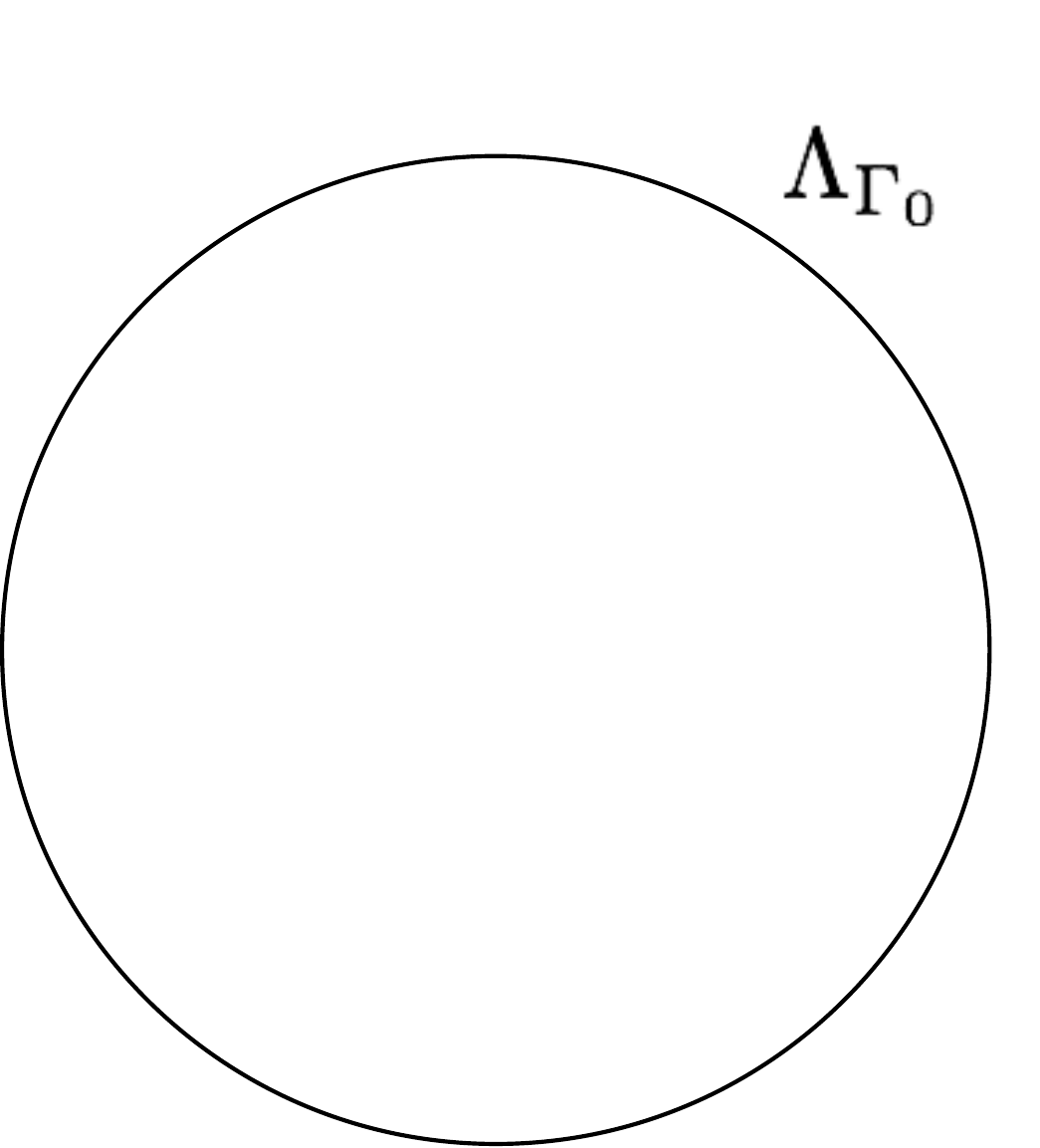}

\vspace{.3cm}

\includegraphics[width=3cm]{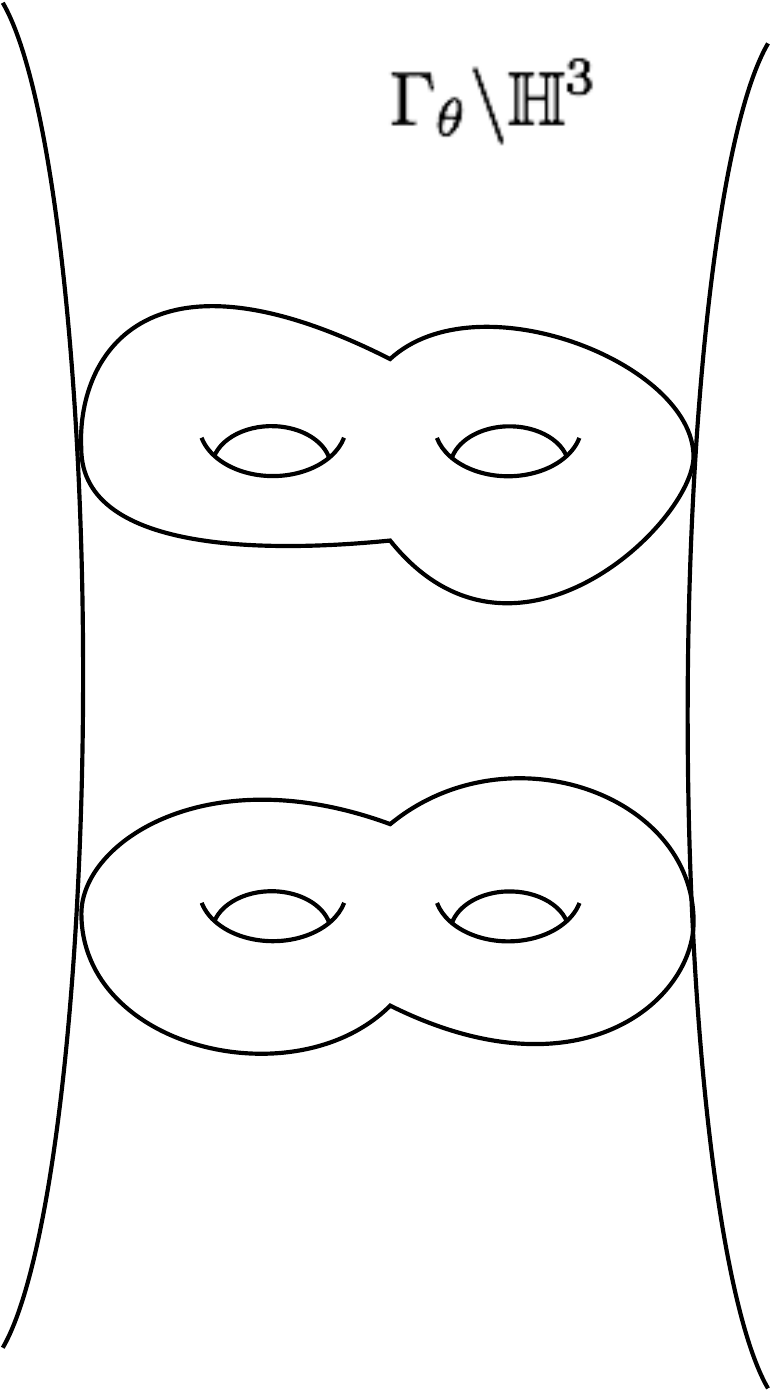}
\hspace{1cm}
\includegraphics[width=4.5cm]{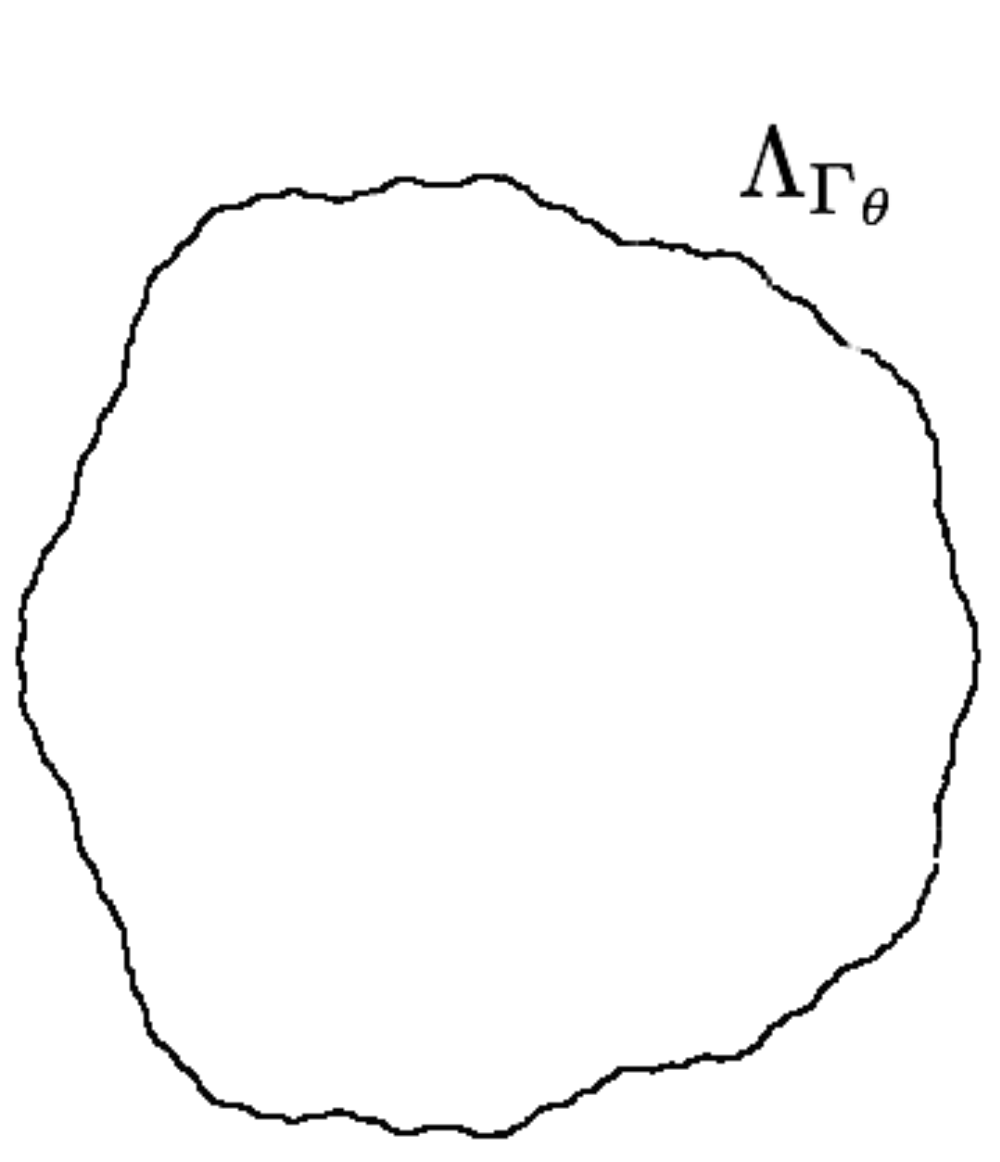}
\caption{
Let $\Gamma_0$ be a cocompact Fuchsian group. Then 
$\Gamma_0 \backslash \mathbb H^3$ is convex cocompact,
its limit 
set $\Lambda_{\Gamma_0}$ is a circle and $\delta_{\Gamma_0} = 1$.
For $\Gamma_\theta$ a  \textit{quasifuchsian bending} of $\Gamma_0$,
$\Gamma_\theta \backslash \mathbb H^3$ is convex cocompact,
 but its limit
set $\Lambda_{\Gamma_\theta}$ is  a \textit{quasicircle} 
and
$\delta_{\Gamma_\theta}>1$.
Theorems~\ref{l:theorem-weak} and~\ref{l:theorem-strong} apply,
but $\Gamma_\theta \backslash \mathbb H^3$ is not Schottky,
$\Lambda_{\Gamma_\theta}$ is connected,
 and
the trapped set is of pure fractal dimension. See 
Appendix~\ref{s:quasifuchsian}.}
\label{f:puredim}
\end{figure}
%

Theorem \ref{l:theorem-weak} follows from Theorem
\ref{l:theorem-strong} below which holds in a general geometric setting. 
The novelty of our approach lies in combining recent results of
Vasy~\cite{v1} on effective meromorphic continuation
with the technology of~\cite{sj-z} for resonance
counting. (For a direct presentation of Vasy's construction in the explicit
setting of the hyperbolic cylinder, see Figure~\ref{f:vasy} and Appendix~\ref{s:picture}.)
A particular challenge comes from constructing
Lyapunov/escape
functions compatible with both approaches.
We also simplify the counting argument on $h$-size scales
by replacing the complicated second microlocalization of~\cite{sj-z} by suitably adapted
functional calculus.
The authors of~\cite{sj-z} also considered the
use of functional calculus in their treatment of $h$-size neighborhoods of the energy surface,
but chose a fully microlocal approach.

%
\begin{figure}
\includegraphics{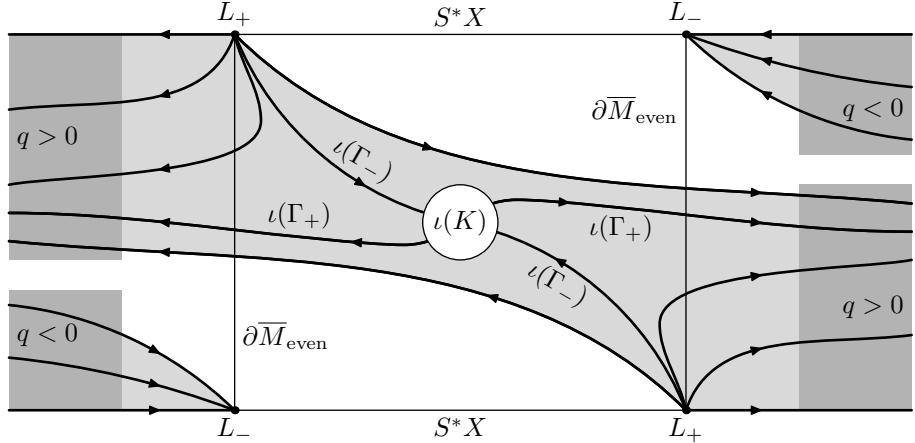}
\caption{A schematic presentation of the dynamical complexity
for the hyperbolic cylinder $M=(-1,1)_r\times \mathbb S^1_{\tilde y}$.
Here $\iota(K)$ denotes the trapped set, which
connects to infinity through the incoming and outgoing tails $\iota(\Gamma_\pm)$.
The vertical lines $\partial \overline M_{\even}$ correspond
to the two funnel ends $\{r=\pm 1\}$; the horizontal variable is $r$ and
the vertical variable is the compactification $\zeta/\langle\zeta\rangle$
of the momentum $\zeta$ dual to $r$. The lighter shaded regions are the components
$\Sigma_\pm$ of the energy surface,
with $\Sigma_+$ the bigger region and $\Sigma_-$ the union
of two small ones. The darker shaded regions are the sets
where the complex absorbing operator
$Q$ is elliptic. See Appendix~\ref{s:picture} for more details.}
\label{f:vasy}
\end{figure}
%
%
To state the main theorem, let $(M,g)$ be an asymptotically 
hyperbolic manifold; i.e. $M$ is the interior of a compact manifold with boundary $\overline
M$ and $g$ is a metric on $M$ such that
\begin{equation}
  \label{e:as-hyp}
g={d\tilde x^2+g_1\over \tilde x^2}
\end{equation}
near $\partial \overline M$, where $\tilde x$ is a  defining
function for $\partial \overline M$ and 
$g_1$ a 2-cotensor with $g_1|_{\partial \overline M}$ a metric.
Suppose $g_1$ is even in the sense 
of being smooth in $\tilde x^2$ (see also~\cite[Definition 1.2]{gui}).

Let $\Delta_g$ be the positive Laplacian on $(M,g)$.
Its resolvent, $(\Delta_{g} - s(n-1-s))^{-1}$, is
meromorphic $L^2(M) \to L^2(M)$ for $\Real s >
\frac{n-1}2$ and the essential spectrum  is the line $\Real s =
\frac{n-1}2$. As an operator from $L^2_{\textrm{comp}}$ to $L^2_{\textrm{loc}}$ 
 it extends meromorphically to $\mathbb{C}$,
with finite rank poles called \textit{resonances}, as shown by Mazzeo--Melrose~\cite{mm}
and Guillarmou~\cite{gui} for asymptotically
hyperbolic manifolds, Guillop\'e--Zworski~\cite{gzasy}
when curvature
near infinity is constant, and Vasy~\cite{v1} for asymptotically hyperbolic
manifolds with $g_1$ even.

When $M = \Gamma \backslash \mathbb{H}^n$,
Bunke--Olbrich \cite{bun} and Patterson--Perry \cite[Theorems 1.5,
1.6]{pp} show  that zeros of $Z_\Gamma$ and \textit{scattering poles} 
coincide on $\mathbb{C}
\setminus(-\mathbb{N}_0 \cup({n-1\over 2} - \mathbb{N}))$. 
Guillop\'e--Zworski~\cite{gz} and Borthwick--Perry~\cite[Theorem 1.1]{bp} show that scattering poles 
 and  resonances coincide 
off a discrete subset of $\mathbb R$.
  Below, we bound the density of
resonances near the essential spectrum. Theorem
\ref{l:theorem-weak} follows from the correspondence between
resonances  and  zeros of $Z_\Gamma$ (the
discrete set where the correspondence fails, which has been further studied in e.g. \cite{grahamzworski, gui2}, is a subset of $\mathbb{R}$ and hence
irrelevant here).

We assume further that the geodesic flow on $M$ is
hyperbolic on its trapped set in the following sense of Anosov. 
(In fact, it is sufficient to assume hyperbolicity in the weaker sense of~\cite[\S 5]{sj} and~\cite[\S 7]{sj-z}.
The latter includes the case of normally hyperbolic trapped sets, see for
example~\cite{w-z}.)
Let $p_0 \in C^\infty(T^*M)$ be the (shifted) geodesic Hamiltonian:
\[
p_0(\rho) = g^{-1}(\rho,\rho) - 1,
\]
where $g^{-1}$ is the dual metric to $g$. Let $H_{p_0}$ be the
Hamilton vector field of $p_0$ and  $\exp(tH_{p_0})$
its flow. Define the \textit{trapped set} 
and its intersection with the energy surface $p_0^{-1}(0)$ by
\[
\widetilde K =  \big\{\rho \in T^*M \setminus 0 \mid \{\exp(tH_{p_0})\rho \mid  t \in \mathbb{R}\} \textrm{ is bounded}\big\},  \  K = \widetilde K \cap p_0^{-1}(0).
\]
Note that the homogeneity of $p_0$ in the fibers implies that
$\widetilde K$ is conic in the fibers,
and that \eqref{e:as-hyp}
implies that $K$ is compact. 
Our assumption is that for any $\rho \in
K$, the tangent space to $p_0^{-1}(1)$ at
$\rho$ splits into flow, unstable, and stable subspaces
\cite[Definition 17.4.1]{kh}:
\begin{enumerate}
\item $T_\rho p_0^{-1}(1) = \mathbb{R} H_{p_0} \oplus E^+_\rho \oplus E_\rho^-, \ \dim E^\pm_\rho = n-1$,
\item $d \exp(tH_{p_0})E^\pm_\rho =  E^\pm_{\exp(tH_{p_0})\rho}$ for all $t \in \mathbb{R}$,
\item there exists $\lambda > 0$ such that $\|d \exp(tH_{p_0}) v\| \le C e^{-\lambda|t|}\|v\|$ for all $v \in E^{\mp}_\rho$, $\pm t \ge 0$.
\end{enumerate}
Here we consider the differential $d$ of $\exp(tH_{p_0}) \colon T^*M \to
T^*M$ as a map $T_\rho T^*M \to T_{\exp(tH_{p_0})\rho} T^*M$.

Recall  a bounded subset $B$ of an $N$-dimensional manifold has upper Minkowski dimension
\begin{equation}\label{e:minkdef}
\inf\{d \mid \exists C>0, \forall \varepsilon \in (0,1], \Vol(B_\varepsilon) \le C \varepsilon^{N-d}\},
\end{equation}
where $B_\varepsilon$ is the $\varepsilon$-neighborhood of $B$.
The dimension is~\emph{pure} if the infimum is attained. 

We state our main theorem for the semiclassical, nonnegative Laplacian with
spectral parameter $E = h^2s(n-1-s) - 1$, and define the multiplicity of a
pole at $E \in \mathbb{C}$ by
\[
m_h(E) = \rank \left[\oint_{E} (h^2 \Delta_g -1- E')^{-1} dE' \colon L^2_\textrm{comp}(M) \to L^2_\textrm{loc}(M)\right].
\]
%
%
\begin{theo}[Main theorem]\label{l:theorem-strong}
Let $(M,g)$ be asymptotically hyperbolic in the sense of \eqref{e:as-hyp}, and suppose $g_1$ is even and  the geodesic flow is hyperbolic on $K$. Let $2\nu_0+1$ be the upper Minkowski dimension of $ K$.
For any $\nu > \nu_0$, $c_0 > 0$ there exist $c_1, h_0>0$ such that 
\begin{equation}\label{e:1}
\sum_{ |E| < c_0h}m_h(E)  \le c_1 h^{-\nu},
\end{equation}
for $h \in (0,h_0]$. If $ K$ is of pure dimension, we may take $\nu =\nu_0$.
\end{theo}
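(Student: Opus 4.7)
The plan is to combine Vasy's effective meromorphic continuation~\cite{v1} with the resonance-counting scheme of Sj\"ostrand--Zworski~\cite{sj-z}. The first step is to use Vasy's construction (applicable since $g_1$ is even) to replace the meromorphic continuation of $(h^2\Delta_g-1-E)^{-1}$ by the study of a compactly supported, non-selfadjoint semiclassical operator $\mathbf P(h)=P(h)-iQ$ on a closed extension of $\overline M_{\mathrm{even}}$: here $P(h)$ is a second-order semiclassical operator whose principal symbol equals $g^{-1}-1$ inside $M$, and $Q$ is a complex absorbing pseudodifferential operator that is semiclassically elliptic on a neighborhood of the incoming/outgoing tails $\iota(\Gamma_\pm)$ but vanishes near $\iota(K)$. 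Under this reduction, $m_h(E)$ equals the algebraic multiplicity of $E$ as an eigenvalue of $\mathbf P(h)$, and the theorem becomes an eigenvalue count for $\mathbf P(h)$ in the disk $|E|<c_0 h$.

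The second step, which I expect to be the main obstacle, is to construct a global escape function $G \in C^\infty(T^*\overline M_{\mathrm{even}})$ meeting two sets of constraints simultaneously: the dynamical Lyapunov inequality $H_p G \le -1$ outside a small neighborhood of $K$, coming from the Anosov hyperbolicity of the flow on $K$ as in~\cite{sj,sj-z}, and Vasy's boundary constraints ensuring that the exponentially conjugated operator remains Fredholm and compatible with the radial-point estimates at $\partial \overline M_{\mathrm{even}}$. The non-selfadjoint character of $\mathbf P(h)$ together with the delicate dynamical behavior at the compactified boundary make this gluing nontrivial; the authors themselves flag the construction of a single globally valid $G$ as the crucial technical point of the paper.

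With $G$ quantized to $G^w$, I would study $\mathbf P_t(h):=e^{-tG^w/h}\mathbf P(h)e^{tG^w/h}$ for small $t>0$, using the deformation to make $\mathbf P_t(h)-E$ elliptic outside a $\sqrt h$-neighborhood of $K$ within $\{|p|\le Ch\}$. A Jensen-type counting argument applied to a holomorphic determinant built from $\mathbf P_t(h)-E$ bounds the eigenvalue count by an upper estimate on $\log|\det|$, which in turn is dominated by the trace of a spectral projector-type operator localized near $K$. The upper Minkowski dimension hypothesis enters through the volume bound~\eqref{e:minkdef} on a $\sqrt h$-neighborhood of $K$, yielding the $h^{-\nu}$ estimate for any $\nu>\nu_0$; the pure-dimension refinement $\nu=\nu_0$ follows because the infimum in~\eqref{e:minkdef} is then attained, allowing one to drop the loss needed to absorb the constant. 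In place of the second microlocalization of~\cite{sj-z} on the residual $h$-scale, I would follow the simplification advertised in the introduction: a functional-calculus cutoff of the form $\chi(\mathbf P_t(h)/(c_0 h))$ with $\chi \in C_c^\infty(\mathbb R)$, whose trace can be estimated via Helffer--Sj\"ostrand directly from symbol data concentrated near $K$, avoiding the heaviest microlocal machinery.
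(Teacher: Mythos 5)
Your high-level strategy matches the paper's: Vasy's construction gives a Fredholm operator $P(z)-iQ$ on a closed manifold (Lemma~\ref{l:vasy-outline}); an exponential conjugation by a quantized escape function makes the conjugated operator invertible away from the trapped set; a finite-rank correction is added near the trapped set; and Jensen's formula applied to a Fredholm determinant, together with the Minkowski volume bound~\eqref{e:minkdef}, gives the count. But several of the ingredients you propose differ from, or would not survive as stated in, the actual proof.

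First, the paper does \emph{not} build a single escape function $G$ that simultaneously controls the interior hyperbolic dynamics and Vasy's radial-point structure. Instead the conjugation factors as $e^{-tF}T_s(\cdot)T_s^{-1}e^{tF}$: the high-order elliptic operator $T_s\in\Psi^s(X)$ handles the radial sets $L_\pm$ by flipping the sign of the subprincipal symbol of $\Imag(T_sP(z)T_s^{-1})$ there (Lemma~\ref{l:radial-conj}), while $F$ is compactly microlocalized and only has to be monotone in the interior transition region and near $K$ (Lemmas~\ref{l:escape},~\ref{l:f-hat}). Separating the two avoids the constraint you flag; it is part of how the paper sidesteps the incompatibility you worry about.

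Second, the scale of your conjugation is wrong for the $C^\infty$ setting. With $H_p G\leq -1$ on a neighborhood, $G\sim 1$, so $e^{\pm tG^w/h}$ would be an exponentially large weight; this is the analytic-category device of~\cite{sj}, not available here. The paper instead takes $F=\widehat F+M\log(1/h)F_0$, so $e^{\pm tF}$ is merely polynomially bounded in $h$; the commutator $t[P(0)-iQ,F]$ still produces an $\mathcal O(h)$ or $\mathcal O(h\log(1/h))$ positive term, which is enough. Relatedly, $t$ must be chosen \emph{large} (fixed, independent of $h$), not small, so that the commutator term dominates the unfavorable contribution of $\Imag z<0$.

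Third, the functional-calculus cutoff $\chi(\mathbf P_t(h)/(c_0h))$ is not well defined: $\mathbf P_t(h)$ is non-self-adjoint, and its spectrum is exactly the object being counted. The paper introduces an auxiliary self-adjoint operator $\widehat P$, elliptic at fiber infinity and with $\sigma(\widehat P)=p$ near $\iota(K)$, and takes $A=\chi((\tilde h/h)\widehat P)\widetilde A$, where $\widetilde A\in\Psie$. Spectral theory for $\widehat P$ (Lemmas~\ref{l:2nd-compact}--\ref{l:2nd-egorov}) then supplies the microlocal properties needed.

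Fourth, you have omitted the second small parameter $\tilde h$ entirely. Because the escape function near $K$ is regular only on scale $(h/\tilde h)^{1/2}$, the relevant operators live in $\Psie$, where remainders are $\mathcal O(\tilde h)$ rather than $\mathcal O(h)$; without $\tilde h\ll 1$ the remainders would be the same size as the positive commutator term and the argument would collapse. The parameter also controls the $\mathcal O(\tilde h/h)$ localization to the energy shell, which is why the rank estimate comes out as $C(\tilde h)h^{-\nu}$ and the error as $\mathcal O(\tilde h)$.

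Finally, the trace/Helffer--Sj\"ostrand route you propose for bounding $\rank A_R$ is not the one the paper follows, though the authors remark (after Lemma~\ref{l:approximation}) that it should be viable. The paper instead covers $\supp\tilde a$ by $\sim(h/\tilde h)^{-\nu}$ balls of radius $(h/\tilde h)^{1/2}$, quantizes each to a shifted harmonic-oscillator spectral cutoff of rank $\mathcal O(\tilde h^{1-n})$, proves their sum lies in $\Psie$ and is elliptic on $\supp\tilde a$ (Lemma~\ref{l:harmonic}), and then writes $A=A_R+A_E$ by a parametrix (\S\ref{s:approximation4}). If you want to pursue the trace approach instead, you would still need to make $A\geq 0$ and still need $\widehat P$ self-adjoint to define and estimate $\operatorname{tr}A$, so the third point above applies in either case.
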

%
%

Our assumptions hold for $M$ convex cocompact,
 but they only concern the
asymptotic structure of $g$  at infinity, and the dynamics
of the geodesic flow at $K$. The latter assumption
holds when $M$ has (possibly variable)  negative 
curvature~\cite[Theorem 3.9.1]{kli}.

When $M$ is convex cocompact, we have
 $\dim K = 2\delta_\Gamma + 1$, and
the Minkowski dimension is pure and equals the Hausdorff
dimension (\cite[\S3]{sul}, see also~\cite[\S8.1]{nic},~\cite[Corollary 1.5]{bj}). 
Theorem~\ref{l:theorem-weak}  follows from Theorem~\ref{l:theorem-strong} 
and the fact that resonances of the Laplacian and zeros of the 
Selberg zeta function agree with multiplicities in the domains we study.

Theorem \ref{l:theorem-strong}  is to be compared with the Weyl law for
eigenvalues of a compact manifold:
\begin{equation}
  \label{e:weyl1}
\sum_{|E| < a}  m_h(E) = (2 \pi h)^{-n} \Vol\left(p_0^{-1}[-a,a]\right) + 
\mathcal O(h^{-n+1}), \quad a \in (0,1),
\end{equation}
where $n = \dim M$, and with the corresponding  bound when one considers smaller domains
\begin{equation}
  \label{e:weyl2}
\sum_{|E| < c_0h} m_h(E)  \le c_1 h^{-n+1},
\end{equation}
see for example \cite[(1.1)]{sj-z}.
The notation in \eqref{e:weyl1} and \eqref{e:weyl2} is as in \eqref{e:1} but
the manifold is compact, so $m_h(E) \ne 0$ only for real $E$ and
gives the multiplicity of the eigenvalue at $E$.

If 
$\Gamma$ is cocompact Fuchsian, as in Figure~\ref{f:puredim},
the Laplacian on $\Gamma \backslash \mathbb H^3$ is unitarily equivalent to 
$\bigoplus_{j \in \mathbb N_0} D_r^2 + \lambda_j \operatorname{sech}^2 r + 1$,
where $\lambda_j \ge 0$ are the eigenvalues of 
$\Gamma \backslash \mathbb H^2$.
By \cite[Appendix]{gzjfa},
  the scattering poles of
$\Gamma \backslash \mathbb H^3$ are  
$s_{j,k} =  \sqrt{1/4 - \lambda_j} + 1/2 - k$
where  $j \in \mathbb N$, 
$k \in \mathbb N_0$,
and the square root has values in $(-1/2,0] \cup i \mathbb R$.
The
Weyl law for $\Gamma \backslash \mathbb H^2$
implies that  in this case \eqref{e:0} and \eqref{e:1}
 can be improved to asymptotics:
\[
\sum_{|s-it|<R} m_\Gamma(s) = C|t|^{\delta_\Gamma}(1 + o(1)), \qquad \sum_{ |E| < c_0h}m_h(E)  = c_1 h^{-\nu}(1 + o(1)), \qquad \delta_\Gamma = \nu = 1.
\]

The first polynomial upper bounds on the resonance counting function
are due to Melrose \cite{mel}, and the first 
bounds involving geometric data (the dimension of the trapped
set) are due to Sj\"ostrand \cite{sj}.  Sj\"ostrand's result
was proved for convex cocompact surfaces by Zworski
\cite{z} (with an improvement by Naud \cite{n})
 and for analytic scattering
manifolds by Wunsch--Zworski \cite{wz}.  Guillop\'e--Lin--Zworski
\cite{glz} proved Theorem~\ref{l:theorem-strong}
 for convex cocompact Schottky groups, including 
general convex cocompact surfaces. 
Sj\"ostrand--Zworski \cite{sj-z} obtained the first result allowing 
$C^\infty$ perturbations, and our analysis of the
trapped set partially follows theirs,  with  modifications needed
due to the asymptotically hyperbolic infinity.
 As in~\cite{glz} and~\cite{sj-z},
we count resonances in $ \mathcal O(h)$ size regions,
rather than in the larger regions of ~\cite{sj}.  Most recently,
Nonnenmacher--Sj\"ostrand--Zworski~\cite{nszj, nszj2} studied general
topologically one-dimensional hyperbolic flows and proved the analog
of Theorem~\ref{l:theorem-strong} for scattering by several convex obstacles.

Much less is known about lower bounds.
In~\cite{g-s}, G\'erard--Sj\"ostrand studied semiclassical
problems with $K$  a single periodic orbit
and proved that resonances lie asymptotically on a lattice, implying
in particular sharp upper and lower bounds with $\nu = 0$. 
Similar
results hold for  spherically symmetric problems, as studied by S\'a
Barreto--Zworski~\cite{sabzwo}, and (under a stronger
hyperbolicity condition) for perturbations of them,
as studied by the second author~\cite{zeeman}. Most recently, Weyl asymptotics for resonances in strips have been obtained under more restrictive dynamical assumptions in work in preparation by Faure--Tsuji~\cite{ft} and the second author~\cite{dyatlov}. Nonnenmacher--Zworski
\cite{nz} proved asymptotics with
fractal $\nu$ for toy models of open quantum systems, and Lu--Sridhar--Zworski \cite{lsz}
give numerical evidence supporting an asymptotic in the case of obstacle scattering. 
Jakobson--Naud \cite{jn} proved
logarithmic lower bounds with exponent related to $\delta_\Gamma$
for convex cocompact surfaces, 
 and in the arithmetic case
their  bounds are fractal, but the power is different from the
one for the upper bound.

For a broader introduction to the subject of the distribution of resonances for systems
with hyperbolic classical dynamics, we refer the reader to the recent review
paper of Nonnenmacher \cite{nonn}, which describes many of
these and other results. It also includes a discussion of the
theoretical and experimental physics literature on the subject,
which supports the optimality of our upper bound (although without rigorous proofs).

An interesting
open problem is to prove the analog of Theorem~\ref{l:theorem-weak}
or~\ref{l:theorem-strong}  for manifolds
with cusps; when $M = \Gamma \backslash \mathbb{H}^n$
this means $\Gamma$ has parabolic
elements. If cusps have mixed rank
 the problem is harder;  Guillarmou--Mazzeo 
\cite{gm} show that the resolvent is meromorphic in $\mathbb C$
but continuation of the zeta function is not known.
If $n=2$ all cusps have full rank, and  the main difficulty
 comes from the fact that $K$ extends into the cusp and is 
not compact.

\subsection*{Outline of the proof of Theorem~\ref{l:theorem-strong}}
(See \S\ref{s:prelim} for a review of semiclassical notation and terminology.)
We begin with Vasy's construction~\cite{v1,v2} 
of a Fredholm semiclassical pseudodifferential operator $P(z) - iQ$, 
on a compact 
manifold $X$, with $M$  diffeomorphic to an open subset of $X$, and
 such that the resonances of the semiclassical Laplacian on
$M$ are a subset of the poles of $(P(z) - iQ)^{-1}$; here $1+E=h^2(n-1)^2/4+(1+z)^2$.
The operator $Q$ is
 supported away
from $\overline M \subset X$, and $P(z)$ is a differential operator
such that $P(z)|_M$ is $\Delta_{g}$ conjugated and weighted (see 
\eqref{e:p1def}).
We review this construction in \S\ref{s:ah.vasy} and summarize the main 
results in Lemma~\ref{l:vasy-outline}; see Figure~\ref{f:vasy} for
the global dynamics of the corresponding Hamiltonian system.
 We will show that for any $C_0>0$
there is $h_0>0$ such that $(P(z) - iQ)^{-1}$ has  $\mathcal O(h^{-\nu})$ many poles in 
$\{|\Real z| \le C_0h,\, |\Imag z| \le C_0h\}$ for $h \in (0,h_0]$.

To do this we introduce the conjugated operator
\[
P_t(z)=e^{-tF}T_s(P(z)-iQ)T_s^{-1}e^{tF},
\]
where  $t>0$ is a large parameter (independent of $h$) and $T_s, F$ are 
 pseudodifferential operators on $X$, with
$F$ in an exotic calculus. We will show that this operator is invertible up
to a remainder of finite rank $\mathcal O(h^{-\nu})$, and then conclude
using Jensen's formula (\S\ref{s:outline}).
This follows from the estimate
\begin{equation}\label{e:intro-main}
\|u\|_{\Hh^{1/2}(X)} \le Ch^{-1} \|(P_t(z) - ithA)u\|_{\Hh^{-1/2}(X)},
\end{equation}
where $A = A_R + A_E$, with  $A_R$ of rank 
$\mathcal O(h^{-\nu})$ and $\|A_E\|_{\Hh^{1/2} \to \Hh^{-1/2}} = \mathcal O(\tilde h)$, where $0<\tilde h \ll 1$
depends on $t$ but not $h$.
This, \eqref{e:intro-main}, is the
 main estimate of the paper (see \eqref{e:main-estimate}).

To prove \eqref{e:intro-main}, we begin with the Taylor expansion
\begin{equation}\label{e:taylorexpintro}
P_t(z) = T_s(P(z)-iQ)T_s^{-1} + t[T_s(P(z)-iQ)T_s^{-1},F] + \mathcal O_t(h\tilde h),
\end{equation}
(this is \eqref{e:conj-dec}). We then use a microlocal partition of unity 
(Lemma~\ref{l:localization}) to divide the phase space $\overline T^*X$
into regions where the principal symbols of the various terms are elliptic.
We construct $T_s$, $F$, and $A$ such that these regions cover $\overline T^*X$, after
which we prove \eqref{e:intro-main} using a positive commutator argument. In fact, because
$P_t(z) - ithA = P(z) - iQ + \mathcal O(h\log(1/h))$, it suffices
to check that the intersection $\{\lxir^{-2}p=0\}\cap\{\lxir^{-2}q=0\}$ of the characteristic sets
of $P(0)$ and $Q$ is covered.

More specifically, $T_s$ is an elliptic pseudodifferential operator whose order $s$
is large enough that the principal symbol of $h^{-1} \Imag T_s(P(z)-iQ)T_s^{-1}$
is elliptic  near the
intersection of $\{\lxir^{-2}p =0\}$ with the fiber infinity of $\overline T^*X$.
We include a neighborhood of the \textit{radial points} of $P(z)$ (i.e. the 
fixed points of
its symbol's Hamiltonian vector field) 
 and this (\S\ref{s:ah.escape})
is  where our analysis near the spatial infinity of $M$ is different from 
that of \cite{v1,v2}. 
The ellipticity of the principal
symbol  of $h^{-1} \Imag T_s(P(z)-iQ)T_s^{-1}$ (with a favorable sign) is proved in
 Lemma~\ref{l:radial-conj}, and it is
used in the positive commutator argument in Lemma~\ref{l:estimate-l-pm}.
 
The operator $F$ has the form
\[
F = \widehat F + M \log(1/h) F_0,
\]
where $M>0$ and  $F_0, \widehat F$ are quantizations of {\em escape functions}, that is functions
monotonic along bicharacteristic flowlines of $P(0)$ in certain regions of $\{\lxir^{-2}p =0\}$. 
For $F_0$ this monotonicity holds outside of a fixed neighborhood of the radial points 
and
of the trapped set $\iota(K)$, 
(here $\iota\colon T^*M \to \overline 
T^*X$ is the slightly modified inclusion map 
defined in \eqref{e:iotadef}; note that no function can be monotonic 
along bicharacteristic flowlines
at the radial points or at $\iota(K)$)
giving ellipticity of the
principal symbol of $h^{-1} \Imag [T_s(P(z)-iQ)T_s^{-1},F_0]$ in that region.
For $\widehat F$ this monotonicity holds at points in a fixed neighborhood of $\iota(K)$
which lie
at least $ \sim (h/\tilde h)^{1/2}$ away from $\iota(K)$ (recall $\tilde h$ 
is small but independent
of $h$)%
\footnote{We need $\tilde h$ because if a symbol's derivatives  grow
like $h^{-1/2}$, then its quantization does not have an asymptotic expansion in powers of $h$.
As we show in \S\ref{s:1/2}, including $\tilde h$ gives an expansion in powers of $\tilde h$.}.
We need the monotonicity up to a neighborhood of $\iota(K)$ which is as small
as possible so that the correction term $A$, needed for  the 
global estimate \eqref{e:intro-main},
can be microsupported in a small
enough set that $A$ is of rank $\mathcal O(h^{-\nu})$ plus a small remainder.
The escape function for $\widehat F$ is taken  directly
from \cite{sj-z}, and it is here that we use the assumption that the 
geodesic flow is hyperbolic on $K$. The escape functions are constructed in 
Lemmas~\ref{l:escape} and~\ref{l:f-hat}.

The operator $A$ has the form
\begin{equation}\label{e:pttildeintro}
A = \chi((h/\tilde h)\widehat P) \widetilde A,
\end{equation}
where $\widehat P$ is an elliptic self-adjoint operator whose principal symbol
agrees with that of $P(0)$ near $\iota(K)$, $\chi \in C_0^\infty(\mathbb R)$ is
$1$ near $0$, 
and $\widetilde A$ is microsupported in a neighborhood of size 
 $\mathcal O(h/\tilde h)^{1/2}$ of $\iota(K)$. The operator $\widetilde A$ is
 pseudodifferential in the exotic class of \S\ref{s:1/2}, but 
 $\chi((h/\tilde h)\widehat P)$ is not even pseudolocal: it propagates
 semiclassical singularities along bicharacteristics of $\widehat P$
 (which near the microsupport of $\widetilde A$ are the same
 as bicharacteristics of $P(0)$). This type of operator is treated in \cite{sj-z}
 using a \emph{second microlocal} pseudodifferential calculus.
 In \S\ref{s:2nd} we use a more basic approach based on the Fourier
 inversion formula,
 \[
 \chi((\tilde h/h)\widehat P) = \frac 1 {2\pi} \int \hat \chi(t) e^{it(h/\tilde h)\widehat P} dt,
 \]
and functional calculus to prove only the  few microlocal properties of $ \chi((\tilde h/h)\widehat P)$
we need.
We use them in the positive commutator argument in 
Lemma~\ref{l:estimate-k-aux}, and we show that $A = A_R + A_E$, 
with $\rank A_R = \mathcal O(h^{-\nu})$ and $\|A_E\| = \mathcal O(\tilde h)$,
in Lemma~\ref{l:approximation} and \S\ref{s:ultimate3}.

\subsection*{Structure of the paper}

\begin{itemize}
\item In \S\ref{s:outline} we state the main properties of the extended manifold $X$
and the modified Laplacian $P(z) - iQ$ from \cite{v1,v2} in Lemma~\ref{l:vasy-outline} and then
use Jensen's formula to  reduce the proof
of Theorem~\ref{l:theorem-strong} to the
main lemma, Lemma~\ref{l:main}. 
\item In \S\ref{s:prelim} we review the
notation used in the paper and properties of semiclassical pseudodifferential
and Fourier integral operators. 
\item In \S\ref{s:ah.vasy} we review the
construction of  $X$ and  $P(z) - iQ$ and the proof of
Lemma~\ref{l:vasy-outline}.
In \S\ref{s:ah.escape} we introduce the conjugation by $T_s$ and prove
 estimates near the radial points, and then define the escape function $f_0$
used to separate the analysis near infinity from the analysis near the trapped set.
\item In \S\ref{s:1/2} we review the part of the exotic $\Psi_{1/2}$ 
 calculus of
\cite{sj-z} needed here. In \S\ref{s:2nd} we 
study microlocal properties of operators of the form $\chi((h/\tilde h)\widehat P)$ 
as in \eqref{e:pttildeintro}.
\item In \S\ref{s:approximation} we prove that operators of
the form of $A$ in \eqref{e:pttildeintro} can
be written $A = A_R + A_E$, with 
$\|A_E\|_{L^2 \to L^2} = \mathcal O(\tilde h)$ and $\rank A_R = \mathcal 
O_{\tilde h}(h^{-\nu})$.
\item In \S\ref{s:ultimate} we prove Lemma~\ref{l:main}.
In \S\ref{s:ultimate0} we use the results of \S\S\ref{s:prelim}--\ref{s:exotic} to 
prove positive commutator estimates for the modified conjugated operator
$P_t(z) - ithA$. In \S\ref{s:ultimate1} and 
\S\ref{s:ultimate2} we use  these to prove
semiclassical resolvent estimates, and
in \S\ref{s:ultimate3} we apply the results of \S\ref{s:approximation} to the
operator $A$ from \eqref{e:pttildeintro}.
\item In Appendix~\ref{s:quasifuchsian} we 
construct the quasifuchsian group in Figure~\ref{f:puredim}.
\item In Appendix~\ref{s:picture} we give a direct presentation of Vasy's construction 
from \S\ref{s:ah.vasy}, and of the escape functions from
Lemmas~\ref{l:escape} and~\ref{l:f-hat},  when $M$ is the hyperbolic cylinder.

\end{itemize}
For the convenience of the reader interested in our approach to second
microlocalization but not in the analysis near infinity, \S\ref{s:2nd} is independent 
of \S\ref{s:ah} (as are \S\ref{s:1/2} and \S\ref{s:approximation}).

\section{Proof of Theorem~\ref{l:theorem-strong}}
  \label{s:outline}

We start by reviewing in Lemma~\ref{l:vasy-outline} Vasy's recent description~\cite{v2} of the
scattering resolvent and resonances on an asymptotically hyperbolic even
space $(M,g)$; this allows us to replace the  Laplacian on $M$ by a Fredholm pseudodifferential operator $P(z) - iQ$ on a compact manifold $X$, adapted to proving semiclassical estimates. See \S\ref{s:ah.vasy} for
details.
%
%
\begin{lemm}
  \label{l:vasy-outline}
Assume that $(M,g)$ is asymptotically hyperbolic and even.
Then there exist a compact manifold $X$ without boundary, an order 2 semiclassical
differential operator $P(z)$ depending holomorphically on  $z$,
and an order 2 semiclassical pseudodifferential operator $Q$
depending holomorphically on $z$ such that for $h$ small enough,
\begin{enumerate}
\item for $\Imag z>-C_0h$ and $s>C_0$, 
the family of operators
$$
P(z)-iQ:\{u\in \Hh^{s+1/2}(X)\mid P(0)u\in \Hh^{s-1/2}(X)\}\to \Hh^{s-1/2}(X)
$$
is Fredholm of index zero, and a preimage of a smooth function under this operator
is again smooth.
Here the domain is equipped with the norm
$\|u\|=\|u\|_{\Hh^{s+1/2}}+\|P(0)u\|_{\Hh^{s-1/2}}$. The inverse
$$
(P(z)-iQ)^{-1}:\Hh^{s-1/2}(X)\to \Hh^{s+1/2}(X)
$$
is meromorphic in $z \in \{\Imag z>-C_0h\}$ with poles of finite rank;
\item the set of poles of $(P(z)-iQ)^{-1}$ in $\{\Imag z>-C_0h\}$ contains (including multiplicities) the set of poles of the continuation of 
\[
(h^2(\Delta_g - (n-1)^2/4) - (z+1)^2)^{-1} \colon L_{\comp}^2(M) \to L^2_{\loc}(M)
\]
from $\{\Imag z>0\}$ to $\{\Imag z>-C_0h\}$.
\end{enumerate}
\end{lemm}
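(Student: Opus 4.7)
The plan is to execute Vasy's construction from~\cite{v1,v2}. Using evenness of $g_1$, in coordinates $(\mu,y)=(\tilde x^2,\tilde y)$ near $\partial\overline M$ the metric has a smooth (indefinite) form across $\{\mu=0\}$. First I would build $X$ by attaching a collar $\{-\epsilon<\mu\le 0\}$ to the even double $\overline M_{\mathrm{even}}$ and capping off smoothly, so that $M$ embeds as $\{\mu>0\}$ together with the original interior. Next, I would define $P(z)$ as the conjugate of $h^2(\Delta_g-(n-1)^2/4)-(1+z)^2$ by a power $\mu^{\alpha(z)}$ with $\alpha(z)$ affine in $z$, followed by multiplication by a positive smooth function. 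Evenness ensures that this operator, initially defined only on $\{\mu>0\}$, extends to a smooth semiclassical differential operator of order two on all of $X$, with principal symbol $p$ nondegenerate and of Lorentzian type near $\partial\overline M$; holomorphic dependence on $z$ is clear from the explicit formula.

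For (1), I would analyze the bicharacteristic flow of $p$ in $T^*X\setminus 0$. The structural features are: ellipticity of $P(z)$ off the characteristic set $\Sigma=\{p=0\}$; radial sources $L_+$ and sinks $L_-$ for the rescaled Hamilton flow at the conormal bundle of $\partial\overline M$; and, in the component of $\Sigma\setminus(L_+\cup L_-)$ on the non-physical side, every bicharacteristic escapes in the cosphere bundle in finite time, while the $M$-side component carries the geodesic flow of $g$. I would then choose $Q$ with nonnegative principal symbol, microsupported in $\{\mu<-\epsilon_0\}$ and elliptic on $\Sigma\cap\{\mu<-\epsilon_0\}$. Combining elliptic estimates off $\Sigma$, ellipticity of $Q$ on its elliptic set, semiclassical propagation of singularities along bicharacteristics, and Vasy's radial source/sink estimates at $L_\pm$ yields the a priori bound
\[
\|u\|_{\Hh^{s+1/2}}\le C\|(P(z)-iQ)u\|_{\Hh^{s-1/2}}+C\|u\|_{\Hh^{-N}},
\]
valid whenever $s>C_0$ and $\Imag z>-C_0h$, with $C_0$ dictated by the threshold inequalities at $L_\pm$ in terms of the subprincipal symbol. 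A symmetric estimate for the adjoint $(P(\bar z)-iQ)^*$ (where the roles of $L_+$ and $L_-$ are exchanged, so the radial thresholds are complementary) gives the Fredholm property. Index zero follows by deforming $z$ to the regime $\Imag z\gg 1$, where spectral theory makes $P(z)-iQ$ directly invertible. Analytic Fredholm theory then provides meromorphic continuation of the inverse with finite-rank poles, and the smoothness of preimages of smooth functions is immediate from the elliptic, propagation, and radial-point estimates upgraded to $s\to\infty$.

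For (2), I would verify that any resonant state $u$ of the continued resolvent at a pole $z_0\in\{\Imag z>-C_0h\}$ pulls back, via the conjugation and using evenness, to a smooth distribution on the $M$-side of $X$ with the correct behaviour at $\partial\overline M$ prescribed by the outgoing condition. It then extends uniquely to all of $X$ by solving the forward problem on the non-physical side, where the source-to-sink flow structure and ellipticity of $Q$ provide both existence and smoothness of the extension; the result is an element of $\ker(P(z_0)-iQ)$. Tracking the contour integrals defining the two multiplicities through the conjugation, together with the corresponding statement for Jordan chains, gives the containment with multiplicities. The main obstacle in carrying this program out rigorously is the radial-point propagation estimate at $L_\pm$: this is the technical core of Vasy's method, and the interaction of its threshold condition with the evenness hypothesis and the exact choice of conjugating weight $\alpha(z)$ is what forces the specific restrictions on $s$ and $\Imag z$; once this estimate is in hand the remainder is bookkeeping.
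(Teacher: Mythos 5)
Your proposal follows essentially the same route as the paper: build $X$ by extending $\overline M_{\even}$ smoothly across $\{\mu=0\}$ and closing up, conjugate and weight the shifted spectral family by a $z$-dependent power of $\mu$ to obtain $P(z)$ (compare~\eqref{e:p1def}), add a complex absorbing operator $Q$ microsupported in the non-physical region, and then deduce the Fredholm/index-zero/meromorphy statements and the resonance containment from Vasy's elliptic, propagation, and radial-point estimates together with analytic Fredholm theory. The paper does exactly this, although after setting up $X$, $P(z)$, $Q$ and verifying the dynamical structure (Lemmas~\ref{l:vasy-1}--\ref{l:vasy-3}) it simply cites \cite[Theorem~4.3]{v2} and \cite[proof of Theorem~5.1]{v2} for the conclusion rather than rederiving those estimates.

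One concrete slip to flag: you cannot take $q=\sigma(Q)$ nonnegative on the whole characteristic set. Because the two components $\Sigma_\pm$ carry opposite time orientations relative to the absorbing region $\{\mu\le-\delta\}$ (on $\Sigma_+$ bicharacteristics enter it in forward time from the source $L_+$, on $\Sigma_-$ they enter it in backward time toward the sink $L_-$), the absorbing potential must have opposite signs on the two pieces; the paper imposes $\pm q\ge 0$ near $\Sigma_\pm$. With a uniformly nonnegative $q$ the propagation estimate on $\Sigma_-$ (and the complementary one for the adjoint) would fail. This is a detail you would catch when writing out the positive commutator at $\Sigma_-$, but it is worth stating correctly since the sign flip is exactly what encodes the outgoing condition on the two sheets.
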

%
%
To prove Theorem \ref{l:theorem-strong} we will apply Lemma \ref{l:vasy-outline} (and Lemma \ref{l:main} below) with $C_0$ a large constant multiple of $c_0$. Throughout the paper we will work in the domain $\{|\Real z| \le C_0 h, \ \Imag z \ge - C_0h\}$. In the main lemma, we define a modified, conjugated operator $\widetilde P_t(z)$, and prove semiclassical estimates for it. See Section~\ref{s:prelim.basics} for the semiclassical notation.
%
%
\begin{lemm}
  \label{l:main} (Main lemma)
Assume that $(M,g)$ and $\nu$ satisfy the assumptions
of Theorem~\ref{l:theorem-strong}. Let $X,P(z),Q, C_0$ be as in Lemma~\ref{l:vasy-outline}, and let
$T_s\in\Psi^s(X)$ be any (semiclassically) elliptic operator.
We introduce a parameter $\tilde h>0$; the estimates below hold for $\tilde h$ small enough
and $h$ small enough depending on $\tilde h$, and the constants in these estimates
are independent of $\tilde h$, $h$, and $z$ in the specified range, except for $C(\tilde h)$ below.

Then there exist $t>0$ and compactly microlocalized polynomially bounded
operators $A,F$ depexnding on $\tilde h$, with
$e^{\pm tF}-1$ polynomially bounded and compactly microlocalized and
\begin{enumerate}
  \item the modified conjugated operator
\begin{equation}\label{e:p-t-tilde-intro}
\widetilde P_t(z)=e^{-tF}T_s(P(z)-iQ)T_s^{-1}e^{tF}-ithA
\end{equation}
satisfies the estimate
\begin{equation}\label{e:main-estimate}
|\Real z|\leq C_0h,\
|\Imag z|\leq C_0h,\
u\in C^\infty(X)\Longrightarrow \|u\|_{\Hh^{1/2}}\leq Ch^{-1}\|\widetilde P_t(z) u\|_{\Hh^{-1/2}};
\end{equation}
  \item if $\tilde h,\varepsilon$ are small enough and $h$ is small enough depending on $\tilde h,\varepsilon$,
then we have the improved estimate in the upper half-plane:
\begin{equation}\label{e:improved-estimate}
|\Real z|\leq C_0h,\
C_0h\leq\Imag z\leq\varepsilon,\
u\in C^\infty(X)\Longrightarrow
\|u\|_{\Hh^{1/2}}\leq {C\over \Imag z}\|\widetilde P_t(z)u\|_{\Hh^{-1/2}};
\end{equation}
  \item we can write $A=A_R+A_E$, where $A_R,A_E$
  are compactly microlocalized and for some constant $C(\tilde h)$ independent of $h$,
$$
\|A_R\|_{\Hh^{1/2}\to \Hh^{-1/2}}=\mathcal O(1),\
\|A_E\|_{\Hh^{1/2}\to \Hh^{-1/2}}=\mathcal O(\tilde h),\
\rank A_R\leq C(\tilde h) h^{-\nu}.
$$
\end{enumerate}
\end{lemm}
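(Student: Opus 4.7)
I would prove the three items in order, using a single positive commutator argument for (1) that also delivers (2) as a by-product, and deferring (3) to the spectral/rank analysis of \S\ref{s:approximation}. The starting point is the Taylor expansion
\[
\widetilde P_t(z)=T_s(P(z)-iQ)T_s^{-1}+t[T_s(P(z)-iQ)T_s^{-1},F]-ithA+\mathcal O_t(h\tilde h),
\]
so that \eqref{e:main-estimate} reduces to ellipticity (with the correct sign) of the principal symbol of $h^{-1}\Imag \widetilde P_t(z)$ on a neighborhood of the characteristic intersection $\{\lxir^{-2}p=0\}\cap\{\lxir^{-2}q=0\}$; outside this set $P(z)-iQ$ itself is already elliptic and the estimate is standard. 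The plan is therefore to engineer $T_s$, $F$, and $A$ so that the three dynamical pieces in the expansion cover this intersection by regions in $\overline T^*X$ on which the respective antihermitian parts are elliptic, and then to glue the regional estimates by a G\aa rding inequality.

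Concretely, I would build the cover from three pieces. A neighborhood of the fiber-infinity portion of $\{\lxir^{-2}p=0\}$, including the radial points of $P(0)$, is handled by choosing the order $s>C_0$ of the elliptic conjugator $T_s$ large enough that a subprincipal symbol computation (Lemma~\ref{l:radial-conj}) gives the correct sign for $h^{-1}\Imag T_s(P(z)-iQ)T_s^{-1}$ at the radial set, feeding into the local estimate of Lemma~\ref{l:estimate-l-pm}. On the complement of a fixed neighborhood of $\iota(K)$ and the radial set, I would build $F_0$ from a classical escape function for the bicharacteristic flow of $p$ and pick the coefficient $M\log(1/h)$ large enough to dominate all fixed-order errors; then $[T_s(P(z)-iQ)T_s^{-1},M\log(1/h)F_0]$ is elliptic in this region. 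Within a fixed neighborhood of $\iota(K)$, at distance $\gtrsim(h/\tilde h)^{1/2}$ from the trapped set itself, I would use the Sj\"ostrand--Zworski escape function (Lemmas~\ref{l:escape},~\ref{l:f-hat}), quantized in the exotic calculus $\Psi_{1/2}$ of \S\ref{s:1/2}, to build $\widehat F$; hyperbolicity of the flow on $K$ is what guarantees the required monotonicity. What is still uncovered is a $(h/\tilde h)^{1/2}$-tube around $\iota(K)\cap\{\lxir^{-2}p=0\}$, and there I would invoke $A=\chi((h/\tilde h)\widehat P)\widetilde A$ with $\widetilde A$ microsupported in this tube and elliptic on $\iota(K)\cap\{\lxir^{-2}p=0\}$, the functional-calculus factor $\chi((h/\tilde h)\widehat P)$ being analyzed via the Fourier-inversion and Egorov machinery of \S\ref{s:2nd}. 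The term $-ithA$ then restores positivity on the residual region, and a positive commutator argument (Lemma~\ref{l:estimate-k-aux}) closes the estimate \eqref{e:main-estimate}. For \eqref{e:improved-estimate}, when $\Imag z\ge C_0 h$ the antihermitian part of $P(z)$ is itself elliptic of size $\Imag z$ uniformly in phase space; this overwhelms the $\mathcal O(h)$ contributions from $F$ and $A$, so after absorbing $e^{\pm tF}-1$ and $ithA$ as perturbations one reads off the bound with $h^{-1}$ replaced by $(\Imag z)^{-1}$.

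Item (3) would then be supplied by Lemma~\ref{l:approximation} from \S\ref{s:approximation}. Since $\widehat P$ is elliptic self-adjoint, $\chi((h/\tilde h)\widehat P)$ admits a spectral-truncation decomposition, and the number of eigenvalues of $(h/\tilde h)\widehat P$ in a fixed window, restricted to the microsupport of $\widetilde A$, is controlled by the $h$-covering number of a $(h/\tilde h)^{1/2}$-neighborhood of $\iota(K)\cap\{\widehat p=0\}$; by the Minkowski hypothesis $\dim K=2\nu_0+1$ this covering number is $\mathcal O_{\tilde h}(h^{-\nu})$ for any $\nu>\nu_0$ (and $\nu=\nu_0$ in the pure-dimension case), yielding $\rank A_R\le C(\tilde h)h^{-\nu}$, while the remainder inherits norm $\mathcal O(\tilde h)$ from the smallness of $\chi$ outside its principal lobe. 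The main obstacle I expect is the three-scale bookkeeping: the exotic $\Psi_{1/2}$-symbols forming $\widehat F$ produce Taylor remainders that must be tracked simultaneously in $h$ and $\tilde h$, and the nonpseudolocal cutoff $\chi((h/\tilde h)\widehat P)$ must have its commutator and microsupport properties extracted from functional calculus rather than symbolic composition, without appealing to the second microlocalization of \cite{sj-z}.
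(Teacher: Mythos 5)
Your plan for parts (1) and (3) tracks the paper's argument closely: the Taylor expansion, the three-region cover of the characteristic intersection (radial points handled by the $T_s$-conjugation and Lemma~\ref{l:radial-conj}, the intermediate region by $M\log(1/h)F_0$, and the $(h/\tilde h)^{1/2}$-tube around $\iota(K)$ by $\widehat F$ and then $A$), and the reduction of $\rank A_R$ to a Minkowski covering count via Lemma~\ref{l:approximation} all match. One point you should make explicit: the gluing requires the sign-balancing operator $Z=\Psi_{L_+}^2+\Psi_{0+}^2+\Psi_K^2-\Psi_{L_-}^2-\Psi_{0-}^2$, because $\Imag T_s(P(z)-iQ)T_s^{-1}$, $H_pf$, and $\Real(\lxir^{-1}\sigma(\partial_z P(0)))$ all change sign between the two characteristic components $\Sigma_+$ and $\Sigma_-$ rather than having a single sign across phase space.

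Your sketch of part (2) has a genuine gap. You claim that when $\Imag z\geq C_0 h$ the extra positivity of size $\Imag z$ ``overwhelms the $\mathcal O(h)$ contributions from $F$ and $A$,'' so $e^{\pm tF}-1$ and $ithA$ can be ``absorbed as perturbations.'' This fails in two ways. First, the contributions from $F=\widehat F+M\log(1/h)F_0$ are of size $\mathcal O(h\log(1/h))$, which is not dominated by $\Imag z$ when $\Imag z\sim C_0 h$; moreover $e^{\pm tF}$ has operator norm that grows polynomially in $1/h$, so it cannot be treated as a small perturbation. Second, and more to the point, there is nothing to overwhelm: the contributions of $T_s$, $F$, and $A$ to $\Imag\widetilde P_t(z)$ all have the \emph{favorable} sign by construction and must be kept, not discarded. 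The working strategy, carried out in \S\ref{s:ultimate2}, is to apply the full positive commutator estimate from part (1) at $\Real z$, then expand $\widetilde P_t(z)-\widetilde P_t(\Real z)=i\Imag z\,\partial_z P(0)+\mathcal O(|\Imag z|^2+h^{1/2}\tilde h^{1/2}|\Imag z|)$ and use the favorable sign of $\partial_z P(0)$ near $\Sigma_\pm$ from Lemma~\ref{l:im-part} (paired against $Z$ and controlled via the non-sharp G\aa rding inequality) to upgrade the $h^{-1}$ bound to $(\Imag z)^{-1}$.
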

%
%
The conjugations by $T_s$ and $F$ modify $P(z) - iQ$ to make it semiclassically elliptic away from the trapped set, without disturbing the poles of the resolvent. The parameter $t$ is a coupling constant for the positive commutator term $t[T_s(P(z)-iQ)T_s^{-1},F]$ (see \eqref{e:taylorexpintro} above and see \S\ref{s:ultimate0} below for details), taken to be large enough that it overcomes the error term arising from the unfavorable sign of $\Imag z$. The  correction term $-ithA$ (which makes $\widetilde P_t(z)$ invertible) is compactly microlocalized in a small enough neighborhood of the trapped set in the energy surface that it can be approximated by an operator of rank $\mathcal O(h^{-\nu})$; it does affect the poles of the resolvent, but as we will see below it can remove no more than $\mathcal O(h^{-\nu})$ of them in the set $\{|\Real z| < C_0h/2, \ |\Imag z| < C_0h/2\}$.
%
%
\begin{proof}[Proof of Theorem~\ref{l:theorem-strong} assuming Lemmas~\ref{l:vasy-outline} and~\ref{l:main}]
We follow~\cite[\S6.1]{sj-z}.
Fix $s>C_0$. For $\Imag z \ge-C_0h$, the operators
$$
P_t(z)=e^{-tF}T_s(P(z)-iQ)T_s^{-1}e^{tF}
$$
and $\widetilde P_t(z)$ are Fredholm of index zero
$$
\{u\in \Hh^{1/2}\mid P(0)u\in \Hh^{-1/2}\}\to \Hh^{-1/2}.
$$

For $P_t(z)$, this follows immediately from Lemma~\ref{l:vasy-outline}
as $e^{tF}-1$ is compactly microlocalized and thus $e^{tF}$ preserves
Sobolev spaces, while $T_s^{-1}$ maps $\Hh^{-1/2}\to \Hh^{s-1/2}$ and
$\Hh^{s+1/2}\to \Hh^{1/2}$. (Note however that the norm of $e^{tF}$
grows as $h \to 0$.) For $\widetilde P_t(z)$, we additionally use that $A$
is compactly microlocalized and thus a compact perturbation of
$P_t(z)$.

Let $\varepsilon>0$ be a small constant and consider the rectangle
$$
R_0=\{\Real z\in [-C_0h,C_0h],\ \Imag z\in [-C_0h,\varepsilon]\}.
$$
It follows from parts~1 and~2 of Lemma~\ref{l:main} that $\widetilde
P_t(z)^{-1}$ has no poles in $R_0$ and satisfies
$$
z\in R_0\Longrightarrow \|\widetilde P_t(z)^{-1}\|_{\Hh^{-1/2}\to \Hh^{1/2}}\leq {C\over \max(h,\Imag z)}.
$$
We now use the decomposition~$A=A_R+A_E$ from~Lemma~\ref{l:main}(3).
Since $\|A_E\|=\mathcal O(\tilde h)$, for $\tilde h$ small enough 
$\|(-ithA_E)\widetilde P_t(z)^{-1} \|_{\Hh^{-1/2}\to\Hh^{-1/2}}\leq 1/2$ and we get
\begin{equation}\label{e:maine}
z\in R_0\Longrightarrow \|(P_t(z)-ithA_R)^{-1}\|_{\Hh^{-1/2}\to \Hh^{1/2}}\leq {C\over \max(h,\Imag z)}.
\end{equation}
Now,
$$
(P_t(z)-ithA_R)^{-1}P_t(z)=1+ith(P_t(z)-ithA_R)^{-1}A_R=:1+K(z),\
z\in R_0.
$$
Therefore, the poles of $P_t(z)^{-1}$ (and hence of $(P(z)-iQ)^{-1}$) are
contained, including multiplicities, in the zeros of
$$
k(z)=\det(1+K(z));
$$
indeed, see \cite[Proposition 5.16]{sres} for a general statement and
see \cite[\S D.1]{e-z} for a discussion of the theory of Grushin problems
which is used there.

By Lemma~\ref{l:main}(3), $K(z)$ has norm $\mathcal O(1)$ and
rank $\mathcal O(h^{-\nu})$. Therefore,
\begin{equation}\label{e:k-upper}
|k(z)|\leq e^{Ch^{-\nu}},\
z\in R_0.
\end{equation}
Since $\|ithA_R\|=\mathcal O(h)$, we have $\|K(z_0)\|\leq 1/2$ for $z_0=iC_1h$ with $C_1>0$ large enough,  so
\begin{equation}\label{e:k-lower}
|k(z_0)|\geq e^{-Ch^{-\nu}}.
\end{equation}
Define rectangles $R_2\subset R_1\subset R_0$ by
$$
\begin{gathered}
R_1=\{\Real z\in (-C_0h,C_0h),\
\Imag z\in (-C_0h, 4C_1h)\},\\
R_2=\{\Real z\in (-C_0h/2,C_0h/2),\
\Imag z\in (-C_0h/2, 2C_1h)\}.
\end{gathered}
$$
Then the estimate~\eqref{e:k-upper} holds in $R_1$, while $z_0\in
R_2$. By the Riemann mapping theorem, there exists a unique conformal
map $w(z)$ from $R_1$ onto the ball $B_1=\{|w|<1\}$ with $w(z_0)=0$
and $w'(z_0)\in \mathbb R^+$. We then see that $w(R_2)\subset
B_2=\{|w|<1-\delta\}$ for some $\delta>0$ independent of $h$ (as
$h^{-1}R_1$ and $h^{-1}R_2$ do not depend on $h$). One can now apply
Jensen's formula (see for example~\cite[\S3.61, equation
(2)]{tit}) to the function $k_1(w)=k(z(w))$ on $B_1$: if $n(r)$ is the
number of zeros of $k_1$ in $\{|w|<r\}$, then
$$
\int_0^{1-\delta/2} {n(r)\over r}\,dr
={1\over 2\pi}\int_0^{2\pi}\log|k_1((1-\delta/2)e^{i\theta})|\,d\theta
-\log|k_1(0)|\leq 2Ch^{-\nu}.
$$
Therefore,
$$
n(1-\delta)\leq {2\over\delta}\int_{1-\delta}^{1-\delta/2}{n(r)\over r}\,dr
=\mathcal O(h^{-\nu}).
$$
This estimates the number of zeroes of $k_1(w)$ in $B_2$, thus the number
of zeroes of $k(z)$ in $R_2$, and thus the number of resonances in $R_2$,
as needed.
\end{proof}
%
%

\section{Semiclassical preliminaries}
  \label{s:prelim}

\subsection{Notation and pseudodifferential operators}
  \label{s:prelim.basics}

In this section, we review certain notions of semiclassical analysis;
for a comprehensive introduction to this area, the reader is referred
to~\cite{e-z} or~\cite{d-s}. We will use the notation
of~\cite[\S2]{v2}, with some minor changes. Consider a (possibly noncompact) manifold
$X$ without boundary.  Following~\cite{mel2}, the symbols we consider will be defined on the
\emph{fiber-radial compactification} $\overline T^*X$ of the cotangent
bundle; its boundary, called the fiber infinity, is associated with
the spherical bundle $S^*X$ and its interior is associated with
$T^*X$. Denote by $(x,\xi)$ a typical element of $\overline T^*X$;
here $x\in X$ and $\xi$ is in the radial compactification of $T^*_x
X$.  We fix a smooth inner product on the fibers of $T^*X$; if
$|\cdot|$ is the norm on the fibers generated by this inner product,
let
$$
\lxir=(1+|\xi|^2)^{1/2}.
$$
Then $\lxir^{-1}$ is a boundary defining function on $\overline T^*X$.
(The smooth structure of $\overline T^*X$ is independent of the choice
of the inner product.)

Let $k\in \mathbb R$. A smooth function $a(x,\xi)$ on $T^*X$ is called
a classical symbol of order $k$, if $\lxir^{-k} a$ extends to a smooth
function on $\overline T^*X$. We denote by $S^k_{\cl}(X)$ the algebra
of all classical symbols.  If $a$ also depends on the semiclassical
parameter $h>0$, it is called a classical semiclassical symbol of
order $k$, if there exists a sequence of functions $a_j(x,\xi)\in
S^{k-j}_{\cl}(X)$, $j=0,1,\dots$, such that $a\sim\sum_j h^ja_j$ in
the following sense: for each $J$, the function
$h^{-J}\lxir^{J-k}\big(a-\sum_{j<J} h^j a_j\big)$ extends to a smooth
function on $\overline T^*X\times [0,h_0)$, for $h_0>0$ small
enough. In this case, $a_0$ is called the (semiclassical)
\emph{principal part} of $a$: it captures $a$ to leading order in both $h$ and $\langle \xi \rangle^{-1}$. The semiclassical symbol $a$ is
classical if and only if $\tilde a=\lxir^{-k}a$ extends to a smooth
function on $\overline T^*X\times [0,h_0)$, and for each differential
operator $\partial^j$ of order $j$ on $\overline T^*X$, the
restriction of $\partial^j\tilde a$ to $S^*X$ is a polynomial of
degree no more than $j$ in $h$.

For real-valued $a\in S^k_{\cl}(X)$, we denote by $H_a$ the
Hamiltonian vector field generated by $a$ with respect to the standard
symplectic form on $T^*X$.  Then $\lxir^{1-k}H_a$ can be
extended to a smooth vector field on $\overline T^*X$ and this
extension preserves the fiber infinity $S^*X$.

The class $S^k_{\hbar,\cl}(X)$ of classical semiclassical symbols is
closed under the standard operations of semiclassical symbol calculus
(multiplication, adjoint, change of coordinates); therefore, one can
consider the algebra $\Psi^k(X)$ of semiclassical pseudodifferential
operators with symbols in $S^k_{\hbar,\cl}(X)$. (See~\cite[Chapters~4,
9, and~14]{e-z} for more information.) We do not use the
$\Psi^k_{\cl}(X)$ notation, as we will only operate with classical
operators until \S\ref{s:1/2}, where a different class will be
introduced. We require that all elements of $\Psi^k$ be properly
supported operators, so that they act $C^\infty(X)\to C^\infty(X)$ and
$C_0^\infty(X)\to C_0^\infty(X)$; in particular, we can multiply two
such operators.  If $\Hh^s(X)$ denotes the semiclassical Sobolev space
of order~$s$ (recall that when $X$ is compact, one choice of norm is $\|u\|_{\Hh^s(X)} = \|(1 + h^2\Delta_X)^{s/2}u\|_{L^2(X)}$, where $\Delta_X$ is the Laplacian on $X$), then each $A\in\Psi^k$ is bounded
$H^s_{\hbar,\comp}(X)\to H^{s-k}_{\hbar,\comp}(X)$ uniformly in
$h$. Here $H^s_{\hbar,\comp}$ consists of compactly supported
distributions lying in $\Hh^s$; in this article, we will mostly work
on compact manifolds, where this space is the same as $\Hh^s$.

For $A\in\Psi^k$, the principal part $\sigma(A)$ of the symbol of $A$
is independent of the quantization procedure. We call $\sigma(A)$ the
\emph{principal symbol} of $A$, and $\sigma(A)=0$ if and only if
$A\in h\Psi^{k-1}$. We remark that $\sigma(A)$ is the restriction of the standard semiclassical principal symbol on the larger standard semiclassical algebra. The principal symbol enjoys the multiplicativity
property $\sigma(AB)=\sigma(A)\sigma(B)$ and the commutator identity
$\sigma(h^{-1}[A,B])=-i\{\sigma(A),\sigma(B)\}$, where
$\{\cdot,\cdot\}$ is the Poisson bracket.

We define the (closed) semiclassical \emph{wavefront set}
$\WFh(A)\subset \overline T^*X$ as follows: if $a$ is the full symbol
of $A$ in some quantization, then $(x_0,\xi_0)\not\in\WFh(A)$ if and
only if there exists a neighborhood $U$ of $(x_0,\xi_0,h=0)$ in
$\overline T^*X\times [0,h_0)$ such that for each $N$, $h^{-N}\lxir^N
a$ is smooth in $U$. Since the operations of semiclassical symbol
calculus are defined locally modulo $\mathcal
O(h^\infty\lxir^{-\infty})$, this definition does not depend on the
choice of quantization, and we also have $\WFh(AB)\subset
\WFh(A)\cap\WFh(B)$ and $\WFh(A^*)=\WFh(A)$.  We say that $A=B$
\emph{microlocally} in some set $V\subset \overline T^*X$ if $\WFh(A-B)\cap
V=\emptyset$. If $\WFh(A)$ is a compact subset of $T^*X$, and in
particular does not intersect the fiber infinity $S^*X$, then we call
$A$ \emph{compactly microlocalized}. Denote by $\Psic(X)$ the class of
all compactly microlocalized pseudodifferential operators; these operators lie in
$\Psi^k(X)$ for all $k$. Note that for a noncompact $X$, compactly
microlocalized operators need not have compactly supported Schwartz
kernels.

The wavefront set of $A\in\Psi^k$ is empty if and only if $A \in
h^N\Psi^{-N}$ for all $N$. In this case, we write $A=\Resh$.  This
property can also be stated as follows: $A$ is properly supported,
smoothing, and each $C^\infty$ seminorm of its Schwartz kernel is
$\mathcal O(h^\infty)$. With  this restatement, 
  $B=\Resh$ makes sense even for $B:\mathcal E'(X_2)\to
C^\infty(X_1)$ with $X_1 \ne X_2$.

We now explain our use of the $\mathcal O(\cdot)$
notation. For an operator $A$, we write
\begin{equation}
  \label{e:O-notation}
A=\mathcal O_Z(f)_{\mathcal X\to \mathcal Y},
\end{equation}
where $\mathcal X,\mathcal Y$ are Hilbert spaces, $Z$ is some list of
parameters, and $f$ is an expression depending on $h$ and perhaps some
other parameters, if the $\mathcal X\to \mathcal Y$ operator norm of
$A$ is bounded by $Cf$, where $C$  depends on $Z$.
Instead of $\mathcal X\to \mathcal Y$, we may put some class of
operators; for example, $A=\mathcal O_Z(f)_{\Psi^k}$ means that for
any fixed value of $Z$, the operator $f^{-1}A$ lies in $\Psi^k$. This
is stronger than estimating some norms of the full symbol of $A$,
 as the classes $\Psi^k$ are not preserved under
multiplication by functions of $h$ that are not polynomials.

One can also define wavefront sets for
operators that are not pseudodifferential. Let $X_1$ and $X_2$ be two
manifolds. A properly supported operator $B:C^\infty(X_2)\to \mathcal
D'(X_1)$ is called \emph{polynomially bounded}, if for each $\chi_j\in
C_0^\infty(X_j)$ and each $s$, there exists $N$ such that
$\chi_1B\chi_2$ is $\mathcal O(h^{-N})$ as an operator $\Hh^s\to
\Hh^{-N}$ and $\Hh^N\to\Hh^s$.  A product of such $B$ with an operator
that is $\Resh$ will also be $\Resh$.  For a polynomially bounded
$B:\mathcal D'(X_2)\to C_0^\infty(X_1)$, its wavefront set
$\WFh(B)\subset \overline T^*X_1\times
\overline T^*X_2$ is defined as follows: a point $(x_1,\xi_1,x_2,\xi_2)$ does
not lie in $\WFh(B)$, if there exist neighborhoods $U_j$ of
$(x_j,\xi_j)$ in $\overline T^*X_j$ such that for each
$A_j\in\Psi^{k_j}(X_j)$ with $\WFh(A_j)\subset U_j$, we have
$A_1BA_2=\Resh$.

If $X_1=X_2=X$, then we call a polynomially bounded operator $B$
\emph{pseudolocal} if its wavefront set is a subset of the diagonal;
in this case we consider $\WFh(B)$ as a subset of
$\overline T^*X$. Pseudolocality is equivalent to saying that for any
$A_j\in\Psi^{k_j}(X)$ with $\WFh(A_1)\cap \WFh(A_2)=\emptyset$, we have
$A_1BA_2=\Resh$.  The operators in $\Psi^k$ are pseudolocal and the
definition of their wavefront set given here agrees with the one given
earlier; however, the definition in this paragraph can also be applied
to operators with exotic symbols of \S\ref{s:1/2}.

A polynomially bounded operator $B$ is called \emph{compactly
microlocalized}, if its wavefront set is a compact subset of
$T^*X_1\times T^*X_2$, and in particular does not intersect the fiber
infinities $S^*X_1\times \overline T^*X_2$ and $\overline T^*X_1\times
S^*X_2$.  In this case, if $A_j\in\Psic(X_j)$ are equal to the
identity microlocally near the projections of $\WFh(B)$ onto
$\overline T^*X_j$, then $B=A_1BA_2+\Resh$. Every compactly
microlocalized operator $B$ is smoothing; we say that $B=\mathcal
O(h^r)$ for some $r$, if for each $\chi_j\in C_0^\infty(X_j)$, there
exist $s,s'$ such that
\begin{equation}
  \label{e:b-bound}
\|\chi_1B\chi_2\|_{\Hh^s\to \Hh^{s'}}=\mathcal O(h^r).
\end{equation}
In fact, if $B$ is compactly microlocalized and $B=\mathcal O(h^r)$,
then~\eqref{e:b-bound} holds for all $s,s'$.

Finally, we say that $A\in\Psi^k(X)$ is \emph{elliptic} (as an element
of $\Psi^k$) on some set $V\subset\overline T^*X$, if
$\lxir^{-k}\sigma(A)$ does not vanish on $V$. If $A$ is elliptic on
the wavefront set of some $B\in\Psi^{k'}(X)$, then we can find an
operator $W\in\Psi^{k'-k}(X)$ such that $B=WA+\Resh$. This implies the
following \emph{elliptic estimate}, formulated here for the case of a
compact $X$:
\begin{equation}
  \label{e:elliptic}
\|Bu\|_{\Hh^{s+k-k'}(X)}\leq C\|Au\|_{\Hh^s(X)}+\mathcal O(h^\infty)\|u\|_{\Hh^{-N}(X)},
\end{equation}
for all $s,N$ and each $u\in \mathcal D'(X)$ such that $Au\in\Hh^s$.

Still assuming $X$ compact, the \emph{non-sharp G\r arding inequality} says that if $A \in \Psi^k(X)$,   $\Psi_1 \in \Psi^0(X)$, $\lxir^{-k}\Real\sigma(A) > 0$ on $\WFh(\Psi_1)$, then
\begin{equation}\label{e:nonsharpg}
\Real \langle A \Psi_1u, \Psi_1 u \rangle \ge C^{-1} \|\Psi_1u\|_{\Hh^{k/2}(X)} -\mathcal O(h^\infty)\|u\|_{\Hh^{-N}(X)}.
\end{equation}
See Lemma~\ref{l:garding-1/2} for a proof of the corresponding statement in the $\Psie$ calculus; the same proof works here. Of course if $A$ is symmetric, we may drop $\Real$ from \eqref{e:nonsharpg}.

With $X$ still compact, the \emph{sharp G\r arding inequality} (see e.g. \cite[Theorem 9.11]{e-z})says that if instead $A \in \Psi^k(X)$,   $\Psi_1 \in \Psi^0(X)$, $\lxir^{-k}\Real\sigma(A) \ge 0$ near $\WFh(\Psi_1)$, then
\begin{equation}\label{e:sharpg}
\Real \langle A \Psi_1u, \Psi_1 u \rangle \ge -C h \|\Psi_1u\|_{\Hh^{(k-1)/2}(X)} -\mathcal O(h^\infty)\|u\|_{\Hh^{-N}(X)}.
\end{equation}

\subsection{Quantization of canonical transformations}
  \label{s:prelim.canonical}

In this section, we discuss local quantization of symplectic
transformations.  The resulting semiclassical Fourier integral
operators will be needed to approximate the operator~$A$ from
Lemma~\ref{l:main} by finite rank operators; see
Lemma~\ref{l:main}(3) and  \S\ref{s:approximation1}.

The theory described below can be found in~\cite{ivana},
\cite[Chapter~6]{gui-st1}, \cite[Chapter~8]{gui-st2}, \cite[\S2.3]{svn},
or~\cite[Chapters~10--11]{e-z}. For the closely related microlocal
setting, see~\cite[Chapter~25]{ho4} or~\cite[Chapters~10--11]{gr-s}. We
follow in part~\cite[\S2.3]{zeeman}. Note that we will only need 
the relatively simple, local  part of the theory of Fourier integral
operators, as we quantize canonical transformations locally and we do
not use geometric invariance of the principal symbol. For a more complete discussion, see
for example~\cite[\S3]{d-g}.

Let $X_1,X_2$ be two manifolds of same dimension,
$U_j\subset T^*X_j$ two bounded open sets, and $\varkappa:U_1\to U_2$
a symplectomorphism. First, assume that
\begin{itemize}
  \item $x_j$ are systems of coordinates
on the projections $\pi_j(U_j)$ of $U_j$ onto $X_j$;
  \item $(x_j,\xi_j)$ are the corresponding coordinates
on $U_j$;
  \item there exists a generating function $S(x_1,\xi_2)\in C^\infty(U_S)$
for some open $U_S\subset \mathbb R^{2n}$
such that in coordinates $(x_1,\xi_1,x_2,\xi_2)$,
the graph of $\varkappa$ is given by
$$
x_2=\partial_{\xi_2}S(x_1,\xi_2),\
\xi_1=\partial_{x_1}S(x_1,\xi_2).
$$
\end{itemize}
Such coordinate systems and generating functions exist locally near
every point in the graph of $\varkappa$, see for example the paragraph
before the final remark of~\cite[Chapter~9]{gr-s}. (The authors of~\cite{gr-s}
consider the homogeneous case, but this does not make a difference here
except possibly at the points of the zero section of $T^*X_2$.
At these points, a different parametrization is possible and all the results
below still hold, but we do not present this parametrization since the
canonical transformations we use can be chosen to avoid the zero section.) An
operator $B:C^\infty(X_2)\to C_0^\infty(X_1)$ of the form
\begin{equation}
  \label{e:quantized-canonical}
(B f)(x_1)=(2\pi h)^{-n}\int_{\mathbb R^{2n}}  e^{i(S(x_1,\xi_2)
-x_2\cdot\xi_2)/h} b(x_1,\xi_2,x_2;h) f(x_2)\,dx_2
d\xi_2,
\end{equation}
where $b\in C_0^\infty(U_S\times \pi_2(U_2))$ is a classical symbol in $h$ (namely, it
is a smooth function of $h\geq 0$ up to $h=0$), is called
a local Fourier integral operator associated to $\varkappa$. Such an
operator is polynomially bounded and compactly microlocalized, in the sense
of \S\ref{s:prelim.basics}. 

In general, we call $B:C^\infty(X_2)\to C^\infty(X_1)$ a (compactly
microlocalized) Fourier integral operator associated to $\varkappa$,
if it can be represented as a finite sum of expressions of the
form~\eqref{e:quantized-canonical} with various choices of local
coordinate systems (and thus various generating functions) plus an
$\Resh$ remainder. Note that we use the convention that $B$ acts
$C^\infty(X_2)\to C^\infty(X_1)$, which is opposite to the more standard
convention that $B$ acts $C^\infty(X_1)\to C^\infty(X_2)$; in
the latter convention, we would say that $B$ quantizes
$\varkappa^{-1}$.

Here are some properties of (compactly microlocalized) Fourier integral operators:
\begin{enumerate}
  \item if $B$ is a Fourier integral operator associated to $\varkappa$, then $\WFh(B)$
is a compact subset of the graph of $\varkappa$; 
  \item if $B$ is associated to $\varkappa$, then the adjoint $B^*$ is associated
to $\varkappa^{-1}$;
  \item if $X_1=X_2=X$, and $\varkappa$ is the identity map on some
open bounded $U\subset T^*X$, then $B$ is associated to $\varkappa$
if and only if $B\in\Psic(X)$ and $\WFh(B)\subset U$;
  \item\label{i:composition} if $B$ is associated to $\varkappa$ and $B'$ is associated
to $\varkappa'$, then $BB'$ is associated to $\varkappa'\circ\varkappa$;
  \item\label{i:bdd} if $B$ is associated to $\varkappa$, then it has norm $\mathcal O(1)$
in the sense of \S\ref{s:prelim.basics}.
This property follows from the previous three, as $B^*B\in\Psic(X_2)$.
  \item\label{i:egorov} (Egorov's theorem) If $A_j\in\Psi^{k_j}(X_j)$ and
$\sigma(A_1)=\sigma(A_2)\circ\varkappa$ near the projection
of $\WFh(B)$ onto $T^*X_1$, then $A_1B=BA_2+\mathcal O(h)$. Here
$\mathcal O(h)$ is understood in the sense of~\eqref{e:b-bound}, as
both sides of the equation are compactly microlocalized.
\end{enumerate}
If $K_j\subset U_j$ are compact sets
such that $\varkappa(K_1)=K_2$, then we say that a pair of operators
$$
B:C^\infty(X_2)\to C^\infty(X_1),\
B':C^\infty(X_1)\to C^\infty(X_2)
$$
quantizes $\varkappa$ near $K_1\times K_2$, if
$B,B'$ are compactly microlocalized Fourier integral operators
associated to $\varkappa,\varkappa^{-1}$, respectively,
and
$$
BB'=1,\
B'B=1
$$
microlocally near $K_1$ and $K_2$, respectively.

Such a pair $(B,B')$ can  be found for any given $\varkappa$,
if we shrink $U_j$ sufficiently, as given by the following
construction of~\cite[Chapter~11]{e-z}.  First of all, we pass to
local coordinates to assume that $X_j=\mathbb R^n$.  Next,
by~\cite[Theorem~11.4]{e-z} (putting $\kappa=\varkappa^{-1},
\tilde\kappa_t=\varkappa_t^{-1},q_t=-z_t$), we can construct a smooth family of
symplectomorphisms $\varkappa_t,t\in [0,1]$ on $T^*\mathbb R^n$
such that
\begin{itemize}
  \item $\varkappa_t$ is equal to the identity outside of some fixed compact set;
  \item $\varkappa_0=1_{T^*\mathbb R^n}$ and $\varkappa_1$ extends $\varkappa$;
  \item there exists a family of real-valued functions $z_t\in C_0^\infty(T^*\mathbb R^n)$
such that for each $t\in [0,1]$ and $H_{z_t}$ the Hamiltonian vector field of $z_t$,
\begin{equation}
  \label{e:fio-deformation}
(\partial_t\varkappa_t)\circ\varkappa_t^{-1}=H_{z_t}.
\end{equation}
\end{itemize}
In other words, $\varkappa$ is a deformation of the identity along the Hamiltonian flow
of the time-dependent function $z_t$.

Let $Z_t$ be the Weyl quantization of $z_t$; this is a self-adjoint
operator on $L^2(\mathbb R^n)$. Consider the family of unitary
operators $B_t$ on $L^2(\mathbb R^n)$ solving the
equations~\cite[Theorem~10.1]{e-z}
\begin{equation}
  \label{e:fio-evolution}
h D_t B_t = B_t Z_t,\
B_0 = 1.
\end{equation}
By~\cite[Theorem~10.3]{e-z} (and using the composition
property~\eqref{i:composition} above to pass from small $t$ to $t=1$)
we see that, if $\Psi_j\in\Psic(X_j)$ with $\WFh(\Psi_j)\subset U_j$
and $\Psi_j=1$ microlocally near $K_j$, then
$(B,B')=(\Psi_1B_1\Psi_2,\Psi_2B_1^{-1}\Psi_1)$ quantizes $\varkappa$
near $K_1\times K_2$.

The difference between~\eqref{e:fio-evolution}
and~\cite[(10.2.1)]{e-z} is explained by the fact that we quantize a
transformation $T^*X_1\to T^*X_2$ as an operator $C^\infty(X_2)\to
C^\infty(X_1)$, while~\cite{e-z} quantizes it as an operator
$C^\infty(X_1)\to C^\infty(X_2)$; in the latter convention, 
$U(t)=B_t^*$ and $P(t)=-Z_t$.

\section{Asymptotically hyperbolic manifolds}
  \label{s:ah}

\subsection{Review of the construction of~\texorpdfstring{\cite{v2}}{[Va2]}}
\label{s:ah.vasy} 

Throughout this section, $|\Real z| \le C_0 h$ and $\Imag z  \ge -C_0h$.
Our notation will differ in several places from the one used in~\cite{v2}.
In particular, we use $(x,\xi)$ to denote coordinates on the whole $T^*X$,
$(\tilde x,\tilde y)$ for the product coordinates near the conformal
boundary of $M$, $\tilde\xi$ for the momentum corresponding to $\mu=\tilde x^2$,
and $\tilde\eta$ for the momentum corresponding to $\tilde y$. 

Let $(M,g)$ be an even asymptotically hyperbolic manifold;
we consider $\delta_0 > 0$ and a boundary defining function $\tilde x$ on $\overline M$ such that
$\{\tilde x<\delta_0^2\}\simeq [0,\delta_0^2)\times \partial \overline M$
and the metric $g$ has the form~\eqref{e:as-hyp}
for some metric $g_1$ depending smoothly on $\tilde x^2$. Consider the space
$\overline M_{\even}$, which is topologically $\overline M$, but with smooth structure at
the boundary changed so that
$$
\mu=\tilde x^2
$$
is a boundary defining function.
Now, we consider the modified Laplacian
\begin{equation}\label{e:p1def}
\begin{gathered}
P_1(z)=\mu^{-{1\over 2}-{n+1\over 4}}e^{i(z+1)\phi\over h}(h^2(\Delta_{g}-(n-1)^2/4)-(z+1)^2)
e^{-i(z+1)\phi\over h}\mu^{-{1\over 2}+{n+1\over 4}}.
\end{gathered}
\end{equation}
Here $\phi$ is a smooth real-valued function on $M$ such that
$$
e^{\phi}=\mu^{1/2}(1+\mu)^{-1/4}\text{ on }\{0<\mu<\delta_0\}.
$$
The function $\phi$ satisfies additional assumptions given in the proof of Lemma~\ref{l:im-part}.
This lemma, needed for the proof of the improved estimate in the upper half-plane~\eqref{e:improved-estimate},
is the only place where the factor $(1+\mu)^{-1/4}$ is needed; the rest of the analysis would work
if simply $e^\phi=\mu^{1/2}$ on all of $M$.

As computed in~\cite[(3.5)]{v2}, $P_1(z)$ has coefficients smooth up to the boundary
of $\overline M_{\even}$. For $\delta_0>0$ small enough,
this operator continues smoothly to
$$
X_{-\delta_0}=\{\mu>-\delta_0\},
$$
by which we mean $\overline M_{\even} \cup \{|\mu|<\delta_0\}$, where $ \{|\mu|<\delta_0\}$  is the double space of $\{ 0 \le \mu<\delta_0\}$.
(See~\cite[\S3.1]{v2} for more details.)

%
%
\begin{lemm}\label{l:vasy-1}
There exists a manifold $X$ without boundary and a family of operators
$P(z)$ so that:
\begin{enumerate}
  \item $P(z)\in\Psi^2(X)$ depends holomorphically on $z$ and
$p=\sigma(P(0))$ is real valued;
  \item $X_{-\delta_0}$ embeds into $X$ and $\mu$ continues to $X$ so that $X_{-\delta_0}=\{\mu>-\delta_0\}$;
  \item the restriction of $P(z)$ to $X_{-\delta_0}$
is equal to $P_1(z)$;
  \item the characteristic set $\{\lxir^{-2}p=0\}$
is the disjoint union of two closed sets $\Sigma_+$ and $\Sigma_-$,
with $\Sigma_\pm=\{\lxir^{-2}p=0\}\cap \{\pm\tilde \xi>0\}$ near $\{\mu=0\}\cap S^*X$;
  \item $\Sigma_+\cap S^*X\subset\{\mu\leq 0\}$ and $\Sigma_-\subset\{\mu\leq 0\}$;
  \item $P(z) - P(0) = (hL + a_0+a_1h)z  + a_2 z^2$, with the vector field
  $L$ and the functions $a_0,a_1,a_2$ independent of $z$ and $h$.
\end{enumerate}
\end{lemm}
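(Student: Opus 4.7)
The plan is to follow Vasy's construction in \cite[\S 3.1]{v2}. The key computation is the principal symbol of $P_1(z)$ in local coordinates $(\mu,\tilde y,\tilde\xi,\tilde\eta)$ near $\partial\overline M_{\even}$. First I would expand the dual metric $g^{-1}$ using \eqref{e:as-hyp} together with $\mu=\tilde x^2$; the evenness assumption on $g_1$ is precisely what ensures the resulting coefficients are smooth functions of $\mu$ rather than of $\tilde x$. The weight $\mu^{\pm 1/2\pm(n+1)/4}$ and the exponential factor $e^{\pm i(z+1)\phi/h}$ in \eqref{e:p1def} are then chosen to cancel the would-be singularities at $\mu=0$, so that a direct computation gives a principal symbol of the form
\[
p_1(0) = 4\tilde\xi(\mu\tilde\xi-1) - |\tilde\eta|^2_{G_1} + \mu\cdot(\text{smooth}),
\]
with coefficients smooth up to and across $\{\mu=0\}$, where $G_1$ is the dual of $g_1|_{\mu=0}$. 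This is exactly the extension step that produces $P_1(z)$ as a smooth operator on $X_{-\delta_0}$.

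Second, to produce the closed manifold $X$, I would cap off the region $\{\mu<-\delta_0/2\}$ inside $X_{-\delta_0}$ by gluing in the complement of an open ball in $\mathbb S^n$, yielding a closed compact $X$ into which $X_{-\delta_0}$ embeds openly, and extending $\mu$ smoothly so that $X_{-\delta_0}=\{\mu>-\delta_0\}$. To extend $P_1(z)$, I would take any real elliptic self-adjoint operator $h^2\Delta_{g_X}\in\Psi^2(X)$ for an auxiliary metric $g_X$ on $X$ and patch it with $P_1(z)$ using a smooth partition of unity supported in the collar $\{-\delta_0<\mu<-\delta_0/2\}$, taking the extension independent of $z$. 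Property (1) is then immediate (ellipticity and reality in the cap, holomorphy from \eqref{e:p1def}); (2) and (3) hold by construction. For (6), one reads directly from \eqref{e:p1def} that $P_1(z)$ depends polynomially on $z$ of degree two, with the $z^1$ term being $hL+a_0+a_1 h$ (the first-order differential piece $hL$ arising from differentiating one factor of $e^{-i(z+1)\phi/h}$, the zeroth-order $a_0$ from the $(z+1)^2$ term, and the subprincipal correction $a_1 h$ from rearrangements), and the $z^2$ coefficient is the multiplication operator $a_2$; since the extension in the cap is $z$-independent, this polynomial structure is preserved.

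For (4) and (5), I would analyze $\{\lxir^{-2}p=0\}$ using the explicit formula for $p_1(0)$. Passing to the fiber-radial compactification, the leading part at $S^*X$ near $\{\mu=0\}$ is $4\mu\tilde\xi^2-|\tilde\eta|^2_{G_1}$, which is nonnegative on the characteristic set and therefore forces $\mu\ge 0$ together with $\tilde\eta$ proportional to $\mu^{1/2}\tilde\xi$; carrying along the subleading $-4\tilde\xi$ term and renormalizing shows $\Sigma\cap S^*X\subset\{\mu\le 0\}$ and that $\tilde\xi\ne 0$ there, giving the disjoint local splitting $\Sigma_\pm=\Sigma\cap\{\pm\tilde\xi>0\}$ near $\{\mu=0\}$. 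Globally, this splitting propagates by continuity: the cap extension is elliptic so $\Sigma$ does not enter it, and in $\{\mu<0\}$ the $H_p$-flow cannot cross $\{\tilde\xi=0\}\cap\Sigma$ since the two components are already separated at $\mu=0$. The decomposition yields $\Sigma_+$ which contains the physical trapped set inside $\{\mu>0\}$ (but meets $S^*X$ only in $\{\mu\le 0\}$) and $\Sigma_-$ which lies entirely in the unphysical extension.

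The main obstacle is verifying the globally disjoint decomposition $\Sigma=\Sigma_+\sqcup\Sigma_-$ with the asymmetric containment in (5); this reduces to computing $H_p\mu$ on $\Sigma\cap S^*X\cap\{\mu=0\}$ and showing its sign is determined by $\tilde\xi$, which identifies $\{\mu=0\}\cap S^*X$ as a radial source in $\Sigma_+$ and sink in $\Sigma_-$, forcing each component to flow away from $\{\mu=0\}$ into the correct half. Everything else is a matter of bookkeeping built on the principal symbol computation and the cap-and-glue extension.
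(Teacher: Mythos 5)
Your reconstruction departs from the paper's in two places: you cap $\{\mu<-\delta_0/2\}$ by the complement of a ball rather than taking $X$ to be the double space of $X_{-\delta_0}$ (the paper simply cites~\cite[\S 3.5]{v2}, where the double is used), and you attempt a direct verification of (4)--(5) rather than invoking~\cite[\S 3.4]{v2}. Neither substitution is wrong in principle, but the execution has a genuine error and a genuine gap.

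The principal symbol is wrong. Near $\{\mu=0\}$ the correct expression, recorded in~\eqref{e:p-near-0}, is
$p=4\mu\tilde\xi^2-4(1+\mathcal O(\mu))\tilde\xi-1+\mathcal O(\mu)+g_1^{-1}(\tilde\eta,\tilde\eta)$:
the angular term enters with a plus sign and there is a constant $-1$. You wrote
$4\tilde\xi(\mu\tilde\xi-1)-|\tilde\eta|^2_{G_1}+\mu\cdot(\text{smooth})$,
dropping the $-1$ and flipping the sign on the angular momentum. This is fatal to your derivation of (5). With your sign, the leading fiber-infinity piece $4\mu\check\xi^2-|\check\eta|^2_{G_1}$ (with $\check\xi=\tilde\xi/\lxir$, $\check\eta=\tilde\eta/\lxir$) vanishes only when $\mu\check\xi^2\geq 0$, i.e.\ $\mu\geq 0$ --- which you in fact write in mid-paragraph, and which contradicts the $\Sigma\cap S^*X\subset\{\mu\le 0\}$ you then assert. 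With the correct $+g_1^{-1}(\check\eta,\check\eta)$, the condition $4\mu\check\xi^2+g_1^{-1}(\check\eta,\check\eta)=0$ together with $\check\xi^2+|\check\eta|^2=1$ on $S^*X$ immediately gives $\mu\leq 0$ and $\check\xi\neq 0$, which is the argument you want.

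The global disjointness (4) and the extension step also need more than you supply. Patching $P_1(z)$ with an arbitrary elliptic self-adjoint operator in the cap does not by itself guarantee that the characteristic set stays split into two closed components, nor that $\Sigma_-$ remains inside $\{\mu\le 0\}$: the interpolation in the collar $\{-\delta_0<\mu<-\delta_0/2\}$ can, without a specific choice of the patching, create new characteristic points or merge the two local pieces. The remark ``the $H_p$-flow cannot cross $\{\tilde\xi=0\}\cap\Sigma$'' is also not a valid proof of disjointness: whether $\Sigma$ decomposes into two closed sets is a statement about the zero set of $p$, not about the dynamics, and it is precisely what must be established before one can read off anything from the flow. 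This is why the paper defers to~\cite[\S\S3.4--3.5]{v2}, where the extended operator is engineered so that both the splitting $\Sigma=\Sigma_+\sqcup\Sigma_-$ and the containment $\Sigma_-\subset\{\mu\le 0\}$ survive the extension (indeed Vasy arranges for $\Sigma_-$ and the part of $\Sigma_+$ over $\{\mu<0\}$ to flow out into the region where $Q$ will be elliptic, which the later Lemma~\ref{l:vasy-3} depends on). Your discussion of (6) is essentially correct once (3) is granted, provided the extension is taken $z$-independent outside $X_{-\delta_0}$, which the double-space construction also arranges.
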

\begin{proof}
The fact that the characteristic set of $p$ on $X_{-\delta_0}$ is
the disjoint union of two sets $\Sigma'_+$ and $\Sigma'_-$, with
$\Sigma'_\pm$ satisfying (4) and (5), is proven in~\cite[\S3.4]{v2},
with $\Sigma_\pm$ denoted by $\Sigma_{\hbar,\pm}$ there.
The manifold $X$ is taken to be the double space of $X_{-\delta_0}$; 
the extension of $P_1(z)$ and $\Sigma'_\pm$ to $X_{-\delta_0}$
is constructed in~\cite[\S3.5]{v2}. The formula (6) follows from~\eqref{e:p1def}.
\end{proof}
%
%
Consider the product coordinates $(\mu,\tilde y)$ near $\{\mu=0\}$,
the corresponding momenta $(\tilde\xi,\tilde\eta)$, and define
\begin{equation}
  \label{e:l-pm}
L_\pm=\{\mu=0,\
\tilde\xi=\pm\infty,\
\tilde\eta=0\}\subset \Sigma_\pm\cap S^*X;
\end{equation}
they can be viewed as images of the conical sets $\{\mu=0,\ \pm\tilde\xi>0,\ \tilde\eta=0\}$
in $S^*M$.
%
%
\begin{lemm}\label{l:vasy-2}
The `event horizon' $\{\mu=0\}$ has the following properties:
\begin{enumerate}
  \item $L_\pm$ consists of fixed points for $\lxir^{-1}H_p$
(also called radial points), with $L_+$ a source and $L_-$ a sink;
  \item $\Sigma_+\cap S^*X\cap \{\mu=0\}=L_+$ and $\Sigma_-\cap \{\mu=0\}=L_-$;
  \item for $\delta>0$ small enough,
$\pm\lxir^{-1}H_p\mu<0$ on $\Sigma_\pm\cap\{-\delta\leq\mu\leq 0\}\setminus L_\pm$;
\end{enumerate}
\end{lemm}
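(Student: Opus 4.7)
The proof is a direct computation from the explicit form of the principal symbol $p=\sigma(P(0))$ near $\{\mu=0\}$ worked out by Vasy. I would first recall, from \eqref{e:p1def} and \cite[\S 3.4]{v2}, the formula for $p$ in the product coordinates $(\mu,\tilde y,\tilde\xi,\tilde\eta)$: after the conjugation, the weighting by $\mu^{-1/2\pm(n+1)/4}$, and the smoothing $\mu=\tilde x^2$, $p$ extends smoothly across $\mu=0$ as a polynomial of degree two in $(\tilde\xi,\tilde\eta)$ whose structure at $\mu=0$ is designed so that the only zeros of $\lxir^{-2}p$ on $\{\mu=0\}$ occur at the two fiber-infinity sets $L_\pm$.

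Given this formula, parts (2) and (3) reduce to inspection. For (2), one sets $\mu=0$ in $\lxir^{-2}p=0$; the resulting equation admits no finite solutions on the component corresponding to $\Sigma_+$, while at fiber infinity it forces $\tilde\eta/\lxir\to 0$ and $\tilde\xi/\lxir\to\pm 1$ according to the component. Combined with the labelling of $\Sigma_\pm$ in Lemma~\ref{l:vasy-1}(4), this produces $\Sigma_+\cap S^*X\cap\{\mu=0\}=L_+$ and shows that $\Sigma_-\cap\{\mu=0\}$ consists only of fiber-infinity points, reducing to $L_-$. For (3), the derivative $H_p\mu=\partial_{\tilde\xi}p$ is a polynomial of degree one in $\tilde\xi$ whose value at $\mu=0$ is a nonzero constant on the complement of $L_\pm$, with opposite signs on the two components of $\Sigma_\pm$. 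The inequality $\pm\lxir^{-1}H_p\mu<0$ on $\Sigma_\pm\cap\{-\delta\le\mu\le 0\}\setminus L_\pm$ then follows by continuity for $\delta$ sufficiently small.

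For part (1), I would fiber-compactify near $L_\pm$ via $\rho=1/|\tilde\xi|$ and $\hat\eta=\tilde\eta/\tilde\xi$, rewrite the rescaled Hamilton field $\lxir^{-1}H_p$ as a smooth vector field in the coordinates $(\mu,\tilde y,\rho,\hat\eta)$, and verify that it vanishes at $L_\pm=\{\mu=0,\rho=0,\hat\eta=0\}$ (the radial-point condition), and that its linearization there has eigenvalues all of one sign, positive at $L_+$ and negative at $L_-$. The main obstacle is the bookkeeping for the $\lxir$-rescaling: since $p$ has degree two in $\xi$, the smooth extension to fiber infinity is $\lxir^{-1}H_p$ rather than $H_p$ itself or $\lxir^{-2}H_p$, and one must be careful to collect all contributions to the linearization at $L_\pm$ arising both from differentiating $p$ and from differentiating $\lxir^{-1}$. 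Once this is done, the source/sink classification falls out from the signs of the relevant coefficients of $p$ along the conormal directions to $L_\pm$, and the computation essentially repeats that of \cite[\S 3.4]{v2}.
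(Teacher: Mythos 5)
Your overall strategy---read off the conclusions from the explicit formula $p=4\mu\tilde\xi^2-4(1+\mathcal O(\mu))\tilde\xi-1+\mathcal O(\mu)+g_1^{-1}(\tilde\eta,\tilde\eta)$ near $\{\mu=0\}$ and the coordinates $(\mu,\tilde y,\tilde\rho,\hat\eta)$ at fiber infinity---is the same as the paper's, and your treatment of part (1) via linearization in those coordinates is essentially what the paper does (via \cite[\S 3.4]{v2} and the estimate $\pm|\tilde\xi|^{-1}H_p\rho_1\ge 4\rho_1-\mathcal O(\rho_1^{3/2})$).

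There are two problems, however. In part (2), you have the two components switched: at $\mu=0$ the characteristic equation $g_1^{-1}(\tilde\eta,\tilde\eta)=4\tilde\xi+1$ has \emph{many} finite solutions, all with $\tilde\xi\ge -1/4>-1/2$, hence all in $\Sigma_+$; it is $\Sigma_-$ (where $\tilde\xi<-1/2$) that has no finite points at $\mu=0$. This is the content of the paper's citation of the inclusion $\Sigma_\pm\cap\{\mu=0\}\subset\{\pm(\tilde\xi+1/2)>0\}$, which you cannot bypass.

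More seriously, the continuity argument you propose for part (3) does not work. First, the function that must be signed is $\lxir^{-1}H_p\mu$, not $H_p\mu$, and while $H_p\mu=8\mu\tilde\xi-4(1+\mathcal O(\mu))$ equals $-4\neq 0$ at $\mu=0$, the factor $\lxir^{-1}$ vanishes on $S^*X$; consequently $\lxir^{-1}H_p\mu$ vanishes identically on $\{\mu=0\}\cap S^*X$, so no continuity argument can produce a definite sign at nearby fiber-infinity points with $\mu<0$. The paper handles these points by the separate computation $\lxir^{-1}H_p\mu=8\mu\check\xi$ on $S^*X$, then uses $\check\xi>0$ on $\Sigma_+$, $\check\xi<0$ on $\Sigma_-$. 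Second, your claim that the signs are ``opposite on the two components'' already at $\mu=0$ is false: $\lxir^{-1}H_p\mu=-4\lxir^{-1}\le 0$ on all of $\{\mu=0\}$, and the sign flip required on $\Sigma_-$ only emerges for $\mu<0$. Since $\Sigma_-\cap\{\mu=0\}\setminus L_-=\emptyset$, there is no $\mu=0$ anchor on $\Sigma_-$ from which to continue. The paper instead works directly at $\mu<0$ on $\Sigma_-$: it uses the characteristic equation to eliminate $g_1^{-1}(\check\eta,\check\eta)$ from $\lxir^{-1}H_p\mu$, obtaining $\lxir^{-1}H_p\mu\ge 2\lxir^{-1}(2+\tilde\xi^{-1})(1+\mathcal O(\mu))$, and then uses $\tilde\xi<-1/2$ together with the fact that $\tilde\xi$ attains a maximum on $\Sigma_-\setminus S^*X$. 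These two case analyses (fiber infinity vs.\ finite points, $\Sigma_+$ vs.\ $\Sigma_-$) are the substance of the proof of (3) and cannot be replaced by a compactness-plus-continuity argument.

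Your part (1) is fine as sketched. Parts (2) and (3) need the corrections above.
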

From this Lemma it follows that bicharacteristics of $H_p$ can cross $\{\mu=0\}$ only from $\{\mu >0\}$ to $\{\mu <0\}$ and never in the other direction, hence the name `event horizon' as in the theory of black holes \cite{v1}.
\begin{proof}
(1) This is proved in~\cite[\S3.4]{v2}; see~\eqref{e:rho1-1}
below for a quantification of the source/sink property.

(2) Near $\{\mu=0\}$, we have~\cite[(3.23)]{v2}
\begin{equation}\label{e:p-near-0}
p=4\mu\tilde\xi^2-4(1+\mathcal O(\mu))\tilde\xi-1+\mathcal O(\mu)+g_1^{-1}(\tilde\eta,\tilde\eta).
\end{equation}
Here $\mathcal O(\mu)$ denotes a smooth function of $\mu$ vanishing at $\mu=0$.
Take $\check\xi=\lxir^{-1}\tilde\xi$,
$\check\eta=\lxir^{-1}\tilde\eta$; then at $\{\mu=0\}$,
$$
\lxir^{-2}p=-4\lxir^{-1}\check\xi
-\lxir^{-2}+g_1^{-1}(\check\eta,\check\eta),\\
$$
This shows that $\Sigma_\pm\cap \{\mu=0\}\cap S^*X=L_\pm$;
to see that $\Sigma_-\cap\{\mu=0\}\subset S^*X$, we can use
that $\Sigma_\pm\cap\{\mu=0\}\subset\{\pm(\tilde\xi+1/2)>0\}$,
as shown in the discussion following~\cite[(3.27)]{v2},
together with~\eqref{e:p-near-0}.

(3) Take $\check\xi,\check\eta$ as in part~(2);
using~\eqref{e:p-near-0}, we get
\begin{equation}\label{e:hp-near-0}
\lxir^{-1}H_p\mu=-4\lxir^{-1}(1+\mathcal O(\mu))
+8\mu\check\xi.
\end{equation}
On $\Sigma_+$, as in part (2), $\tilde\xi>-1/2$ and thus $\check\xi\geq-\lxir^{-1}/2$;
therefore, for $\mu\leq 0$,
$$
\lxir^{-1}H_p\mu\leq - 4\lxir^{-1}(1+\mathcal O(\mu))
- 4\mu\lxir^{-1}
$$
This is negative for small $\mu$ unless $\lxir^{-1}=0$;
in the latter case, $\lxir^{-1}H_p\mu=8\mu\check\xi$
and $\check\xi>0$, $\mu < 0$ since we are on $\Sigma_+ \cap S^*X \setminus L_+$.
On $\Sigma_-\setminus S^*M$ we use \eqref{e:p-near-0} to eliminate the last term from \eqref{e:hp-near-0}, getting
\[
\lxir^{-1}H_p\mu = 2\lxir^{-1}(2 + \tilde\xi^{-1})(1+\mathcal O(\mu)) - 2g_1^{-1}(\check\eta,\check\eta)\check\xi^{-1}
\ge 2\lxir^{-1}(2 + \tilde\xi^{-1})(1+\mathcal O(\mu)) .
\]
Since $\tilde\xi<-1/2$ on $\Sigma_-$,
and since $\tilde\xi$ must attain a maximum there, this expression is positive for small $\mu$.
Finally, on $\Sigma_-\cap S^*M\setminus L_-$ we have $\lxir^{-1}H_p\mu=8\mu\check\xi$
and $\mu<0$, $\check\xi<0$.
\end{proof}
%
%
We also need to compute the sign of the imaginary part of $P(z)$ when
$z$ moves away from the real line. We will use this to obtain
improved estimates~\eqref{e:improved-estimate} in the physical half-plane.
%
%
\begin{lemm}\label{l:im-part}
The operator $\partial_z P(z)$ lies in $\Psi^1(X)$
and for an appropriate choice of the function $\phi$ from~\eqref{e:p1def}
and $\delta>0$ small enough, we have 
\begin{equation}\label{e:im-part}
\mp \Real(\lxir^{-1} \sigma(\partial_{z}P(0)))>0
\text{ near }\Sigma_\pm\cap\{\mu\geq-\delta\}.
\end{equation}
\end{lemm}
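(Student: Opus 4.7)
The plan is to compute $\sigma(\partial_z P(0))$ explicitly from~\eqref{e:p1def}, verify the sign on $\Sigma_\pm\cap\{\mu=0\}$ and at the radial points $L_\pm$ by direct evaluation, extend to the interior of $M$ via a Cauchy--Schwarz argument that exploits the specific form of $\phi$, and conclude by compactness and continuity.

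That $\partial_z P(z)\in\Psi^1(X)$ is immediate from Lemma~\ref{l:vasy-1}(6), which gives $\partial_z P(z)=hL+a_0+ha_1+2a_2z$ with $hL\in\Psi^1(X)$ and the rest in $\Psi^0(X)\subset\Psi^1(X)$. For the principal symbol, conjugation by $F_z=e^{-i(z+1)\phi/h}$ shifts momentum by $-(z+1)d\phi$, so $\sigma(F_z^{-1}[h^2\Delta_g-(z+1)^2]F_z)(x,\xi)=g^{-1}(\xi-(z+1)d\phi,\xi-(z+1)d\phi)-(z+1)^2$. Differentiating at $z=0$ and multiplying by the $z$-independent factor $\mu^{-1}$ arising from~\eqref{e:p1def} yields the real-valued symbol
\[
\sigma(\partial_z P(0))=-2\mu^{-1}\bigl(g^{-1}(d\phi,\xi-d\phi)+1\bigr),
\]
so the $\Real$ in the statement is vacuous at the principal level. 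On the collar $\{0\leq\mu<\delta_0\}$, the dual metric is block-diagonal in $(d\mu,d\tilde y)$ with $g^{\mu\mu}=4\mu^2$, and $\partial_\mu\phi=\tfrac{2+\mu}{4\mu(1+\mu)}$, so substitution gives
\[
\sigma(\partial_z P(0))=-\tfrac{2(2+\mu)}{1+\mu}\Bigl(\tilde\xi+\tfrac{4+3\mu}{4(1+\mu)(2+\mu)}\Bigr),\qquad |d\phi|_g=\tfrac{2+\mu}{2(1+\mu)}.
\]
The prefactor is strictly negative; at $\mu=0$ the bracket equals $\tilde\xi+1/2$, which by the characterization $\Sigma_\pm\cap\{\mu=0\}\subset\{\pm(\tilde\xi+1/2)>0\}$ used in the proof of Lemma~\ref{l:vasy-2}(2) has sign $\pm$ strictly on $\Sigma_\pm\cap\{\mu=0\}$. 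Hence $\mp\sigma(\partial_z P(0))>0$ strictly there. At the radial points $L_\pm$, where $\check\xi=\lxir^{-1}\tilde\xi\to\pm 1$, one finds $\lxir^{-1}\sigma(\partial_z P(0))\to\mp 4$, again with the right sign.

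For the interior $\{\mu>0\}$, the plan is to use that on $\Sigma_\pm$ the mass-shell condition $g^{-1}(\xi-d\phi,\xi-d\phi)=1$ and Cauchy--Schwarz give $g^{-1}(d\phi,\xi-d\phi)\geq-|d\phi|_g$; if $|d\phi|_g<1$ then $\sigma(\partial_z P(0))<0$. The collar computation shows $|d\phi|_g<1$ for $\mu>0$, which is precisely the purpose of the factor $(1+\mu)^{-1/4}$ in $e^\phi=\mu^{1/2}(1+\mu)^{-1/4}$. The ``appropriate choice of $\phi$'' amounts to extending $\phi$ smoothly to $M\cap\{\mu\geq\delta_0\}$ while preserving the strict condition $|d\phi|_g<1$ throughout $M$; this is an open strict condition and is straightforwardly arranged by taking $\phi$ nearly constant in the deep interior. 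Since $\Sigma_-\subset\{\mu\leq 0\}$ by Lemma~\ref{l:vasy-1}(5), no additional work is needed on $\Sigma_-$ for $\mu>0$. Finally, $\Sigma_\pm\cap\{\mu\geq-\delta\}\subset\overline T^*X$ is compact and $\sigma(\partial_z P(0))$ is continuous, so the strict sign on $\Sigma_\pm\cap\{\mu\geq 0\}$ persists on $\Sigma_\pm\cap\{\mu\geq-\delta\}$ for $\delta>0$ sufficiently small, and then on an open neighborhood of this compact set.

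The main obstacle is the interior sign: without the $(1+\mu)^{-1/4}$ modification one would have $|d\phi|_g\equiv 1$, so the Cauchy--Schwarz bound would not be strict and the sign on $\Sigma_+\cap\{\mu>0\}$ would be lost. The factor is inserted precisely to break this equality favorably for $\mu>0$ on the collar, and the extension to the deep interior is sufficiently flexible to preserve the strict inequality; the remaining sign verification at $\mu=0$, at $L_\pm$, and near $\Sigma_-$ for $\mu\leq 0$ is a direct computation followed by a compactness/continuity argument.
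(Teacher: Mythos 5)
Your proposal is correct and takes essentially the same approach as the paper: compute $\sigma(\partial_z P(0))=-2\mu^{-1}\bigl((\xi,d\phi)_{g^{-1}}+1-|d\phi|^2_{g^{-1}}\bigr)$, verify the sign in the collar $\{|\mu|\le\delta\}$ from the fact that $\Sigma_\pm\subset\{\pm(\tilde\xi+1/2)>0\}$ there, and use $|d\phi|_{g^{-1}}<1$ (guaranteed by the $(1+\mu)^{-1/4}$ factor) to close the case $\mu>0$ on $\Sigma_+$. The only differences are cosmetic: you redo the collar computation explicitly instead of citing the formulas in Vasy's paper, and for $\mu>0$ you bound $(\xi,d\phi)_{g^{-1}}$ by Cauchy--Schwarz where the paper substitutes the mass-shell identity $2(\xi,d\phi)_{g^{-1}}=|\xi|^2_{g^{-1}}+|d\phi|^2_{g^{-1}}-1$ to write the symbol as $-\mu^{-1}(|\xi|^2_{g^{-1}}+1-|d\phi|^2_{g^{-1}})$, which makes the nonnegativity of the first term manifest.
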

\begin{proof}
It follows from part~(6) of Lemma~\ref{l:vasy-1} that $\partial_zP(z)\in\Psi^1(X)$.
Moreover, the principal symbol of $\partial_zP(z)$ is equal to the derivative in $z$
of the principal symbol of $P(z)$.

For $\delta$ small enough, we can use~\cite[(3.6)]{v2} to
write the principal symbol of $P(z)$ in $\{|\mu|\leq\delta\}$ as
$$
4\mu\tilde\xi^2-4(1+\mathcal O(\mu))(1+z)\tilde\xi-(1+z)^2(1+\mathcal O(\mu))+g_1^{-1}(\tilde\eta,\tilde\eta).
$$
Therefore, the principal symbol of $\partial_z P(z)$ at $z=0$ is
$$
-4(1+\mathcal O(\mu))\tilde\xi-2+\mathcal O(\mu).
$$
Since $\Sigma_\pm\cap\{|\mu|\leq \delta\}\subset \{\pm (\tilde\xi+1/2)>0\}$, we obtain~\eqref{e:im-part}
for $|\mu|\leq\delta$.

It remains to consider the region $\{\mu>\delta\}$. Choose the function $\phi$ so that
$|d\phi|_{g^{-1}}<1$, this is possible by the discussion following~\cite[(3.13)]{v2}.
By~\cite[(3.11)]{v2}, we have
$$
\sigma(\partial_z P(0))=-2\mu^{-1}((\xi,d\phi)_{g^{-1}}+1-|d\phi|^2_{g^{-1}}).
$$
Since $(\xi-d\phi,\xi-d\phi)_{g^{-1}}=1$ on $\Sigma_+$, this becomes
\[
-\mu^{-1}(|\xi|^2_{g^{-1}}+1-|d\phi|^2_{g^{-1}})<0.\hfil\qedhere
\]
\end{proof}
%
%
To relate $p$ to the principal symbol $p_0$ of $h^2\Delta_{g}-1$,
consider the map $\iota: T^*M\to \overline T^*X$
given by
\begin{equation}\label{e:iotadef}
\iota(x,\xi)=(x,\xi
+d\phi(x)),\
x\in M,\
\xi\in T^*_x M;
\end{equation}
then
\begin{equation}\label{e:p-p0}
p(\iota(x,\xi))=\mu(x)^{-1}p_0(x,\xi).
\end{equation}
In particular, the images of flow lines of $H_{p_0}$ on $p_0^{-1}(0)$ under $\iota$
are reparametrized flow lines of $H_p$ on $p^{-1}(0)$; the reparametrization
factor is bounded as long as we are away from $\{\mu\leq 0\}$.

To state our next lemma, which collects some global properties of the flow of $H_p$, we need the following
notions of sets trapped in one time direction on $T^*M$:
\begin{equation}\label{e:gammadef}
\widetilde \Gamma_\pm = \big\{\rho \in T^*M \setminus 0\mid \{\exp(tH_{p_0})\rho \mid \mp t \ge 0\} \textrm{ is bounded}\big\},\
\Gamma_\pm = \widetilde\Gamma_\pm \cap p_0^{-1}(0).
\end{equation}
The sets $\widetilde\Gamma_+$ and $\widetilde\Gamma_-$ are respectively the \textit{forward} and \textit{backward trapped sets}, and $\widetilde K = \widetilde \Gamma_+ \cap \widetilde \Gamma_-$, $K = \Gamma_+ \cap \Gamma_-$.
%
%
\begin{lemm}\label{l:vasy-3}
If $\delta>0$ is small enough and
$\gamma(t)$ is a flow line of $\lxir^{-1}H_p$
on $\{\lxir^{-2}p=0\}$ with $\gamma(0)\in\{\mu>-\delta\}$, then:
\begin{enumerate}
  \item if $\gamma(0)\in\Sigma_+\setminus (L_+\cup\iota(\Gamma_-))$, then
there exists $T>0$ such that $\gamma(T)\in\{\mu\leq -\delta\}$;
  \item if $\gamma(0)\in\Sigma_+$, then either $\gamma(-T)\in\iota(\Gamma_+)$ for $T>0$ large enough or
  $\gamma(t)$ converges to $L_+$ as $t\to -\infty$;
  \item if $\gamma(0)\in\Sigma_-\setminus L_-$, then there exists
$T>0$ such that $\gamma(-T)\in\{\mu\leq -\delta\}$;
  \item if $\gamma(0)\in\Sigma_-$, then $\gamma(t)$ converges to $L_-$
as $t\to +\infty$.
\end{enumerate}
See Figure~\ref{f:vasy} in the introduction for a picture of the global dynamics of the flow.
\end{lemm}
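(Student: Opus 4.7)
The plan is to combine three ingredients: (i) the monotonicity of $\mu$ on $\Sigma_\pm \cap \{-\delta \leq \mu \leq 0\}$ from Lemma~\ref{l:vasy-2}(3), (ii) the radial-source/sink structure at $L_\pm$ from Lemma~\ref{l:vasy-2}(1), and (iii) the identification in \eqref{e:p-p0} of the $H_p$-flow on $\{\mu>0\}\cap p^{-1}(0)$ with a $\mu^{-1}$-reparametrization of the geodesic flow $H_{p_0}$ under $\iota$. Every part is then an $\omega$- or $\alpha$-limit argument on a compact piece of $\overline T^*X$, supplemented when necessary by the escape rate $\mu\sim e^{-2d}$ built into the asymptotically hyperbolic geometry~\eqref{e:as-hyp}.

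For part~(1) I split by the sign of $\mu(\gamma(0))$. If $\mu(\gamma(0))\le 0$, then $\gamma(0)\neq L_+$ by hypothesis; were $\gamma$ to stay in $\Sigma_+\cap\{-\delta\le\mu\le 0\}$ for all $t\ge 0$, strict monotonicity $\lxir^{-1}H_p\mu<0$ off $L_+$ would force the $\omega$-limit into $\{\lxir^{-1}H_p\mu=0\}\cap\Sigma_+\cap\{-\delta\le\mu\le 0\}=L_+$, contradicting the fact that forward orbits cannot accumulate on the source $L_+$. Hence $\gamma$ exits through $\mu=-\delta$ in finite time. If $\mu(\gamma(0))>0$, write $\gamma(0)=\iota(\rho_0)$ with $\rho_0\in p_0^{-1}(0)$; since $\rho_0\notin\Gamma_-$, the forward $H_{p_0}$-orbit leaves every compact of $T^*M$, and properness of $p_0^{-1}(0)\to M$ forces $\mu\to 0$ at the hyperbolic rate $\mu\sim e^{-2d}$. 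Because $H_p=\mu^{-1}H_{p_0}$ on the characteristic set, $ds=\mu\,dt$ is integrable and the $H_p$-orbit reaches $\{\mu=0\}$ in finite time. The source property rules out hitting $L_+$ in finite time, so by Lemma~\ref{l:vasy-2}(2) the crossing occurs at an interior (non-fiber-infinity) point of $\Sigma_+\cap\{\mu=0\}$, where $\lxir^{-1}H_p\mu<0$ drives $\gamma$ into $\{\mu<0\}$, reducing to the previous sub-case.

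Part~(2) dualizes part~(1) via the dichotomy on $\rho_0=\iota^{-1}(\gamma(0))$: if $\rho_0\in\Gamma_+$, flow-invariance gives $\gamma(-T)\in\iota(\Gamma_+)$ for every $T\ge 0$, while if $\rho_0\notin\Gamma_+$ the backward $H_{p_0}$-orbit is unbounded and $\mu\to 0$ along it. The key observation is that the sign of $\lxir^{-1}H_p\mu$ in Lemma~\ref{l:vasy-2}(3) forbids any backward crossing of $\{\mu=0\}$ at a non-radial point of $\Sigma_+$ (such a crossing would have $\mu$ increasing backward, opposite to $\lxir^{-1}H_p\mu<0$), so the backward orbit stays in $\{\mu>0\}$ and the $\alpha$-limit lies in $\Sigma_+\cap\{\mu\ge 0\}$. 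Every candidate other than $L_+$ is ruled out: interior points of $\Sigma_+\cap\{\mu=0\}$ are not fixed points of $H_p$ by Lemma~\ref{l:vasy-2}(3), and points with $\mu>0$ would be fixed points of $H_{p_0}$ on $p_0^{-1}(0)$, which do not exist. So the $\alpha$-limit equals $L_+$ and $\gamma(t)\to L_+$. The remaining case $\gamma(0)\in\Sigma_+\cap\{\mu\le 0\}\setminus L_+$ is handled by running $\mu$ backward: either the backward orbit enters $\{\mu>0\}$ in finite time, and the above applies after a time shift, or it stays in $\{-\delta\le\mu\le 0\}$ and LaSalle again gives $\gamma(t)\to L_+$.

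Parts~(3) and~(4) are simpler because $\Sigma_-\subset\{\mu\le 0\}$ by Lemma~\ref{l:vasy-1}(5) and $\Sigma_-\cap\{\mu=0\}=L_-$ by Lemma~\ref{l:vasy-2}(2), so no reduction to the geodesic picture is needed and $\mu(\gamma(0))<0$ whenever $\gamma(0)\in\Sigma_-\setminus L_-$. Part~(3) is the backward LaSalle argument on $\Sigma_-\cap\{-\delta\le\mu\le 0\}$: using $\lxir^{-1}H_p\mu>0$ off $L_-$, trapping in this set would push the $\alpha$-limit into $L_-\subset\{\mu=0\}$, incompatible with the strictly decreasing negative $\mu$ along the backward orbit. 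Part~(4) is its forward counterpart: monotonicity traps $\gamma$ in $\Sigma_-\cap\{-\delta<\mu\le 0\}$; $\mu$ strictly increases to a limit in $[\mu(\gamma(0)),0]$; the $\omega$-limit, concentrated at $\{\lxir^{-1}H_p\mu=0\}\cap\Sigma_-=L_-$, forces $\mu\to 0$ and hence $\gamma(t)\to L_-$. The main obstacle lies in parts~(1) and~(2), in matching $H_p$- and $H_{p_0}$-times across $\{\mu=0\}$ via $\mu\sim e^{-2d}$ so that $H_{p_0}$-escape in infinite time becomes $H_p$-escape in finite time, while simultaneously exploiting the signed monotonicity of $\lxir^{-1}H_p\mu$ to prevent the orbit from curling into $L_+$ from the wrong side.
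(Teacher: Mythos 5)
Your LaSalle/$\omega$-limit treatment of the near-horizon region $\{-\delta\leq\mu\leq 0\}$ (via Lemma~\ref{l:vasy-2}(3) and the source/sink structure at $L_\pm$) is sound, and parts~(3) and~(4) are essentially correct. The paper's proof, however, takes a different and shorter route for part~(1): it cites \cite[Lemma~3.2]{v2} as a black box (any flowline escapes the slab $\{|\mu|<\delta\}$ in finite time) and then observes, using \cite[(3.30)]{v2} together with Lemma~\ref{l:vasy-2}(3), that $V_\delta=\Sigma_+\cap\{\mu\geq\delta\}$ is \emph{convex} under the flow, i.e.\ once a flowline leaves $V_\delta$ it never re-enters. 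The convexity is exactly what turns ``the orbit is not contained in any compact set'' into ``the orbit eventually stays in $\{\mu<\delta\}$,'' after which \cite[Lemma~3.2]{v2} finishes the job.

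This convexity is where your proposal has a genuine gap. In both part~(1) (case $\mu(\gamma(0))>0$) and part~(2), you jump from ``$\rho_0\notin\Gamma_\mp$, so the $H_{p_0}$-orbit is unbounded'' directly to ``$\mu\to 0$ along the orbit, at the hyperbolic rate $\mu\sim e^{-2d}$.'' Unboundedness of the orbit, by the definition~\eqref{e:gammadef}, only gives $\liminf\mu=0$: a priori the orbit could oscillate, approaching the boundary and coming back near $\iota(\widetilde K)$ infinitely often, and in that case the $\alpha$- (or $\omega$-) limit set need not be contained in $\{\mu=0\}$ — it could be a connected invariant set straddling $\{\mu=0\}$ and $\{\mu>0\}$, meeting $\iota(\widetilde K)$. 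Ruling this out is precisely the content of the convexity of $V_\delta$, which you neither invoke nor reprove. Relatedly, in part~(2) your step ``points with $\mu>0$ in the $\alpha$-limit would be fixed points of $H_{p_0}$'' is not correct: $\alpha$-limit points need only lie in a compact invariant set, and such sets (namely $\iota(\widetilde K)$) do exist. Two smaller issues: the asserted decay rate $\mu\sim e^{-2d}$ along escaping geodesics is used but not justified, and the reparametrization factor between $\lxir^{-1}H_p$-time and $H_{p_0}$-time is $\lxir\mu$, not $\mu$, so the integrability claim requires controlling $\lxir\circ\iota$ along the orbit as well — although, granting monotone escape, the finiteness of the crossing time can also be deduced without the rate from the fact that $L_+$ is a source, as you do elsewhere.
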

\begin{proof}
We demonstrate (1); the other statements are proved similarly.
 By~\cite[Lemma~3.2]{v2}, there exists $T_0\geq 0$ such that $\gamma(T_0)\in \{|\mu|\geq \delta\}$.
If $\mu(\gamma(T_0))\leq -\delta$, then we are done; assume
that $\mu(\gamma(T_0))\geq \delta$. By~\cite[(3.30)]{v2}
and Lemma~\ref{l:vasy-2}(3),
the set $V_\delta=\Sigma_+\cap\{\mu\geq\delta\}$ is convex
in the following sense: if $\tilde\gamma(t)$ is any flow line of $\lxir^{-1}H_p$
with $\tilde\gamma(t_1),\tilde\gamma(t_2)\in V_\delta$ for some $t_1<t_2$, then
the whole segment $\tilde\gamma([t_1,t_2])$ lies in $V_\delta$.
This leaves only two cases: either $\gamma([T_0,\infty))\subset V_\delta$
or there exists $T_1>T_0$ such that $\gamma([T_1,\infty))\cap V_\delta=\emptyset$.
By~\eqref{e:p-p0}, the first case would mean that $\gamma(0)\in\iota(\Gamma_-)$;
the second case implies by~\cite[Lemma~3.2]{v2} that $\gamma(T)\in\{\mu=-\delta\}$
for some $T>0$.
\end{proof}
%
%
Now, we take $\delta$ small enough so that Lemmas~\ref{l:vasy-2}, \ref{l:im-part}, and~\ref{l:vasy-3} hold
and any operator $Q$ such that:
\begin{itemize}
\item $Q\in\Psi^2(X)$ with Schwartz kernel supported in $\{\mu < -\epsilon\}$ for some $\epsilon>0$;
\item $q=\sigma(Q)$ is real-valued and $\pm q\geq 0$ near $\Sigma_\pm$;
\item $Q$ is elliptic on $\{\lxir^{-2}p=0\}\cap \{\mu\leq-\delta\}$.
\end{itemize}
Such a $Q$ satisfies the conditions of~\cite[\S3.5]{v2}, except for the self-adjointness condition;
Lemma~\ref{l:vasy-outline} then follows from~\cite[Theorem~4.3]{v2} and~\cite[proof of Theorem~5.1]{v2}.
(In~\cite{v2}, $Q$ was required to be self-adjoint; however as remarked in \cite[\S2.2]{v1},
this condition can be relaxed. Strictly speaking we are citing \cite[Theorem~2.11, Theorem~4.3]{v1}).
We will impose more conditions on $Q$ in \S\ref{s:ultimate}.

\subsection{Conjugation and escape function}
  \label{s:ah.escape}

We first study $P(z)$ near the radial points $L_\pm$ defined in~\eqref{e:l-pm}.
The functions $\mu,\tilde y,\tilde\rho=|\tilde\xi|^{-1},\hat\eta=\tilde\rho\tilde\eta$
form a coordinate system on $\overline T^*X$ near $L_\pm$;
in these coordinates, $L_\pm$ is given by $\{\mu=\tilde\rho=\hat\eta=0\}$, and~\cite[(3.23) and~(3.28)]{v2}
\begin{equation}\label{e:near-l-pm}
\begin{gathered}
|\tilde\xi|^{-2} p=4\mu\mp 4(1+\mathcal O(\mu))\tilde\rho-(1+\mathcal O(\mu))\tilde\rho^2
+g_1^{-1}(\hat\eta,\hat\eta);\\
|\tilde\xi|^{-1} H_p=\pm 4(2\mu \partial_\mu+\tilde\rho \partial_{\tilde\rho}+\hat\eta \partial_{\hat\eta})
-4\tilde\rho \partial_\mu+\mathcal O(\hat\eta)\partial_{\tilde y}+\mathcal O(\mu^2+\tilde\rho^2+|\hat\eta|^2).
\end{gathered}
\end{equation}
Define the function
\begin{equation}
  \label{e:rho-1}
\rho_1=\mu^2+\tilde\rho^2+g_1^{-1}(\hat\eta,\hat\eta)
\end{equation}
near $L_\pm$; extend it to the whole $\overline T^*X$
so that $\rho_1>0$ outside of $L_+\cup L_-$. The function
$\rho_1$ has properties similar to the function
$\tilde\rho^2+\rho_0$ used in the proof of~\cite[Proposition~4.6]{v2};
we will use it to define neighborhoods of $L_\pm$.
%
%
\begin{lemm}\label{l:rho1}
For $\delta>0$ small enough,
\begin{gather}
\label{e:rho1-1}
\pm\lxir^{-1}H_p\rho_1\geq\delta\rho_1
\text{ on }\Sigma_\pm\cap\{\rho_1\leq 5\delta\};\\
\label{e:rho1-2}
\{\lxir^{-2}p=0\}\cap S^*X
\cap \{\mu\geq-\delta\}\subset \{\rho_1< 5\delta\}.
\end{gather}
\end{lemm}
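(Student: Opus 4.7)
My plan is to prove both statements by direct computation in the coordinates $(\mu,\tilde y,\tilde\rho,\hat\eta)$ near $L_\pm$ using the normal form~\eqref{e:near-l-pm}, exploiting the fact that the scaling vector field $2\mu\partial_\mu+\tilde\rho\partial_{\tilde\rho}+\hat\eta\partial_{\hat\eta}$ dilates each monomial of $\rho_1=\mu^2+\tilde\rho^2+g_1^{-1}(\hat\eta,\hat\eta)$ with a positive eigenvalue.

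For~\eqref{e:rho1-1} I apply $|\tilde\xi|^{-1}H_p$ to $\rho_1$ term by term. The dilation part $\pm 4(2\mu\partial_\mu+\tilde\rho\partial_{\tilde\rho}+\hat\eta\partial_{\hat\eta})\rho_1$ evaluates to $\pm(8\rho_1+8\mu^2)+\mathcal O(\rho_1^{3/2})$, where the error absorbs the $(\mu,\tilde y)$-dependence of $g_1^{-1}$ using $|\mu|\cdot|\hat\eta|^2\le\rho_1^{3/2}$. The extra term $-4\tilde\rho\partial_\mu\rho_1=-8\mu\tilde\rho+\mathcal O(\rho_1^{3/2})$ is controlled by AM--GM, $|8\mu\tilde\rho|\le 4(\mu^2+\tilde\rho^2)\le 4\rho_1$, while the remaining pieces $\mathcal O(\hat\eta)\partial_{\tilde y}$ and $\mathcal O(\mu^2+\tilde\rho^2+|\hat\eta|^2)\cdot\partial$ contribute only $\mathcal O(\rho_1^{3/2})$ since $|\nabla\rho_1|=\mathcal O(\rho_1^{1/2})$. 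Taking the sign corresponding to $\Sigma_\pm$ (these components are characterized by $\pm\tilde\xi>0$ near $\{\mu=0\}$ by Lemma~\ref{l:vasy-1}(4), matching the $\pm$ in~\eqref{e:near-l-pm}), this produces $\pm |\tilde\xi|^{-1}H_p\rho_1\ge 4\rho_1-C\rho_1^{3/2}$, which is $\ge 2\rho_1$ once $\rho_1$ is sufficiently small. Since a short computation gives $\lxir/|\tilde\xi|=\sqrt{\tilde\rho^2+1+|\hat\eta|^2}\to 1$ as $\tilde\rho,\hat\eta\to 0$, the ratio $|\tilde\xi|^{-1}/\lxir^{-1}$ stays bounded below by a positive constant on $\{\rho_1\le 5\delta\}$ for $\delta$ small, which gives~\eqref{e:rho1-1} after a harmless adjustment of the constant on the right-hand side.

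For~\eqref{e:rho1-2} I argue directly: on $S^*X$ one has $\tilde\rho=0$, so~\eqref{e:near-l-pm} forces $g_1^{-1}(\hat\eta,\hat\eta)=-4\mu$ on $\{\lxir^{-2}p=0\}\cap S^*X$ (in particular $\mu\le 0$ there). Substituting into $\rho_1$ yields $\rho_1=\mu^2-4\mu$, bounded by $\delta^2+4\delta<5\delta$ whenever $\mu\in[-\delta,0]$ and $\delta<1$. Lemma~\ref{l:vasy-1}(5) confines $\{\lxir^{-2}p=0\}\cap S^*X\cap\{\mu\ge-\delta\}$ to $\{-\delta\le\mu\le 0\}$, and Lemma~\ref{l:vasy-2}(2) identifies its intersection with $\{\mu=0\}$ as $L_+\cup L_-$; compactness of $S^*X$ and continuity then force the whole set into any prescribed neighborhood of $L_+\cup L_-$ for $\delta$ small, so the local formula applies uniformly on it.

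The main obstacle is not conceptual but purely a matter of bookkeeping: one must choose the extension of $\rho_1$ off the coordinate patches at $L_+$ and $L_-$ so that the sublevel set $\{\rho_1\le 5\delta\}$ stays inside the region of validity of~\eqref{e:near-l-pm}, verify the uniform comparability of $|\tilde\xi|^{-1}$ and $\lxir^{-1}$ there, and combine all the Taylor remainders from~\eqref{e:near-l-pm} with the smoothness terms of $g_1^{-1}$ into the single $\mathcal O(\rho_1^{3/2})$ bound. All three issues are handled simultaneously by taking $\delta>0$ small enough.
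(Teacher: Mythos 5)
Your proof is correct and follows essentially the same route as the paper: apply the normal form~\eqref{e:near-l-pm} near $L_\pm$ to compute $\pm|\tilde\xi|^{-1}H_p\rho_1\geq 4\rho_1-\mathcal O(\rho_1^{3/2})$, absorb the cross term by AM--GM, and for~\eqref{e:rho1-2} set $\tilde\rho=0$ on $S^*X$ to read off $g_1^{-1}(\hat\eta,\hat\eta)=-4\mu$ and hence $\rho_1=\mu^2-4\mu<5\delta$ for $\mu\in[-\delta,0]$. The only difference is that you spell out the comparability of $|\tilde\xi|^{-1}$ and $\lxir^{-1}$ near $L_\pm$, which the paper leaves implicit.
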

\begin{proof} 
We compute $\pm|\tilde\xi|^{-1}H_p\rho_1\geq 4\rho_1-\mathcal O(\rho_1^{3/2})$; \eqref{e:rho1-1} follows.
To show~\eqref{e:rho1-2}, take a point in $\Sigma_\pm\cap S^*X\cap\{\mu\geq-\delta\}$;
by~\eqref{e:p-p0}, we have $\mu\leq 0$.
By Lemma~\ref{l:vasy-2}(2), for $\delta$ small enough our point lies
in the domain of the coordinate system $(\mu,\tilde y,\tilde\rho,\hat\eta)$.
By~\eqref{e:near-l-pm}, and using $S^*X = \{\tilde \rho = 0\}$,
we get
$$
g_1^{-1}(\hat\eta,\hat\eta)=-4\mu\leq 4\delta;
$$
it remains to recall the definition of $\rho_1$.
\end{proof}
%
%
We now take the density on $X$ introduced in~\cite[\S3.1]{v2}
(in fact, any density would work). If $B$ is any continuous operator
$C^\infty(X)\to \mathcal D'(X)$ and $B^*$ is its adjoint,
then define
$$
\Imag B={1\over 2i}(B-B^*);
$$
for each $u\in C^\infty(X)$,
$$
\Imag(Bu, u)=((\Imag B) u,u).
$$
Instead of using the radial points estimate~\cite[Proposition~4.5]{v2},
we will conjugate $P(z)$ by an elliptic operator of order $s$
to make the imaginary part of the subprincipal symbol have the correct sign:
%
%
\begin{lemm}\label{l:radial-conj}
Assume that $|\Real z |\leq C_0h, \ |\Imag z| \leq C_0h$, $s>C_0$ is fixed, and
$T_s\in\Psi^s(X)$ is any elliptic operator, as in Lemma~\ref{l:main}. Then for $\delta>0$ small enough,
$$
\mp\sigma(h^{-1}\Imag(T_sP(z)T_s^{-1}))\geq \delta\lxir
\text{ on }\Sigma_\pm\cap\{\rho_1\leq 5\delta\}.
$$
\end{lemm}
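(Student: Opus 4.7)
The plan is to compute the principal order-$1$ symbol of $h^{-1}\Imag(T_s P(z) T_s^{-1})$ near $L_\pm$ and show that the dominant contribution, coming from the conjugation by $T_s$, beats the other contributions for $s > C_0$. First, I decompose
\[
T_s P(z) T_s^{-1} = P(z) + [T_s, P(z)] T_s^{-1}.
\]
The commutator has order $s+1$; since $\sigma(P(z)) = p + \mathcal O(h)$ at the principal order for $|z| \leq C_0 h$, its principal symbol is $-ih\{t_s, p\}$ to leading order in $h$. Hence $[T_s, P(z)]T_s^{-1}$ is order $1$ with principal symbol $i h\, H_p\log t_s$; writing $\log t_s = \log|t_s| + i\arg t_s$ with $\log|t_s|, \arg t_s$ real, the resulting contribution to $\sigma(h^{-1}\Imag(T_s P(z) T_s^{-1}))$ is $H_p\log|t_s|$. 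From $\Imag P(z)$ itself, since $p$ is real and $q := \sigma(\partial_z P(0))$ is real (by the explicit computation in the proof of Lemma~\ref{l:im-part}), the order-$1$ contribution is $(\Imag z/h)\, q + \Imag p^{\mathrm{sub}}(P(0)) + \mathcal O(h)$, where $p^{\mathrm{sub}}(P(0))$ denotes the subprincipal symbol of $P(0)$.

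Next, I evaluate each piece on $\Sigma_\pm \cap \{\rho_1 \leq 5\delta\}$. Writing $t_s = \lxir^s \tilde t_s$ with $\tilde t_s$ smooth and nonvanishing (by ellipticity of $T_s$), one has $H_p\log|t_s| = s\, H_p\log\lxir + H_p\log|\tilde t_s|$. A direct computation from $H_p\tilde\rho = \tilde\rho^2\, \partial_\mu p$ and the formula~\eqref{e:p-near-0} for $p$ yields $H_p\tilde\rho = \pm 4 + \mathcal O(\tilde\rho + |\hat\eta|^2)$ near $L_\pm$, so that
\[
H_p\log\lxir = \mp 4\,\lxir\,\bigl(1 + \mathcal O(\delta^{1/2})\bigr) + \mathcal O(\delta^{1/2}\lxir)
\]
on $\Sigma_\pm \cap \{\rho_1 \leq 5\delta\}$ (using also $\log\lxir = -\log\tilde\rho + \mathrm{smooth}$ near $L_\pm$ and that $|\tilde\xi|$ is comparable to $\lxir$ there). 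The remainder $H_p\log|\tilde t_s|$ is $\mathcal O(\delta^{1/2}\lxir)$ because $\lxir^{-1}H_p$ is a smooth vector field vanishing at $L_\pm$, applied to the smooth bounded function $\log|\tilde t_s|$. For the $P(z)$ contribution, Lemma~\ref{l:im-part}'s formula gives $q = -4(1+\mathcal O(\mu))\tilde\xi + \mathcal O(1)$, so $|(\Imag z/h)\, q| \leq 4C_0\,\lxir\,(1+\mathcal O(\delta^{1/2}))$ using $|\Imag z| \leq C_0 h$.

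Putting the estimates together on $\Sigma_\pm \cap \{\rho_1 \leq 5\delta\}$:
\[
\mp\, \sigma\bigl(h^{-1}\Imag(T_s P(z) T_s^{-1})\bigr)
\geq 4(s - C_0)\,\lxir - C\delta^{1/2}\,\lxir - \mathcal O(1),
\]
where the fixed $\mathcal O(\lxir)$ contribution from $\Imag\, p^{\mathrm{sub}}(P(0))$ is absorbed into the choice of $C_0$ in Lemma~\ref{l:vasy-outline}. Since $s > C_0$ and $\delta$ is small, the right-hand side is $\geq \delta\,\lxir$, as required.

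The central technical point, and the main obstacle, is that the numerical coefficient $4$ appears identically in both the favorable conjugation term $s\, H_p\log\lxir$ and the unfavorable $(\Imag z/h)\, q$ term. This matching is not accidental: both derive from the explicit form of $p$ near the event horizon $\{\mu = 0\}$ in~\eqref{e:p-near-0}, reflecting the fact that the radial expansion rate of $\lxir^{-1}H_p$ at $L_\pm$ and the $z$-derivative of the shifted Hamiltonian see the same underlying structure. This exact matching is what makes $s > C_0$ (rather than a larger multiple of $C_0$) the correct threshold.
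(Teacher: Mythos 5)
Your overall structure is the same as the paper's: split off the conjugation contribution from $\Imag P(z)$ itself, compute both near $L_\pm$, and observe that the coefficient $4$ in $H_p\log\lxir$ matches the coefficient $4$ in $\lxir^{-1}q$ at $L_\pm$, so that the threshold comes out to $s > C_0$ exactly. That matching is indeed the key point, and you identify it correctly.

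There is, however, a genuine gap in the treatment of the $z$-independent subprincipal part $\Imag p^{\mathrm{sub}}(P(0))$. You write it as $\mathcal O(1)$ in the displayed inequality and then call it ``$\mathcal O(\lxir)$'' and say it is ``absorbed into the choice of $C_0$.'' Neither version works. The quantity is genuinely of first order near $L_\pm$, so comparable in size to the main term $4(s-C_0)\lxir$. And $C_0$ is given, not at your disposal: the lemma must hold for \emph{every} fixed $s > C_0$, however close to $C_0$. If $\lxir^{-1}\Imag p^{\mathrm{sub}}(P(0))|_{L_+}$ were a nonzero constant of the wrong sign, the estimate would fail whenever $s - C_0$ is small, and the lemma as stated would be false. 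What is actually true --- and what the paper uses --- is that this subprincipal term \emph{vanishes} at $L_\pm$. The paper cites~\cite[(3.10)]{v2}, which gives $|\tilde\xi|^{-1}\sigma(h^{-1}\Imag P(z))|_{L_\pm} = \mp 4\Imag\tilde z$; taking $z$ real shows $\Imag p^{\mathrm{sub}}(P(0))|_{L_\pm} = 0$, so that on $\Sigma_\pm\cap\{\rho_1\leq 5\delta\}$ it contributes only $\mathcal O(\sqrt{\delta})\lxir$, harmless. This is a nontrivial cancellation depending on the specific form of the conjugating weights $\mu^{-\frac12\pm\frac{n+1}{4}}$ and $e^{\pm i(z+1)\phi/h}$ in~\eqref{e:p1def}, and your proof needs either that citation or a direct computation of the subprincipal at $L_\pm$. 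With that repair, the rest of your argument goes through.
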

Note that, since $\sigma(T_sP(z)T_s^{-1})=p$ is real-valued,
$\Imag(T_sP(z)T_s^{-1})\in h\Psi^1$.
\begin{proof}
We consider $P(z)$ as a function of $\tilde z=h^{-1}z$.
It suffices to show that
$$
\pm\lxir^{-1}\sigma(h^{-1}\Imag(T_s P(z)T_s^{-1}))|_{L_\pm}<0.
$$
Consider the coordinates $\mu,\tilde y,\tilde\rho=|\tilde\xi|^{-1},\hat\eta=\tilde\rho\tilde\eta$
near $L_\pm$. By~\cite[(3.10)]{v2},
$$
|\tilde\xi|^{-1}\sigma(h^{-1}\Imag P(z))|_{L_\pm}=\mp 4\Imag\tilde z.
$$
Furthermore, by part~(6) of Lemma~\ref{l:vasy-1} we have $P(z)-P(0) \in h \Psi^1$, and hence $T_s(P(z) - P(0))T_s^{-1} -
(P(z) - P(0)) \in h^2 \Psi^0$, from which we conclude that
it suffices to prove that
\begin{equation}
  \label{e:zimbabwe}
|\tilde\xi|^{-1}\sigma(h^{-1}(T_sP(0)T_s^{-1}-P(0)))|_{L_\pm}=\mp 4is.
\end{equation}
Near $L_\pm$, we can write $\sigma(T_s)=\tilde\rho^{-s}e^a$,
with $a$ smooth on $\overline T^*X$. Then
$$
\begin{gathered}
|\tilde\xi|^{-1}\sigma(h^{-1}(T_sP(0)T_s^{-1}-P(0)))
=\tilde\rho \sigma(h^{-1}[T_s,P(0)]T_s^{-1})\\
=i\tilde\rho^{s+1}e^{-a}H_p(\tilde\rho^{-s}e^a)
=-isH_p\tilde\rho+i\tilde\rho H_pa.
\end{gathered}
$$
The first term on the right-hand side gives~\eqref{e:zimbabwe} by~\eqref{e:near-l-pm},
while the second one vanishes at $L_\pm$ since $|\tilde\xi|^{-1}H_p=0$ there.
\end{proof}
%
%
Now, we construct an escape function for the region $(\Sigma_+ \cup
\Sigma_-)\cap\{\mu\geq -\delta\}\setminus
(U_K\cup\{\rho_1<5\delta\})$, where $U_K$ is a neighborhood of $\iota(K)$ in $T^*X$. This  is based partly on
\cite[Lemma 4.3]{d-v2} (see also \cite[Appendix]{g-s}, \cite[\S
4]{v-z}).
%
%
\begin{lemm}\label{l:escape}
For $\delta>0$ small enough and any sufficiently small neighborhood
$U_K$ of $\iota(K)$, there exists a smooth nonnegative
function $f_0$ on $\overline T^*X$ such that:
\begin{enumerate}
\item $f_0$ is supported in $\{\mu>-2\delta\}$ and away from $S^*X$;
\item $H_p f_0 \ge 0$ near $(\Sigma_+\cap\{\mu\geq -\delta\})\cup \overline U_K$,  and $ H_pf_0 \le 0$ near $\Sigma_-\cap\{\mu\geq -\delta\}$;
\item $\pm H_pf_0>0$ on $V_\pm=\Sigma_\pm\cap\{\mu\geq -\delta\}\setminus (U_K\cup\{\rho_1<5\delta\})$;
\item $H_pf_0 = 0$ near $\overline U_K \cap \iota(\widetilde K)$.
\end{enumerate}
\textrm{In fact, the function $f_0$ we construct in the proof is identically $1$ near $\overline U_K \cap \iota(\widetilde K)$.}
\end{lemm}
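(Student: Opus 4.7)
The plan is to construct $f_0 = M(g_+ + g_-) + \phi_K$ as a sum of escape-integral functions $g_\pm$ supported near $\Sigma_\pm$ respectively and a plateau function $\phi_K \equiv 1$ near $\overline U_K \cap \iota(\widetilde K)$, with $M>0$ a large constant chosen at the end. Lemmas~\ref{l:vasy-2} and~\ref{l:vasy-3} give the dynamical picture: on $\Sigma_+\cap\{\mu\ge-\delta\}$ every non-trapped flow line exits to $\{\mu\le-\delta\}$ forward in finite time (and stays there by Lemma~\ref{l:vasy-2}(3)), while going backward it either converges to $L_+$ or enters $\iota(\Gamma_+)$; the situation on $\Sigma_-$ is dual under time reversal. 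Since $\Sigma_+,\Sigma_-$ are disjoint closed subsets of the compact $\overline T^*X$ and, by Lemma~\ref{l:rho1}(2), their fiber-infinity parts in $\{\mu\ge-\delta\}$ cluster around $L_\pm$, I can choose disjoint open neighborhoods $W_\pm\supset\Sigma_\pm\cap\{\mu\ge-\delta\}$ inside $\{\mu>-2\delta\}$ and away from $S^*X$, with $U_K\subset W_+$ and $U_K$ disjoint from $W_-$.

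To build $g_+$, choose a smooth $a_+\ge 0$ supported in $W_+$, vanishing on a neighborhood of $L_+\cup\iota(\widetilde K)\cup\{\mu\le-3\delta/2\}$, and strictly positive on $V_+$. Every backward trajectory from a point of $\supp a_+$ must eventually enter one of the absorbing sets $\{\mu<-3\delta/2\}$, a small neighborhood of $L_+$, or the interior of $U_K$, and once inside cannot return to $\supp a_+$; by compactness of $\supp a_+$ there is a uniform $T>0$ after which $\exp(-sH_p)\rho\notin\supp a_+$. Set
\begin{equation*}
g_+(\rho)=\int_0^T a_+\bigl(\exp(-sH_p)\rho\bigr)\,ds,\qquad g_-(\rho)=\int_0^T a_-\bigl(\exp(sH_p)\rho\bigr)\,ds,
\end{equation*}
where $a_-$ is the analogue of $a_+$ on $W_-$, adapted via Lemma~\ref{l:vasy-3}(3)--(4). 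A direct computation gives $H_pg_+ = a_+ - a_+\circ\exp(-TH_p)$ and $H_pg_- = a_-\circ\exp(TH_p) - a_-$, which on $\supp a_\pm$ reduce to $+a_+$ and $-a_-$ respectively by the choice of $T$. Hence $H_pg_+\ge 0$ near $\Sigma_+\cap\{\mu\ge-\delta\}$ with strict positivity on $V_+$, and $H_pg_-\le 0$ near $\Sigma_-\cap\{\mu\ge-\delta\}$ with strict negativity on $V_-$. Because forward (resp.\ backward) trajectories from $\Sigma_+$ (resp.\ $\Sigma_-$) stay in $W_+$ (resp.\ $W_-$), we also get $g_-\equiv 0$ on $\Sigma_+$ and $g_+\equiv 0$ on $\Sigma_-$.

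Choose $\phi_K\in C_c^\infty(\overline T^*X)$ supported in $U_K\subset W_+$ with $\phi_K\equiv 1$ on a smaller neighborhood of $\overline U_K\cap\iota(\widetilde K)$. By further shrinking $\supp a_+$ away from $\iota(\widetilde K)$ inside $W_+$, I arrange that $g_+\equiv 0$ on a full open neighborhood of $\overline U_K\cap\iota(\widetilde K)$, so that $f_0\equiv\phi_K\equiv 1$ there; this gives condition~(4) and the final italicized sentence. Condition~(1) is built into the supports of $g_\pm,\phi_K$. For (2) and (3) on $\Sigma_+$: since $\phi_K$ is supported inside $U_K\subset W_+$, we have $H_pf_0 = MH_pg_+ + H_p\phi_K$ on $\Sigma_+$; outside $U_K$ this equals $Ma_+\ge 0$, strict on $V_+$. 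On $\Sigma_-$, $\phi_K$ vanishes (as $U_K$ is disjoint from $\Sigma_-$), so $H_pf_0 = MH_pg_- = -Ma_-\le 0$, strict on $V_-$.

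\textbf{Main obstacle.} The hardest part is the interaction between the plateau $\phi_K\equiv 1$ at the trapped set and the sign condition $H_pf_0\ge 0$ inside $\overline U_K$. On the outgoing unstable manifold of $\iota(K)$ inside $U_K$, the flow moves away from $\iota(K)$ so $\phi_K$ decreases along it, making $H_p\phi_K<0$ on an annular transition region of the plateau. The fix is to nest $U_K''\Subset U_K'\Subset U_K$ and arrange (i) $\phi_K\equiv 1$ on $U_K''$, (ii) $g_+\equiv 0$ on $U_K''$, obtained by placing $\supp a_+$ outside the time-$T$ backward flowout of $U_K''$, and (iii) $\supp a_+$ covers the transition annulus $U_K\setminus U_K''$ where $\phi_K$ varies; taking $M$ large enough then forces $Ma_+ + H_p\phi_K\ge 0$ throughout $U_K$. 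Uniform boundedness of the exit time $T$ across $V_+$ relies on compactness of $V_+\subset\overline T^*X$, which in turn uses Lemma~\ref{l:rho1}(2) to confine fiber-infinity trapped points to neighborhoods of $L_\pm$ that are excluded from $V_+$.
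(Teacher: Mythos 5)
Your escape-integral strategy has an attractive simplicity, but it contains a genuine gap at the point you expected to be routine: the claim of a \emph{uniform} backward escape time $T$ for $\supp a_+$, together with the claim that the proposed absorbing sets are actually backward-absorbing.

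Two of your three absorbing sets fail. The set $\{\mu<-3\delta/2\}$ absorbs \emph{forward} trajectories from $\Sigma_+$ (Lemma~\ref{l:vasy-2}(3) gives $H_p\mu<0$ on $\Sigma_+\cap\{-\delta\le\mu\le 0\}$), so backward flow from a neighborhood of $\Sigma_+\cap\{\mu\ge-\delta\}$ increases $\mu$ and never reaches $\{\mu<-3\delta/2\}$. More seriously, $U_K$ is not backward-absorbing: near $\iota(K)$ the backward flow \emph{contracts} the $\iota(\Gamma_+)$ directions and \emph{expands} the $\iota(\Gamma_-)$ directions, so a backward trajectory entering $U_K$ near $\iota(\Gamma_+)$ eventually exits along $\iota(\Gamma_-)$. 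That exit trajectory then travels backward to $L_+$ (Lemma~\ref{l:vasy-3}(2)), and on the way it passes through the transition region $V_+$, where $a_+>0$. Concretely: fix $\rho_*\in V_+\cap\iota(\Gamma_+)$ (this intersection is nonempty in general), and take $\rho_n\to\rho_*$ with $\rho_n\notin\iota(\Gamma_+)$. The backward trajectory from $\rho_n$ spends time $T_n\to\infty$ near $\iota(K)$, exits $U_K$ along $\iota(\Gamma_-)$, and re-enters $\supp a_+$ in $V_+\cap$(a neighborhood of $\iota(\Gamma_-)$). There is no uniform $T$, and therefore the identity $H_pg_+=a_+-a_+\circ\exp(-TH_p)$ does not simplify to $a_+$ on any neighborhood of $\Sigma_+\cap\{\mu\ge-\delta\}$; the subtracted term is strictly positive at points of $U_K$ that are the forward time-$T$ flowout of $\supp a_+\cap\iota(\Gamma_-)$, and there $H_pg_+<0$, which would violate condition~(2). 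Your proposed fix ("$\supp a_+$ outside the time-$T$ backward flowout of $U_K''$") is circular ($T$ is the escape time \emph{for} $\supp a_+$) and in any case is incompatible with $a_+>0$ on $V_+$, since the time-$T$ backward flowout of $U_K''$ sweeps out a neighborhood of $\iota(\Gamma_+)\cap V_+$ for $T$ large.

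The paper avoids this by \emph{not} integrating the escape function along trajectories globally. Instead, $\tilde f_0=\sum_k f_k+C\phi(\rho_1)$ where each $f_k=\chi_k(t)\varphi_k$ is compactly supported in a finite tube $V_{(x_k,\xi_k)}$ around a single bicharacteristic segment, and the profile $\chi_k$ is required to be nondecreasing for $t\ge 0$ (resp.\ $t\le 0$) when that bicharacteristic tends to $\iota(\widetilde K)$ forward (resp.\ backward). Crucially, no bicharacteristic outside $\iota(\widetilde K)$ tends to $\iota(\widetilde K)$ in both time directions, so there is no conflict. Monotonicity near the trapped set is then built in by hand, tube by tube, instead of emerging from a global absorbing-set argument. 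Finally the postcomposition with $\alpha$ and the shift by $C\phi(\rho_1)$ produce the plateau at $\iota(\widetilde K)$, the nonnegativity, and the correct support — this is why the paper can prove conditions~(2)--(4) simultaneously. If you want to rescue the escape-integral approach, you would need to split $\supp a_+$ into pieces with a definite time-direction relation to the trapped set and integrate each over the appropriate half-line, which in effect reconstructs the paper's tube-by-tube construction.
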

\begin{proof}
Note that $V_+$ and $V_-$ are compact and disjoint, and by~\eqref{e:rho1-2} neither intersects $S^*X$.
We will first construct a function $\tilde f_0 \in C^\infty(\overline T^*X)$,  with the following properties:
\begin{enumerate}
\item $\tilde f_0 \le -2$ near $(\Sigma_+ \cup \Sigma_-) \cap S^*X \cap \{\mu \ge -\delta\}$;
\item near $\Sigma_\pm\cap\{\mu\geq -\delta\}$, $\pm H_p \tilde f_0\geq 0$;
\item $\pm H_p \tilde f_0>0$, $ \tilde f_0 \ge -1/2$ on $V_\pm$;
\item $ H_p \tilde f_0 = 0$ near $ \iota(\widetilde K)$.
\end{enumerate}
%
%
\begin{figure}
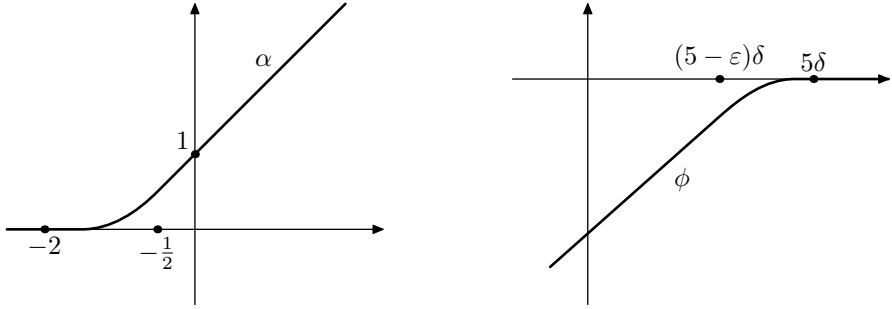

\includegraphics{fwl.2}\qquad\qquad
\includegraphics{fwl.3}
\caption{We precompose $\tilde f_0$ with $\alpha$ to obtain a
function which is $0$ near $L_+$ and $1$ near $\widetilde K$ and smooth in
between. We use a multiple of $\phi$ in the definition of $\tilde f_0$
to guarantee that $\tilde f_0 \le -2$ near $\Sigma_+ \cup \Sigma _-
\cap S^*X$ so that $\alpha \circ \tilde f_0$ vanishes there.}
\label{f:escape}
\end{figure}
%
%

Before constructing this function we show how we use it to construct
$f_0$. In several places we will shrink $U_K$, keeping $U_K \cap \Sigma_+$ fixed:
note that this procedure does not affect $\tilde f_0$.

Take $\alpha \in C^\infty(\mathbb R)$ nondecreasing with
 $\alpha(t) = t+1$ near $t \ge -1/2$ and $\supp \alpha \subset (-2,\infty)$.
 (See Figure~\ref{f:escape}.)
Take $\chi \in C^\infty(\overline T^*X;[0,1])$ supported in
$\{\mu > -2\delta\}$ and away from $S^*X$, and with $\chi = 1$
near $(\Sigma_+ \cup \Sigma_-)\cap\{\mu\geq -\delta\} \cap
\overline{\{\tilde f_0 > -2\}}$. This is possible by property (1) of $\tilde
f_0$. Then put
\[
f_0(x, \xi) = \chi(x, \xi) \alpha (\tilde f_0(x,\xi)).
\]
Property (1) of $f_0$ follows from the support condition on $\chi$. Note that thanks to
the choice of the set where $\chi =1$ together with $\alpha(t) = 0$ near $t \le -2$, we have 
\begin{equation}\label{e:hpf0}
H_p f_0(x,\xi) = \alpha'(\tilde f_0(x, \xi)) H_p \tilde f_0 \textrm{ near } (\Sigma_+ \cup \Sigma_-)\cap\{\mu\geq -\delta\}.
\end{equation}
Hence (if necessary shrinking $U_K$ while keeping $U_K \cap \Sigma_+$ fixed) property (2) of $f_0$ follows from property (2) of $\tilde f_0$ together with the fact that $\alpha$ is nondecreasing. Properties (3) and (4)  of $f_0$ follow from properties (3) and (4) of $\tilde f_0$ together with  \eqref{e:hpf0} and with the fact that $\alpha(t) = t+1$ near $t \ge -1/2$, again if necessary shrinking $U_K$ while keeping $U_K \cap \Sigma_+$ fixed.

We will take $\tilde f_0$ of the form
\[
\tilde f_0 = \sum_{k=1}^N f_k + C \phi(\rho_1).
\]
Here $\phi \in C^\infty(\mathbb{R};(-\infty,0])$ is nondecreasing, supported in
$(-\infty,5\delta]$,  and  $\phi' = 1$ on
$(-\infty,(5-\varepsilon)\delta]$, where $\varepsilon>0$ is small
enough that $(\Sigma_+ \cup \Sigma_-) \cap S^*X \cap \{\mu \ge
-\delta\} \subset \{\rho_1 < (5-\varepsilon)\delta\}$ (see
\eqref{e:rho1-2}). Each $f_k$, specified below, is supported near the
bicharacteristic through $(x_k,
\xi_k)$, a suitably chosen point in $V_+ \cup V_-$. 
Now if $C$ is sufficiently large (depending on $\sum
f_k$) we will have property (1) of $\tilde f_0$. It suffices now to
construct the $f_k$ so that properties (2), (3) and (4) of $\tilde f_0$ hold, and indeed since
$\pm H_p\phi(\rho_1) \ge 0$ on $\Sigma_\pm$ by~\eqref{e:rho1-1} and $\supp
\phi(\rho_1) \cap (V_+ \cup V_- \cup U_K) = \emptyset$ it is enough to check
these properties for $\sum f_k$ (to prove property (2) we will also increase $C$ further).

To determine the $(x_k,\xi_k)$ we first fix an open neighborhood
$\widetilde U_K$ of $\iota(K)$ with $\overline{\widetilde U_K}\subset U_K$,
and associate to each $(x,\xi) \in V^+$ the following
\textit{escape times}:
\begin{align*}
T_{(x,\xi)}^0 &=  \inf\{t \in \mathbb R \colon \rho_1(\gamma(t)) \ge 4 \delta \textrm{ and } \gamma(t) \not\in \widetilde U_K\},\\
T_{(x,\xi)}^1 & = \sup\{t \in \mathbb R \colon \mu(\gamma(t)) \ge - 3\delta/2 \textrm{ and } \gamma(t) \not\in \widetilde U_K\}.
\end{align*}
Here $\gamma(t)$ is the bicharacteristic flowline through $(x,\xi)$. For $(x,\xi) \in V^-$ we put
\[
T_{(x,\xi)}^0 = \inf\{t \in \mathbb R \colon
\mu(\gamma(t)) \ge -3\delta/2\}, \qquad T_{(x,\xi)}^1 =
\inf\{t \in \mathbb R \colon \rho_1(\gamma(t)) \ge	 4 \delta\}.\]
Note that for every $(x,\xi) \in V_- \cup V_+$ we have
$- \infty <T_{(x,\xi)}^0 < 0 < T_{(x,\xi)}^1 <\infty$
thanks to the description of the large time behavior
of trajectories in $\Sigma_\pm$ given by Lemma \ref{l:vasy-3} (we use the fact
that all trajectories in $\iota(\Gamma_\pm)$ tend to $\iota (K)$ as $t
\to \mp \infty$, see for example \cite[Proposition~A.2]{g-s}).
%
%
\begin{figure}
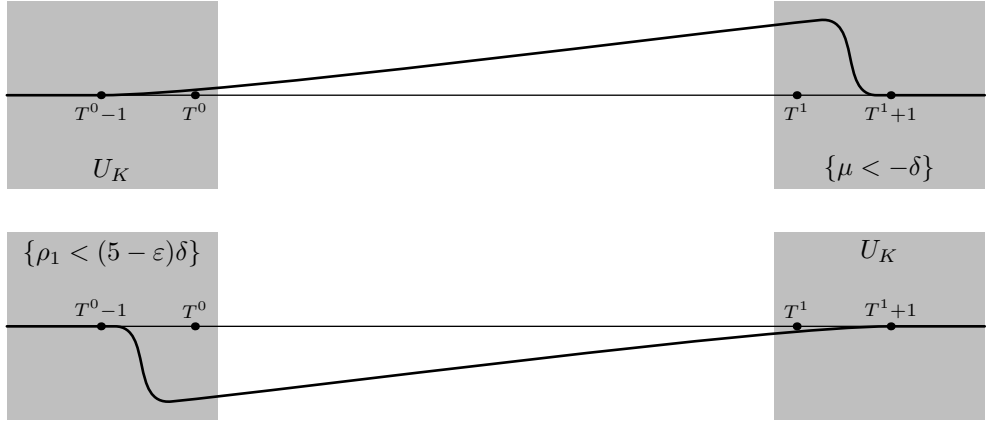

\includegraphics{fwl.4}

\vspace{.5cm}

\includegraphics{fwl.5}
\caption{A graph of $\chi_{(x, \xi)(t)}$ in the cases when the bicharacteristic
through $(x, \xi)$ tends to $\iota(\widetilde K)$ as $t \to
- \infty$ (top) and as $t \to + \infty$ (bottom) (other cases are
similar and simpler). Here $t$ is time along a
bicharacteristic flowline. In each case $\chi' \ge 0$ near the
trapped set. In the first case $\chi' > 0$ until the
bicharacteristic enters the elliptic set of $Q$, and in the second
case  $\chi' > 0$ starting when the bichracteristic leaves a
small neighborhood of the radial set and continuing until it enters a
small neighborhood of the trapped set.}
\label{f:escape2}
\end{figure}
%
%
Next, let $\mathcal{S}_{(x,\xi)}$ be a hypersurface
through $(x,\xi)$ which is transversal to $H_p$ near
$(x,\xi)$. Then if $U_{(x,\xi)}$ is a
sufficiently small neighborhood of $(x,\xi)$, the set
\[V_{(x,\xi)} = \{\gamma_{(x',\xi')}(t) \colon (x',\xi') \in U_{(x,\xi)}
\cap \mathcal S_{(x,\xi)}, t \in (T_{(x,\xi)}^0 - 1, T_{(x,\xi)}^1 + 1)\},\]
where $\gamma_{(x',\xi')}(t)$ is the bicharacteristic
flowout of $(x',\xi')$, is diffeomorphic to
$(\mathcal S_{(x,\xi)} \cap U_{(x,\xi)})
\times (T_{(x,\xi)}^0 - 1, T_{(x,\xi)}^1
+ 1)$, and this diffeomorphism defines product coordinates on
$V_{(x,\xi)}$. If necessary, shrink $U_{(x,\xi)}$ so that
\begin{equation}\label{e:tesc}
\overline{V_{(x,\xi)}} \cap \{t \in [T_{(x,\xi)}^0-1,T_{(x,\xi)}^0]\cup [T_{(x,\xi)}^1,T_{(x,\xi)}^1+1]\}
\subset U_K \cup \{\rho_1 < (5 - \varepsilon) \delta\} \cup \{\mu < - \delta\}.
\end{equation}
Take $\varphi_{(x,\xi)} \in C_0^\infty(\mathcal
S_{(x,\xi)} \cap U_{(x,\xi)};[0,1])$
identically $1$ near $(x,\xi)$, also considered as a
function on $V_{(x,\xi)}$ via the product coordinates,
and let $V'_{(x,\xi)} \subset V_{(x,\xi)}$ be the product of $(T_0,T_1)$
with an open subset of $\mathcal S_{(x,\xi)}\cap U_{(x,\xi)}$ on which $\varphi_{(x,\xi)}=1$.
Using the compactness of $V_+ \cup V_-$, take $(x_1,\xi_1), \dots, (x_N,\xi_N)$ with
$$
V_+ \cup V_- \subset \bigcup_{k=1}^N V'_{(x_k,\xi_k)}.
$$
For each $k \in \{1,\dots, N\}$ put $f_k = f_{(x_k,\xi_k)}$, where
$$
f_{(x,\xi)} = \chi_{(x,\xi)}(t) \varphi_{(x,\xi)}, \qquad H_p f_{(x,\xi)}
= \chi'_{(x,\xi)}(t) \varphi_{(x,\xi)},
$$
and where $\chi_{(x,\xi)} \in C_0^\infty((T_{(x,\xi)}^0 - 1,
T_{(x,\xi)}^1 + 1))$.  Note that $V_{(x,\xi)} \cap
\iota(\widetilde K) = \emptyset$ for all $(x,\xi) \in V_+
\cup V_-$, and so each $f_k$ vanishes near $\iota(\widetilde K)$, and
in particular we have property (4) of $\tilde f_0$.

We further impose that $\pm \chi'_{(x,\xi)} >0$ (accordingly as
$(x,\xi) \in V_\pm$) and $\chi_{(x,\xi)} \ge -(2N)^{-1}$ on
$[T_{(x,\xi)}^0, T_{(x,\xi)}^1]$. This condition gives
property (3) of $\tilde f_0$ since $V_\pm \cap V_{(x,\xi)}
\subset \{t \in [T_{(x,\xi)}^0, T_{(x,\xi)}^1]\}$ and
$\pm H_pf_{(x,\xi)}>0$ on $V'_{(x,\xi)}$.
If $\gamma(t)$ (the bicharacteristic through
$(x,\xi)$) tends to $\iota(\widetilde K)$ as $t \to
\pm \infty$, then we further require that $\chi_{(x,\xi)}' (t) \ge 0$
for $\pm t \ge 0$. (Note that $\gamma(t)$ cannot tend to $\iota(\widetilde K)$ both as $t\to +\infty$
and $t\to -\infty$, as in this case $\gamma(t)\subset \iota(\widetilde K)$.)
 This is sufficient to imply
property (2) of $\tilde f_0$ since now \eqref{e:tesc} implies that
$\Sigma_\pm \cap \{\mu \ge - \delta\} \cap \{\rho_1 \ge
(5-\varepsilon)\delta\} \cap V_{(x,\xi)} \subset
\{\pm\chi'_{(x,\xi)} (t) \ge 0\}$, and since the $C
\phi(\rho_1)$ term takes care of the set $\{\rho_1 \le
(5-\varepsilon)\delta\} \cap \Sigma_\pm$.
\end{proof}
%
%

\section{Exotic classes of operators}\label{s:exotic}

\subsection{\texorpdfstring{$\Psi_{1/2}$}{Psi-1/2} calculus}\label{s:1/2}

In this subsection we review the pseudodifferential calculus of
operators with symbols in the exotic class $S_{1/2}$
depending on two semiclassical parameters $h,\tilde h$,  studied in
\cite[\S3.3]{sj-z} and \cite[\S3]{w-z}. The escape function in \S\ref{s:ultimate0} will provide
positivity up to distance $(h/\tilde h)^{1/2}$  to the
trapped set, and the operator $A$ from Lemma~\ref{l:main} will be
supported $\mathcal O((h/\tilde h)^{1/2})$ close to the trapped set;
 we will study both using this exotic class.

We always assume
$\tilde h$ is small but independent of $h$, and $h$ is  small
depending on $\tilde h$. The reason for the second semiclassical
parameter $\tilde h$ and the corresponding symbol class $S_{1/2}$ is
the following: since our escape function is only regular on the scale $h^{1/2}$ it
belongs to a calculus with no asymptotic
decomposition in powers of $h$, and some of the remainder terms in the
positive commutator estimate in \S\ref{s:ultimate} will be of
order $h$, the same magnitude as the positive term coming from the
commutator. However, if we use symbols  in $S_{1/2}$
(which are regular on the larger scale $(h/\tilde
h)^{1/2}$ instead of just $h^{1/2}$), then we  have an asymptotic
decomposition in powers of $\tilde h$ for the corresponding calculus
and the remainder terms will be $\mathcal O(h\tilde h)$, and hence small in comparison with
the $h$ sized positive term for $\tilde h$ small enough.

\noindent\textbf{Remark.}
An alternative approach would use instead the mildly exotic class $S_\rho$, with
$\rho<1/2$; its elements are regular on the scale $h^\rho$. The
semiclassical calculus in this class has a decomposition in powers of
$h$, which would simplify the arguments below, eliminating the need
for $\tilde h$.  However, in this case the rank of the operator $A_R$
from Lemma~\ref{l:main} would grow as $h^{-2\nu\rho-(n-1)(1-2\rho)}$,
which is the number of cylinders on the energy surface of size $1$ in
the direction of the Hamiltonian flow and size $h^{1/2}$ in all other
directions that are needed to cover an $\mathcal O(h^\rho)$
neighborhood of the trapped set (see \S\ref{s:approximation}).
This is a weaker estimate than the $\mathcal O(h^{-\nu})$ that we 
 prove. By taking $\rho$ very close to $1/2$, one could get any
power of $h^{-1}$ bigger than $\nu$, but not $h^{-\nu}$; this makes a
difference if the trapped set is of pure Minkowski dimension, which is
the case in the most interesting examples (see the introduction).

\smallskip

We proceed to the construction of the $\Psi_{1/2}$ calculus.
We will only need compactly microlocalized operators, thus we restrict
ourselves to symbols that are $\mathcal O(h^\infty)$ outside of a compact set.
For a manifold $X$, we define the class $\Syme(X)$ as
follows: a function $a(x,\xi;h,\tilde h)$ smooth in $(x,\xi)\in T^*X$
lies in this class if and only if:
\begin{itemize}
\item there exists a compact set $V\subset T^*X$ such that each $(x,\xi)$-derivative
of $a$ is $\mathcal O(h^\infty\langle\xi\rangle^{-\infty})$ outside of $V$,
uniformly in $\xi$ and locally uniformly in $x$;
\item for each multiindex $\alpha$, there exists a constant $C_\alpha$ such that
near $V$,
\begin{equation}\label{e:1/2-definition}
|\partial^\alpha_{x,\xi} a|\leq C_{\alpha} (h/\tilde h)^{-|\alpha|/2}.
\end{equation}
\end{itemize}
As in~\S\ref{s:prelim.basics}, we require only local  uniformity in $x$.
This is in contrast with~\cite{sj-z} and~\cite{w-z}, but their results
 still hold if we only require our estimates to be locally uniform in $x$.

We begin with operators on $\mathbb R^n$.  For $a\in\Syme(\mathbb R^n)$,
let $\Op_h(a)$ be its Weyl quantization:
\begin{equation}
  \label{e:q-weyl}
\Op_h(a)u(x)=(2\pi h)^{-n}\int e^{{i\over h}(x-y)\cdot\xi}a\bigg({x+y\over 2},\xi\bigg)
\check\chi(x-y)u(y)\,d\xi dy,\ u\in C^\infty(\mathbb R^n).
\end{equation}
Here $\check\chi\in C_0^\infty(\mathbb R^n)$ is some fixed function
 equal to 1 near the origin. We use the $\check\chi(x-y)$
cutoff, which is absent in the standard definition of the Weyl
quantization, to make $\Op_h(a)$ properly supported. It is also needed
for the integral to converge, as $a$ can grow arbitrarily fast as
$x\to\infty$.  The factor $\check\chi(x-y)$  only changes the
operator $\Op_h(a)$ by a smoothing term of order $\mathcal
O(h^\infty)$ because of the pseudolocality of $\Op_h(a)$, see for
example~\cite[Lemma~3.4]{w-z}.  (We will need to use more standard
symbol classes~\eqref{e:1/2-rn-symbols} and the standard
definition~\eqref{e:1/2-weyl-q} of Weyl quantization in a limited way in \S\ref{s:approximation3}.)  Here are the basic properties of
quantization of exotic symbols on $\mathbb R^n$
(see~\cite[\S3.3]{sj-z} or~\cite[\S3.2]{w-z}):

%
%
\begin{lemm}[Properties of the $S_{1/2}$ calculus on $\mathbb R^n$]
\label{l:1/2-properties-rn}\quad

1. For $a\in\Syme(\mathbb R^n)$, $\Op_h(a)$ is compactly
microlocalized, pseudolocal, and has norm $\mathcal O(1)$, in the
sense of \S\ref{s:prelim.basics}. If $\supp a\subset K$
for some compact set $K\subset T^*\mathbb R^n$
independent of $h,\tilde h$, then $\WFh(\Op_h(a))\subset K$.

2. For $a\in\Syme(\mathbb R^n)$, $\Op_h(a)^*=\Op_h(\bar a)$.

3. For $a,b\in\Syme(\mathbb R^n)$, there exists a symbol $a\#
b\in\Syme(\mathbb R^n)$ such that
$$
\Op_h(a)\Op_h(b)=\Op_h(a\#b)+\Resh.
$$
(The $\Resh$ error comes from the $\check\chi(x-y)$ cutoff.)  The same holds when one of $a,b$ lies in $\Syme(\mathbb R^n)$ and the other in
the class $S^k_{\cl}(\mathbb R^n)$ defined in \S\ref{s:prelim.basics},
with $a\#b$ still in $\Syme(\mathbb R^n)$.
  
4. If $a,b\in\Syme(\mathbb R^n)$, then
$$
a\#b=ab+\mathcal O(\tilde h)_{\Syme(\mathbb R^n)}.
$$

5. If one of $a,b$ lies in $\Syme(\mathbb R^n)$ and the other in $S^k_{\cl}(\mathbb R^n)$,
then
$$
a\#b=ab+\mathcal O(h^{1/2}\tilde h^{1/2})_{\Syme(\mathbb R^n)},\
a\#b-b\#a=-ih\{a,b\}+\mathcal O(h^{3/2}\tilde h^{3/2})_{\Syme(\mathbb R^n)}.
$$

6. Assume that $f:U_1\to U_2$ is a diffeomorphism, $U_1,U_2\subset
\mathbb R^n$, and take $\chi\in C^\infty_0(U_1)$.  Then for each
$a\in\Syme(\mathbb R^n)$,
$$
\begin{gathered}
(f^{-1})^*\chi\Op_h(a)\chi f^*=\Op_h(a_f)+\mathcal O(h^\infty)_{\Psi^{-\infty}},\
a_f\in \Syme(\mathbb R^n),\\
a_f(x,\xi)=\chi(f^{-1}(x))^2a(f^{-1}(x),{}^t
f'(x)\xi)+\mathcal O(h^{1/2}\tilde h^{1/2})_{\Syme}.
\end{gathered}
$$
This fact depends on using the Weyl quantization; the proof can be found
in~\cite[Lemma~3.3]{w-z}. (See also the proof of Lemma~\ref{l:1/2-egorov}
below.)
\end{lemm}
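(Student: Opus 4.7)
My plan starts from the guiding observation that each phase-space derivative of $a\in\Syme$ contributes a factor $(h/\tilde h)^{-1/2}$, while each power of $h^{1/2}$ produced by the quantization absorbs one such factor to leave $\tilde h^{1/2}$; this is why the usual $h$-expansion of the Weyl calculus becomes an expansion in $\tilde h$ for this class. For Part~1 I would invoke the semiclassical Calder\'on--Vaillancourt estimate $\|\Op_h(a)\|_{L^2\to L^2}\leq C\sum_{|\alpha|\leq M_n}h^{|\alpha|/2}\sup|\partial^\alpha a|$, which applied to $a\in\Syme$ gives $\|\Op_h(a)\|=\mathcal O(\sum\tilde h^{|\alpha|/2})=\mathcal O(1)$ uniformly for $\tilde h\in(0,1]$; compact microlocalization when $\supp a\subset K$ is then read directly off the oscillatory integral representation of the Schwartz kernel; for pseudolocality I would integrate by parts in $\xi$ using $(ih)\nabla_\xi e^{i(x-y)\xi/h}=-(x-y)e^{i(x-y)\xi/h}$, each step gaining $h/|x-y|$ at the cost of one derivative of $a$ worth $(h/\tilde h)^{-1/2}$, for a net gain per step of $(h\tilde h)^{1/2}/|x-y|$, so testing against operators with disjoint wavefront sets produces $\Resh$. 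Part~2 is then immediate, since the Weyl midpoint evaluation $a((x+y)/2,\xi)$ is symmetric in $x,y$ and complex conjugation of the kernel replaces $a$ by $\bar a$.

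For Parts~3--5 I would start from the Moyal formula
$$
a\#b(x,\xi) = \exp\Bigl({ih\over 2}\sigma(D_{x_1},D_{x_2})\Bigr)\bigl(a(x_1)b(x_2)\bigr)\Big|_{x_1=x_2=(x,\xi)},
$$
derived from the oscillatory integral for the composition, with the $\Resh$ error of Part~3 accounting for the $\check\chi(x-y)$ cutoff and the stationary-phase remainder. The $k$-th term in this expansion is a sum of products $h^k(\partial^\alpha a)(\partial^\beta b)$ with $|\alpha|+|\beta|=k$. When $a,b\in\Syme$, each such term has size $h^k(h/\tilde h)^{-k}=\tilde h^k$, which simultaneously proves closure under composition (Part~3) and yields $a\#b=ab+\mathcal O(\tilde h)_{\Syme}$ (Part~4). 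When one factor is classical in $S^k_{\cl}$, derivatives on it are $\mathcal O(1)$; at order $k=1$ the term with a derivative on the exotic symbol has size $h\cdot(h/\tilde h)^{-1/2}=h^{1/2}\tilde h^{1/2}$ and the one with a derivative on the classical symbol has size $h$, giving the first estimate of Part~5. For the commutator, the even-order terms of the Moyal series cancel by antisymmetry of $\sigma(D_{x_1},D_{x_2})$ under $(x_1,x_2)\leftrightarrow(x_2,x_1)$, leaving $a\#b-b\#a=-ih\{a,b\}$ modulo the $k=3$ contribution, whose worst term (three derivatives on the exotic symbol) is of size $h^3(h/\tilde h)^{-3/2}=h^{3/2}\tilde h^{3/2}$.

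For Part~6 I would follow the argument of~\cite[Lemma~3.3]{w-z} essentially verbatim: write the conjugation $(f^{-1})^*\chi\Op_h(a)\chi f^*$ as an oscillatory integral with phase $(f^{-1}(x)-f^{-1}(y))\cdot\xi$, linearize via $f^{-1}(x)-f^{-1}(y)=F(x,y)(x-y)$ with $F(x,y)=\int_0^1 (f^{-1})'(tx+(1-t)y)\,dt$, change variables $\xi\mapsto {}^tF(x,y)^{-1}\xi$ to recover the Weyl phase $(x-y)\cdot\xi$, and apply stationary phase to the resulting amplitude; as in Part~5 each subleading term picks up the factor $h^{1/2}\tilde h^{1/2}$. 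The hard part of the whole lemma will be controlling the remainders in these Moyal and stationary phase expansions: since $N$-th derivatives of $\Syme$ symbols have size $(h/\tilde h)^{-N/2}$, I must check that the remainders, which consume seminorms of growing order, do remain $\mathcal O(\tilde h^K)$ for any $K$, and this is precisely what enforces the working regime $h\ll\tilde h\ll 1$ and ensures that the calculus is asymptotic in $\tilde h$ rather than in $h$.
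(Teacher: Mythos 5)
Your approach mirrors the paper's, which in fact gives no self-contained proof: it cites \cite[Lemma~3.6]{sj-z} for the Moyal expansion \eqref{e:1/2-asymptotic}, reads off Parts~4--5 by counting derivatives, and refers to \cite[Lemma~3.3]{w-z} for Part~6. Your Calder\'on--Vaillancourt argument for Part~1, the Weyl-symmetry observation for Part~2, and the integration-by-parts pseudolocality argument are all standard and consistent with what the paper takes for granted.

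However, there is a persistent error in your derivative count that, taken literally, breaks the argument. You write that the $k$-th Moyal term is a sum of products $h^k(\partial^\alpha a)(\partial^\beta b)$ with $|\alpha|+|\beta|=k$. This is wrong: the bidifferential operator $\sigma(D_{x_1},D_{x_2})=\partial_\xi\cdot\partial_y-\partial_\eta\cdot\partial_x$ puts one derivative on \emph{each} factor per application, so the $k$-th term has $|\alpha|=|\beta|=k$ (i.e.\ $k$ derivatives on $a$ \emph{and} $k$ on $b$). Your own estimate $h^k(h/\tilde h)^{-k}=\tilde h^k$ for $a,b\in\Syme$ is only consistent with the correct count; under $|\alpha|+|\beta|=k$ you would get $(h/\tilde h)^{-k/2}$ from the derivatives and hence $h^{k/2}\tilde h^{k/2}$, which is the \emph{Part~5} remainder, not the improved $\tilde h^k$ of Part~4. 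The whole point of Part~4 is that when both symbols are exotic you collect twice as many rough derivatives, which is exactly what compensates the $h^k$. The same confusion recurs in your discussion of Part~5, where you describe ``the term with a derivative on the exotic symbol'' and ``the one with a derivative on the classical symbol'' as if they were distinct contributions; in the Weyl/Moyal expansion the $j=1$ term is a single product $h\cdot(\partial a)(\partial b)$, and its size $h\cdot(h/\tilde h)^{-1/2}\cdot 1=h^{1/2}\tilde h^{1/2}$ comes from multiplying the two scalings, not from a worst case over two separate terms. Correct the count to $|\alpha|=|\beta|=k$ and the rest of your reasoning (including the $(-1)^j$ cancellation of even-order terms in the commutator, which is the paper's observation that the $j=2$ term agrees for $a\#b$ and $b\#a$, specific to the Weyl quantization) goes through unchanged.
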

%
%
Lemma~\ref{l:1/2-properties-rn}(4) and (5) follow from the following
asymptotic expansion~\cite[Lemma~3.6]{sj-z}:
\begin{equation}\label{e:1/2-asymptotic}
(a\#b)(x,\xi)\sim\sum_{j\geq 0} {h^j\over j!(2i)^j}
(\partial_\xi\cdot \partial_y-\partial_\eta\cdot \partial_x)^j
a(x,\xi)b(y,\eta)|_{y=x\atop\eta=\xi}.
\end{equation}
The expansion~\eqref{e:1/2-asymptotic} holds in the following
sense: for every $N$, each $S_{1/2}$ seminorm of the
difference of the left-hand side and the sum of the terms with $j<N$
on the right-hand side is bounded by a certain $S_{1/2}$
seminorm of the $j=N$ term of the sum, taken without restricting to
$y=x,\eta=\xi$ (see~\cite[(3.12)]{sj-z}).  If
both $a,b$ are in $S_{1/2}$, then the $j$th term of the
asymptotic sum is~$\mathcal O(\tilde h^j)$
and~\eqref{e:1/2-asymptotic} is an expansion in powers of $\tilde h$,
not $h$. However, if one of $a,b$ lies in the class $S^k_{\cl}$, then
the $j$th term of the sum is~$\mathcal O(h^{j/2}\tilde h^{j/2})$ and
we get an expansion in powers of $h$ and better remainders for the
product and commutator formulas.  The improved remainder estimate for
the commutator $a\#b-b\# a$ in part~5 of
Lemma~\ref{l:1/2-properties-rn} is due to the fact (specific to the
Weyl quantization) that the $j=2$ term in~\eqref{e:1/2-asymptotic} is
the same for $a\#b$ and $b\#a$; therefore, the remainder comes from
the $j=3$ term.

We can now construct the $\Psi_{1/2}$ calculus on a
manifold, similarly to~\cite[\S3.3]{w-z}. More specifically, we
will define the class~$\Psie(X)$ of compactly microlocalized operators
with symbols in~$\Syme(X)$. Let $X$ be a manifold and $A$ be a
properly supported operator on $X$ depending on $h,\tilde h$, which is
compactly microlocalized and pseudolocal in the sense of
\S\ref{s:prelim.basics}.  We say that $A$ lies in $\Psie(X)$, if
for each coordinate system $f:U_f\to V_f$, with $U_f\subset X$,
$V_f\subset \mathbb R^n$, and each $\chi\in C_0^\infty(U_f)$, there
exists $a_{f,\chi}\in \Syme(U_f)\cap C_0^\infty(T^*U_f)$ such that
$$
(f^{-1})^*\chi A\chi
f^*=\Op_h((f^{-1})^*a_{f,\chi})+\mathcal O(h^\infty)_{\Psi^{-\infty}}.
$$
Here $(f^{-1})^*a_{f,\chi}\in\Syme(\mathbb R^n)\cap
C_0^\infty(T^*V_f)$ denotes the pullback of $a_{f,\chi}$ under the map
$T^*V_f\to T^*U_f$ induced by $f^{-1}$. It can be seen from
Lemma~\ref{l:1/2-properties-rn} that there exists a symbol $a_f\in
C^\infty(T^*U_f)$ such that for each $\chi$, $a_{f,\chi}=\chi^2
a_f+\mathcal O(h^{1/2}\tilde h^{1/2})_{\Syme(X)}$ and moreover, the
symbols $a_f$ given by different coordinate charts agree modulo
$\mathcal O(h^{1/2}\tilde h^{1/2})_{\Syme}$. This makes it possible to
define the principal symbol map
\begin{equation}\label{e:1/2-symbol}
\tilde\sigma:\Psie(X)\to \Syme(X)/(h^{1/2}\tilde h^{1/2}
\Syme(X)).
\end{equation}
We will sometimes consider operators of the form $f(h,\tilde h)A$,
where $f$ is some function and $A\in\Psie(X)$. We put
$\tilde\sigma (fA)=f\tilde\sigma(A)$; it is defined modulo
$\mathcal O(f(h,\tilde h)h^{1/2}\tilde h^{1/2})_{\Syme(X)}$. For instance,
the symbol of an element of $h^{1/2}\tilde h^{1/2}\Psie(X)$
is defined modulo $\mathcal O(h\tilde h)_{\Syme(X)}$.

The symbol map has a non-canonical right inverse $\Op_h:\Syme(X)\to
\Psie(X)$, defined as follows: consider a locally finite covering of
$X$ by the domains $U_j$ of some coordinate charts $f_j:U_j\to
V_j\subset \mathbb R^n$, a partition of unity $\chi_j\in
C_0^\infty(U_j)$ on $X$, and some functions $\chi'_j\in
C_0^\infty(V_j)$ equal to 1 near $f_j(\supp\chi_j)$.  Then for
$a\in\Syme(X)\cap C_0^\infty(T^*X)$, we put
\begin{equation}\label{e:1/2-quantization}
\Op_h(a)=\sum_j f_j^*\chi'_j \Op_h((f_j^{-1})^*(\chi_j a))\chi'_j (f_j^{-1})^*;
\end{equation}
here $(f_j^{-1})^*(\chi_j a)\in\Syme(\mathbb R^n)\cap
C_0^\infty(T^*V_j)$ is quantized by~\eqref{e:q-weyl}. We have
$\Op_h(a)\in\Psie(X)$ and $\tilde\sigma(\Op_h(a))=a+\mathcal
O(h^{1/2}\tilde h^{1/2})_{\Syme(X)}$.

Using the properties of the $S_{1/2}$ calculus on $\mathbb
R^n$ listed above, we get
%
%
\begin{lemm}[Properties of the $\Psi_{1/2}$ calculus on manifolds]
\label{l:1/2-properties}
Let $X$ be a manifold. Then:

1. Each operator in $\Psie(X)$ is compactly microlocalized,
pseudolocal, and has norm $\mathcal O(1)$ in the sense of
\S\ref{s:prelim.basics}. If $\supp a\subset K$ for some compact
set $K\subset T^*X$ independent of $h,\tilde h$, then
$\WFh(\Op_h(a))\subset K$.

2. The class $\Psic(X)$ of compactly microlocalized classical operators
from \S\ref{s:prelim.basics} is contained in $\Psie(X)$,
with a correspondence between the symbol maps.

3. For $A\in\Psie(X)$, $\tilde\sigma(A)=\mathcal O(h^{1/2}\tilde h^{1/2})_{\Syme}$
if and only if $A\in (h^{1/2}\tilde h^{1/2})\Psie(X)$.

4. If $A\in\Psie(X)$, then its adjoint $A^*$ (with respect to some given density)
also lies in $\Psie(X)$ and
$$
\tilde\sigma(A^*)=\overline{\tilde\sigma(A)}+\mathcal O(h^{1/2}\tilde h^{1/2})_{\Syme}.
$$

5. If $A,B\in\Psie(X)$, then $AB\in\Psie(X)$ and
$$
\tilde\sigma(AB)=\tilde\sigma(A)\tilde\sigma(B)+\mathcal O(\tilde h)_{\Syme}.
$$

6. If $A\in\Psi^k(X)$ and $B\in\Psie(X)$, then $AB,BA\in\Psie(X)$,
and
\begin{equation}
  \label{e:1/2-mull}
\tilde\sigma(AB)=\sigma(A)\tilde\sigma(B)+\mathcal O(h^{1/2}\tilde h^{1/2})_{\Syme}=
\tilde\sigma(BA).
\end{equation}
Moreover $[A,B]\in h^{1/2}\tilde h^{1/2}\Psie(X)$, and we have
\begin{equation}
  \label{e:1/2-comm}
\tilde\sigma([A,B])=-ih\{\sigma(A),
\tilde\sigma(B)\}+\mathcal O(h\tilde h)_{\Syme}.
\end{equation}

7. If $A\in\Psi^k(X)$, $B\in\Psie(X)$, and
$\{\sigma(A),\tilde\sigma(B)\}=\mathcal O(1)_{\Syme}$ (instead of
$\mathcal O((\tilde h/h)^{1/2})$ known a priori), then $[A,B]\in
h\Psie(X)$.
\end{lemm}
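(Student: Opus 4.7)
My plan is to derive every part of Lemma~\ref{l:1/2-properties} by reducing to the corresponding $\mathbb R^n$ statement in Lemma~\ref{l:1/2-properties-rn}, working in a locally finite coordinate cover $\{U_j\}$ of $X$ with subordinate partition of unity $\{\chi_j\}$ and cutoffs $\chi_j'$ equal to $1$ near $f_j(\supp\chi_j)$. The key tool that allows assembly of the chartwise pieces is the coordinate-invariance statement Lemma~\ref{l:1/2-properties-rn}(6); this is also what makes the principal symbol map~\eqref{e:1/2-symbol} well defined and will be the recurring technical input.

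\textbf{Parts 1--3.} Part~(1) is a direct chartwise application of Lemma~\ref{l:1/2-properties-rn}(1); the wavefront containment follows because the $\Resh$ pieces in~\eqref{e:1/2-quantization} carry empty wavefront set. For part~(2), any $a\in S^k_{\hbar,\cl}(X)$ has $h$-uniform $(x,\xi)$-derivative bounds on compact sets and so trivially satisfies~\eqref{e:1/2-definition}, giving $\Psic(X)\subset\Psie(X)$ with symbols matching modulo the built-in $\mathcal O(h^{1/2}\tilde h^{1/2})_{\Syme}$ ambiguity. Part~(3) is an unpacking of the definition of $\Psie(X)$: if $\tilde\sigma(A) = \mathcal O(h^{1/2}\tilde h^{1/2})_{\Syme}$, then rescaling each chartwise symbol by $(h^{1/2}\tilde h^{1/2})^{-1}$ gives another element of $\Syme$ whose quantization realizes $A/(h^{1/2}\tilde h^{1/2})$ modulo $\Resh$.

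\textbf{Parts 4--6.} Part~(4) is a chartwise application of Lemma~\ref{l:1/2-properties-rn}(2). For parts~(5) and~(6), I would split $AB = \sum_{j,k}\chi_jA\chi_kB$; the $j\neq k$ pieces are $\Resh$ by pseudolocality from part~(1), while the $j=k$ pieces are computed in a single chart via Lemma~\ref{l:1/2-properties-rn}(3), combined with~(4) in the $\Psie\times\Psie$ setting of part~(5) and with~(5) in the mixed setting of part~(6), yielding~\eqref{e:1/2-mull}. The commutator bound~\eqref{e:1/2-comm} then follows from~\eqref{e:1/2-mull} after noting that $\{\sigma(A),\tilde\sigma(B)\}$ lies a priori only in $(h/\tilde h)^{-1/2}\Syme$, because each derivative of $\tilde\sigma(B)$ costs a factor $(h/\tilde h)^{-1/2}$; multiplying by $h$ places $[A,B]$ in $h^{1/2}\tilde h^{1/2}\Psie(X)$ with the claimed principal symbol.

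\textbf{Part 7 and main obstacle.} For part~(7) the crucial observation is that only odd $j$ contribute to the Moyal expansion~\eqref{e:1/2-asymptotic} for $a\#b - b\#a$, because the bidifferential operator $\partial_\xi\partial_y-\partial_\eta\partial_x$ is antisymmetric under the swap $(x,\xi)\leftrightarrow(y,\eta)$. The improved hypothesis upgrades the $j=1$ term $-ih\{\sigma(A),\tilde\sigma(B)\}$ from $\mathcal O(h^{1/2}\tilde h^{1/2})_{\Syme}$ to $\mathcal O(h)_{\Syme}$, while each $j\geq 3$ term is already $\mathcal O(h^{j/2}\tilde h^{j/2})_{\Syme}\subset h\Syme$ automatically, since the $j$ derivatives hitting the classical $\sigma(A)$ cost nothing and those hitting $\tilde\sigma(B)$ each cost only $(h/\tilde h)^{-1/2}$. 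Dividing by $h$ then produces a $\Syme$ symbol, placing $[A,B]$ in $h\Psie(X)$. The main obstacle I foresee is the bookkeeping when globalizing: the coordinate-change errors in Lemma~\ref{l:1/2-properties-rn}(6) are only $\mathcal O(h^{1/2}\tilde h^{1/2})_{\Syme}$, and one must verify that these do not spoil the improved $h$-order. This reduces to checking that the hypothesis $\{\sigma(A),\tilde\sigma(B)\}\in\mathcal O(1)_{\Syme}$ is coordinate-intrinsic, which it is: the Poisson bracket is invariant on $T^*X$ and the transition errors in $\tilde\sigma(B)$, after Poisson-bracketing with the classical $\sigma(A)$, contribute only subleading $\mathcal O(\tilde h)_{\Syme}$ corrections well below the $h$ target.
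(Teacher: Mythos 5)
Your route is the same as the paper's: the paper gives no separate proof of this lemma, simply asserting that it follows from the $\mathbb R^n$ statements of Lemma~\ref{l:1/2-properties-rn} through the chart-based definition of $\Psie(X)$ and the quantization~\eqref{e:1/2-quantization}, and that is exactly the reduction you carry out. Your quantitative bookkeeping for parts~6 and~7 (only odd Moyal terms survive in the Weyl commutator; the $j\geq 3$ terms are $\mathcal O(h^{j/2}\tilde h^{j/2})$; the hypothesis $\{\sigma(A),\tilde\sigma(B)\}=\mathcal O(1)_{\Syme}$ is stable under the $\mathcal O(h^{1/2}\tilde h^{1/2})_{\Syme}$ ambiguity of $\tilde\sigma(B)$, since bracketing $\sigma(A)$ with that error costs only one derivative and yields $\mathcal O(\tilde h)$) is correct and is precisely what makes the manifold statements hold with the claimed remainders.

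One step is wrong as written, though easily repaired. In parts~5--6 you claim that the $j\neq k$ terms of $AB=\sum_{j,k}\chi_j A\chi_k B$ are $\Resh$ ``by pseudolocality''. Pseudolocality gives $\chi_j A\chi_k=\Resh$ only when $\supp\chi_j\cap\supp\chi_k=\emptyset$; for a partition of unity, neighbouring cutoffs overlap and those terms are genuinely nonnegligible. The standard fix is to localize with nested cutoffs rather than with the partition itself: to verify the defining property of $\Psie(X)$ for $AB$ in a chart $f:U_f\to V_f$ with $\chi\in C_0^\infty(U_f)$, choose $\tilde\chi_1,\tilde\chi\in C_0^\infty(U_f)$ with $\tilde\chi_1=1$ near $\supp\chi$ and $\tilde\chi=1$ near $\supp\tilde\chi_1$; then $\chi A(1-\tilde\chi_1)=\Resh$ and polynomial boundedness of $B$ give
$$
\chi AB\chi=(\chi A\tilde\chi)(\tilde\chi_1 B\chi)+\Resh,
$$
with both factors supported in the single chart, where Lemma~\ref{l:1/2-properties-rn}(3)--(5) applies directly; for parts~6--7 one should localize symmetrically, $\chi[A,B]\chi=\chi[\tilde\chi_1 A\tilde\chi_1,\tilde\chi_2 B\tilde\chi_2]\chi+\Resh$ (an exact identity modulo $\Resh$ when $\tilde\chi_2=1$ near $\supp\tilde\chi_1\cup\supp\chi$), so that the commutator structure, and hence the Weyl cancellation you invoke, is preserved after localization. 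The change-of-variables statement Lemma~\ref{l:1/2-properties-rn}(6) then enters only where you already use it, namely to show the chartwise symbols patch into a well-defined $\tilde\sigma$ modulo $\mathcal O(h^{1/2}\tilde h^{1/2})_{\Syme}$. With this adjustment your argument goes through.
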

%
%
Note that in Lemma~\ref{l:1/2-properties}(1), we
use the notion of  wavefront set of a pseudolocal operator from
\S\ref{s:prelim.basics}, which gives a set independent of
$h$. In particular, if $a\in\Syme(X)$ is supported in an $\mathcal
O((h/\tilde h)^{1/2})$ neighborhood of some compact set $K$, then
$\WFh(\Op_h(a))\subset K$.

We also have the following version of the non-sharp G\r arding inequality:
%
%
\begin{lemm}\label{l:garding-1/2}
Assume that $X$ is a compact manifold, $A\in\Psie(X)$,
$\Psi_1\in\Psic(X)$, and $\Real\tilde\sigma(A)>0$ near $\WFh(\Psi_1)$.
Then there exists a constant $C$ such that for each $u\in L^2(X)$,
$$
\Real \langle A\Psi_1 u,\Psi_1 u\rangle\geq C^{-1}\|\Psi_1 u\|_{L^2(X)}^2-\mathcal O(h^\infty)\|u\|_{L^2(X)}^2.
$$
\end{lemm}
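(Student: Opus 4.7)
The plan is to imitate the standard non-sharp G\r arding argument, approximating $A$ by a square $B^*B$ microlocally near $\WFh(\Psi_1)$, with care for the $\tilde h$-sized errors specific to the $\Psie$ calculus. First, since $\Real\langle A\Psi_1 u,\Psi_1 u\rangle=\langle\tfrac12(A+A^*)\Psi_1 u,\Psi_1 u\rangle$ and $\tfrac12(A+A^*)\in\Psie(X)$ has principal symbol $\Real\tilde\sigma(A)$ (modulo the usual $\mathcal O(h^{1/2}\tilde h^{1/2})_{\Syme}$ ambiguity), I may replace $A$ by its symmetric part and assume $A=A^*$ with $\tilde\sigma(A)\geq 2c>0$ on some open neighborhood $V$ of $\WFh(\Psi_1)$.

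Next, construct a real-valued, compactly supported $b\in\Syme(X)$ with $b^2=\tilde\sigma(A)$ on $V$ and $b\geq\sqrt{c}$ on a smaller neighborhood of $\WFh(\Psi_1)$ -- for instance, take $b=\chi\,\phi(\tilde\sigma(A))$ where $\chi\in C_0^\infty(T^*X)$ equals $1$ on a large compact set containing the essential support of $A$ and $\phi\in C^\infty(\mathbb R)$ satisfies $\phi(t)=\sqrt{t}$ for $t\geq c$ and $\phi(t)=0$ for $t\leq c/2$. Set $B=\Op_h(b)\in\Psie(X)$. By Lemma~\ref{l:1/2-properties}(5), $B^*B\in\Psie(X)$ with $\tilde\sigma(B^*B)=b^2+\mathcal O(\tilde h)_{\Syme}$, so $R:=A-B^*B\in\Psie(X)$ satisfies $\tilde\sigma(R)=\mathcal O(\tilde h)_{\Syme}$ on $V$. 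Choose also $\Psi_2\in\Psic(X)$ with $\Psi_2=I$ microlocally near $\WFh(\Psi_1)$ and $\WFh(\Psi_2)\subset V$; pseudolocality gives $\Psi_1=\Psi_2\Psi_1+\Resh$, so
\[
\Real\langle A\Psi_1u,\Psi_1u\rangle
=\|B\Psi_2\Psi_1u\|^2
+\Real\langle\Psi_2^*R\Psi_2\,\Psi_1u,\Psi_1u\rangle
+\mathcal O(h^\infty)\|u\|^2.
\]

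The middle term is controlled because the principal symbol of $\Psi_2^*R\Psi_2$ equals $|\sigma(\Psi_2)|^2\tilde\sigma(R)$ modulo $\mathcal O(\tilde h)$, and $\sigma(\Psi_2)$ may be chosen supported in $V$ where $\tilde\sigma(R)=\mathcal O(\tilde h)$; writing $\Psi_2^*R\Psi_2$ as $\tilde h$ times an element of $\Psie(X)$, Lemma~\ref{l:1/2-properties}(1) yields $\|\Psi_2^*R\Psi_2\|_{L^2\to L^2}=\mathcal O(\tilde h)$, so this term is bounded by $\mathcal O(\tilde h)\|\Psi_1u\|^2$. For the main term, since $\tilde\sigma(B)\geq\sqrt{c}$ on $\WFh(\Psi_2\Psi_1)$, a microlocal parametrix $E\in\Psie(X)$ with $\tilde\sigma(E)=b^{-1}$ there gives $\|\Psi_2\Psi_1u\|\leq C\|B\Psi_2\Psi_1u\|+\mathcal O(\tilde h)\|\Psi_2\Psi_1u\|+\mathcal O(h^\infty)\|u\|$, whence $\|B\Psi_2\Psi_1u\|^2\geq(c-C'\tilde h)\|\Psi_1u\|^2-\mathcal O(h^\infty)\|u\|^2$ after absorbing the self-interaction. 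Combining,
\[
\Real\langle A\Psi_1u,\Psi_1u\rangle\geq(c-C''\tilde h)\|\Psi_1u\|^2-\mathcal O(h^\infty)\|u\|^2;
\]
since the constant $C$ in the conclusion may depend on $\tilde h$ (and on $A,\Psi_1$), the $\mathcal O(\tilde h)$ contribution is absorbed into $C^{-1}$ to yield the stated bound. The main (minor) obstacle is precisely this feature: because the $\Psie$ calculus only expands asymptotically in $\tilde h$ rather than $h$, the square-root and parametrix constructions produce single-step errors of order $\mathcal O(\tilde h)$, not $\mathcal O(h^\infty)$; however, since $\tilde h$ is a fixed small parameter, these errors are harmlessly absorbed into the overall constant.
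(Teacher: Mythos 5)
Your proof is correct and yields the stated estimate. It differs from the paper's argument in one genuinely different structural choice: you factor $\Real\tilde\sigma(A)\approx b^2$ exactly (after cutting off) and then recover the lower bound $\|B\Psi_1 u\|^2\gtrsim\|\Psi_1 u\|^2$ via a microlocal parametrix $E$ for $B$, which introduces a second round of $\mathcal O(\tilde h)$ errors that must be absorbed. The paper instead subtracts a constant before taking the square root, writing $\Real\tilde\sigma(A)=C^{-1}+|b|^2+\mathcal O(h^{1/2}\tilde h^{1/2})_{\Syme}$ near $\WFh(\Psi_1)$, so that $\Real A=C^{-1}+B^*B+\mathcal O(\tilde h)_{\Psie}$ microlocally there and the identity operator term $C^{-1}$ delivers $C^{-1}\|\Psi_1 u\|^2$ on the nose, with $\|B\Psi_1 u\|^2\geq 0$ simply discarded. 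This avoids the parametrix and the cutoff $\Psi_2$ entirely and gives a two-line proof; your route works but carries more bookkeeping for the same price (a single $\mathcal O(\tilde h)$ absorption at the end).

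One small but substantive remark: you write that ``the constant $C$ in the conclusion may depend on $\tilde h$.'' It should not, and in fact your own estimate $(c-C''\tilde h)\|\Psi_1 u\|^2$ already gives a $\tilde h$-uniform constant $C^{-1}=c/2$ once $\tilde h$ is small enough; this is what the paper does explicitly (``take $\tilde h$ small enough so that the third term is absorbed by the first''). This uniformity matters downstream: the lemma is invoked inside positive-commutator estimates where the resulting constant has to be independent of $\tilde h$ because $\tilde h$ is subsequently sent to zero. So state the conclusion as: fix $C$ depending only on $A,\Psi_1$; then the bound holds for all $\tilde h$ small enough and all $h$ small enough depending on $\tilde h$. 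Also, the phrase ``writing $\Psi_2^*R\Psi_2$ as $\tilde h$ times an element of $\Psie$'' is a little loose --- rather, its principal symbol is $\mathcal O(\tilde h)_{\Syme}$ with the remainder in $h^{1/2}\tilde h^{1/2}\Psie$, and both pieces have $L^2$ norm $\mathcal O(\tilde h)$ --- but the conclusion you draw is right.
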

\begin{proof}
For $C>0$ large enough, we can write
$$
\Real\tilde\sigma(A)=C^{-1}+|b|^2+\mathcal O(h^{1/2}\tilde h^{1/2})_{\Syme}
\text{ near }\WFh(\Psi_1)
$$
for some $b\in\Syme(X)$. Take $B=\Op_h(b)\in\Psie$; then
$$
\Real A=C^{-1}+B^*B+\mathcal O(\tilde h)_{\Psie}\text{ microlocally near }
\WFh(\Psi_1).
$$
Therefore,
$$
\Real\langle A\Psi_1 u,\Psi_1 u\rangle=C^{-1}\|\Psi_1 u\|_{L^2}^2+\|B\Psi_1 u\|_{L^2}^2+\mathcal O(\tilde h)
\|\Psi_1 u\|_{L^2}^2+\mathcal O(h^\infty)\|u\|_{L^2}^2.
$$
The second term on the right-hand side is nonnegative; it remains to
take $\tilde h$ small enough so that the third term is absorbed by
the first one.
\end{proof}
%
%
We will additionally need to work with symbols all of whose derivatives
grow according to~\eqref{e:1/2-definition}, but the symbols themselves
grow like $\log(1/h)$; those are the growth conditions satisfied by
the logarithmically flattened escape function from
Lemma~\ref{l:f-hat}. We need to know that certain properties
of the $\Psi_{1/2}$ calculus still hold in this setting:
%
%
\begin{lemm}\label{l:kangaroo}
Assume that $a\in C_0^\infty(T^*X)$ satisfies
\begin{equation}
  \label{e:kangaroo}
a=\mathcal O(\log(1/h));\
\partial^\alpha_{x,\xi} a=\mathcal O((h/\tilde h)^{-|\alpha|/2}),\
|\alpha|>0.
\end{equation}
In particular, $a\in\log(1/h)\Syme(X)$.  Let $A=\Op_h(a)\in\log(1/h)\Psie(X)$
be defined by~\eqref{e:1/2-quantization}. Then:

1. The symbol~$\tilde\sigma(A)$ from~\eqref{e:1/2-symbol} is defined
modulo $\mathcal O(h^{1/2}\tilde h^{1/2})_{\Syme}$ (without the $\log(1/h)$ factor), and $\tilde\sigma(A)=a+\mathcal
O(h^{1/2}\tilde h^{1/2})_{\Syme}$.

2. If $B=\Op_h(b)$ for some $b$ satisfying~\eqref{e:kangaroo}, then
$[A,B]=\mathcal O(\tilde h)_{\Psie}$.

3. If $B\in\Psi^k(X)$, then $[A,B]\in h^{1/2}\tilde h^{1/2}\Psie(X)$
and~\eqref{e:1/2-comm} holds.
\end{lemm}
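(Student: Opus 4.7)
The plan is to exploit a single observation: every remainder in the $\Psi_{1/2}$ calculus on $\mathbb R^n$---the product expansion~\eqref{e:1/2-asymptotic}, its specialization to a commutator, and the coordinate change formula of Lemma~\ref{l:1/2-properties-rn}(6)---is controlled by positive-order $(x,\xi)$-derivatives of the symbols, never by their values. Under hypothesis~\eqref{e:kangaroo}, every positive-order derivative of $a$ obeys the same $(h/\tilde h)^{-|\alpha|/2}$ bound as a derivative of a generic symbol in $\Syme$, with the $\log(1/h)$ prefactor confined to $a$ itself. Hence each such remainder retains its standard $(h,\tilde h)$-size, with no extra $\log(1/h)$ factor---precisely the improvement claimed in all three parts over the naive bound that would come from $A/\log(1/h)\in\Psie$.

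For part~1, I would unwind the definition~\eqref{e:1/2-quantization} of $\Op_h(a)$ and identify the ambiguity in $\tilde\sigma(A)$ as arising from three sources: the choice of coordinate chart in which a local piece of $A$ is expressed as a Weyl quantization, the partition of unity $\chi_j$, and the auxiliary cutoff $\check\chi$ in~\eqref{e:q-weyl}. The first is controlled by Lemma~\ref{l:1/2-properties-rn}(6), whose proof in~\cite[Lemma~3.3]{w-z} expresses the error as a stationary phase remainder, that is, a combination of positive-order $(x,\xi)$-derivatives of the symbol. Under~\eqref{e:kangaroo} this is still bounded by $\mathcal O(h^{1/2}\tilde h^{1/2})_{\Syme}$ without any $\log$ factor; the partition-of-unity and $\check\chi$ errors are $\mathcal O(h^\infty)_{\Psi^{-\infty}}$ and hence better. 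Summing over the charts gives $\tilde\sigma(A)=a+\mathcal O(h^{1/2}\tilde h^{1/2})_{\Syme}$.

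For parts~2 and~3, I would apply the Moyal expansion~\eqref{e:1/2-asymptotic} to $a\#b$ in local coordinates, using Lemma~\ref{l:1/2-properties-rn}(3) to allow mixing $\Syme$ and $S^k_{\cl}$ symbols. In the Weyl calculus the even-$j$ terms cancel in $a\#b-b\#a$, so the leading contribution is the $j=1$ term and the next remainder is the $j=3$ term. In part~2, $b$ also satisfies~\eqref{e:kangaroo}, so the $j$-th term carries $j$ derivatives on each factor, each contributing $(h/\tilde h)^{-1/2}$, which gives size $\mathcal O(h^j(h/\tilde h)^{-j})=\mathcal O(\tilde h^j)$ and hence $[A,B]=\mathcal O(\tilde h)_{\Psie}$. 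In part~3, the derivatives of $b\in S^k_{\cl}$ are $\mathcal O(1)$ on its compact microsupport, so only the derivatives on $a$ contribute and the $j$-th term has size $\mathcal O(h^j(h/\tilde h)^{-j/2})=\mathcal O(h^{j/2}\tilde h^{j/2})$. The $j=1$ term supplies the principal contribution $-ih\{\tilde\sigma(A),\sigma(B)\}$, and the $j=3$ remainder $\mathcal O(h^{3/2}\tilde h^{3/2})$ lies in $\mathcal O(h\tilde h)_{\Syme}$; this yields $[A,B]\in h^{1/2}\tilde h^{1/2}\Psie(X)$ together with~\eqref{e:1/2-comm}.

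The main obstacle is part~1: the naive route ($A/\log(1/h)\in\Psie$) only places the symbol modulo $\log(1/h)\cdot\mathcal O(h^{1/2}\tilde h^{1/2})_{\Syme}$, which is too weak for the applications in~\S\ref{s:ultimate}, where $\tilde\sigma(A)$ is compared against genuine $\mathcal O(h^{1/2}\tilde h^{1/2})$-size quantities. The improvement requires tracking the fact that the errors in the coordinate-change formula (and in the Moyal expansion) depend only on derivatives of the symbol, not its values---a distinction that is invisible in the standard $\Syme$ setting but essential here.
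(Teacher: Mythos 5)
Your proposal is correct and takes essentially the same approach as the paper: the paper's proof rests precisely on the observation that the remainders in the $S_{1/2}$ expansion~\eqref{e:1/2-asymptotic} and in the coordinate-change formula of~\cite[Lemma~3.3]{w-z} are controlled by derivatives of the symbols rather than their values, together with the invariance of the class~\eqref{e:kangaroo} under changes of variables and multiplication by $S^k_{\cl}(X)$ symbols (which you use implicitly when applying the partition of unity and the chart changes). Your worked-out Moyal-expansion size estimates for parts 2 and 3 supply the detail that the paper leaves as ``verified in a straightforward fashion.''
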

\begin{proof}
The remainder estimates of the asymptotic decompositions for the
$S_{1/2}(\mathbb R^n)$ calculus in~\eqref{e:1/2-asymptotic}
and~\cite[Lemma~3.3]{w-z} depend on bounds on derivatives of the
symbols involved, but not on the size of the symbols themselves.
Therefore, the remainders in parts~4--6 of
Lemma~\ref{l:1/2-properties-rn} are the same for symbols
satisfying~\eqref{e:kangaroo} as for $\Syme$. Note also
that the class of symbols satisfying~\eqref{e:kangaroo} is invariant
under changes of variables and multiplication by symbols in
$S^k_{\cl}(X)$ (though not by $\Syme(X)$). All the statements above
can now be verified in a straightforward fashion.
\end{proof}
%
%
Finally, we establish the following analog of Egorov's Theorem,
needed in  \S\ref{s:approximation1}.  A direct argument
involving~\eqref{e:quantized-canonical} and the method of stationary
phase would give an $\mathcal O(\tilde h)$ error;
the slightly more delicate
argument  below yields an $\mathcal O(h^{1/2}\tilde h^{1/2})$ error for
the Weyl quantization, giving a natural generalization (and a different
 proof) of~\cite[Lemma~3.3]{w-z}.
%
%
\begin{lemm}\label{l:1/2-egorov}
Let $X_1,X_2$ be two manifolds of the same dimension, $\varkappa$ a
symplectomorphism mapping an open subset of $T^*X_1$ onto an
open subset of $T^*X_2$, and $B:C^\infty(X_2)\to C^\infty(X_1)$ a
compactly microlocalized semiclassical Fourier integral operator
associated to $\varkappa$, in the sense of
\S\ref{s:prelim.canonical}. Assume that $A_j\in\Psie(X_j)$ are
such that
$$
\tilde\sigma(A_1)=\tilde\sigma(A_2)\circ\varkappa+\mathcal O(h^{1/2}\tilde h^{1/2})_{\Syme}
$$
near the projection of $\WFh(B)$ onto $T^*X_1$. Then
$$
A_1B=BA_2+\mathcal O(h^{1/2}\tilde h^{1/2});
$$
here $\mathcal O(h^{1/2}\tilde h^{1/2})$ is understood in the sense of~\eqref{e:b-bound},
as both sides of the equation are compactly microlocalized.
\end{lemm}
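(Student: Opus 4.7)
The plan is to adapt the standard deformation proof of Egorov's theorem (cf.\ the discussion around~\eqref{e:fio-evolution}) to the $\Psi_{1/2}$ calculus, exploiting the special cancellation of even-order terms in the Moyal expansion~\eqref{e:1/2-asymptotic} peculiar to the Weyl quantization. The remark before the statement warns that a naive stationary phase computation on~\eqref{e:quantized-canonical} would miss this cancellation and yield only $\mathcal O(\tilde h)$.

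Passing to local coordinates and using a partition of unity, I may assume $X_1=X_2=\mathbb R^n$, and by the construction recalled after~\eqref{e:fio-deformation} write $B=\Psi_1B_1\Psi_2+\Resh$, where $\Psi_j\in\Psic(\mathbb R^n)$ equal the identity microlocally on the two projections of $\WFh(B)$, and $B_t$ is the $L^2$-unitary family from~\eqref{e:fio-evolution} solving $hD_tB_t=B_tZ_t$, $B_0=\mathrm{Id}$, with $Z_t=\Op_h(z_t)$ for a compactly supported $z_t$ whose Hamiltonian flow $\varkappa_t$ deforms the identity (at $t=0$) to $\varkappa$ (at $t=1$). Choose a cutoff $\chi_t\in C_0^\infty(T^*\mathbb R^n)$, smooth in $t$, equal to $1$ on a neighborhood of the compact set $\bigcup_{s\in[0,1]}\varkappa_s(\pi_1\WFh(B))$, and set
\[
\alpha(t):=\chi_t\cdot\bigl(\tilde\sigma(A_1)\circ\varkappa_t^{-1}\bigr),\qquad \tilde A(t):=\Op_h(\alpha(t))\in\Psie(\mathbb R^n).
\]
Smoothness of the symplectomorphisms $\varkappa_t$ preserves the scale bound~\eqref{e:1/2-definition}, so $\alpha(t)\in\Syme$ uniformly in $t$, and the chain rule gives the exact transport equation $\partial_t\alpha(t)+\{z_t,\alpha(t)\}=0$ wherever $\chi_t=1$. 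By~\eqref{e:1/2-symbol} we have $\tilde A(0)=A_1+\mathcal O(h^{1/2}\tilde h^{1/2})_{\Psie}$ microlocally where $\chi_0=1$, and by the hypothesis of the lemma $\tilde A(1)=A_2+\mathcal O(h^{1/2}\tilde h^{1/2})_{\Psie}$ microlocally near the second projection of $\WFh(B)$.

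Set $\phi(t):=A_1B_t-B_t\tilde A(t)$. Differentiating and using $hD_tB_t=B_tZ_t$ gives
\[
hD_t\phi(t)=\phi(t)Z_t-B_tR(t),\qquad R(t):=[Z_t,\tilde A(t)]+hD_t\tilde A(t).
\]
Lemma~\ref{l:1/2-properties-rn}(5), applied to the mixed classical/exotic Moyal product of $z_t$ against $\alpha(t)$, yields
\[
[Z_t,\tilde A(t)]=\Op_h\bigl(-ih\{z_t,\alpha(t)\}\bigr)+\mathcal O(h^{3/2}\tilde h^{3/2})_{\Psie};
\]
the improved $h^{3/2}\tilde h^{3/2}$ remainder (rather than the generic $h\tilde h$) is precisely due to the Weyl-specific symmetry of the $j=2$ term in~\eqref{e:1/2-asymptotic} under swapping $z_t\leftrightarrow\alpha(t)$, which forces that term to drop out of the commutator. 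Meanwhile $hD_t\tilde A(t)=\Op_h(-ih\partial_t\alpha(t))=\Op_h(ih\{z_t,\alpha(t)\})$ where $\chi_t=1$, cancelling exactly the leading commutator term on that set. Choosing $\chi_t$ generous enough that the wavefront set of $R(t)\Psi_2$ propagated forward under $B_s^{-1}$ for $s\in[t,1]$ stays in $\{\chi_t=1\}$, one concludes $\|B_tR(t)\Psi_2\|_{L^2\to L^2}=\mathcal O(h^{3/2}\tilde h^{3/2})$ uniformly in $t\in[0,1]$.

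From $hD_tB_t^{-1}=-Z_tB_t^{-1}$ one checks $hD_t(\phi(t)B_t^{-1})=-B_tR(t)B_t^{-1}$, whence
\[
\phi(1)\Psi_2=\phi(0)B_1\Psi_2-\frac{i}{h}\Bigl(\int_0^1 B_tR(t)B_t^{-1}\,dt\Bigr)B_1\Psi_2.
\]
Unitarity of $B_t$ makes the integral term $\mathcal O(h^{-1}\cdot h^{3/2}\tilde h^{3/2})=\mathcal O(h^{1/2}\tilde h^{3/2})$, while $\phi(0)B_1\Psi_2=\mathcal O(h^{1/2}\tilde h^{1/2})$, since $B_1\Psi_2$ has $T^*X_1$-projection of wavefront set inside $\pi_1\WFh(B)$, where $\phi(0)=A_1-\tilde A(0)$ has symbol of size $\mathcal O(h^{1/2}\tilde h^{1/2})$. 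Decomposing
\[
A_1B-BA_2=\Psi_1\phi(1)\Psi_2+\Psi_1B_1(\tilde A(1)-A_2)\Psi_2+[A_1,\Psi_1]B_1\Psi_2-\Psi_1B_1[\Psi_2,A_2]
\]
and bounding the two commutator terms by $\mathcal O(h^{1/2}\tilde h^{1/2})$ via Lemma~\ref{l:1/2-properties}(6) completes the proof. The main obstacle is securing the $\mathcal O(h^{3/2}\tilde h^{3/2})$ commutator bound: the Duhamel division by $h$ consumes a full power of $h$, so any weaker bound (such as the generic $\mathcal O(h\tilde h)$ one obtains from a non-Weyl quantization, or the $\mathcal O(\tilde h)$ from direct stationary phase) would fail to reach $\mathcal O(h^{1/2}\tilde h^{1/2})$. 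A secondary technicality is arranging the $t$-dependent cutoffs $\chi_t$ smoothly and generously enough, which is possible because $z_t$ is compactly supported and smooth in $t$.
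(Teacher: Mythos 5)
Your proof is correct and shares the essential skeleton of the paper's argument: the same unitary deformation $B_t$ solving~\eqref{e:fio-evolution}, the same use of the Weyl-specific cancellation in Lemma~\ref{l:1/2-properties-rn}(5) to gain the extra $h^{1/2}\tilde h^{1/2}$ in the commutator remainder, and the same integration in $t$. The one structural difference lies in how the transported symbol is compactified. The paper factors $B$ through the unitary $B_1$, puts $C_1=BB_1^{-1}\in\Psic$ and $C_2=B_1^{-1}B\in\Psic$ (so that $\sigma(C_1)=\sigma(C_2)\circ\varkappa$ by the classical Egorov theorem, and $\tilde\sigma(A_1C_1)=\tilde\sigma(C_2A_2)\circ\varkappa+\mathcal O(h^{1/2}\tilde h^{1/2})$), and transports $a(t)=\tilde\sigma(A_1C_1)\circ\varkappa_t^{-1}$; since $a(t)\circ\varkappa_t$ is then literally $t$-independent, the transport equation $\partial_t a(t)+\{z_t,a(t)\}=0$ holds identically on $T^*\mathbb R^n$, and the conclusion follows from a single integration with no further wavefront-set bookkeeping. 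You instead truncate $\tilde\sigma(A_1)\circ\varkappa_t^{-1}$ by an auxiliary cutoff $\chi_t$, accept an $\mathcal O(h^{1/2}\tilde h^{1/2})$-sized error in $R(t)$ supported on $\supp d\chi_t$, and dispose of it by pseudolocality against $B_t^{-1}B_1\Psi_2$; this works, but requires the supports of $\chi_t$ and $\Psi_2$ to be chosen compatibly, which is exactly the ``secondary technicality'' you flag, and which the paper's device sidesteps entirely. One small slip in the prose: the operator that must be shown to be $\mathcal O(h^{3/2}\tilde h^{3/2})$ is $R(t)B_t^{-1}B_1\Psi_2$ (as your displayed Duhamel formula in fact reads), not $B_tR(t)\Psi_2$, and the set that must be kept inside the interior of $\{\chi_t=1\}$ is $\varkappa_t\circ\varkappa_1^{-1}(\WFh(\Psi_2))$, i.e.\ the first projection of $\WFh(B_t^{-1}B_1\Psi_2)$, rather than anything propagated by $B_s^{-1}$.
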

\begin{proof}
Using a microlocal partition of unity, we may assume that $A_1$ is
microlocalized in a small neighborhood of some $(x_1,\xi_1)\in T^*X_1$
and $A_2$ is microlocalized in a small neighborhood of
$\varkappa(x_1,\xi_1)$.  We can then let $X_1=X_2=\mathbb R^n$
and quantize $\varkappa$ near $\WFh(A_1)\times\WFh(A_2)$ by the
unitary operators $(B_1,B_1^{-1})$ constructed at the end of
\S\ref{s:prelim.canonical}, multiplied by certain
pseudodifferential cutoffs. We will use the families of symplectomorphisms
$\varkappa_t$, Fourier integral operators $B_t$, and pseudodifferential
operators $Z_t$ from this construction.

Using the composition property~\eqref{i:composition}
of Fourier integral operators in \S\ref{s:prelim.canonical}, we see that 
$C_1=BB_1^{-1}$ and $C_2=B_1^{-1}B$ lie in $\Psic$
and by the standard
Egorov property~\eqref{i:egorov} in \S\ref{s:prelim.canonical},
$\sigma(C_1)=\sigma(C_2)\circ\varkappa$.
We then need to prove that
\begin{equation}
  \label{e:egorov-1}
A_1C_1=B_1(C_2A_2)B_1^{-1}+\mathcal O(h^{1/2}\tilde h^{1/2}),
\end{equation}
where we know that
\begin{equation}
  \label{e:egorov-sym}
\tilde\sigma(A_1C_1)=\tilde\sigma(C_2A_2)\circ\varkappa+\mathcal O(h^{1/2}\tilde h^{1/2})_{\Syme}.
\end{equation}
Let $A(t)=\Op_h(a(t))$, where
\begin{equation}
  \label{e:a-t}
a(t)=\tilde\sigma(A_1C_1)\circ\varkappa_t^{-1}
\end{equation}
and $\Op_h$ is defined by~\eqref{e:q-weyl}.
Then $A(0)=A_1C_1+\Resh$; by~\eqref{e:egorov-sym},
$A(1)=C_2A_2+\mathcal O(h^{1/2}\tilde h^{1/2})_{\Psie}$,
and~\eqref{e:egorov-1} reduces to
\begin{equation}
  \label{e:egorov-2}
B_1A(1)B_1^{-1}=A(0)+\mathcal O(h^{1/2}\tilde h^{1/2}).
\end{equation}
Using~\eqref{e:fio-evolution}, we get 
\begin{equation}
  \label{e:egorov-3}
hD_t (B_tA(t)B_t^{-1})=B_t([Z_t,A(t)]+hD_t A(t))B_t^{-1}.
\end{equation}
Now, by part~5 of Lemma~\ref{l:1/2-properties-rn} (which is where we need
the Weyl quantization),
$$
[Z_t,A(t)]+hD_t A(t)={h\over i}\Op_h(\{z_t,a(t)\}+\partial_t a(t))+\mathcal O(h^{3/2}\tilde h^{3/2});
$$
by~\eqref{e:a-t}, we get $\{z_t,a(t)\}+\partial_t a(t)=\partial_t(a(t)\circ\varkappa_t)\circ\varkappa_t^{-1}=0$
and thus the right-hand side of~\eqref{e:egorov-3} is $\mathcal O(h^{3/2}\tilde
h^{3/2})$.  Integrating in $t$ from 0 to 1, we get~\eqref{e:egorov-2}.
\end{proof}
%
%

\subsection{Second microlocalization}\label{s:2nd}

In this subsection, we study certain operators microlocalized
$\mathcal O(h/\tilde h)$ close to the energy surface $p^{-1}(0)$. (See
\S\ref{s:1/2} for why one needs the second semiclassical
parameter $\tilde h$; as always in this paper, we assume that $\tilde h$ is
small and $h$ is small enough depending on $\tilde h$.)  We need the
operator~$A$ from Lemma~\ref{l:main} to be localized $\mathcal
O(h/\tilde h)$ close to the energy surface to be able to approximate
it by an operator of rank $\mathcal O(h^{-\nu})$. Without this
additional localization, the rank of $A_R$ from Lemma~\ref{l:main},
and thus the number of resonances, would be estimated by $\mathcal
O(h^{-\nu-1})$; however, this estimate would be valid in an $o(1)$
spectral window (with the imaginary part of the resonances still
bounded by $C_0h$) instead of the $\mathcal O(h)$ one that we study.

Since $h/\tilde h\ll h^{1/2}$, operators microlocalized $\mathcal
O(h/\tilde h)$ close to the energy surface will not be
pseudodifferential even in the exotic classes studied in
\S\ref{s:1/2}. In fact, they will not even be pseudolocal; their
wavefront set will include transport along the Hamiltonian flow of $p$
on the energy surface. This presents a difficulty with constructing a
calculus of such operators; however, as shown in~\cite[\S5]{sj-z}, one
can still quantize symbols which are regular on the scale $h/\tilde h$
in the direction transversal to the energy surface, on the scale $1$
in the direction of the Hamiltonian flow of $p$, and on the scale
$(h/\tilde h)^{1/2}$ in all other directions.

The second microlocal calculus of~\cite{sj-z} is rather involved and
we do not use it here, proceeding instead as follows. Let $\hat p$ be a real-valued symbol with
\[
\hat p = p \quad \textrm{near the trapped set } K,
\]
and which is elliptic near fiber infinity.
We quantize $\hat p$ to a self-adjoint operator
$\widehat P$ (on the compact manifold $X$ introduced in
\S\ref{s:ah.vasy}), and use an operator of the form
$\chi((\tilde h/h)\widehat P)$, where $\chi\in C_0^\infty$; this
operator is microlocalized $\mathcal O(\tilde h/h)$ close to the
surface $\hat p^{-1}(0)$. In this subsection, we use spectral theory to
get  estimates on the resulting operator in an abstract
setting; we will apply these to our problem in \S\ref{s:approximation1}
and Lemma~\ref{l:estimate-k-aux}.

Throughout this subsection, $X$ is a compact manifold without boundary
and with a prescribed volume form, and $\widehat P\in \Psi^k(X)$, $k>0$, is
a  symmetric pseudodifferential operator.
Let $\hat p$ be the principal symbol of $\widehat P$ (not to be
confused with the notation for $\lxir^{-2}p$ used in~\cite{v2}). 
Assume that $\hat p$ is elliptic near the fiber infinity; namely, the
characteristic set $\{\lxir^{-k}\hat p=0\}$ does not intersect $S^*X$
(and thus can be written $\hat p^{-1}(0)$).  Then
$\widehat P$ is self-adjoint on $L^2(X)$ with domain $\Hh^k(X)$ and
compact resolvent (see for example~\cite[\S7.10]{tay}).

For any bounded Borel measurable function $\chi$ on $\mathbb R$,
define $\chi((\tilde h/h)\widehat P)$ by 
spectral theory (see for example~\cite[Chapter~8]{tay}). This operator
is bounded on $\Hh^s(X)$ for each $s$, uniformly in $h,\tilde
h$. Indeed, this is true for $s=0$ by spectral theory, for $s \in k \mathbb Z$ 
by commuting with $i+\widehat P$, which is an
isomorphism $\Hh^{s+k}\to\Hh^s$ for all $s$, and for general $s$ by
interpolation. Note also that the $\Hh^s(X)$ operator norm of
$\chi((\tilde h/h)\widehat P)$ depends only on $s,\widehat P$, and
$\sup|\chi|$. In particular, the unitary operator $e^{it(\tilde
h/h)\widehat P}$ is bounded on each $\Hh^s$ uniformly in $t$.

We first show that for $\chi$ Schwartz, the operator $\chi((\tilde
h/h)\widehat P)$ is microlocalized on $\hat p^{-1}(0)$:
%
%
\begin{lemm}\label{l:2nd-compact}
Let $\chi\in\mathscr S(\mathbb R)$. Then $\chi((\tilde h/h)\widehat P)$
is a compactly microlocalized operator of norm $\mathcal O(1)$,
in the sense of \S\ref{s:prelim.basics}, and
$$
\WFh(\chi((\tilde h/h)\widehat P))\subset \hat p^{-1}(0)\times\hat p^{-1}(0).
$$
\end{lemm}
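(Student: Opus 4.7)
The plan is to reduce Lemma~\ref{l:2nd-compact} to two claims: (a) the wavefront set of $\chi((\tilde h/h)\widehat P)$ is contained in $\hat p^{-1}(0)\times\hat p^{-1}(0)$; and (b) $\hat p^{-1}(0)$ is compact. Claim (b) is immediate from the hypothesis that $\hat p$ is elliptic at fiber infinity: the zero set of $\hat p$ is then a closed bounded subset of $T^*X$, hence compact since $X$ itself is compact. Granted (a) and (b), the wavefront set is a compact subset of $T^*X\times T^*X$, so $\chi((\tilde h/h)\widehat P)$ is compactly microlocalized, and the uniform $L^2$-boundedness from spectral theory noted just before the lemma upgrades, via~\eqref{e:b-bound}, to the $\mathcal O(1)$ bound in the sense of \S\ref{s:prelim.basics}.

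The key input for (a) is the functional-calculus identity
\[
\widehat P^N \chi((\tilde h/h)\widehat P)=(h/\tilde h)^N \psi_N((\tilde h/h)\widehat P),\qquad \psi_N(\lambda):=\lambda^N\chi(\lambda)\in\mathscr S(\mathbb R),
\]
together with the fact that $\|\psi_N((\tilde h/h)\widehat P)\|_{\Hh^s\to \Hh^s}=\mathcal O(1)$ uniformly in $h,\tilde h$ for every $s$ and every $N$, by the same argument given before the lemma. To establish (a) I would fix $\rho_0\in\overline T^*X$ at which $\hat p$ is elliptic (automatic at $S^*X$), and let $A\in\Psi^{k_1}(X)$ have $\WFh(A)$ in a small neighborhood $U$ of $\rho_0$ on which $\widehat P$ is elliptic. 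A standard elliptic parametrix gives, for every $N$, operators $E_N\in\Psi^{k_1-kN}(X)$ and $R_N=\Resh$ with $A=E_N\widehat P^N+R_N$. Multiplying on the right by $\chi((\tilde h/h)\widehat P)$ and invoking the identity,
\[
A\,\chi((\tilde h/h)\widehat P)=(h/\tilde h)^N E_N\,\psi_N((\tilde h/h)\widehat P)+R_N\,\chi((\tilde h/h)\widehat P).
\]
The second summand is $\Resh$. For the first, $E_N$ maps $\Hh^s\to\Hh^{s+kN-k_1}$ with uniform norm and $\psi_N((\tilde h/h)\widehat P)$ preserves every Sobolev index uniformly, so with the prefactor $(h/\tilde h)^N$ and $\tilde h$ held fixed, the composition is bounded $\Hh^s\to\Hh^{s'}$ by $\mathcal O(h^N)$ once $N$ is taken large enough relative to $s,s',k_1$. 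Since $N,s,s'$ are arbitrary, $A\,\chi((\tilde h/h)\widehat P)=\Resh$.

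The symmetric statement $\chi((\tilde h/h)\widehat P)A'=\Resh$ for $A'$ microsupported in the elliptic set of $\widehat P$ follows by taking adjoints, using $\chi((\tilde h/h)\widehat P)^*=\bar\chi((\tilde h/h)\widehat P)$ (by self-adjointness of $\widehat P$) and reapplying the argument to $\bar\chi\in\mathscr S(\mathbb R)$. Combining both one-sided estimates and the fact that $\Resh$ composed with a polynomially bounded operator is again $\Resh$, the wavefront-set definition of \S\ref{s:prelim.basics} yields $\WFh(\chi((\tilde h/h)\widehat P))\subset \hat p^{-1}(0)\times \hat p^{-1}(0)$, proving claim (a). The only subtle point is the Sobolev bookkeeping needed to turn the operator-norm bounds into a genuine $\Resh$ statement, but this poses no real obstacle because $\tilde h$ is a fixed positive parameter throughout and $N$ may be taken arbitrarily large.
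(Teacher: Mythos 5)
Your proposal is correct and follows essentially the same route as the paper's proof: in both cases the heart of the matter is the functional-calculus identity $\widehat P^N\chi((\tilde h/h)\widehat P)=(h/\tilde h)^N\psi_N((\tilde h/h)\widehat P)$ with $\psi_N(\lambda)=\lambda^N\chi(\lambda)\in\mathscr S$, combined with an elliptic parametrix for $\widehat P$ on the microsupport of a test cutoff away from $\hat p^{-1}(0)$, and letting $N\to\infty$ to produce $\Resh$. The only cosmetic difference is that the paper writes the cutoff as $\Psi_1=\widehat P^N\Psi_2+\Resh$ and handles $\chi((\tilde h/h)\widehat P)\Psi_1$, while you write $A=E_N\widehat P^N+R_N$ and handle $A\,\chi((\tilde h/h)\widehat P)$; the two are related by the same adjoint step you invoke for the other side.
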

\begin{proof}
It suffices to show that if $\Psi_1\in \Psi^l(X)$ satisfies $\WFh(\Psi_1)\cap \hat p^{-1}(0)=\emptyset$, then
$$
\chi((\tilde h/h)\widehat P)\Psi_1=\mathcal O(h^\infty)_{\Psi^{-\infty}},\
\Psi_1\chi((\tilde h/h)\widehat P)=\mathcal O(h^\infty)_{\Psi^{-\infty}}.
$$
We prove the first statement. Take a large positive integer $N$.
Since $\widehat P$ is elliptic near $\WFh(\Psi_1)$,
there exists
$\Psi_2\in\Psi^{l-kN}(X)$ such that $\Psi_1=\widehat P^N\Psi_2+\mathcal O(h^\infty)_{\Psi^{-\infty}}$.
We then have
$$
\chi((\tilde h/h)\widehat P)\Psi_1=h^{N}\tilde h^{-N}\chi_N((\tilde h/h)\widehat P)\Psi_2
+\mathcal O(h^\infty)_{\Psi^{-\infty}}.
$$
Here $\chi_N(\lambda)=\lambda^N\chi(\lambda)$ is Schwartz.
The first term on the right-hand side is $\mathcal O(h^{N-1})_{\Hh^s\to \Hh^{s+kN-l}}$ for all $s$,
for $h$ small enough depending on $\tilde h$ (for example, for $h<e^{-1/\tilde h}$);
it remains to let $N$ go to infinity. 
\end{proof}
%
%
To establish further properties of $\chi((\tilde h/h)\widehat P)$,
we use the following 
%
%
\begin{lemm}\label{l:crazy}
Let $\chi\in \mathscr S(\mathbb R)$, and $B:C^\infty(X)\to
C^\infty(X)$ be a polynomially bounded operator in the sense of
\S\ref{s:prelim.basics}. Then for each $s,s'$ and each integer
$N\geq 0$,
\begin{equation}
  \label{e:crazy-formula}
\begin{gathered}
\chi((\tilde h/h) \widehat P) B
=\sum_{0\leq j<N} {(\tilde h/ h)^j\over j!}(\ad_{\widehat P}^j B)\chi^{(j)}((\tilde h/h)\widehat P)\\
+\mathcal O((\tilde h/h)^N\|\ad_{\widehat P}^N B\|_{\Hh^s\to\Hh^{s'}})_{\Hh^s\to\Hh^{s'}}.
\end{gathered}
\end{equation}
Here $\chi^{(j)}$ denotes $j$-th derivative of $\chi$ and
$\ad_{\widehat P} A=[\widehat P,A]$ for any $A$.  The constant in
$\mathcal O(\cdot)$ depends on $\chi,N,s,s',\widehat P$, but not on
$B,h,\tilde h$.
\end{lemm}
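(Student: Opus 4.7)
The plan is to reduce the identity to a Taylor expansion of the Heisenberg-evolved operator $e^{it\mu\widehat P} B e^{-it\mu\widehat P}$ (where $\mu:=\tilde h/h$), using Fourier inversion to convert this back to a statement about $\chi(\mu\widehat P)$. Since $\widehat P$ is self-adjoint with compact resolvent, $e^{it\mu\widehat P}$ is unitary on $L^2(X)$ and, by commuting with $i+\widehat P$ and interpolating as in the paragraph before Lemma~\ref{l:2nd-compact}, uniformly bounded on every $\Hh^s(X)$, independently of $t$, $h$, $\tilde h$. This boundedness is what makes the Fourier-side argument work.

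First, I would write $\chi(\mu\widehat P)=\frac{1}{2\pi}\int\hat\chi(t)e^{it\mu\widehat P}\,dt$ (meaningful as a strongly convergent integral against any vector, thanks to functional calculus and $\hat\chi\in\mathscr S$). Next I apply Taylor's theorem with integral remainder to $F(s)=e^{is\mu\widehat P}Be^{-is\mu\widehat P}$: since $F'(s)=i\mu\,\ad_{\widehat P}F(s)$, iteration gives, at $s=t$,
\begin{equation*}
e^{it\mu\widehat P}Be^{-it\mu\widehat P}
=\sum_{0\le j<N}\frac{(it\mu)^j}{j!}\ad_{\widehat P}^j B
+(it\mu)^N\!\int_0^1\frac{(1-s)^{N-1}}{(N-1)!}\,e^{ist\mu\widehat P}(\ad_{\widehat P}^N B)e^{-ist\mu\widehat P}\,ds,
\end{equation*}
and multiplying on the right by $e^{it\mu\widehat P}$ gives the corresponding expression for $e^{it\mu\widehat P}B$.

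Then I multiply by $\hat\chi(t)/(2\pi)$ and integrate in $t$. The $j$-th term in the main sum contributes
$$
\frac{(i\mu)^j}{j!}(\ad_{\widehat P}^j B)\cdot\frac{1}{2\pi}\int t^j\hat\chi(t)e^{it\mu\widehat P}\,dt
=\frac{\mu^j}{j!}(\ad_{\widehat P}^j B)\,\chi^{(j)}(\mu\widehat P),
$$
where I used $\chi^{(j)}(\lambda)=\frac{1}{2\pi}\int(it)^j\hat\chi(t)e^{it\lambda}\,dt$ and the functional calculus of $\widehat P$ (this identity for Schwartz $\chi$ and self-adjoint $\widehat P$ is standard). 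This reproduces the claimed sum with $\mu=\tilde h/h$.

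The remainder becomes
$$
\frac{(i\mu)^N}{2\pi}\int\hat\chi(t)\,t^N\!\int_0^1\frac{(1-s)^{N-1}}{(N-1)!}\,e^{ist\mu\widehat P}(\ad_{\widehat P}^N B)e^{i(1-s)t\mu\widehat P}\,ds\,dt.
$$
Its $\Hh^s\to\Hh^{s'}$ operator norm is controlled by $|\mu|^N\|\ad_{\widehat P}^N B\|_{\Hh^s\to\Hh^{s'}}$ times $\int|t|^N|\hat\chi(t)|\,dt$, which is finite since $\hat\chi\in\mathscr S(\mathbb R)$, together with the uniform $\Hh^s$-boundedness of the unitary groups $e^{ist\mu\widehat P}$ noted above. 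This yields the advertised $\mathcal O((\tilde h/h)^N\|\ad_{\widehat P}^N B\|_{\Hh^s\to\Hh^{s'}})$ bound, with constant depending only on $\chi$, $N$, $s$, $s'$, $\widehat P$. The only genuine point to be careful about is justifying the interchange of the $s$- and $t$-integrals with the action on a test vector; this is routine from Fubini and the Sobolev-boundedness of $e^{it\mu\widehat P}$, since $B$ is polynomially bounded so that $\ad_{\widehat P}^N B$ maps $\Hh^s\to\Hh^{s'}$ for appropriate exponents. No deeper obstacle arises.
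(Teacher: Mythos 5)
Your proof is correct and follows essentially the same route as the paper: Fourier inversion $\chi(\mu\widehat P)=\frac{1}{2\pi}\int\hat\chi(t)e^{it\mu\widehat P}\,dt$, Taylor expansion of $e^{it\mu\widehat P}Be^{-it\mu\widehat P}$ using $\partial_t^j=(i\mu)^j\ad_{\widehat P}^j$, uniform $\Hh^s$-boundedness of $e^{\pm it\mu\widehat P}$, and substitution back. The only cosmetic difference is that you spell out the integral form of the Taylor remainder and the identity $\chi^{(j)}(\lambda)=\frac{1}{2\pi}\int(it)^j\hat\chi(t)e^{it\lambda}\,dt$, while the paper states these compressed.
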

\begin{proof}
By the Fourier inversion formula,
\begin{equation}\label{e:crazy-fourier}
\chi((\tilde h/h)\widehat P)={1\over 2\pi}\int \hat \chi(t)e^{it(\tilde h/h)\widehat P}\,dt.
\end{equation}
Here $\hat \chi\in \mathscr S(\mathbb R)$ is the Fourier transform of $\chi$.
Now, for each $j$,
$$
\partial_t^j (e^{it(\tilde h/h)\widehat P} Be^{-it(\tilde h/h)\widehat P})
=(i\tilde h/h)^je^{it(\tilde h/h) \widehat P}(\ad_{\widehat P}^j  B)e^{-it(\tilde h/h) \widehat P};
$$
since $e^{\pm it (\tilde h/h)\widehat P}$ is bounded uniformly in $t$ on each $\Hh^s(X)$, we have
by Taylor's formula
$$
\bigg\|e^{it(\tilde h/h) \widehat P} Be^{-it(\tilde h/h)\widehat P}-\sum_{0\leq j<N} {(it\tilde h/h)^j\over j!}
\ad_{\widehat P}^j B\bigg\|_{\Hh^s\to\Hh^{s'}}\leq C|t\tilde h/h|^N\|\ad_{\widehat P}^N B\|_{\Hh^s\to\Hh^{s'}}.
$$
It remains to multiply the operator in the left-hand side by $e^{it(\tilde h/h)\widehat P}$
on the right and substitute into~\eqref{e:crazy-fourier}.
\end{proof}
%
%
The operator $\chi((\tilde h/h)\widehat P)$ is not
$h$-pseudodifferential.  As mentioned in the beginning of the
subsection, we expect it to have nonlocal contributions
corresponding to transport along the Hamiltonian flow for all
times. To see this, recall~\eqref{e:crazy-fourier} and the fact that
$e^{it(\tilde h/h)\widehat P}$ is a Fourier integral operator
associated to the Hamiltonian flow of $\hat p$ at time $t\tilde h$ (see
also~\cite[\S5.4]{sj-z}).  However, the nonlocal part of 
$\chi((\tilde h/h)\widehat P)$ decays rapidly with
respect to $\tilde h$, and commuting with certain pseudodifferential
operators produces a power of $\tilde h$:
%
%
\begin{lemm}\label{l:second-mic}
Let $\chi\in \mathscr S(\mathbb{R})$. Then:
\begin{enumerate}
  \item if $\Psi_1,\Psi_2\in\Psi^0(X)$ have $\WFh(\Psi_1)\cap\WFh(\Psi_2)=\emptyset$,
then
$$
\Psi_1\chi((\tilde h/h)\widehat P)\Psi_2=\mathcal O(\tilde h^\infty);
$$
  \item if $\Psi_1\in\Psi^0(X)$, or both $\Psi_1\in\Psie(X)$ and
$H_{\hat p}\tilde\sigma(\Psi_1)=\mathcal O(1)_{\Syme}$, then
$$
[\chi((\tilde h/h)\widehat P),\Psi_1]=\mathcal O(\tilde h).
$$
\end{enumerate}
In both cases, the $\mathcal O(\cdot)$ is understood in the sense of~\eqref{e:b-bound},
as the left-hand sides are compactly microlocalized by Lemma~\ref{l:2nd-compact}.
\end{lemm}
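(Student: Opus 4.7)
The plan is to deduce both parts directly from the expansion formula in Lemma~\ref{l:crazy}, the key point being that each application of $\ad_{\widehat P}$ to a pseudodifferential operator produces a factor of $h$ (offsetting the $\tilde h/h$ in the expansion and leaving a net factor of $\tilde h$), and additionally preserves the wavefront set.

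For part~(1), I would take $B=\Psi_2$ in \eqref{e:crazy-formula} and fix an arbitrary $N$. The crucial structural fact is that iterated commutators $\ad_{\widehat P}^j\Psi_2$ satisfy $\ad_{\widehat P}^j\Psi_2\in h^j\Psi^{j(k-1)}(X)$, with wavefront set contained in $\WFh(\Psi_2)$: each bracket with $\widehat P\in\Psi^k$ kills the principal symbol, producing an $h$, while wavefront sets can only shrink under pseudodifferential composition. Multiplying each term in the sum on the left by $\Psi_1$ and using the hypothesis $\WFh(\Psi_1)\cap\WFh(\Psi_2)=\emptyset$, we get $\Psi_1(\ad_{\widehat P}^j\Psi_2)=\Resh$, and then the uniform Sobolev bound on $\chi^{(j)}((\tilde h/h)\widehat P)$ makes the $j$-th term $\mathcal O(h^\infty(\tilde h/h)^j)=\mathcal O(\tilde h^\infty)$ in the range $h<e^{-1/\tilde h}$ (as in Lemma~\ref{l:2nd-compact}). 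For the remainder term, the bound $\|\ad_{\widehat P}^N\Psi_2\|_{\Hh^s\to \Hh^{s-N(k-1)}}=\mathcal O(h^N)$ combined with the $(\tilde h/h)^N$ prefactor yields a remainder of size $\mathcal O(\tilde h^N)$. Since $N$ is arbitrary, this gives $\mathcal O(\tilde h^\infty)$.

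For part~(2), I would apply Lemma~\ref{l:crazy} with $B=\Psi_1$ and $N=1$, obtaining
\[
\chi((\tilde h/h)\widehat P)\Psi_1-\Psi_1\chi((\tilde h/h)\widehat P)=\mathcal O\bigl((\tilde h/h)\,\|[\widehat P,\Psi_1]\|\bigr).
\]
In the first case $\Psi_1\in\Psi^0(X)$, standard pseudodifferential calculus gives $[\widehat P,\Psi_1]\in h\Psi^{k-1}(X)$, hence bounded $\Hh^s\to \Hh^{s-k+1}$ by $\mathcal O(h)$, and the right-hand side is $\mathcal O(\tilde h)$. In the second case $\Psi_1\in\Psie(X)$ with $H_{\hat p}\tilde\sigma(\Psi_1)=\mathcal O(1)_{\Syme}$, the hypothesis is precisely the one needed for Lemma~\ref{l:1/2-properties}(7) (applied to $A=\widehat P$, since $\{\hat p,\tilde\sigma(\Psi_1)\}=H_{\hat p}\tilde\sigma(\Psi_1)$), which upgrades the commutator from $h^{1/2}\tilde h^{1/2}\Psie$ to $h\Psie(X)$. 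This gives the required $\mathcal O(\tilde h)$ bound.

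The mild technical obstacle is the order shift induced by the commutators in Sobolev spaces: each $\ad_{\widehat P}$ lowers the Sobolev mapping order by $k-1$. This is harmless because both sides in each statement are compactly microlocalized by Lemma~\ref{l:2nd-compact}, so any bound in a single pair of Sobolev norms automatically upgrades (in the sense of \eqref{e:b-bound}) via insertion of compactly microlocalized cutoffs that are the identity microlocally near $\WFh(\chi((\tilde h/h)\widehat P))\cup\WFh(\Psi_1)\cup\WFh(\Psi_2)$. No other subtlety appears, so the two applications of Lemma~\ref{l:crazy} really do settle the statement.
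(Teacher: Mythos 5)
Your proof is correct and follows essentially the same route as the paper: both parts are deduced by applying Lemma~\ref{l:crazy} with $B=\Psi_2$ (respectively $B=\Psi_1$, $N=1$), using that iterated $\ad_{\widehat P}$ produces powers of $h$, the disjointness of wavefront sets for part~(1), and Lemma~\ref{l:1/2-properties}(7) for the exotic case in part~(2). The Sobolev-index bookkeeping you flag is handled the same way in the paper (by upgrading via compact microlocalization), so there is no substantive difference.
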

\begin{proof} 
(1) We apply Lemma~\ref{l:crazy} with $B=\Psi_2$.
Since $\ad_{\widehat P}^N \Psi_2=\mathcal O(h^N)_{\Psi^{N(k-1)}}$, we have for each $N$ and each $s$,
$$
\Psi_1\chi((\tilde h/h)\widehat P)\Psi_2=\sum_{0\leq j<N}{(\tilde h/h)^j\over j!}\Psi_1
(\ad_{\widehat P}^j\Psi_2)\chi^{(j)}((\tilde h/h)\widehat P)+\mathcal O(\tilde h^N)_{\Hh^s\to \Hh^{s+N(1-k)}};
$$
each term in the sum is $\Resh$ as $\Psi_1(\ad_{\widehat P}^j\Psi_2)=\Resh$.
It remains to let $N\to\infty$.

(2) We have $[\widehat P,\Psi_1]=\mathcal O(h)_{L^2\to L^2}$ (see part~7 of
Lemma~\ref{l:1/2-properties} for the second case); it remains to apply
Lemma~\ref{l:crazy} with $B=\Psi_1$ and $N=1$.
\end{proof}
%
%
Finally, we establish a version of Egorov's theorem, needed in  \S\ref{s:approximation1}.
%
%
\begin{lemm}\label{l:2nd-egorov}
Let $X_1,X_2$ be two compact manifolds of the same dimension,
$\varkappa$ be a symplectomorphism mapping an open subset of $T^*X_1$
onto an open subset of $T^*X_2$, and $B:C^\infty(X_2)\to
C^\infty(X_1)$ be a compactly microlocalized semiclassical Fourier
integral operator associated to $\varkappa$, in the sense of
\S\ref{s:prelim.canonical}.  Assume that $\widehat
P_j\in\Psi^{k_j}(X_j)$, $k_j>0$, are symmetric operators
elliptic near $S^* X_j$ and 
$$
\sigma(\widehat P_1)=\sigma(\widehat P_2)\circ\varkappa
$$
near the projection of $\WFh(B)$ onto $T^*X_1$.
Then for each $\chi\in \mathscr S(\mathbb R)$,
$$
\chi((\tilde h/h)\widehat P_1)B=B\chi((\tilde h/h)\widehat P_2)+\mathcal O(\tilde h),
$$
with $\mathcal O(\tilde h)$ understood in the sense of~\eqref{e:b-bound}.
\end{lemm}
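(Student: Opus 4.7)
The plan is to reduce to a propagator-level Egorov statement via Fourier inversion, following the same template that underlies Lemma~\ref{l:crazy} and Lemma~\ref{l:second-mic}. Concretely, using
\[
\chi((\tilde h/h)\widehat P_j)={1\over 2\pi}\int\hat\chi(t)\,e^{it(\tilde h/h)\widehat P_j}\,dt,
\]
the assertion is equivalent to the estimate
\begin{equation}\label{e:eg-reduction}
\|e^{is\widehat P_1}B-Be^{is\widehat P_2}\|_{\Hh^{s'}\to \Hh^{s''}}\leq C|s|h,\qquad s\in\mathbb R,
\end{equation}
for any fixed Sobolev exponents $s',s''$: once this is known, setting $s=t\tilde h/h$ and using that $t\hat\chi(t)\in L^1(\mathbb R)$ because $\hat\chi\in \mathscr S(\mathbb R)$ produces the desired $\mathcal O(\tilde h)$ bound in the sense of~\eqref{e:b-bound}, since both sides of the equation in the lemma are compactly microlocalized by Lemma~\ref{l:2nd-compact} and the fact that $B$ is compactly microlocalized.

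To prove~\eqref{e:eg-reduction}, set $D(s)=e^{is\widehat P_1}B-Be^{is\widehat P_2}$ and differentiate:
\[
{d\over ds}D(s)=i\widehat P_1 D(s)+iRe^{is\widehat P_2},\qquad R:=\widehat P_1 B-B\widehat P_2,\qquad D(0)=0.
\]
Variation of parameters gives
\[
D(s)=i\int_0^s e^{i(s-\tau)\widehat P_1}Re^{i\tau\widehat P_2}\,d\tau.
\]
By Egorov's theorem (property~\eqref{i:egorov} in \S\ref{s:prelim.canonical}) applied to the symbolic identity $\sigma(\widehat P_1)=\sigma(\widehat P_2)\circ\varkappa$ near $\pi_1(\WFh(B))$, the remainder $R$ is compactly microlocalized with $R=\mathcal O(h)$ in the sense of~\eqref{e:b-bound}, i.e.\ $\|R\|_{\Hh^{s'}\to\Hh^{s''}}=\mathcal O(h)$ for every $s',s''$. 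Since $e^{i\tau\widehat P_j}$ is bounded on each $\Hh^s(X_j)$ uniformly in $\tau$ (as noted just before Lemma~\ref{l:2nd-compact}), the integrand is uniformly $\mathcal O(h)$ in the relevant operator norm, which yields~\eqref{e:eg-reduction}.

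The one place requiring care is the uniform Sobolev boundedness of the propagators: on $L^2$ it is immediate from self-adjointness, and the extension to general $\Hh^s$ proceeds as in the argument just before Lemma~\ref{l:2nd-compact} by commuting with powers of $i+\widehat P_j$ and interpolating. Apart from this point, the argument is a direct Duhamel computation; the main structural observation is simply that the gain $\tilde h$ in the lemma comes from trading one factor of $h$ in the principal-symbol Egorov for the time scaling $s=t\tilde h/h$, with the remaining $t$-factor absorbed by the Schwartz decay of $\hat\chi$.
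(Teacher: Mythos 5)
Your argument is correct and follows essentially the same route as the paper: Fourier inversion reduces the claim to a propagator-level estimate, which is then obtained from the standard Egorov bound $\widehat P_1 B - B\widehat P_2 = \mathcal O(h)$ and uniform boundedness of $e^{it\widehat P_j}$ on Sobolev spaces, with the factor $\tilde h$ emerging from the time rescaling and the $t$-dependence absorbed by Schwartz decay of $\hat\chi$. The paper obtains the same bound by conjugating $e^{it(\tilde h/h)\widehat P_1}B - Be^{it(\tilde h/h)\widehat P_2}$ on the right by $e^{-it(\tilde h/h)\widehat P_2}$ and differentiating in $t$, which is equivalent to your explicit Duhamel/variation-of-parameters computation.
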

\begin{proof}
As in the proof of Lemma~\ref{l:crazy}, we use the Fourier inversion formula:
$$
\chi((\tilde h/h)\widehat P_1)B-B\chi((\tilde h/h)\widehat P_2)
={1\over 2\pi}\int \hat\chi(t)(e^{it(\tilde h/h)\widehat P_1}B-Be^{it(\tilde h/h)\widehat P_2})\,dt.
$$
It is then enough to prove that
$$
e^{it(\tilde h/h)\widehat P_1}B-Be^{it(\tilde h/h)\widehat P_2}=\mathcal O(t\tilde h)_{L^2\to L^2}.
$$
Multiply the left-hand side by $e^{-it(\tilde h/h)\widehat P_2}$ on the right and differentiate in $t$:
we get
$$
i(\tilde h/h)e^{it(\tilde h/h)\widehat P_1}(\widehat P_1 B -B\widehat P_2)e^{-it(\tilde h/h)\widehat P_2};
$$
this expression is $\mathcal O(\tilde h)_{L^2\to L^2}$ uniformly in $t$
by the standard Egorov property~\eqref{i:egorov} in \S\ref{s:prelim.canonical},
as $\widehat P_1B-B\widehat P_2=\mathcal O(h)$. It remains to integrate in $t$.
\end{proof}
%
%

\section{Approximation by finite rank operators}
  \label{s:approximation}
  
In this section, we prove the following analog of \cite[Proposition~5.10]{sj-z}:
%
%
\begin{lemm}\label{l:approximation}
Let $X$ be a compact manifold, $\widehat P\in\Psi^k(X)$, $k>0$, a
symmetric operator with principal symbol $\hat
p=\sigma(\widehat P)$ elliptic outside of a compact set,
and $\widetilde A\in\Psie(X)$ is such that $\hat p$ has no critical points
on $\hat p^{-1}(0)\cap\WFh(\widetilde A)$.
Assume moreover that $\tilde\sigma(\widetilde A)=\tilde a+\mathcal
O(h^{1/2}\tilde h^{1/2})_{\Syme}$, where $\tilde a\in
C_0^\infty(T^*X)\cap \Syme(X)$ and there exists a constant $\nu \ge 0$ such that for each
$R>0$, there exists $C>0$ such that 
\begin{equation}
  \label{e:vol-est}
\begin{gathered}
\Vol_{\hat p^{-1}(0)}\{\exp(tH_{\hat p})(x,\xi)\mid
|t|\leq R,\\
(x,\xi)\in
(\supp\tilde a\cap \hat p^{-1}(0))+B_{\hat p^{-1}(0)}(R(h/\tilde h)^{1/2})\}
\leq C(h/\tilde h)^{n-1-\nu}.
\end{gathered}
\end{equation}
Here $\Vol_{\hat p^{-1}(0)}$ denotes the volume
with respect to the Liouville measure on $\hat p^{-1}(0)$ and
$V+B_{\hat p^{-1}(0)}(r)$ denotes the set of points in $\hat
p^{-1}(0)$ lying distance at most $r$ away from $V\subset \hat
p^{-1}(0)$ (with respect to some fixed smooth metric).  Finally, let
$\chi\in C_0^\infty(\mathbb R)$ and define
$$
A=\chi((\tilde h/h)\widehat P)\widetilde A.
$$
Then we can write $A=A_R+A_E$, where $A_R,A_E:C^\infty(X)\to C^\infty(X)$
are compactly microlocalized and for some constant $C(\tilde h)$ independent
of $h$,
$$
A_R=\mathcal O(1),\
A_E=\mathcal O(\tilde h),\
\rank A_R\leq C(\tilde h)h^{-\nu}.
$$
Here $\mathcal O(\cdot)$ is understood in the sense of~\eqref{e:b-bound}.
\end{lemm}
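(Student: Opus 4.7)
The plan is to reduce, via a Darboux chart quantized by semiclassical FIOs, to a flat model on $T^*\mathbb R^n$ in which $\hat p=\xi_1$, and then to realize $A_R$ as the orthogonal projection of the model operator onto an explicit finite family of Gaussian wave packets whose cardinality is controlled by the volume hypothesis~\eqref{e:vol-est}.

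\textbf{Model reduction.} Since $\hat p$ has no critical points on the compact set $\hat p^{-1}(0)\cap\WFh(\widetilde A)$, Darboux's theorem produces a finite open cover $\{U_j\}$ of it with symplectomorphisms $\varkappa_j:U_j\to V_j\subset T^*\mathbb R^n$ satisfying $\hat p\circ\varkappa_j^{-1}=\xi_1$. A $\Psi^0(X)$ partition of unity subordinate to $\{U_j\}$ splits $\widetilde A=\sum_j\widetilde A_j$ with $\WFh(\widetilde A_j)\subset U_j$. Quantize each $\varkappa_j$ by a compactly microlocalized FIO pair $(B_j,B_j')$ as in \S\ref{s:prelim.canonical}, and set $\widetilde A_j^{\flat}:=B_j'\widetilde A_j B_j$; by Lemma~\ref{l:1/2-egorov}, this lies in $\Psie(\mathbb R^n)$ with principal symbol $\tilde a\circ\varkappa_j^{-1}$ modulo $\mathcal O(h^{1/2}\tilde h^{1/2})_{\Syme}$, and $\widetilde A_j=B_j\widetilde A_j^{\flat}B_j'+\Resh$. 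Since $hD_{x_1}$ on $\mathbb R^n$ has principal symbol $\xi_1=\hat p\circ\varkappa_j^{-1}$ on $V_j$, Lemma~\ref{l:2nd-egorov} gives $\chi((\tilde h/h)\widehat P)B_j=B_j\,\chi(\tilde h D_{x_1})+\mathcal O(\tilde h)$. Combining these identities and using boundedness of FIOs $\mathcal O(1)$ on $L^2$, the problem is reduced to decomposing the model operator $A^{\flat}:=\chi(\tilde h D_{x_1})\,\widetilde A^{\flat}$ on $\mathbb R^n$.

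\textbf{Coherent state approximation and rank count.} Fix $R$ with $\supp\hat\chi\subset[-R,R]$. Using~\eqref{e:vol-est}, choose a lattice $\{(x'_\alpha,\xi'_\alpha)\}_{\alpha\in\mathcal J}$ at spacing $(h/\tilde h)^{1/2}$ in the $(x',\xi')$ variables that covers the $(h/\tilde h)^{1/2}$-neighborhood (in $\{\xi_1=0\}$) of the $x_1$-flow-out of $\supp\tilde a^{\flat}\cap\{\xi_1=0\}$ for $|t|\leq R$; then $|\mathcal J|\leq C(\tilde h)h^{-\nu}$. Attach to each $\alpha$ a normalized Gaussian wave packet $g_\alpha(x')$ of width $(h/\tilde h)^{1/2}$, center $x'_\alpha$, and frequency $\xi'_\alpha/h$. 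In the $x_1$-direction, take an exponential Fourier basis $\{\phi_k(x_1)\}$ on a fixed interval containing the $x_1$-footprint of the flow-out, truncated to $|\xi_1|\leq Ch/\tilde h$ so that $\chi(\tilde h\xi_1)$ does not vanish; this contributes only $\mathcal O_{\tilde h}(1)$ basis elements. Let $V:=\operatorname{span}\{\phi_k(x_1)\,g_\alpha(x')\}$, so $\dim V\leq C(\tilde h)h^{-\nu}$, and set $A_R^{\flat}:=\Pi_V A^{\flat}$, $A_E^{\flat}:=(I-\Pi_V)A^{\flat}$. Transporting back through $(B_j,B_j')$ produces compactly microlocalized operators $A_R,A_E$ on $X$ with $\rank A_R\leq\dim V\leq C(\tilde h)h^{-\nu}$.

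\textbf{Main obstacle.} The delicate step is the operator-norm bound $\|A_E^{\flat}\|_{L^2\to L^2}=\mathcal O(\tilde h)$. By pseudolocality of $\widetilde A^{\flat}$ in the $\Psie$ calculus (Lemma~\ref{l:1/2-properties}(1)), its image is, up to $\mathcal O(\tilde h^\infty)$, a superposition of Gaussians concentrated in the $(h/\tilde h)^{1/2}$-neighborhood of $\supp\tilde a^{\flat}$ in $(x',\xi')$. Post-composition with $\chi(\tilde h D_{x_1})=(2\pi)^{-1}\int\hat\chi(t)e^{it\tilde h D_{x_1}}\,dt$ translates these packets in $x_1$ within a controlled range and confines $\xi_1$ to $|\xi_1|\lesssim h/\tilde h$. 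The lattice $V$ is built with an extra margin so that the tails beyond $V$ decay as $\mathcal O(\tilde h^N)$ for any $N$; taking $N$ large yields the claimed bound. The real obstacle is coordinating the three distinct scales---$(h/\tilde h)^{1/2}$ transverse, $h/\tilde h$ in $\xi_1$, and $\mathcal O(1)$ along the flow---with the non-pseudolocal character of $\chi(\tilde h D_{x_1})$ and the FIO pullback to $X$, extracting a genuine operator-norm gain of $\tilde h$ rather than merely $\mathcal O(1)$.
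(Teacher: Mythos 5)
Your model reduction and rank count track the paper's own proof (\S\ref{s:approximation1}--\S\ref{s:approximation2}): reduce via Darboux charts and semiclassical FIOs, using Lemmas~\ref{l:1/2-egorov} and~\ref{l:2nd-egorov}, to a flat model with $\widehat P = hD_{x_1}$, then cover the relevant region at scale $(h/\tilde h)^{1/2}$ in $(x',\xi')$, obtaining $\mathcal O((h/\tilde h)^{-\nu})$ cells times $\mathcal O_{\tilde h}(1)$ modes in $x_1$. But your realization of $A_R$ as the orthogonal projection $\Pi_V A^{\flat}$ onto a span of individual Gaussian wave packets diverges from the paper's mechanism, and the divergence lands you precisely on the estimate you leave unproven. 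The paper instead writes $A_R = \chi(\tilde h D_{x_1})\Psi_\Pi B'$, with $\Psi_\Pi$ a finite-rank sum of spectral cutoffs $\tilde\chi((\tilde h/h)\widetilde P_l)$ of shifted harmonic oscillators and $B'\in\Psie$ a parametrix constructed by dividing the symbol by $\tilde\sigma(\Psi_\Pi)$; the $\mathcal O(\tilde h)$ remainder then comes straight out of the $\Psie$ symbol calculus. The nontrivial content is Lemma~\ref{l:harmonic}: that the sum of polynomially many harmonic-oscillator cutoffs lands in $\Psie$ with symbol $\ge 1-\mathcal O(\tilde h)$ near $\Pi(\supp\tilde a)$, proved via the precise off-diagonal decay of the full Weyl symbol (Lemma~\ref{l:harmonic-internal}) and a dyadic summation over cells — exactly the step the paper flags as having been skipped in~\cite{sj-z}.

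The gap in your argument is therefore genuine and not cosmetic. First, a single Gaussian per $(h/\tilde h)^{1/2}$-cell sits at critical Gabor density, and by the Balian--Low obstruction such a system is not a frame; you would need either to densify by a $\tilde h$-dependent factor (allowed, since it only feeds into $C(\tilde h)$, but the frame bounds then degrade and must be tracked uniformly as the number of cells grows polynomially in $1/h$), or to attach $\sim\tilde h^{1-n}$ modes per cell as the paper's oscillator projections do. Second, even granting a frame, the claim that the range of $A^{\flat}$ lies within $\mathcal O(\tilde h)$ of $V$ is a quantitative concentration statement about a $\Psie$ operator composed with a non-pseudolocal second-microlocal cutoff, and establishing it would reproduce, in coherent-state language, essentially the same off-diagonal summation as Lemma~\ref{l:harmonic}. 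Third, your Fourier basis on a fixed interval does not interact cleanly with the whole-line multiplier $\chi(\tilde h D_{x_1})$ — the paper sidesteps this by working on $\mathbb S^1_{x_1}$ — and you never address the split $\widetilde A = \widetilde A' + \widetilde A''$ of \S\ref{s:approximation4}, needed because $\tilde a$ is only $S_{1/2}$-regular in $\xi_1$ while the second-microlocal cutoff acts at the finer scale $h/\tilde h$. In short, the skeleton and the rank count are correct, but the norm bound $\|A_E\|=\mathcal O(\tilde h)$, which is the substance of the lemma, is not established by the route you sketch.
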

%
%
Lemma~\ref{l:approximation} is the main component needed to
approximate the operator $A$ from Lemma~\ref{l:main} by a finite rank
operator. In our case, the volume estimate~\eqref{e:vol-est} is a
direct consequence of the definition of the upper Minkowski dimension
of the trapped set  \eqref{e:minkdef} and the fact that the symbol $\tilde a$ will be
supported $\mathcal O((h/\tilde h)^{1/2})$ close to the trapped set.
See \S\ref{s:ultimate3} for details.

To prove Lemma~\ref{l:approximation}, we will first, in \S\ref{s:approximation1}, conjugate the
operator $A$ locally by a  semiclassical Fourier integral
operator to make $\hat p=\xi_1$  thus trivializing the `second
microlocalized' factor $\chi((\tilde h/h)\widehat P)$ of $A$. To
simplify the discussion, we drop the second
microlocalized factor in this paragraph and explain how to approximate
$\widetilde A$ rather than  $A$. For that,  cover $\supp \tilde a$
 by balls of size $\sim(h/\tilde h)^{1/2}$ (the analog of this step is carried out in \S\ref{s:approximation2}); then to each such
ball,  associate an operator of rank $\mathcal O(1)$ which is a
function of a quantum harmonic oscillator, shifted to be centered in that ball (see \S\ref{s:approximation3}).  The sum
$\widetilde\Psi_\Pi$ of these operators will be elliptic near
$\WFh(\widetilde A)$, thus we can approximate $\widetilde A$ by a
multiple of $\widetilde\Psi_\Pi$ (see \S\S\ref{s:approximation3}--\ref{s:approximation4}). However, the rank of
$\widetilde\Psi_\Pi$ is bounded by a constant times the number of
balls of size $(h/\tilde h)^{1/2}$ that are needed to cover
$\supp\tilde a$; this number can be estimated by the volume
in~\eqref{e:vol-est}.

We generally follow the proof of~\cite[Proposition~5.10]{sj-z} (see
also~\cite[\S6.4]{e-z} for an application of some of the ideas
used in a simpler setting), with the following two differences. First
of all, we treat the operators microlocalized $\mathcal O(h/\tilde h)$
near the energy surface as in \S\ref{s:2nd}, rather than
using~\cite[\S5]{sj-z}.  Secondly, we prove in detail (see
Lemma~\ref{l:harmonic}) that the operator $\widetilde\Psi_\Pi$ lies
in~$\Psie$. This is not trivial because, even though the operator
associated to each ball lies in $\Psie$, we sum $\sim(\tilde
h/h)^{n-1-\nu}$ many such operators; this step is skipped in~\cite{sj-z}.

One of the anonymous referees of this paper has suggested an alternative approach to this lemma. If one arranges that $A$ is a positive operator, and shows that the trace of $A$ is bounded by $C h^{-n} (h/\tilde h)^{n-\nu}$, or more generally by $C(\tilde h)h^{-\nu}$, then $A$ can have no more than $C(\tilde h)\tilde h^{-1}h^{-\nu}$ many eigenvalues greater than $\tilde h$, and the decomposition follows. If $A$ were pseudodifferential, its trace could be computed by integrating its full symbol as in \cite[Theorem 9.4]{d-s}. The factor $\chi((\tilde h/h)\widehat P)$ prevents one from applying this approach directly, but this difficulty could likely be overcome using the methods of \S\ref{s:2nd}.

We prove Lemma~\ref{l:approximation} over the course of the following four subsections:

\subsection{Reduction to a model problem}\label{s:approximation1} We reduce the general case to the following model case:
\[
X=X_M=\mathbb S^1_{x_1}\times \mathbb R^{n-1}_{x'}, \qquad \widehat P=hD_{x_1}.
\]
The manifold $X_M$ is not compact, so, strictly speaking, the
statement of Lemma~\ref{l:approximation} does not apply. However, the
only place where we use the compactness of $X$ in the proof is the
application of Lemma~\ref{l:2nd-egorov} in this subsection; since
$hD_{x_1}$ is self-adjoint on $L^2(X_M)$ and the associated unitary
operator $e^{it(\tilde h/h)hD_{x_1}}$ is a shift in the $x_1$ variable
and thus bounded on each $\Hh^s(X_M)$ uniformly in $t$, the proof of
this lemma still goes through. The operator
$\chi(\tilde hD_{x_1})$ is a Fourier series multiplier in the
$x_1$ variable; therefore, it is properly supported and
polynomially bounded in the sense of \S\ref{s:prelim.basics};
moreover, the product of this operator with a compactly
microlocalized operator will also be compactly microlocalized.

We now construct finitely many operators $\Psi_j\in \Psic(X)$ such that
\begin{enumerate}
  \item $\sum_j\Psi_j=1$ microlocally near $\hat p^{-1}(0)\cap\WFh(\widetilde A)$;
  \item for each $j$, there exists a symplectomorphism
$\varkappa_j$ from a neighborhood of $\WFh(\Psi_j)$ onto
some open subset of $T^*X_M$ such that $\hat p=\xi_1\circ\varkappa_j$
on the domain of $\varkappa_j$;
  \item for each $j$, there exist compactly microlocalized
semiclassical Fourier integral operators
$$
B_j:C^\infty(X_M)\to C^\infty(X),\
B'_j:C^\infty(X)\to C^\infty(X_M),
$$
associated to $\varkappa_j$ and $\varkappa_j^{-1}$, respectively,
such that $B_jB'_j=1$ microlocally near $\WFh(\Psi_j)$.
\end{enumerate}
Indeed, by the Darboux theorem (see for
example~\cite[Theorem~12.1]{e-z}) each $(x,\xi)\in \hat p^{-1}(0)\cap\WFh(\widetilde A)$ has
a neighborhood $U_{(x,\xi)}\subset T^*X$ with a symplectomorphism
$\varkappa:U_{(x,\xi)}\to T^* X_M$ such that $\hat
p=\xi_1\circ\varkappa_{(x,\xi)}$ on $U_{(x,\xi)}$. Using the method
described at the end of \S\ref{s:prelim.canonical}, we can find
compactly microlocalized semiclassical Fourier integral operators
$B_{(x,\xi)}:C^\infty(X_M)\to C^\infty(X)$,
$B'_{(x,\xi)}:C^\infty(X)\to C^\infty(X_M)$ quantizing $\varkappa$
near the closure of
$V_{(x,\xi)}\times\varkappa_{(x,\xi)}(V_{(x,\xi)})$, where
$V_{(x,\xi)}\subset U_{(x,\xi)}$ is some neighborhood of $(x,\xi)$.
It remains to choose $\Psi_j$ as a microlocal partition of unity
subordinate to an open cover of $\hat p^{-1}(0)$ by finitely many of
the sets $V_{(x,\xi)}$.

By Lemma~\ref{l:2nd-egorov} (see the remark about non-compactness of $X_M$ in
the beginning of this subsection), we have
$$
B'_j\chi((\tilde h/h)\widehat P)=\chi(\tilde hD_{x_1})B'_j+\mathcal O(\tilde h);
$$
here $\mathcal O(\tilde h)$ is understood in the sense of~\eqref{e:b-bound},
as both sides of the equation are compactly microlocalized.
Next, let $\chi_j\in C_0^\infty(T^*X)$ be supported inside the domain of $\varkappa_j$,
but $\chi_j=1$ near the projection of $\WFh(B'_j)$ onto $T^*X$, and let
$\widetilde A_j\in\Psi^{\comp}_{1/2}(X_M)$ satisfy
$$
\tilde\sigma(\widetilde A_j)=\tilde a_j+\mathcal O(h^{1/2}\tilde h^{1/2}),\
\tilde a_j=(\chi_j\tilde a)\circ\varkappa_j^{-1};
$$
here $\tilde a_j$ is extended by zero outside of the image of $\varkappa_j$.
By Lemma~\ref{l:1/2-egorov},
$$
B'_j\widetilde A=\widetilde A_jB'_j+\mathcal O(h^{1/2}\tilde h^{1/2});
$$
moreover, $\tilde a_j$ satisfies the volume bound~\eqref{e:vol-est},
with $\xi_1$ taking the place of $\hat p$.
By Lemmas~\ref{l:2nd-compact} and~\ref{l:second-mic}(1),
$$
\begin{gathered}
A=\sum_j \Psi_j\chi((\tilde h/h)\widehat P)\widetilde A+\mathcal O(\tilde h^\infty)
=\sum_j \Psi_jB_j B'_j\chi((\tilde h/h)\widehat P)\widetilde A+\mathcal O(\tilde h^\infty)\\
=\sum_j \Psi_jB_j \chi(\tilde hD_{x_1})\widetilde A_jB'_j+\mathcal O(\tilde h).
\end{gathered}
$$
It now suffices to establish the decomposition for each of the operators
$\chi(\tilde hD_{x_1})\widetilde A_j$ (bearing in mind that $B_j,B'_j$ have
norm $\mathcal O(1)$~--- see property~\ref{i:bdd} in \S\ref{s:prelim.canonical}).
Therefore, we henceforth assume that $X=X_M$ and $\widehat P=hD_{x_1}$.

\subsection{Covering by cylinders}\label{s:approximation2} We now cover $\supp \tilde a\cap\hat p^{-1}(0)$ by cylinders. 

We write $(x,\xi)\in T^* X_M$ as $(x_1,x',\xi_1,\xi')$, where
$x_1\in \mathbb S^1$, $\xi_1\in \mathbb R$, and $x',\xi'\in \mathbb R^{n-1}$.
The energy surface is $\hat p^{-1}(0)=\{\xi_1=0\}$. The Hamiltonian flow
of $\hat p$ is $2\pi$-periodic; thus, for given $(x,\xi)$ and $R\geq \pi$, the set
$\{\exp(tH_{\hat p}(x,\xi))\mid |t|\leq R\}$ is equal to the
circle in the direction of the $x_1$ variable passing through $(x,\xi)$.
If $V\subset T^* X_M$, define
\begin{equation}\label{e:def-pi}
\Pi(V)=\{(x',\xi')\mid \exists x_1: (x_1,x',0,\xi')\in V\}\subset \mathbb R^{2(n-1)}.
\end{equation}
The definition~\eqref{e:def-pi} is useful for handling the second
microlocalization in \S\ref{s:approximation4}.  In fact, covering
$\Pi(V)$ by $(h/\tilde h)^{1/2}$ sized balls is morally the same as
covering $V\cap \hat p^{-1}([-h/\tilde h,h/\tilde h])$ (which is where
the operator $A$ is microlocalized) by cylinders of size $h/\tilde h$
in the direction transversal to $\hat p^{-1}(0)$, of size $1$ in the
direction of the Hamiltonian flow of $\hat p$, and of size $(h/\tilde
h)^{1/2}$ in all other directions; to each such cylinder will
correspond an operator of rank $\mathcal O(1)$, which is essentially
the product of a $C_0^\infty$ function of $\tilde h D_{x_1}$ and
spectral projector associated to a shifted harmonic oscillator.  This
subsection and \S\ref{s:approximation3} will handle the $(x',\xi')$
directions, constructing the covering by balls and the corresponding
finite rank operators.

The volume estimate~\eqref{e:vol-est} with $R = \pi$ implies (the choice of $1/2$
is convenient later)
$$
\Vol_{x',\xi'}\Big(\Pi(\supp \tilde a)
+B\Big(0,{1\over 2}(h/\tilde h)^{1/2}\Big)\Big)\leq C(h/\tilde h)^{n-1-\nu};
$$
here $B(\rho,r)$ denotes the closed Euclidean ball of radius $r$
centered at $\rho$. Following~\cite[Lemma~3.3]{sj}, take a maximal set
of points
$$
(x'_l,\xi'_l)\in\Pi(\supp \tilde a),\
1\leq l\leq M(h,\tilde h),
$$
such that the distance between any two distinct points in this set is
$>(h/\tilde h)^{1/2}$. Then
$$
\bigsqcup_{l=1}^{M(h,\tilde h)}B\Big((x'_l,\xi'_l),{1\over 2}(h/\tilde h)^{1/2}\Big)
\subset \Pi(\supp\tilde a)+B\Big(0,{1\over 2}(h/\tilde h)^{1/2}\Big);
$$
therefore, comparing the volumes of the two sides, we get
\begin{equation}\label{e:m-h-bound}
M(h,\tilde h)\leq C(h/\tilde h)^{-\nu}.
\end{equation}
However, we also know by maximality that
\begin{equation}\label{e:covering}
\Pi(\supp\tilde a)\subset \bigcup_{l=1}^{M(h,\tilde h)} B((x'_l,\xi'_l),(h/\tilde h)^{1/2}).
\end{equation}

\subsection{The finite rank operator}\label{s:approximation3} To each ball in the covering, we associate 
a finite rank operator constructed using a function of a shifted
quantum harmonic oscillator.  The sum of these operators will generate
the finite rank term in the decomposition. As noted in the beginning
of this section, since the number of balls in the covering grows
polynomially in $h$, we need to take care when summing up the
corresponding operators.

Consider the following shifted harmonic oscillators on $\mathbb R^{n-1}$
(here $x'_{l,i},\xi'_{l,i}$ are the $i$th coordinates of $x'_l$, $\xi'_l$ respectively)
$$
\widetilde P_l=\sum_{i=1}^{n-1}(hD_{x'_i}-\xi'_{l,i})^2+(x'_i-x'_{l,i})^2.
$$
Let $\tilde\chi\in C_0^\infty(-2,2)$ be nonnegative and equal to $1$ on $(-1,1)$
and put
$$
\widetilde\Psi_l=\tilde\chi((\tilde h/h)\widetilde P_l).
$$
%
%
\begin{lemm}\label{l:harmonic}
Take $\chi_b\in C_0^\infty(\mathbb R^{n-1})$ equal to $1$ near the projection
of $\Pi(\supp\tilde a)$ onto the base space $\mathbb R^{n-1}_{x'}$. Then the sum
$$
\widetilde\Psi_\Pi=\sum_{l=1}^{M(h,\tilde h)}\chi_b(x')\widetilde\Psi_l\chi_b(x')
$$
lies in $\Psie(\mathbb R^{n-1})$ (the cutoff $\chi_b$ is needed
because the operators $\widetilde\Psi_l$ are not properly supported), and its
symbol $\tilde s_\Pi=\tilde\sigma(\widetilde\Psi_\Pi)$ satisfies
$$
|\tilde s_\Pi|\geq 1-\mathcal O(\tilde h)
$$
on $\Pi(\supp\tilde a)$. Moreover,
if we consider $\widetilde\Psi_\Pi$ as an operator on $L^2(\mathbb R^{n-1})$,
then
\begin{equation}\label{e:harmonic-rank}
\rank \widetilde\Psi_\Pi\leq C\tilde h^{\nu+1-n}h^{-\nu}.
\end{equation}
\end{lemm}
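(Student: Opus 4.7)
Proof plan. The plan is to reduce each summand to the standard semiclassical harmonic oscillator with effective parameter $\tilde h$ via a unitary shift plus a rescaling, and then control the sum using the disjointness of the balls $B((x_l',\xi_l'),(h/\tilde h)^{1/2}/2)$ coming from maximality. For fixed $l$, conjugation by $e^{ix'\cdot\xi_l'/h}$ sends $hD_{x_i'}-\xi_{l,i}'$ to $hD_{x_i'}$ and translation by $x_l'$ sends $x_i'-x_{l,i}'$ to $x_i'$, so $\widetilde P_l$ is unitarily equivalent to $(hD)^2+|x'|^2$. The phase-space rescaling $(y,\eta)=\sqrt{\tilde h/h}\,(x',\xi')$ then identifies $(\tilde h/h)\widetilde P_l$ with the semiclassical harmonic oscillator $-\tilde h^2\Delta_y+|y|^2$ (with effective semiclassical parameter $\tilde h$). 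Standard semiclassical functional calculus for $\tilde\chi\in C_0^\infty$ (via Helffer--Sj\"ostrand) gives $\tilde\chi(-\tilde h^2\Delta_y+|y|^2)=\Op_{\tilde h}(A)+\mathcal O(\tilde h^\infty)_{\Psi^{-\infty}}$ with $A\sim\tilde\chi(|y|^2+|\eta|^2)+\tilde h A_1+\cdots$ Schwartz in $(y,\eta)$. Undoing the rescaling and shifts yields $\widetilde\Psi_l=\Op_h(a_l)+\Resh$, where
\[
a_l(x',\xi')=A\bigl(\sqrt{\tilde h/h}\,(x'-x_l'),\sqrt{\tilde h/h}\,(\xi'-\xi_l');\tilde h\bigr),
\]
so $a_l\in\Syme$, is rapidly decaying away from $(x_l',\xi_l')$ on the scale $(h/\tilde h)^{1/2}$, and has principal part $\tilde\chi((\tilde h/h)\widetilde p_l)$ modulo $\mathcal O(\tilde h)_{\Syme}$. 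The implicit constants are uniform in $l$ since the centers lie in the compact set $\Pi(\supp\tilde a)$.

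For the class-membership claim, the maximality from \S\ref{s:approximation2} makes the balls $B((x_l',\xi_l'),(h/\tilde h)^{1/2}/2)$ pairwise disjoint, so by volume comparison any point $\rho$ has at most $C_R$ centers within distance $R(h/\tilde h)^{1/2}$; combined with the rapid decay of each $a_l$ this gives $\sum_l a_l\in\Syme$ with seminorms independent of the number of summands. The remainders sum to $\Resh$ because $M(h,\tilde h)\leq C(h/\tilde h)^{-\nu}$ is polynomial in $h^{-1}$ (with $\tilde h$ fixed) while each remainder is $\mathcal O(h^N)$ for every $N$. Sandwiching by $\chi_b$ via Lemma~\ref{l:1/2-properties-rn}(3, 5) introduces composition errors which, by the same bounded-overlap argument, remain of class $\mathcal O((h\tilde h)^{1/2})_{\Syme}$; this gives $\widetilde\Psi_\Pi=\Op_h(\chi_b^2\sum_l a_l)+\mathcal O((h\tilde h)^{1/2})_{\Psie}\in\Psie(\mathbb R^{n-1})$ and identifies its principal symbol. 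On $\Pi(\supp\tilde a)$ the covering~\eqref{e:covering} places some $(x_l',\xi_l')$ within $(h/\tilde h)^{1/2}$ of the given point, so the principal part of $a_l$ there is $\tilde\chi(t)$ for some $t\in[0,1]$, equal to $1$; all other $a_{l'}$ are nonnegative since $\tilde\chi\geq 0$, and $\chi_b^2=1$ on the base projection, so $|\tilde s_\Pi|\geq 1-\mathcal O(\tilde h)$.

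For the rank bound, $\widetilde P_l$ is unitarily equivalent to $(hD)^2+|x'|^2$ on $\mathbb R^{n-1}$, whose spectrum is $\{h(2|\alpha|+n-1):\alpha\in\mathbb N_0^{n-1}\}$; the eigenvalues in $\supp\tilde\chi((\tilde h/h)\cdot)\subset[0,2h/\tilde h)$ are those with $|\alpha|\leq 1/\tilde h$, of which there are at most $C\tilde h^{1-n}$. Summing with \eqref{e:m-h-bound} gives $\rank\widetilde\Psi_\Pi\leq M(h,\tilde h)\cdot C\tilde h^{1-n}\leq C\tilde h^{\nu+1-n}h^{-\nu}$. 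The main obstacle is the class-membership claim: the number of summands grows polynomially in $h^{-1}$, so the $\Syme$ seminorms of $\sum_l a_l$, the composition errors from $\chi_b$, and the cumulative $\Resh$ errors must all be kept bounded uniformly in the polynomially many summands; all three are controlled by the bounded-overlap packing argument coming directly from the maximality condition defining the centers $(x_l',\xi_l')$.
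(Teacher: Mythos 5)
Your proof is correct and follows essentially the same route as the paper: reduce to the standard harmonic oscillator by shift and rescaling, establish uniform $\widehat S_{1/2}$ bounds and rapid off-center decay for each $\check s_l$ (the paper's Lemma~\ref{l:harmonic-internal}), control the sum of polynomially many terms by a packing/dyadic argument using the maximal $(h/\tilde h)^{1/2}$-separation of the centers, and conclude the rank bound via Weyl's law for the harmonic oscillator together with \eqref{e:m-h-bound}. The paper writes the dyadic decomposition $\{1,\dots,M\}=\bigsqcup_j L_j$ with $|L_j|\le C2^{j(2n-2)}$ explicitly and shows $\check s_\Pi\in\widehat S_{1/2}$ before applying the single $\chi_b$ sandwich, whereas you state the bounded overlap more informally and cut off term by term, but the mechanism is identical.
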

%
%
For \eqref{e:harmonic-rank}, we use~\eqref{e:m-h-bound}
and the fact that the rank of each $\widetilde\Psi_l$ is bounded by $C\tilde h^{1-n}$.
The latter fact follows from Weyl's law for the eigenvalues for the harmonic oscillator
(see for example~\cite[Theorem~6.3]{e-z})
and the fact that $\tilde\chi$ is compactly supported. 

Since functions of the quantum harmonic oscillator are not properly
supported operators, and to facilitate the rescaling argument in
the proof of Lemma~\ref{l:harmonic-internal}, we 
use a variation of the $\Psie(\mathbb R^{n-1})$ calculus.
Define the operator classes $\widehat S$ and $\widehat S_{1/2}$
on $T^* \mathbb R^{n-1}$ as follows:
\begin{equation}
  \label{e:1/2-rn-symbols}
\begin{gathered}
a(x',\xi';h,\tilde h)\in \widehat S\iff \forall\alpha\forall N
\sup_{(x',\xi')\in T^* \mathbb R^{n-1}}
|\partial^\alpha_{x',\xi'} a|\leq C_{\alpha N}\langle(x',\xi')\rangle^{-N};\\
a(x',\xi';h,\tilde h)\in \widehat S_{1/2}\iff \forall\alpha\forall N
\sup_{(x',\xi')\in T^* \mathbb R^{n-1}}
|\partial^\alpha_{x',\xi'} a|\leq C_{\alpha N} (h/\tilde h)^{-|\alpha|/2}\langle (x',\xi')\rangle^{-N}.
\end{gathered}
\end{equation}
Clearly $\widehat S\subset\widehat S_{1/2}$. The difference
between $\widehat S_{1/2}$ and the class $\Syme$ from \S\ref{s:1/2} is that we do not
require compact essential support, imposing instead uniform bounds on
the derivatives of the symbol as $(x',\xi')\to\infty$. However,
$\Syme(\mathbb R^{n-1}) \not\subset \widehat
S_{1/2}$, as the former does not require uniform bounds
as $x'\to\infty$.  For $a\in \widehat S_{1/2}$, we define its Weyl
quantization $\widehat\Op_h(a)$ as
\begin{equation}
  \label{e:1/2-weyl-q}
\widehat\Op_h(a)u(x')=(2\pi h)^{1-n}\int e^{{i\over h}(x'-y')\cdot\xi'}a\Big({x'+y'\over 2},\xi'\Big)
u(y')\,d\xi'dy'.
\end{equation}
The difference from~\eqref{e:q-weyl} is the lack of the cutoff
$\check\chi(x-y)$; because of this, the operator $\widehat\Op_h(a)$
need not be properly supported. However, this operator acts on the
space of Schwartz functions on $\mathbb R^{n-1}$, as well as on
$L^2(\mathbb R^{n-1})$, therefore one can still compose two such
operators.  The resulting calculus has the properties listed in 
Lemma~\ref{l:1/2-properties-rn}, parts~2--5, 
with the improvement that $\widehat\Op_h(a)\widehat\Op_h(b)=\widehat\Op_h(a\#b)$ 
without the
$\mathcal O(h^\infty)$ remainder; in fact, $\widehat S_{1/2}$ lies in
the class $\widetilde S_{1/2}(T^*\mathbb R^{n-1})$
from~\cite[(3.5)]{w-z}.

The proof of Lemma~\ref{l:harmonic} is based on the following precise
estimates on the full symbol of a function of the harmonic oscillator:
%
%
\begin{lemm}\label{l:harmonic-internal}
Put
$$
P_0(h)=-h^2\Delta_{x'}^2+|x'|^2,
$$
an unbounded operator on $L^2(\mathbb R^{n-1})$, and let $\chi\in
C_0^\infty(\mathbb R)$. Then:

1. $\chi(P_0(h))=\widehat\Op_h(a_\chi)$, where $a_\chi(x',\xi';h)\in \widehat S$
and
\begin{equation}\label{e:chi-p0-symb}
\begin{gathered}
a_\chi(x',\xi';h)=\chi(|x'|^2+|\xi'|^2)+\mathcal O(h)_{\widehat S},\\
a_\chi=\mathcal O(h^\infty)_{\widehat S}\text{ outside of any neighborhood of }
\{(x',\xi')\mid |x'|^2+|\xi'|^2\in\supp\chi\}.
\end{gathered}
\end{equation}

2. $\chi((\tilde h/h)P_0(h))=\widehat\Op_h(a_{\chi((\tilde h/h)\cdot)})$,
where $a_{\chi((\tilde h/h)\cdot)}(x',\xi';h)\in\widehat S_{1/2}$ and
$$
a_{\chi((\tilde h/h)\cdot)}=\chi((\tilde h/h)(|x'|^2+|\xi'|^2))+\mathcal O(\tilde h)_{\widehat S_{1/2}}.
$$
If $T>0$ satisfies $\supp\chi\subset (-T,T)$
and $r=\sqrt{|x'|^2+|\xi'|^2}$,
then for each $\alpha,N$,
$$
|\partial^\alpha_{x',\xi'} a_{\chi((\tilde h/h)\cdot)}(x',\xi')|
\leq C_{\alpha N} (h/\tilde h)^{-|\alpha|/2}(h/r^2)^N
\text{ if }r\geq (Th/\tilde h)^{1/2}.
$$
\end{lemm}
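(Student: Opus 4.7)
Part~1 is a standard semiclassical functional calculus assertion for a globally elliptic operator; my plan is to prove it via the Helffer--Sj\"ostrand formula
\[
\chi(P_0(h))=-\frac{1}{\pi}\int_{\mathbb C}\bar\partial\tilde\chi(z)(z-P_0(h))^{-1}\,dL(z),
\]
where $\tilde\chi$ is an almost analytic extension of $\chi$ satisfying $\bar\partial\tilde\chi=\mathcal O(|\Imag z|^\infty)$, and to construct an asymptotic parametrix for $(z-P_0(h))^{-1}$ in $\widehat\Op_h(\widehat S)$. Set $r_0(x',\xi';z)=(z-p_0)^{-1}$ with $p_0=|x'|^2+|\xi'|^2$, and recursively determine $r_j\in\widehat S$ so that $(z-P_0(h))\widehat\Op_h\bigl(\sum_{j<N}h^jr_j\bigr)=1+h^NR_N(z)$, with $R_N(z)$ having symbol in $\widehat S$ and seminorms polynomial in $|\Imag z|^{-1}$. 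Each $r_j$ is a polynomial in $(z-p_0)^{-1}$ and derivatives of $p_0$, with typical seminorm bound of order $|\Imag z|^{-j-1-N_\alpha}\langle(x',\xi')\rangle^{-2-j-|\alpha|}$; these blowups are absorbed on integration against $\bar\partial\tilde\chi$ by almost analyticity, producing $a_\chi=\sum_{j\geq 0}h^ja_{\chi,j}\in\widehat S$. The Cauchy--Pompeiu formula gives $a_{\chi,0}=\chi(p_0)$, and each $a_{\chi,j}$ is a linear combination of derivatives of $\chi$ at $p_0$ times derivatives of $p_0$; in particular every term in the formal series vanishes outside $\{p_0\in\supp\chi\}$, from which the $\mathcal O(h^\infty)_{\widehat S}$ bound outside any neighborhood of this set follows by taking $N\to\infty$ in the remainder.

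For Part~2 my plan is to reduce to Part~1 by a unitary rescaling. With $U_\lambda u(y')=\lambda^{(n-1)/2}u(\lambda y')$ and $\lambda=(h/\tilde h)^{1/2}$, a direct computation yields $(\tilde h/h)P_0(h)=U_\lambda^*P_0(\tilde h)U_\lambda$, and therefore, by functional calculus,
\[
\chi((\tilde h/h)P_0(h))=U_\lambda^*\chi(P_0(\tilde h))U_\lambda.
\]
Part~1 applied with $\tilde h$ in place of $h$ yields $\chi(P_0(\tilde h))=\widehat\Op_{\tilde h}(a_\chi(\,\cdot\,;\tilde h))$. Changing variables in the Weyl integral \eqref{e:1/2-weyl-q} shows that conjugation by $U_\lambda$ corresponds precisely to symbol rescaling:
\[
U_\lambda^*\widehat\Op_{\tilde h}(a)U_\lambda=\widehat\Op_h(b),\qquad b(x',\xi')=a(\lambda^{-1}x',\lambda^{-1}\xi').
\]
Thus the required Weyl symbol is $a_{\chi((\tilde h/h)\cdot)}(x',\xi')=a_\chi\bigl((\tilde h/h)^{1/2}x',(\tilde h/h)^{1/2}\xi';\tilde h\bigr)$. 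The chain rule picks up factors of $(\tilde h/h)^{|\alpha|/2}=(h/\tilde h)^{-|\alpha|/2}$ per derivative, placing the symbol in $\widehat S_{1/2}$ and giving the expansion $a_{\chi((\tilde h/h)\cdot)}=\chi((\tilde h/h)(|x'|^2+|\xi'|^2))+\mathcal O(\tilde h)_{\widehat S_{1/2}}$ from the corresponding $\mathcal O(\tilde h)$ remainder in Part~1.

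The improved decay for $r\geq(Th/\tilde h)^{1/2}$ comes from the second assertion of Part~1 under the same rescaling: in the rescaled variables $(y',\eta')=((\tilde h/h)^{1/2}x',(\tilde h/h)^{1/2}\xi')$ one has $|y'|^2+|\eta'|^2=(\tilde h/h)r^2\geq T$, so $(y',\eta')$ lies outside a fixed neighborhood of $\{|y'|^2+|\eta'|^2\in\supp\chi\}\subset\{|y'|^2+|\eta'|^2<T\}$. Hence for all $M,N'$,
\[
|\partial^\alpha_{y',\eta'}a_\chi(y',\eta';\tilde h)|\leq C_{\alpha M N'}\tilde h^M\langle(y',\eta')\rangle^{-N'}.
\]
Taking $N'=2N$, $M=N$ and pulling back by the rescaling gives
\[
|\partial^\alpha a_{\chi((\tilde h/h)\cdot)}|\leq C(\tilde h/h)^{|\alpha|/2}\tilde h^{N}((\tilde h/h)^{1/2}r)^{-2N}=C(h/\tilde h)^{-|\alpha|/2}h^{N}r^{-2N},
\]
which is exactly the announced bound.

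The main obstacle is the clean bookkeeping in Part~1: one must control seminorms of the parametrix symbols $r_j$ and of the remainder $R_N(z)$ uniformly enough in $|\Imag z|$ so that integration against $\bar\partial\tilde\chi$ lands the result genuinely in $\widehat S$ rather than in a weighted class, and one must verify that each term $a_{\chi,j}$ is supported in $\{p_0\in\supp\chi\}$ in order to obtain the rapid decay outside this set. Once Part~1 is in hand, Part~2 is essentially formal, amounting to the rescaling identity for $P_0$ together with the symbol-level rescaling formula above.
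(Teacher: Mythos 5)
Your proposal is correct and takes essentially the same route as the paper: Part~1 is the Helffer--Sj\"ostrand functional-calculus argument (which the paper delegates to Dimassi--Sj\"ostrand, Chapter~8, with order function $m=1+|x'|^2+|\xi'|^2$, rather than sketching the parametrix construction explicitly), and Part~2 uses exactly the same unitary rescaling (your $U_\lambda$ with $\lambda=(h/\tilde h)^{1/2}$ is the paper's $T_\beta$ with $\beta=h/\tilde h$) combined with the Weyl-quantization rescaling identity and the pullback of the $\mathcal O(h^\infty)_{\widehat S}$ decay from Part~1.
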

%
%
The proof of Lemma~\ref{l:harmonic-internal} is given at the end of
this subsection. Using part~2 of it  together with conjugating by an exponential
and performing a shift to reduce to the case $(x'_l,\xi'_l)=0$, we see
that $\widetilde\Psi_l=\widehat\Op_h(\check s_l)$, where
$$
\check s_l(x',\xi';h,\tilde h)=a_{\tilde\chi((\tilde h/h)\cdot)}(x'-x'_l,\xi'-\xi'_l;h)
$$
lies in $\widehat S_{1/2}$ uniformly in $l$.  Moreover, putting $T=2$ in
part~2 of Lemma~\ref{l:harmonic-internal} and recalling that
$\supp\tilde\chi\subset (-2,2)$, we get
\begin{gather}
\label{e:harmonic-1}
\check s_l(x',\xi')=\tilde\chi((\tilde h/h)(|x'-x'_l|^2+|\xi'-\xi'_l|^2))+\mathcal O(\tilde h)_{\widehat S_{1/2}},\\
\label{e:harmonic-2}
\check s_l=\mathcal O\big((h/(|x'-x'_l|^2+|\xi'-\xi'_l|^2))^{\infty}\big)_{\widehat S_{1/2}}
\text{ outside of }B((x'_l,\xi'_l),2(h/\tilde h)^{1/2}).
\end{gather}
%
%
\begin{proof}[Proof of Lemma~\ref{l:harmonic}]
The idea is to use the improved bound on the full symbol~\eqref{e:harmonic-2}
together with information on how many of the points $(x'_l,\xi'_l)$ can lie close to some fixed point.
Fix $(x'_0,\xi'_0)\in T^*\mathbb R^{n-1}$ and consider the dyadic partition
$$
\begin{gathered}
\{1,\dots,M(h,\tilde h)\}=\bigsqcup_{j\geq 0} L_j,\\
L_0=\{l\mid (x'_l,\xi'_l)\in B((x'_0,\xi'_0),2(h/\tilde h)^{1/2})\},\\
L_j=\{l\mid (x'_l,\xi'_l)\in B((x'_0,\xi'_0),2^{j+1}(h/\tilde h)^{1/2})
\setminus B((x'_0,\xi'_0),2^j (h/\tilde h)^{1/2})\},\ j\geq 1.
\end{gathered}
$$
By the triangle inequality,
$$
\bigcup_{l\in L_j} B\Big((x'_l,\xi'_l),{1\over 2}(h/\tilde h)^{1/2}\Big)
\subset B((x'_0,\xi'_0),2^{j+2}(h/\tilde h)^{1/2}).
$$
Moreover, since the pairwise distance between the points
$(x'_l,\xi'_l)$ is $>(h/\tilde h)^{1/2}$ by the construction in
\S\ref{s:approximation2}, the union on the left-hand side is disjoint.  By comparing the
volumes of the two sets, we arrive at the bound
\begin{equation}\label{e:l-j-bound}
|L_j|\leq C\cdot 2^{j(2n-2)}.
\end{equation}
Now, we write $\widetilde\Psi_\Pi=\chi_b\widehat\Op_h(\check s_\Pi)\chi_b$,
where
$$
\check s_\Pi=\sum_{j\geq 0}\check s_\Pi^{(j)},\
\check s_\Pi^{(j)}=\sum_{l\in L_j}\check s_l.
$$
The symbol $\check s_\Pi^{(0)}$ will be the principal part of $\check
s_\Pi$ at $(x_0',\xi_0')$.  The symbols $\check s_\Pi^{(j)}$ for $j\geq 1$ at
$(x'_0,\xi'_0)$ are nonzero because of tunneling; however, we can
estimate their contribution as $\mathcal O(\tilde h^\infty)$. More
precisely, we combine~\eqref{e:harmonic-2} with the
bound~\eqref{e:l-j-bound} on $|L_j|$ to get
$$
\check s^{(j)}=\mathcal O((2^{-j}\tilde h)^\infty)_{\widehat S_{1/2}}\text{ at }(x'_0,\xi'_0)\text{ for }j\geq 1.
$$
Hence $\sum_{j=1}^\infty \check s^{(j)}$ is $\mathcal
O(\tilde h^\infty)_{\widehat S_{1/2}}$ at $(x'_0,\xi'_0)$. Now,
by~\eqref{e:l-j-bound}, $\check s_\Pi^{(0)}$ is the sum of a bounded
number of $\check s_l$'s and is therefore in $\widehat S_{1/2}$ at
$(x'_0,\xi'_0)$. Moreover, if $(x'_0,\xi'_0)\in\Pi(\supp\tilde a)$,
then by~\eqref{e:covering} and~\eqref{e:harmonic-1}, at least one term
in the sum for $\check s^{(0)}(x'_0,\xi'_0)$ is equal to
$1+\mathcal O(\tilde h)$, and the other terms are nonnegative modulo
$\mathcal O(\tilde h)$.  Therefore, $\check s_\Pi\in \widehat S_{1/2}$
and $|\check s_\Pi|\geq 1-\mathcal O(\tilde h)$ near $\Pi(\supp\tilde
a)$. Also, $\check s_\Pi=\mathcal O(h^\infty)_{\widehat S_{1/2}}$
outside a fixed compact set.  Then
$\widetilde\Psi_\Pi=\chi_b\widehat\Op_h(\check s_\Pi)\chi_b$ lies in
$\Psie(\mathbb R^{n-1})$ and
$\tilde\sigma(\widetilde\Psi_\Pi)=\chi_b(x')^2\check s_\Pi+\mathcal
O(h^{1/2}\tilde h^{1/2})_{\Syme(\mathbb R^{n-1})}$; this finishes the
proof.
\end{proof}
%
%
%
%
\begin{proof}[Proof of Lemma~\ref{l:harmonic-internal}]
1. This follows from standard results on functional calculus of
pseudodifferential operators, see for example~\cite[Chapter~8]{d-s}.
In particular, the fact that $a\in\widehat S$ follows
from~\cite[Theorem~8.7]{d-s}, with the order function
$m=1+|x'|^2+|\xi'|^2$, while~\eqref{e:chi-p0-symb} follows from the
expansion for $a_\chi$ preceding~\cite[(8.15)]{d-s}.  The validity
of~\eqref{e:chi-p0-symb} in $\widehat S$ is checked as in the proof
of~\cite[Theorem~8.7]{d-s}, by using the composition formula and the
fact that $\chi(P_0(h))=\chi_k(P_0(h))(P_0(h)+i)^{-k}$, where
$\chi_k(\lambda)=(\lambda+i)^k\chi(\lambda)$ lies in
$C_0^\infty(\mathbb R)$ and $(P_0(h)+i)^{-k}$ has symbol in
$S(m^{-k})$, in the notation of~\cite{d-s}.

2. We use the unitary rescaling operator (see~\cite[\S6.1.2]{e-z})
$$
T_\beta:L^2(\mathbb R^{n-1})\to L^2(\mathbb R^{n-1}),\
\beta>0,\
(T_\beta u)(\tilde x)=\beta^{(n-1)/4}u(\beta^{1/2} \tilde x);
$$
then 
$$
\chi((h/\tilde h)P_0(h))=T_\beta^{-1}\chi((\tilde h\beta/ h)P_0(h/\beta))T_\beta.
$$
Moreover, the operator $T_\beta$ changes Weyl quantized symbols as follows:
$$
\widehat\Op_h(a)=T_\beta^{-1}\widehat\Op_{h/\beta}(a_\beta)T_\beta,\
a_\beta(\tilde x,\tilde \xi)=a(\beta^{1/2}\tilde x,\beta^{1/2}\tilde\xi).
$$
Take $\beta=h/\tilde h$; then
$$
\begin{gathered}
\chi((\tilde h/h)P_0(h))=T_\beta^{-1}\chi(P_0(\tilde h))T_\beta
=T_\beta^{-1}\widehat\Op_{\tilde h}(a_\chi(\cdot,\cdot;\tilde h))T_\beta
=\widehat\Op_h(a_{\chi((\tilde h/h)\cdot)}),\\
a_{\chi((\tilde h/h)\cdot)}(x',\xi';h)=a_\chi((\tilde h/h)^{1/2}x',
(\tilde h/h)^{1/2}\xi';\tilde h).
\end{gathered}
$$
It remains to use the estimates on $a_\chi$ from part~1, with $\tilde h$
taking the place of $h$.
\end{proof}
%
%

\subsection{Approximation}
  \label{s:approximation4}

We finally use parametrices to obtain the approximation.

We write $\widetilde A=\widetilde A'+\widetilde A''$, where
$$
\widetilde A'=\Op_h(\tilde a'),\
\tilde a'(x_1,x',\xi_1,\xi')=\widetilde\chi(\xi_1)\widetilde a(x_1,x',0,\xi'),
$$
with $\widetilde \chi \in C_0^\infty(\mathbb R)$ as in \S\ref{s:approximation3}.
Consider the operator $\widetilde\Psi_\Pi$ from Lemma~\ref{l:harmonic},
take $\widetilde\chi'\in C_0^\infty(\mathbb R)$ equal to $1$ near $\supp\widetilde\chi$,
and put
$$
\Psi_\Pi=\widetilde\chi'(hD_{x_1})\otimes \widetilde\Psi_\Pi\in\Psie(X_M).
$$
The symbol $s_\Pi=\tilde\sigma(\Psi_\Pi)$ satisfies
$|s_\Pi|\geq 1-\mathcal O(\tilde h)$ on $\supp(\tilde a')$.
(Here we use that $|\tilde s_\Pi|\geq 1-\mathcal O(\tilde h)$ on $\Pi(\supp\tilde a)$ from Lemma~\ref{l:harmonic}, the definition~\eqref{e:def-pi} of~$\Pi$, and $\widetilde \chi' \widetilde \chi = \widetilde \chi$.)
Put $B'=\Op_h(\tilde a'/s_\Pi)\in\Psi^{\comp}_{1/2}(X_M)$; then
$$
\widetilde A'=\Psi_\Pi B'+\mathcal O(\tilde h)_{\Psie(X_M)},
$$
and we have written $\widetilde A'$ as the sum of a finite rank term and an $\mathcal O(\tilde h)$ remainder, as needed in the statement of Lemma~\ref{l:approximation}. To treat the $\widetilde A''$ term,
put $B''=\Op_h((\tilde a-\tilde
a')/\xi_1)\in(\tilde h/h)^{1/2}\Psie(X_M)$ (with the prefactor coming
from the fact that $\partial_{\xi_1}\tilde a$ can grow like $(\tilde
h/h)^{1/2}$). Then by part~6 of Lemma~\ref{l:1/2-properties},
$$
\widetilde A''=(hD_{x_1})B''+\mathcal O(\tilde h)_{\Psie(X_M)}.
$$
Therefore (with $\mathcal O(\tilde h)$ below in the sense of~\eqref{e:b-bound})
$$
A=\chi(\tilde hD_{x_1})\widetilde A=
\chi(\tilde hD_{x_1})\Psi_\Pi B'
+\chi(\tilde hD_{x_1})(hD_{x_1})B''+\mathcal O(\tilde h).
$$
However, $\chi(\tilde hD_{x_1})(hD_{x_1})= \chi(\tilde hD_{x_1})(\tilde hD_{x_1}) (h/\tilde h)= \mathcal O(h/\tilde h)$; therefore,
$$
A=\chi(\tilde hD_{x_1})\Psi_\Pi B'+\mathcal O(\tilde h).
$$
We now put $A_R=\chi(\tilde hD_{x_1})\Psi_\Pi B'$.  To estimate its
rank, we write for $\tilde h$ small enough, $\chi(\tilde
hD_{x_1})\Psi_\Pi=\chi(\tilde hD_{x_1})\otimes\widetilde\Psi_\Pi$,
where $\chi(\tilde hD_{x_1})$ on the right-hand side acts on
$L^2(\mathbb S^1)$ and has rank $\mathcal O(\tilde h^{-1})$;
therefore, by~\eqref{e:harmonic-rank},
$$
\rank A_R\leq C\tilde h^{\nu-n}h^{-\nu}
$$
and the proof of Lemma~\ref{l:approximation} is finished.

\section{Proof of the main lemma}
  \label{s:ultimate}

Fix $\delta>0$ small enough so that all the results of \S\ref{s:ah} apply.
We will now impose an additional condition on $Q$: namely, that there
exists $Q_0\in \Psi^1(X)$ such that $T_sQT_s^{-1}=\pm Q_0^*Q_0$
microlocally near $\Sigma_\pm$. Such  a $Q$ can be obtained by first
choosing $Q_0$ and $T_s$.  Note that $Q_0$ will be elliptic on
$\{\lxir^{-2}p=0\}\cap \{\mu\leq-\delta\}$.

\subsection{Positive commutator estimates}\label{s:ultimate0}
In \S\ref{s:ultimate0} and \S\ref{s:ultimate1} we assume $|\Real z| \le C_0 h$, $|\Imag z| \le C_0 h$. We relax this assumption in \S\ref{s:ultimate2} where we prove an improved estimate for $\Imag z > 0$.

We start with the construction of an escape function near the trapped
set. We recall~\cite[Lemma~7.6]{sj-z} (where one puts $\epsilon=h/\tilde h$):
%
%
\begin{lemm}\label{l:f-hat}
Suppose the geodesic flow on $M$ is hyperbolic on $K$ in the sense
of the assumption of Theorem~\ref{l:theorem-strong}. Then
 there exists a neighborhood $V_K$
of $\iota(K)$ and a family of smooth real-valued functions
$\hat f(x,\xi;h,\tilde h)$ on $V_K$ depending
on two parameters $0<h<\tilde h$, such that
for some constant $C_{\hat f}>1$,
$$
\begin{gathered}
\hat f=\mathcal O(\log(1/h)),\
\partial^\alpha H_p^k\hat f=\mathcal O((\tilde h/h)^{-|\alpha|/2}),\
|\alpha|+k\geq 1;\\
H_p\hat f(x,\xi)\geq C_{\hat f}^{-1}>0\text{ for }
d((x,\xi),\iota(\widetilde K))>C_{\hat f}(h/\tilde h)^{1/2}.
\end{gathered}
$$ 
\end{lemm}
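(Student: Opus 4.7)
The plan is to follow the construction of~\cite[Lemma~7.6]{sj-z}: one builds an escape function by logarithmically flattening, at the scale $\epsilon=h/\tilde h$, a pair of smooth nonnegative Lyapunov-type functions that measure the transverse distances to the $\iota$-images of the stable and unstable manifolds of $K$. The Anosov hypothesis on $K$ is exactly what makes such Lyapunov functions exist, and it is what ultimately produces positivity of $H_p\hat f$.

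First I would produce, on some neighborhood $V_K$ of $\iota(K)$, smooth nonnegative $\phi_\pm$ such that $\phi_\pm(\rho)$ is comparable to the squared transverse distance from $\rho$ to $\iota(\Gamma_\pm)$ (with $\Gamma_\pm$ as in~\eqref{e:gammadef}) and
\[
\pm H_p\phi_\pm\;\geq\;\lambda\phi_\pm\;-\;C\phi_\pm^{3/2}
\]
for some $\lambda>0$. These two estimates are the direct translation of Anosov hyperbolicity under the reparametrization~\eqref{e:p-p0}: the transverse direction to the stable manifold $\iota(\Gamma_+)$ is unstable (hence expanding under $H_p$), and conversely for $\iota(\Gamma_-)$. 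Once $\phi_\pm$ are in hand, set
\[
\hat f(\rho;h,\tilde h)\;=\;\tfrac{1}{2}\log\frac{\phi_+(\rho)+\epsilon}{\phi_-(\rho)+\epsilon}.
\]
The bound $\hat f=\mathcal O(\log(1/\epsilon))=\mathcal O(\log(1/h))$ is immediate from $0\leq\phi_\pm\leq C$. The key computation is
\[
H_p\hat f\;=\;\tfrac{1}{2}\Bigl(\frac{H_p\phi_+}{\phi_++\epsilon}-\frac{H_p\phi_-}{\phi_-+\epsilon}\Bigr)\;\geq\;\tfrac{\lambda}{2}\Bigl(\frac{\phi_+}{\phi_++\epsilon}+\frac{\phi_-}{\phi_-+\epsilon}\Bigr)\;+\;O(\phi_+^{1/2}+\phi_-^{1/2}).
\]
Because $\widetilde K=\Gamma_+\cap\Gamma_-$ and the stable and unstable leaves meet transversally inside the energy surface, one has $\phi_++\phi_-\gtrsim d(\cdot,\iota(\widetilde K))^2$ in $V_K$. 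Hence, once $d(\rho,\iota(\widetilde K))\geq C_{\hat f}(h/\tilde h)^{1/2}$, at least one of the $\phi_\pm$ exceeds $\epsilon$ and the corresponding bracketed term is bounded below by a positive absolute constant; taking $C_{\hat f}$ large enough absorbs the subdominant $O(\phi_\pm^{1/2})$ error and yields $H_p\hat f\geq C_{\hat f}^{-1}$. The derivative bounds follow from the chain rule applied to $\log(\phi_\pm+\epsilon)$: since $\phi_\pm$ vanishes quadratically on $\iota(\Gamma_\pm)$, $|\partial\phi_\pm|\lesssim\phi_\pm^{1/2}$, and the worst case $\phi_\pm\sim\epsilon$ costs at most $\epsilon^{-1/2}=(h/\tilde h)^{-1/2}$ per spatial derivative, which is the $\Psi_{1/2}$ scaling of~\S\ref{s:1/2}; the extra $H_p^k$ factors contribute only smooth bounded coefficients.

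The main obstacle is the first step, namely the construction of smooth $\phi_\pm$ satisfying the Lyapunov bounds, because the stable and unstable foliations of an Anosov flow are in general only H\"older regular and naive squared-distance candidates fail to be smooth. The standard workaround, as in~\cite{sj-z}, is to start from a smooth local approximation of the squared distance to $\iota(\Gamma_\pm)$ and then symmetrize along the flow over a finite time window, so that the exponential expansion/contraction from assumption~(3) of Theorem~\ref{l:theorem-strong} dominates the smoothing error. With these $\phi_\pm$ in place, the remaining verifications for $\hat f$ — logarithmic size, the lower bound on $H_p\hat f$, and the derivative estimates — are essentially routine manipulations as sketched above.
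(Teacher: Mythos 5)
Your proposal reproduces the construction of~\cite[Lemma~7.6]{sj-z}, which is exactly what the paper cites for this lemma and which Appendix~\ref{s:picture} illustrates explicitly for the hyperbolic cylinder: build smooth Lyapunov functions $\phi_\pm$ comparable to squared transversal distance to the incoming/outgoing tails $\iota(\Gamma_\pm)$, then take the logarithmically flattened difference at scale $\epsilon=h/\tilde h$, deferring the genuinely hard point (obtaining smooth $\phi_\pm$ from only H\"older-regular foliations) to the cited reference, just as the paper does. Two small bookkeeping remarks: with the paper's definition~\eqref{e:gammadef} $\Gamma_+$ is the \emph{backward}-trapped set, i.e.\ the unstable manifold, and Appendix~\ref{s:picture} accordingly has $H_p\varphi_+=-4\varphi_+(1+\mathcal O(\cdot))$, so your stable/unstable labels are swapped relative to the paper's (harmless, since it only exchanges the roles of $\phi_+$ and $\phi_-$); and the error in the Lyapunov differential inequality is more naturally $\mathcal O\bigl(\phi_\pm(\phi_++\phi_-)^{1/2}\bigr)$ rather than $\mathcal O(\phi_\pm^{3/2})$, which your computation still absorbs because the factor $\phi_\pm$ in the error cancels against the denominator $\phi_\pm+\epsilon$.
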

Note that we have written $\mathcal O(\log(1/h))$ where \cite{sj-z} has $\mathcal O(\log(\tilde h/h))$. This is equivalent because when $h<\tilde h^2<1$, $\log(1/h)/2\leq \log(\tilde h/h)\leq \log(1/h)$. For later convenience, we assume that $V_K$ is small enough that
\begin{equation}\label{e:vkdelta}
\overline V_K \subset \{\mu > \sqrt {5\delta}\},
\end{equation}
and that the estimate \eqref{e:im-part} holds on $\overline V_K$.
%
%

Take a neighborhood $U_K$ of $K$ such that $\overline U_K\subset V_K$, and which is
sufficiently small that Lemma~\ref{l:escape} applies, and
let $f_0$ be the function defined by Lemma~\ref{l:escape}. Then define
\[
F_0 \in\Psi^{\comp}(X),\
F_0^*=F_0,\
\sigma(F_0)=f_0+\mathcal O(h).
\]
Here $F_0$ is a standard compactly microlocalized semiclassical pseudodifferential operator in the sense of \S\ref{s:prelim.basics}. Next take $\hat\chi\in C_0^\infty(V_K)$ such that
$\hat\chi=1$ near $\overline U_K$, and define
\[
\widehat F  \in\log(1/h)\Psie(X),\
\WFh(\widehat F)\subset V_K,\
\widehat F^*=\widehat F,\
\tilde\sigma(\widehat F)=\hat\chi\hat f+\mathcal O(h^{1/2}\tilde h^{1/2}).
\]
Here $\widehat F$ is a pseudodifferential operator in the exotic class described in Lemma~\ref{l:kangaroo}.
Let $M$ be a large constant and define the full quantized escape function
\[
F=\widehat F+M\log(1/h)F_0.
\]
Then $F=\mathcal O(\log(1/h))_{\Psie}$
and its principal symbol is
$$
f=\hat\chi\hat f+M\log(1/h) f_0.
$$
Note that
$$
f=\mathcal O(\log(1/h)),\ \partial^\alpha f=\mathcal O((h/\tilde h)^{-|\alpha|/2}),\ |\alpha|>0.
$$
We then calculate 
\begin{equation}\label{e:h-p-f}
H_p f=\hat\chi\cdot H_p\hat f+\hat f\cdot H_p\hat\chi+M\log(1/h)H_pf_0;
\end{equation}
we see by Lemma~\ref{l:f-hat}
that $H_p f=\mathcal O(\log(1/h))_{\Syme}$ and therefore
(see Lemma~\ref{l:1/2-properties}(7) and~Lemma~\ref{l:kangaroo}(3)) we gain a full power of $h$ when commuting $F$ with $P(0)$:
\begin{equation}\label{e:conj-f}
[P(0),F]=\mathcal O(h\log(1/h))_{\Psie},\
\tilde\sigma([P(0),F])=-ihH_pf+\mathcal O(h\tilde h)_{\Syme}.
\end{equation}
Since $\pm H_pf_0>0$ near the set $V_\pm$ defined in Lemma~\ref{l:escape}(3),
we can fix $M$ large enough so that
for some constant $C_f>0$,
\begin{equation}
  \label{e:f-positivity}
\pm H_p f> C_f^{-1}\log(1/h)\text{ near }V_\pm.
\end{equation}
We now perform the conjugation.
Since $F$ is compactly microlocalized and $\|F\|=\mathcal O(\log(1/h))$, we see
that for any fixed $t$, the operator $e^{tF}-1$ is compactly microlocalized
and polynomially bounded in $h$. Given the operator $T_s$
from Lemma~\ref{l:radial-conj} (and in particular
$s$ is large enough depending on $\Imag z$), define the conjugated operator
$$
P_t(z)=e^{-tF}T_s(P(z)-iQ)T_s^{-1}e^{tF}.
$$
Here $t>0$ will be chosen in Lemma~\ref{l:estimate-k} so that the contribution from the conjugation by $e^{tF}$ is
greater than that of $\Imag T_sP(z)T_s^{-1}$; how large $t$ needs to be depends
on $\Imag z$.
%
%
\begin{lemm}\label{l:conjugation} We have
\begin{equation}\label{e:conj-dec}
P_t(z)=T_s(P(z)-iQ)T_s^{-1}+t[T_s(P(z)-iQ)T_s^{-1},F]+\mathcal O_t(h\tilde h)_{\Psie}.
\end{equation}
\end{lemm}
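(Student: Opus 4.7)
The plan is a Taylor expansion in $t$. Set $A := T_s(P(z)-iQ)T_s^{-1}$ and $\phi(t) := e^{-tF}Ae^{tF}$; since $\phi'(t) = e^{-tF}[A,F]e^{tF}$ and $\phi''(t) = e^{-tF}[[A,F],F]e^{tF}$, Taylor's formula with integral remainder gives
\begin{equation}\label{e:plan-taylor}
P_t(z) = \phi(t) = A + t[A,F] + \int_0^t (t-s)\, e^{-sF}[[A,F],F]\,e^{sF}\,ds,
\end{equation}
so the proof reduces to controlling the remainder integral in $h\tilde h\,\Psie$.

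The first step is to establish $[[A,F],F] = \mathcal O(h\tilde h)_{\Psie}$. Writing $a = \sigma(A) = p-iq$, Lemma~\ref{l:kangaroo}(3) gives only the weaker bound $[A,F] \in h^{1/2}\tilde h^{1/2}\,\Psie$, with leading symbol $-ih\{a,f\} = -ih(H_p f - iH_q f)$ modulo $\mathcal O(h\tilde h)_{\Syme}$. However, the escape function property $H_p f = \mathcal O(\log(1/h))$ from~\eqref{e:h-p-f} via Lemma~\ref{l:f-hat}, combined with $H_q f = \mathcal O(\log(1/h))$ (since $\widehat F$ and $Q$ have disjoint supports thanks to~\eqref{e:vkdelta}, leaving only the classical contribution $M\log(1/h)H_q f_0$), shows that $h^{-1}[A,F]$ has a principal symbol satisfying the kangaroo bounds~\eqref{e:kangaroo}. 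As in the derivation of~\eqref{e:conj-f}, Lemma~\ref{l:1/2-properties}(7) combined with Lemma~\ref{l:kangaroo}(3) then upgrades this to $[A,F] \in h\log(1/h)\,\Psie$, and Lemma~\ref{l:kangaroo}(2) applied to the kangaroo-class pair $(h^{-1}[A,F], F)$ produces an extra factor of $\tilde h$, yielding $[[A,F],F] = \mathcal O(h\tilde h)_{\Psie}$.

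To control the remainder integral in~\eqref{e:plan-taylor} I iterate. Denote by $C_k$ the $k$-fold iterated commutator $[[\cdots[A,F],F],\cdots,F]$, so that $C_0 = A$, $C_{k+1} = [C_k,F]$, and $\phi^{(k)}(0) = C_k$. By induction on $k$, noting that $(h\tilde h^{k-1})^{-1}C_k \in \Psie$ has symbol in $\Syme$ (hence kangaroo), Lemma~\ref{l:kangaroo}(2) yields $C_{k+1} = \mathcal O(h\tilde h^k)_{\Psie}$ for every $k\geq 1$. Expanding Taylor's formula to order $N$,
$$\phi(t) = A + t[A,F] + \sum_{k=2}^{N-1}\frac{t^k}{k!}C_k + \frac{1}{(N-1)!}\int_0^t (t-s)^{N-1}\,e^{-sF}C_N\,e^{sF}\,ds,$$
the finite sum clearly lies in $h\tilde h\,\Psie$ with $\Psie$-seminorms $\mathcal O_t(1)$, via the convergent series $\sum_k t^k\tilde h^{k-1}/k!$.

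The hard part will be the tail integral: since $F \in \log(1/h)\,\Psie$ has $\|F\|_{L^2\to L^2} = \mathcal O(\log(1/h))$, one only has $\|e^{\pm sF}\|_{L^2\to L^2} = \mathcal O(h^{-Cs})$, which precludes a direct operator-norm bound. I plan to resolve this by viewing $\psi_s := e^{-sF}C_N\,e^{sF}$ as the solution of the Heisenberg ODE $\dot\psi_s = [\psi_s, F]$ with $\Psie$-valued initial datum $C_N$, and iterating the kangaroo commutator bound (Lemma~\ref{l:kangaroo}(2)) inside the ODE to show that $\psi_s \in \Psie$ uniformly for $s\in[0,t]$ with $\Psie$-seminorms enlarged by at most a factor $\mathcal O_t(1)$. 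For $N$ chosen sufficiently large depending on $t$, the gain $\tilde h^{N-1}$ in $C_N$ then absorbs those factors and yields the required $\mathcal O_t(h\tilde h)_{\Psie}$ bound on the tail.
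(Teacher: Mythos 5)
Your Taylor expansion and the decomposition of $[A,F]$ follow the same route as the paper (which writes exactly~\eqref{e:ptexpansion} with a second-order remainder and then proves $\ad_F^2(T_s(P(z)-iQ)T_s^{-1})=\mathcal O(h\tilde h)_{\Psie}$ via the decomposition~\eqref{e:conj-comm}). Your treatment of $[A,F]$ is a bit more compressed than the paper's — the paper separates out $[P(0),F]$, $-iM\log(1/h)[Q,F_0]$, and the $\mathcal O(h^{3/2}\tilde h^{1/2})_{\Psie}$ error and commutes each with $F$ individually — but the substance is the same, and your observation about the disjointness of $\WFh(\widehat F)$ and $\WFh(Q)$ is correctly used.

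The genuine gap is the treatment of the remainder integral. You correctly observe that $\|e^{\pm sF}\|_{L^2\to L^2}=\mathcal O(h^{-Cs})$ rules out a crude operator-norm estimate, and you propose instead an ODE iteration for $\psi_s=e^{-sF}C_Ne^{sF}$. But the statement you need — that the conjugation map $B\mapsto e^{-sF}Be^{sF}$ is bounded $\Psie\to\Psie$ uniformly for $s$ in a fixed interval — is precisely the content of the Bony--Chemin theorem, which is what the paper invokes at this point (citing~\cite[Lemma~8.1]{sj-z}). Your ODE plan asserts the conclusion (that $\psi_s$ stays in $\Psie$ with seminorms enlarged by only $\mathcal O_t(1)$) rather than proving it: to apply Lemma~\ref{l:kangaroo}(2) to the pair $(\psi_s,F)$ at each time $s$ you need to know a priori that $\psi_s$ is a quantization of a symbol in the exotic class with controlled seminorms, and establishing that propagation is exactly the nontrivial step. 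Moreover, the fallback in your last sentence — taking $N$ large so that $\tilde h^{N-1}$ "absorbs" the growth — cannot work: $\tilde h$ is a fixed small constant independent of $h$, so no power of $\tilde h$ can beat a factor $h^{-Ct}$ as $h\to 0$. Once the Bony--Chemin continuity is in hand, the paper's second-order expansion already suffices (the integrand is $\mathcal O(h\tilde h)_{\Psie}$ uniformly in $t'$), and expanding to order $N>2$ buys nothing. In short: you need to invoke the Bony--Chemin theorem explicitly, rather than re-derive it by sketch.
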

\begin{proof}
We follow the proof of~\cite[Proposition~8.2]{sj-z}. We write
\begin{equation}\label{e:ptexpansion}
\begin{gathered}
P_t(z)=e^{-t\ad_F} (T_s(P(z)-iQ)T_s^{-1})
=T_s(P(z)-iQ)T_s^{-1}-t\ad_F(T_s(P(z)-iQ)T_s^{-1})\\
+\int_0^t (t-t')e^{-t'F}\ad^2_F(T_s(P(z)-iQ)T_s^{-1})e^{t'F}\,dt'.
\end{gathered}
\end{equation}
By the Bony--Chemin Theorem (\cite{bonychemin}; see~\cite[Lemma~8.1]{sj-z} for this version), the
conjugation $A\mapsto e^{-t'F}Ae^{t'F}$ is  continuous
$\Psie \to \Psie$. Hence, it remains to show
that
$$
\ad^2_F(T_s(P(z)-iQ)T_s^{-1})=\mathcal O(h\tilde h)_{\Psie}.
$$
Using Lemma~\ref{l:vasy-1}(6) and the fact that $|z| \le 2C_0h$ (see the remark at the beginning of the subsection), $\WFh(\widehat F)\cap\WFh(Q)=\emptyset$,
and Lemma~\ref{l:kangaroo}(3),
\begin{equation}\label{e:conj-comm}
[T_s(P(z)-iQ)T_s^{-1},F]=[P(0),F]-iM\log(1/h)[Q,F_0]+\mathcal O(h^{3/2}\tilde h^{1/2})_{\Psie}.
\end{equation}
The second term on the right-hand side is $\mathcal O(h\log(1/h))_{\Psi^{\comp}}$;
commuting it with $F$, we get $\mathcal O(h^{3/2}\tilde h^{1/2}\log(1/h))$. The third term is handled similarly.
As for the first term, by~\eqref{e:conj-f} the symbol of $h^{-1}[P(0),F]$
satisfies~\eqref{e:kangaroo}; therefore, by Lemma~\ref{l:kangaroo}(2),
$[[P(0),F],F]\in h\tilde h\Psie$.
\end{proof}
%
%
Combining Lemma~\ref{l:vasy-1}(6), $|z| \le 2C_0h$, \eqref{e:conj-f}, \eqref{e:conj-dec}, and~\eqref{e:conj-comm}, we get
\begin{equation}\label{e:conj-dec-approx}
P_t(z)=P(0)-iQ+\mathcal O_t(h)_{\Psi^1}+\mathcal O_t(h\log(1/h))_{\Psie}.
\end{equation}
Since $q$ and $f_0$ are real-valued, $\Real[Q,F_0]=\mathcal O(h^2)_{\Psi^{\comp}}$; therefore, using \eqref{e:conj-dec} and \eqref{e:conj-comm},
\begin{equation}\label{e:conj-dec-im}
\Imag P_t(z)=\Imag(T_sP(z)T_s^{-1})-\Real(T_sQT_s^{-1})+t\Imag [P(0),F]+\mathcal O_t(h\tilde h)_{\Psie}.
\end{equation}
We will use a positive commutator argument for $\Imag \widetilde P_t(z)$,
with $\widetilde P_t(z)=P_t(z)-ithA$ as in~\eqref{e:p-t-tilde-intro},
to control the norm of $u$ in terms of $\widetilde P_t(z) u$.
We first analyze the terms involving $P_t(z)$, then  define $A$
and analyze the $-ithA$ term, and
finally put them all together in \S\S\ref{s:ultimate1}--\ref{s:ultimate2}.
The right-hand side of~\eqref{e:conj-dec-im}
consists of several components in different symbol classes, with positivity
of the sum provided by different components in different regions of $\overline T^*X$.
In Lemma~\ref{l:microlocal-partition} we construct a  microlocal partition of unity corresponding
to these regions, and treat each member of the partition in a separate
lemma. The following lemma is useful in dealing with this partition;
the left-hand side of~\eqref{e:localization} is easily summed over different
operators $\Psi_1$, and the right-hand side is adapted to a positive commutator
argument.
%
%
\begin{lemm}\label{l:localization}
Let $\Psi_1\in\Psi^0$ have real-valued principal
symbol and assume that $T_sQT_s^{-1}=\pm Q_0^*Q_0$ near
$\WFh(\Psi_1)$. Then for $u \in C^\infty(X)$,
\begin{equation}\label{e:localization}
\begin{gathered}
\pm\Real\langle\Imag P_t(z)u,\Psi_1^2 u\rangle\leq\pm\langle(\Imag(T_sP(z)T_s^{-1}))\Psi_1u,\Psi_1u\rangle\\
\mp t\Real\langle i[P(0),F]\Psi_1 u,\Psi_1 u\rangle+\mathcal O_t(h\tilde h)\|u\|_{L^2}^2.
\end{gathered}
\end{equation}
\end{lemm}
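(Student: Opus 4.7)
The plan is to substitute the decomposition~\eqref{e:conj-dec-im} of $\Imag P_t(z)$ into the left-hand side of~\eqref{e:localization}, obtaining three operator-valued summands plus an $\mathcal O_t(h\tilde h)_{\Psie}$ remainder (which immediately contributes $\mathcal O_t(h\tilde h)\|u\|^2_{L^2}$ on pairing with $\Psi_1^2 u$). I will then treat each summand by moving one of the factors of $\Psi_1$ across the inner product so that both copies act on $u$ directly.

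The central algebraic device will be the identity
$$
\langle Bu,\Psi_1^2 u\rangle=\langle B\Psi_1 u,\Psi_1 u\rangle+\langle[\Psi_1,B]u,\Psi_1 u\rangle+\langle(\Psi_1^*-\Psi_1)Bu,\Psi_1 u\rangle
$$
for self-adjoint $B$. Because $\Psi_1$ has real-valued principal symbol, $\Psi_1^*-\Psi_1\in h\Psi^{-1}$, so each of the last two terms picks up an extra power of $h$ (or of $h^{1/2}\tilde h^{1/2}$ in the exotic calculus, via Lemma~\ref{l:kangaroo}(3) for the $F$-dependent piece), and $\Real\langle B\Psi_1 u,\Psi_1 u\rangle=\langle B\Psi_1 u,\Psi_1 u\rangle$. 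Applying this with $B=\Imag(T_sP(z)T_s^{-1})\in h\Psi^1$ yields the first term on the right-hand side of~\eqref{e:localization} with an $\mathcal O(h^2)\|u\|^2_{L^2}$ error. Applying it with $B=\Imag[P(0),F]\in h\log(1/h)\Psie$, together with the short computation $\Real(i[P(0),F])=-\Imag[P(0),F]$ (an immediate consequence of $F^*=F$), flips the sign and produces the second term on the right-hand side of~\eqref{e:localization}; the corresponding errors are of order $h^{3/2}\tilde h^{1/2}\leq h\tilde h$ (using $h\leq \tilde h$).

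The only remaining, and crucial, term is $\mp\Real\langle\Real(T_sQT_s^{-1})u,\Psi_1^2 u\rangle$, and this is where the $Q$-hypothesis enters. Since $T_sQT_s^{-1}=\pm Q_0^*Q_0$ microlocally near $\WFh(\Psi_1)$, the operator $\Psi_1(T_sQT_s^{-1}\mp Q_0^*Q_0)$ is $\Resh$, so the term in question equals $-\Real\langle Q_0^*Q_0 u,\Psi_1^2 u\rangle$ modulo $\mathcal O(h^\infty)\|u\|^2_{L^2}$. After moving one $\Psi_1$ across, its leading contribution is $-\|Q_0\Psi_1 u\|^2\leq 0$, which is simply discarded; this is precisely the step that converts the equality coming from the three-term decomposition into the inequality asserted in~\eqref{e:localization}. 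The only point requiring care is to verify that all commutator errors truly fall into the $\mathcal O_t(h\tilde h)\|u\|^2_{L^2}$ budget, which I expect to be routine once the order gains from the calculi of Sections~\ref{s:prelim.basics} and~\ref{s:1/2} are tracked carefully.
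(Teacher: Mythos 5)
Your argument is correct and follows essentially the same route as the paper: both decompose $\Imag P_t(z)$ via~\eqref{e:conj-dec-im}, move a factor of $\Psi_1$ across the pairing using the real principal symbol of $\Psi_1$ to control the commutator and adjoint errors, and discard the nonpositive term $-\|Q_0\Psi_1 u\|_{L^2}^2$ coming from the microlocal factorization $T_sQT_s^{-1}=\pm Q_0^*Q_0$. The only structural difference is the order of operations --- the paper first proves the localization identity~\eqref{e:localization-internal} for $\Imag P_t(z)$ as a whole (running the error analysis through~\eqref{e:conj-dec-approx}) and only then substitutes~\eqref{e:conj-dec-im}, whereas you substitute first and localize term by term --- and the two orderings are interchangeable. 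One caution for the error tracking you defer: for the $Q$ piece alone the operator $\Real(T_sQT_s^{-1})$ sits in $\Psi^2$, not in $h\Psi^1$, so $\langle[\Psi_1,Q_0^*Q_0]u,\Psi_1 u\rangle$ by itself is only $\mathcal O(h)\|u\|_{\Hh^{1/2}}^2$, which does not fit the budget; you must actually use the outer $\Real$ --- since $\Psi_1^*[\Psi_1,Q_0^*Q_0]\in h\Psi^1$ has purely imaginary principal symbol, $\Real(\Psi_1^*[\Psi_1,Q_0^*Q_0])\in h^2\Psi^0$ --- to obtain the needed $\mathcal O(h^2)\|u\|_{L^2}^2$ bound, which is exactly the step the paper records as the relevant part of $B$, multiplied by $h^{-1}$, having imaginary-valued principal symbol.
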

\begin{proof}
We first claim that the left-hand side can be replaced
by $\pm\langle(\Imag P_t(z))\Psi_1u,\Psi_1u\rangle$:
\begin{equation}\label{e:localization-internal}
\Real\langle\Imag P_t(z)u,\Psi_1^2u\rangle-\langle\Imag P_t(z)\Psi_1 u,\Psi_1u\rangle=\mathcal O_t(h^{3/2}\tilde h^{1/2}\log(1/h))\|u\|_{L^2}.
\end{equation}
Indeed, write the left-hand side as $\Real(Bu,u)$, where
$$
B=\Psi_1^*((\Psi_1^*-\Psi_1)\Imag P_t(z)+[\Psi_1,\Imag P_t(z)]).
$$
We now use~\eqref{e:conj-dec-approx}. The part of $B$ corresponding
to $P(0)-iQ+\mathcal O_t(h)_{\Psi^1}$ lies in $h\Psi^1$ for each $t$, and, when multiplied by $h^{-1}$,
has imaginary-valued
principal symbol; therefore, the corresponding part of $\Real(Bu,u)$
is $\mathcal O_t(h^2)\|u\|_{L^2}^2$. The part of $B$ corresponding to
$\mathcal O_t(h\log(1/h))_{\Psie}$ is
$\mathcal O_t(h^{3/2}\tilde h^{1/2}\log(1/h))_{\Psie}$ by Lemma~\ref{l:1/2-properties}(6).

Having established~\eqref{e:localization-internal}, we use~\eqref{e:conj-dec-im}:
$$
\begin{gathered}
\pm\langle\Imag P_t(z)\Psi_1u,\Psi_1u\rangle=\pm\langle\Imag(T_sP(z)T_s^{-1})\Psi_1u,\Psi_1u\rangle\\
-\|Q_0\Psi_1u\|_{L^2}^2\pm t\Imag\langle[P(0),F]\Psi_1u,\Psi_1u\rangle+\mathcal O_t(h\tilde h)\|u\|_{L^2}^2;
\end{gathered}
$$
it remains to note that the second term on the right-hand side is $\leq 0$.
\end{proof}
%
%
We now introduce the microlocal partition of unity mentioned before
Lemma~\ref{l:localization}. The operator $\Psi_E$ corresponds to the
elliptic set of $P(z)-iQ$, $\Psi_{L_\pm}$ correspond
to the neighborhoods of the radial sets $L_\pm$ where $\Imag(T_s P(z)T_s^{-1})$
has a favorable sign by Lemma~\ref{l:radial-conj},
$\Psi_{0\pm}$ correspond to the transition region,
where the escape function $f_0$ from Lemma~\ref{l:escape}
provides positivity, and $\Psi_K$ handles a neighborhood of
the trapped set.
%
%
\begin{lemm}\label{l:microlocal-partition}
There exist operators $\Psi_E,\Psi_{L_\pm}\in\Psi^0(X)$ and $\Psi_{0\pm},\Psi_K\in\Psic(X)$
with real-valued principal symbols, and neighborhoods $\widetilde\Sigma_\pm$ of $\Sigma_\pm\cap\{\mu\geq-\delta\}$, such that
\begin{enumerate}
  \item $\WFh(\Psi_E)$ is contained in the elliptic set of $p-iq$;
  \item  $\Psi_{L_+}^2+\Psi_{0+}^2+\Psi_K^2=1$ and $\Psi_{L_-}=\Psi_{0-}=0$ microlocally
on $\widetilde\Sigma_+$ and $\Psi_{L_-}^2+\Psi_{0-}^2=1$ and
$\Psi_{L_+}=\Psi_{0+}=\Psi_K=0$ microlocally on $\widetilde\Sigma_-$;
  \item $\Psi_E$ is elliptic on the complement of $\widetilde\Sigma_+\cup\widetilde\Sigma_-$;
  \item $T_sQT_s^{-1}=\pm Q_0^*Q_0$ microlocally near $\WFh(\Psi_{L_\pm}) \cup \WFh(\Psi_{0\pm})$, and $\WFh (T_sQT_s^{-1}) \cap \WFh(\Psi_K) = \emptyset$;
  \item $\pm\lxir^{-1}\sigma(h^{-1}\Imag(T_sP(z)T_s^{-1}))<0$ and $\pm H_pf_0\geq 0$ near $\WFh(\Psi_{L_\pm})$;
moreover, $\WFh(\Psi_{L_\pm})\cap \overline U_K = \WFh(\Psi_{L_\pm})\cap\WFh(\widehat F)=\emptyset$;
  \item $\pm H_pf\geq C_f^{-1}\log(1/h)>0$ on $\WFh(\Psi_{0\pm})$;
  \item $\WFh(\Psi_{L_\pm})$ and $\WFh(\Psi_{0\pm})$ do not intersect $\iota(\widetilde K)$;
  \item $\WFh(\Psi_K)\subset U_K$ and $H_pf_0\geq 0$ near $\WFh(\Psi_K)$;
  \item $\mp \Real(\lxir^{-1} \sigma(\partial_zP(0)))>0$
on $\widetilde\Sigma_\pm$.
\end{enumerate}
\end{lemm}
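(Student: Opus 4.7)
The plan is to construct five open sets in $\overline T^*X$ covering $\{\lxir^{-2}p=0\}\cap\{\mu\ge-\delta\}$, take a smooth partition of unity subordinate to this cover, quantize, and correct by lower-order symbol modifications so that the squared-sum identities in item~(2) hold at the operator level. The key geometric separations I would exploit are $\overline V_K\subset\{\mu>\sqrt{5\delta}\}$ from~\eqref{e:vkdelta}, $L_\pm\subset S^*X\cap\{\mu=0\}$, the disjointness of $V_\pm$ from $S^*X$ via Lemma~\ref{l:rho1}(2), the fact that $\supp Q\subset\{\mu<-\epsilon\}$ is separated in $\mu$ from everything happening in $\{\mu\ge -\epsilon/2\}$, and the identity $\iota(\widetilde K)\cap\{p=0\}=\iota(K)\subset U_K$, which follows from $\widetilde K$ being conic in the fibers and~\eqref{e:p-p0}.

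First I would choose an open neighborhood $U_{L_\pm}$ of $L_\pm$ with $\overline{U_{L_\pm}}\subset\{\rho_1<4\delta\}$ and disjoint from $\overline V_K$, $\iota(\widetilde K)$, and $\overline U_K$; an open neighborhood $U_{0\pm}$ of $V_\pm$, contained in $T^*X$, disjoint from $\overline{U_{L_\pm}}$, $\overline U_K$, and $\iota(\widetilde K)$, and small enough that $\pm H_pf>C_f^{-1}\log(1/h)$ on $\overline{U_{0\pm}}$ by~\eqref{e:f-positivity}; and the given $U_K$. The $(+)$-labeled sets are also chosen disjoint from $\Sigma_-$ and conversely, using the disjointness of $\Sigma_\pm$. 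Their union $U$ is an open neighborhood of the compact set $\{\lxir^{-2}p=0\}\cap\{\mu\ge-\delta\}$; I would then take $U_E=\overline T^*X\setminus K_0$ for some compact $K_0\subset U$ containing this characteristic set in its interior, so that $p-iq$ is elliptic on $U_E$ (using that $Q$ is elliptic on $\{p=0\}\cap\{\mu\le-\delta\}$).

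Second I would pick smooth nonnegative functions $\psi_{L_\pm},\psi_{0\pm},\psi_K$ supported in $U_{L_\pm},U_{0\pm},U_K$ respectively, together with small neighborhoods $\widetilde\Sigma_\pm$ of $\Sigma_\pm\cap\{\mu\ge-\delta\}$ such that $\psi_{L_+}^2+\psi_{0+}^2+\psi_K^2=1$ and $\psi_{L_-}=\psi_{0-}=0$ on $\widetilde\Sigma_+$, with mirror identities on $\widetilde\Sigma_-$. Weyl-quantizing yields operators in $\Psi^0$ for $L_\pm$ (which meet fiber infinity) and in $\Psic$ for $0\pm$ and $K$ (whose supports are compact in $T^*X$); an iterative correction by $h$-smaller symbol terms promotes the principal-symbol sum-of-squares identity to an exact microlocal operator identity, giving~(2). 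Items~(1), (3), and~(9) are then immediate from the cover and Lemma~\ref{l:im-part}. Item~(5) combines Lemma~\ref{l:radial-conj} (whose hypothesis is met since $\overline{U_{L_\pm}}\subset\{\rho_1<5\delta\}$) with Lemma~\ref{l:escape}(2) and the disjointness of $\overline{U_{L_\pm}}$ from $\overline U_K$ and $\overline V_K\supset\WFh(\widehat F)$. Item~(6) is~\eqref{e:f-positivity} restricted to $\overline{U_{0\pm}}$; item~(7) was built in at step one; items~(4) and~(8) follow from the standing hypothesis on $Q$ microlocally near $\Sigma_\pm$, Lemma~\ref{l:escape}(2), and the $\mu$-separation between $\overline U_K$ and $\supp Q$.

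The main obstacle is the book-keeping in step one: all of the geometric requirements on the closures $\overline{U_{L_\pm}}, \overline{U_{0\pm}}, \overline U_K$ must be satisfied simultaneously, and the $(+)$-$(-)$ disjointness must be preserved across the partition-of-unity construction in step two. The strict separations listed in the first paragraph are exactly what makes all of these compatible; once they are in place, no single step is technically hard, and the whole construction reduces to a careful variant of a standard microlocal partition argument.
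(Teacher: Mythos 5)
Your overall strategy matches the paper's: build an open cover, take a partition of unity, quantize, and iteratively correct the sum of squares. However, there is a genuine gap in how you choose the open sets, and it is not just bookkeeping. You require $U_{0\pm}$ to be \emph{disjoint} from both $\overline{U_{L_\pm}}$ and $\overline{U_K}$, and you take $\overline{U_{L_\pm}}\subset\{\rho_1<4\delta\}$. These two choices together make the cover fail. Concretely, the escape-function lemma fixes $V_\pm=\Sigma_\pm\cap\{\mu\geq-\delta\}\setminus(U_K\cup\{\rho_1<5\delta\})$, so $U_{0\pm}$, being a small neighborhood of $V_\pm$ inside $T^*X$, lives at $\rho_1\gtrsim 5\delta$ and away from fiber infinity. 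Meanwhile your $U_{L_\pm}$ lives in $\{\rho_1<4\delta\}$, and $U_K$ lives in $\{\mu>\sqrt{5\delta}\}$. But from the proof of Lemma~\ref{l:rho1}, on $\Sigma_\pm\cap S^*X\cap\{\mu\geq-\delta\}$ one has $\rho_1=\mu(\mu-4)$ with $\mu\in[-\delta,0]$, which ranges up to $4\delta+\delta^2>4\delta$. So there are points of $\Sigma_\pm\cap\{\mu\geq-\delta\}$ on $S^*X$ with $\rho_1\in[4\delta,5\delta)$ that are outside $U_{L_\pm}$ (too small), outside $U_{0\pm}$ (it avoids $S^*X$), and outside $U_K$ ($\mu\leq 0$ there). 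Those points are uncovered.

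More structurally, even ignoring $S^*X$: if $U_{L_\pm}$, $U_{0\pm}$, $U_K$ are pairwise disjoint and together cover a connected set, then each intersects it in a relatively clopen subset, forcing a disconnection. The set $\Sigma_+\cap\{\mu\geq-\delta\}$ is connected (the flow joins any point either to $L_+$ or to $\iota(K)$ through $\iota(\Gamma_\pm)$), so a pairwise-disjoint cover is impossible. And at a more elementary level, a smooth partition of unity $\psi_{L_+}^2+\psi_{0+}^2+\psi_K^2=1$ subordinate to pairwise-disjoint opens cannot exist on any boundary point of $U_{L_+}$ lying in the covered set. The fix is exactly what the paper does: take $U_{L_\pm}$ \emph{large}, containing all of $\Sigma_\pm\cap\{\mu\geq-\delta\}\cap\{\rho_1\leq5\delta\}$ (this is the correct size precisely because Lemma~\ref{l:radial-conj} provides the favorable sign of $\Imag(T_sP(z)T_s^{-1})$ on $\{\rho_1\leq5\delta\}$), impose the closure-disjointness only against $\overline U_K$, $\WFh(\widehat F)$, $\iota(\widetilde K)$, $\overline{\{\pm H_pf_0<0\}}$, etc., and then let $U_{L_\pm}$, $U_{0\pm}$, $U_K$ genuinely overlap, with $U_{0\pm}$ bridging the radial region to the trapping region. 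Your remaining steps (quantize, iterate the correction, verify items (1), (3), (5)--(9)) are correct and line up with the paper's argument once the cover is fixed.
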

\begin{proof}
We will define open coverings
\begin{equation}\label{e:ucover}
\begin{gathered}
\Sigma_+\cap\{\mu\geq -\delta\}\subset U_{L_+}\cup U_{0+}\cup U_K,\
\Sigma_-\cap\{\mu\geq -\delta\}\subset U_{L_-}\cup U_{0-},
\end{gathered}
\end{equation}
take partitions of unity subordinate to these open coverings such that
\begin{equation}
  \label{e:nbhd-1}
\psi_{L_+}^2+\psi_{0+}^2+\psi_K^2=1\text{ near }\Sigma_+\cap\{\mu\geq -\delta\},\
\psi_{L_-}^2+\psi_{0-}^2=1\text{ near }\Sigma_-\cap\{\mu\geq-\delta\},
\end{equation}
satisfying the additional support conditions (possible since $\Sigma_+\cap\Sigma_-=\emptyset$)
\begin{equation}
  \label{e:nbhd-2}
\begin{gathered}
(\supp\psi_{L_+}\cup\supp\psi_{0+}\cup\supp\psi_K)\cap(\Sigma_-\cap\{\mu\geq -\delta\})=\emptyset,\\
(\supp\psi_{L_-}\cup\supp\psi_{0-})\cap(\Sigma_+\cap\{\mu\geq -\delta\})=\emptyset,
\end{gathered}
\end{equation}
and take open $\widetilde\Sigma_\pm\supset\Sigma_\pm\cap\{\mu\geq -\delta\}$ such that~\eqref{e:nbhd-1} and~\eqref{e:nbhd-2}
hold with $\Sigma_\pm\cap\{\mu\geq -\delta\}$ replaced by $\widetilde\Sigma_\pm$. 
The $\Psi_j$ will be obtained at the end of this proof
by quantizing the $\psi_j$ and adding correction terms (without changing the semiclassical wavefront sets) to obtain the equations in item~(2) without remainders. Since $p-iq$ is elliptic
on the complement of $\widetilde\Sigma_+\cup\widetilde\Sigma_-$ (as follows from the properties of $q$
listed at the end of Section~\ref{s:ah.vasy}), we can find $\Psi_E\in\Psi^0(X)$ such that items~(1) and~(3)
hold.

The open set $U_K$ is the same as the one defined immediately following Lemma~\ref{l:f-hat}. Since $\WFh(\Psi_K) \subset U_K$, the properties of $\WFh (\Psi_K)$ asserted in items (4) and (8) follow from \eqref{e:vkdelta}, which implies $\overline U_K \subset \{\mu >0\} \subset \overline T^*X \setminus \WFh (T_s Q T_s^{-1})$, and from Lemma~\ref{l:escape}(2).

 Let $U_{L\pm}$ be open sets such that, with $\rho_1$ defined in~\eqref{e:rho-1}, 
\begin{equation}\label{e:ul}
\Sigma_\pm\cap\{\mu\geq -\delta\}\cap\{\rho_1\le5\delta\}\subset U_{L\pm},
\end{equation}
and such that
$\overline U_{L\pm}$ is disjoint from 
\[
\begin{gathered}
\WFh(T_sQT_s^{-1}\mp Q_0^* Q_0) \cup \{\pm\lxir^{-1}\sigma(h^{-1}\Imag(T_sP(z)T_s^{-1}))\ge0\} \,\cup \overline U_K \\\cup \WFh(\widehat F)  \cup \overline{\{\pm H_pf_0 < 0\}}  \cup \iota(\widetilde K)  \cup \{\mp \Real(\lxir^{-1} \sigma(\partial_zP(0)))\le0\}.
\end{gathered}
\]
To see that such sets exist, note that $\Sigma_\pm\cap\{\mu\geq-\delta\}\cap\{\rho_1\le5\delta\}$ is disjoint from $\WFh(T_sQT_s^{-1}\mp Q_0^*Q_0)$ by the condition imposed on $Q_0$ when it was introduced at the beginning of \S\ref{s:ultimate},  from $\{\pm\lxir^{-1}\sigma(h^{-1}\Imag(T_sP(z)T_s^{-1}))\ge0\}$ by Lemma~\ref{l:radial-conj}, from $\overline U_K \cup \WFh(\widehat F)$ by  \eqref{e:vkdelta}, from $\overline{\{\pm H_pf_0 < 0\}}$ by Lemma~\ref{l:escape}(2), from $\iota(\widetilde K)$ by the fact that $L_\pm$ is a source/sink (see \eqref{e:rho1-1}), and from $\{\mp \Real(\lxir^{-1}\sigma(\partial_zP(0)))\le0\}$ by \eqref{e:im-part}. This disjointness condition implies the properties of $\WFh(\Psi_{L\pm})$ asserted in items (4), (5), and (7).

Let $U_{0\pm}$ be open sets such that
\begin{equation}\label{e:u0}
 V_\pm\subset U_{0\pm},
\end{equation}
(with notation as in Lemma~\ref{l:escape}(3)) and such that $\overline U_{0\pm}$ is disjoint from
$$
\begin{gathered}
\WFh(T_sQT_s^{-1} \mp Q_0^* Q_0) \cup \{\pm H_p f\leq C_f^{-1}\log(1/h)\}\\\cup \iota(\widetilde K) \cup S^*X  \cup \{\mp \Real(\lxir^{-1}\sigma(\partial_zP(0)))\le0\}.
\end{gathered}
$$
That this is possible is checked as in the construction of $U_{L\pm}$ above, and by~\eqref{e:f-positivity}.
This disjointness condition implies the properties of $\WFh(\Psi_{0\pm})$ asserted in items (4), (6), and (7).
The condition of disjointess from $S^*X$  ensures that $\Psi_{0\pm}$ is compactly microlocalized.

The covering property~\eqref{e:ucover} follows from~\eqref{e:ul} and~\eqref{e:u0}. 
Now item (9) follows from 
\[
\widetilde\Sigma_+\subset U_{L_+}\cup U_{0+}\cup U_K,\
\widetilde\Sigma_-\subset U_{L_-}\cup U_{0-},
\]
together with the fact that the closures of the right hand sides of this formula are disjoint from $ \{\mp \Real(\lxir^{-1}\sigma(\partial_zP(0)))\le0\}$ by construction.

We now explain in  detail the construction of the $\Psi_j$, giving item (2). We have
\[
\Op_h(\psi_{L_+})^2 + \Op_h(\psi_{0+})^2 +\Op_h(\psi_K)^2 =1 + R_+
\]
microlocally on $\widetilde \Sigma_+$, where $R_+ \in h \Psi^{-1}$.
There exists an operator $S_+=1+\mathcal O(h)_{\Psi^{-1}}$ such that
$S_+^2(1+R_+)=1$ microlocally on $\widetilde\Sigma_+$. Then
\[
(S_+\Op_h(\psi_{L_+}))^2+(S_+\Op_h(\psi_{0+}))^2+(S_+\Op_h(\psi_K))^2=1+\mathcal O(h^2)_{\Psi^{-2}},
\]
microlocally on $\widetilde \Sigma_+$. Iterating the process of dividing the right hand side over, and concluding with a Borel summation, we improve the remainder to $\mathcal O(h^\infty)_{\Psi^{-\infty}}$, while
preserving the property $\WFh(\Psi_j)\subset\supp\psi_j$. The operators $\Psi_{L_-}$ and $\Psi_{0-}$
are constructed similarly.
\end{proof}
%
%
We now prove portions of the estimate corresponding to each
of the pseudodifferential~operators of Lemma~\ref{l:microlocal-partition}.
We start with the radial points, where we use the conjugation~by~$T_s$:
%
%
\begin{lemm}\label{l:estimate-l-pm}
For some constant $C_t$ and  $u\in C^\infty(X)$,
$$
\pm \Real\langle\Imag P_t(z)u,\Psi^2_{L_\pm}u\rangle\leq -C_t^{-1}h\|\Psi_{L_\pm}u\|_{\Hh^{1/2}}^2
+\mathcal O_t(h\tilde h)\|u\|_{L^2}^2.
$$
\end{lemm}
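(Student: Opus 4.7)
The plan is to apply Lemma~\ref{l:localization} with $\Psi_1=\Psi_{L_\pm}$ and then bound each of the two leading terms on the right-hand side separately. This application is valid because $\Psi_{L_\pm}\in\Psi^0(X)$ has real-valued principal symbol and, by Lemma~\ref{l:microlocal-partition}(4), $T_sQT_s^{-1}=\pm Q_0^*Q_0$ microlocally near $\WFh(\Psi_{L_\pm})$. It gives
\begin{equation*}
\pm\Real\langle\Imag P_t(z)u,\Psi_{L_\pm}^2u\rangle
\leq \pm\langle\Imag(T_sP(z)T_s^{-1})\Psi_{L_\pm}u,\Psi_{L_\pm}u\rangle
\mp t\Real\langle i[P(0),F]\Psi_{L_\pm}u,\Psi_{L_\pm}u\rangle
+\mathcal O_t(h\tilde h)\|u\|_{L^2}^2.
\end{equation*}

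For the first term, I would use Lemma~\ref{l:radial-conj} together with Lemma~\ref{l:microlocal-partition}(5). The latter guarantees that $\WFh(\Psi_{L_\pm})$ is a closed subset of $\overline T^*X$ on which $\mp\lxir^{-1}\sigma(h^{-1}\Imag(T_sP(z)T_s^{-1}))>0$; by compactness this lower bound is uniform, i.e.\ $\mp\sigma(h^{-1}\Imag(T_sP(z)T_s^{-1}))\geq c\lxir$ on $\WFh(\Psi_{L_\pm})$ for some $c>0$ independent of $z$ in the specified range. Since $\sigma(T_sP(z)T_s^{-1})=p$ is real, $\Imag(T_sP(z)T_s^{-1})\in h\Psi^1(X)$ is self-adjoint, and the non-sharp G\r arding inequality~\eqref{e:nonsharpg} applied to $\mp h^{-1}\Imag(T_sP(z)T_s^{-1})\in\Psi^1(X)$ yields
\begin{equation*}
\pm\langle\Imag(T_sP(z)T_s^{-1})\Psi_{L_\pm}u,\Psi_{L_\pm}u\rangle
\leq -c h\|\Psi_{L_\pm}u\|_{\Hh^{1/2}}^2+\mathcal O(h^\infty)\|u\|_{L^2}^2.
\end{equation*}

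For the second term, I would split $F=\widehat F+M\log(1/h)F_0$ and handle the two pieces separately. Since $\WFh(\Psi_{L_\pm})\cap\WFh(\widehat F)=\emptyset$ by Lemma~\ref{l:microlocal-partition}(5) and both $\Psi_{L_\pm}\in\Psi^0$ and $\widehat F\in\log(1/h)\Psie$ are pseudolocal, $\Psi_{L_\pm}\widehat F$ and $\widehat F\Psi_{L_\pm}$ are $\mathcal O(h^\infty)$, so $\langle i[P(0),\widehat F]\Psi_{L_\pm}u,\Psi_{L_\pm}u\rangle=\mathcal O(h^\infty)\|u\|_{L^2}^2$. For the $F_0$ piece, the algebraic identity
\begin{equation*}
\Real(i[P(0),F_0])=\tfrac{1}{2}(i[P(0),F_0]+(i[P(0),F_0])^*)
=\tfrac{i}{2}[P(0)-P(0)^*,F_0]=-[\Imag P(0),F_0],
\end{equation*}
together with the fact that $p$ is real so $\Imag P(0)\in h\Psi^1(X)$, and $F_0\in\Psic(X)$, shows that $[\Imag P(0),F_0]\in h^2\Psic(X)$ has operator norm $\mathcal O(h^2)$. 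Multiplying by $tM\log(1/h)$ gives a contribution $\mathcal O_t(h^2\log(1/h))\|u\|_{L^2}^2$, which is $\mathcal O_t(h\tilde h)\|u\|_{L^2}^2$ once $h$ is small enough compared to $\tilde h$.

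Combining the three bounds gives the claim with $C_t^{-1}=c$ and the error collected into $\mathcal O_t(h\tilde h)\|u\|_{L^2}^2$. The main obstacle is the first step: the radial points $L_\pm$ lie at fiber infinity, so the ellipticity that produces the favorable sign of $\Imag(T_sP(z)T_s^{-1})$ is an order-one statement (hence the $\Hh^{1/2}$ norm in the conclusion), and one must verify that Lemma~\ref{l:radial-conj} produces strict positivity on the full wavefront set $\WFh(\Psi_{L_\pm})$ rather than only at $L_\pm$ itself; this is exactly what is built into the construction of $\Psi_{L_\pm}$ in the proof of Lemma~\ref{l:microlocal-partition}. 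The handling of the commutator term is straightforward once one observes that $\widehat F$ simply does not see $\WFh(\Psi_{L_\pm})$, so no second-microlocal difficulty arises here.
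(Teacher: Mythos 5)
Your treatment of the first two pieces is correct and matches the paper: you apply Lemma~\ref{l:localization} with $\Psi_1=\Psi_{L_\pm}$, use Lemma~\ref{l:radial-conj} via Lemma~\ref{l:microlocal-partition}(5) plus the non-sharp G\r arding inequality for the $\Imag(T_sP(z)T_s^{-1})$ term, and drop the $\widehat F$ piece because $\WFh(\widehat F)\cap\WFh(\Psi_{L_\pm})=\emptyset$.

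However, your handling of the $F_0$ piece has a genuine gap, rooted in a sign error. Since $F_0^*=F_0$, a direct computation gives
\[
(i[P(0),F_0])^*=-i\,[F_0,P(0)^*]=i\,[P(0)^*,F_0],\qquad\text{so}\qquad
\Real\bigl(i[P(0),F_0]\bigr)=\tfrac{i}{2}\,[\,P(0)+P(0)^*,\,F_0\,]=i[\Real P(0),F_0],
\]
not $\tfrac{i}{2}[P(0)-P(0)^*,F_0]=-[\Imag P(0),F_0]$ as you wrote. The distinction is decisive: $\Imag P(0)\in h\Psi^1$, so $[\Imag P(0),F_0]\in h^2\Psi^{\comp}$, but $\Real P(0)$ has principal symbol $p$, so $i[\Real P(0),F_0]\in h\Psi^{\comp}$ with principal symbol $hH_pf_0$, which is only $\mathcal O(h)$. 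A norm bound on this term then gives $\mathcal O_t(h\log(1/h))\|\Psi_{L_\pm}u\|_{L^2}^2$ after multiplying by $tM\log(1/h)$; that is much larger than $h\tilde h$, so it cannot be absorbed into the error, and the lemma would not follow. What is needed — and what the paper uses — is the sign information from Lemma~\ref{l:microlocal-partition}(5), namely that $\pm H_pf_0\geq 0$ on $\WFh(\Psi_{L_\pm})$ (this comes from Lemma~\ref{l:escape}(2)). Applying the sharp G\r arding inequality~\eqref{e:sharpg} to $\mp h^{-1}i[P(0),F_0]$, whose principal symbol $\mp H_pf_0$ is nonpositive there, gains an extra factor of $h$ and produces the bound $\mathcal O_t(h^2\log(1/h))\|u\|_{L^2}^2$, which is indeed absorbed into $\mathcal O_t(h\tilde h)\|u\|_{L^2}^2$. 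So the step where you replace this sign-and-sharp-G\r arding argument by a pure size estimate on $\Real(i[P(0),F_0])$ does not work, and it is precisely the favorable sign of $H_pf_0$ near the radial sets, together with sharp G\r arding, that closes the proof.
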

\begin{proof}
Note that, by Lemma~\ref{l:microlocal-partition}(5),
$\WFh(\widehat F)\cap\WFh(\Psi_{L_\pm})=\emptyset$.
By Lemma~\ref{l:localization}, it is then enough to estimate
$$
\pm\langle(\Imag(T_sP(z)T_s^{-1}))\Psi_{L_\pm}u,\Psi_{L_\pm}u\rangle
\mp tM\log(1/h)\Real\langle i[P(0),F_0]\Psi_{L_\pm}u,\Psi_{L_\pm}u\rangle.
$$
We now use Lemma~\ref{l:microlocal-partition}(5) again. By the non-sharp G\r arding inequality \eqref{e:nonsharpg},
the first term is $\leq -C_t^{-1}h\|\Psi_{L_\pm}u\|_{\Hh^{1/2}}^2+\mathcal O(h^\infty)\|u\|_{L^2}^2$.
Also, the principal symbol of $\mp h^{-1}i[P(0),F_0]$ is equal to $\mp H_pf_0\leq 0$
near $\WFh(\Psi_{L_\pm})$; then
the second term is $\leq \mathcal O(h^2\log(1/h))\|u\|_{L^2}^2$ by the sharp G\r arding inequality \eqref{e:sharpg}.
\end{proof}
%
%
Next, we deal with the transition region:
%
%
\begin{lemm}\label{l:estimate-u0}
For some constant $C_t$ and $u\in C^\infty(X)$,
$$
\pm\Real\langle\Imag P_t(z)u,\Psi^2_{0\pm}u\rangle\leq -C_t^{-1}h\log(1/h)\|\Psi_{0\pm}u\|_{L^2}^2
+\mathcal O_t(h\tilde h)\|u\|_{L^2}^2.
$$
\end{lemm}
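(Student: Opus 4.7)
\medskip

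\textbf{Proof proposal for Lemma \ref{l:estimate-u0}.}
The plan is to apply Lemma~\ref{l:localization} with $\Psi_1=\Psi_{0\pm}$ and then control the two main terms on the right-hand side of \eqref{e:localization} separately: the $\Imag(T_sP(z)T_s^{-1})$ contribution is harmless of size $\mathcal O(h)\|\Psi_{0\pm}u\|^2$, while the commutator term with the coupling constant $t$ produces the desired large negative quantity $-C_t^{-1}h\log(1/h)\|\Psi_{0\pm}u\|^2$ thanks to the positivity of $\pm H_pf$ on $\WFh(\Psi_{0\pm})$ supplied by Lemma~\ref{l:microlocal-partition}(6). The hypothesis of Lemma~\ref{l:localization} is satisfied by Lemma~\ref{l:microlocal-partition}(4).

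For the first term, I would expand $P(z)=P(0)+z\partial_zP(0)+\mathcal O(z^2)$ as provided by Lemma~\ref{l:vasy-1}(6). Since $\sigma(P(0))=p$ is real we have $\Imag(T_sP(0)T_s^{-1})\in h\Psi^1$, and since $\Psi_{0\pm}\in\Psic$ is compactly microlocalized this contributes only $\mathcal O(h)\|\Psi_{0\pm}u\|^2$. The $z$-dependent part splits as $(\Real z)\Imag(T_s\partial_zP(0)T_s^{-1})+(\Imag z)\Real(T_s\partial_zP(0)T_s^{-1})+\mathcal O(h^2)_{\Psi^0}$; since $|\Real z|,|\Imag z|\le C_0h$ and the operators involved are $\mathcal O(1)$ on $\WFh(\Psi_{0\pm})$ (again thanks to compact microlocalization), the full contribution is bounded by $Ch\|\Psi_{0\pm}u\|_{L^2}^2+\mathcal O(h^\infty)\|u\|_{L^2}^2$. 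Note that we need no sign information here; the favorable sign from Lemma~\ref{l:im-part} is not needed for the basic estimate.

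For the commutator term, the essential input is \eqref{e:conj-f}, which gives $\tilde\sigma([P(0),F])=-ihH_pf+\mathcal O(h\tilde h)_{\Syme}$, so that
\[
\tilde\sigma\!\Bigl(\pm ih^{-1}\log(1/h)^{-1}[P(0),F]\Bigr)=\pm\log(1/h)^{-1}H_pf+\mathcal O(\tilde h)_{\Syme}\geq C_f^{-1}-\mathcal O(\tilde h)
\]
on $\WFh(\Psi_{0\pm})$ by Lemma~\ref{l:microlocal-partition}(6). Applying the non-sharp G{\aa}rding inequality in the $\Psi_{1/2}$ calculus (Lemma~\ref{l:garding-1/2}) to this operator, and then multiplying through by $h\log(1/h)$, yields
\[
\pm\Real\langle i[P(0),F]\Psi_{0\pm}u,\Psi_{0\pm}u\rangle\geq C_f^{-1}h\log(1/h)\|\Psi_{0\pm}u\|_{L^2}^2-\mathcal O(h^\infty)\|u\|_{L^2}^2,
\]
so the corresponding term in \eqref{e:localization} is at most $-tC_f^{-1}h\log(1/h)\|\Psi_{0\pm}u\|_{L^2}^2+\mathcal O_t(h^\infty)\|u\|_{L^2}^2$.

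Combining the two bounds with the $\mathcal O_t(h\tilde h)\|u\|_{L^2}^2$ remainder in \eqref{e:localization}, the net estimate is $(Ch-tC_f^{-1}h\log(1/h))\|\Psi_{0\pm}u\|_{L^2}^2+\mathcal O_t(h\tilde h)\|u\|_{L^2}^2$; for $h$ small enough (depending on $t$) the $\log(1/h)$ term dominates, absorbing the $Ch$ contribution and giving the claimed upper bound with $C_t^{-1}=tC_f^{-1}/2$. The only mild subtlety is keeping track of the two different calculi---$P(z)-iQ\in\Psi^2$ is classical while $F\in\log(1/h)\Psie$---but this is routine given Lemmas \ref{l:1/2-properties} and \ref{l:kangaroo}; no new ideas beyond those of Lemma~\ref{l:estimate-l-pm} are needed.
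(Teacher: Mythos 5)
Your proof is correct and takes essentially the same route as the paper's: both apply Lemma~\ref{l:localization} with $\Psi_1=\Psi_{0\pm}$, bound the $\Imag(T_sP(z)T_s^{-1})$ contribution crudely by $\mathcal O(h)\|\Psi_{0\pm}u\|_{L^2}^2$ using that $\Imag(T_sP(z)T_s^{-1})\in h\Psi^1$ and $\Psi_{0\pm}$ is compactly microlocalized, and obtain the dominant $-h\log(1/h)$ term from the commutator via \eqref{e:conj-f}, Lemma~\ref{l:microlocal-partition}(6), and the G\r arding inequality of Lemma~\ref{l:garding-1/2} applied to $(h\log(1/h))^{-1}i[P(0),F]$. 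The only differences are cosmetic: you spell out the $z$-expansion for the first term and the final absorption of $Ch$ by $tC_f^{-1}h\log(1/h)$ for $h$ small, both of which the paper leaves implicit.
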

\begin{proof}
By Lemma~\ref{l:localization}, it is enough to estimate
$$
\pm\langle(\Imag(T_sP(z)T_s^{-1}))\Psi_{0\pm}u,\Psi_{0\pm}u\rangle
\mp t\Real\langle i[P(0),F]\Psi_{0\pm}u,\Psi_{0\pm}u\rangle.
$$
Since $\Imag(T_sP(z)T_s^{-1})\in h\Psi^1$ and $\Psi_{0\pm}$ is compactly microlocalized,
the first term is $\mathcal O(h)\|\Psi_{0\pm}u\|_{L^2}^2$. Therefore, it is enough to show that
$$
\mp \Real\langle i[P(0),F]\Psi_{0\pm}u,\Psi_{0\pm}u\rangle\leq -C^{-1}h\log(1/h)\|\Psi_{0\pm}u\|_{L^2}^2
+\mathcal O(h^\infty)\|u\|_{L^2};
$$
by~\eqref{e:conj-f} and Lemma~\ref{l:microlocal-partition}(6),
this follows by Lemma~\ref{l:garding-1/2} applied to $(h\log(1/h))^{-1}i[P(0),F]$.
\end{proof}
%
%
To deal with the $\Psi_K$ term, we have to modify our operator,
adding a term $-ith\widetilde A$, which provides positivity
in an $\mathcal O((h/\tilde h)^{1/2})$ size neighborhood of the trapped set.
Note that the resulting operator is not yet $\widetilde P_t(z)$;
the final operator $A$ that we use will also have a second microlocalization
factor, introduced below.
Let $C_{\hat f}$ be the constant from Lemma~\ref{l:f-hat}.
Let $\chi_1\in C^\infty(\mathbb R)$ be a nonnegative function
such that $\chi_1(\lambda)+\lambda=1$ for $\lambda\leq C_{\hat f}^{-1}/2$
and $\supp\chi_1\subset (-\infty,C_{\hat f}^{-1})$. 
Then the function $\chi_1(H_p\hat f)$, defined on $V_K$, 
is supported $\mathcal O((h/\tilde h)^{1/2})$ close to $\iota(\widetilde K)$. Moreover,
$\chi_1(H_p\hat f)>0$ in the region where $H_p\hat f$ is not positive and
\begin{equation}
  \label{e:chi-better}
\chi_1(H_p\hat f)+H_p\hat f\geq C_{\hat f}^{-1}/2>0\text{ on }V_K.
\end{equation}
Take a real-valued $\tilde\chi\in C_0^\infty(U_K)$ equal to 1 near
$\WFh(\Psi_K)$. Then the function
\begin{equation}
  \label{e:a-tilde}
\tilde a=\chi_1(H_p\hat f)\tilde\chi
\end{equation}
is in $C_0^\infty(U_K)$ and thus can be extended to  $\overline T^*X$.
It follows from Lemma~\ref{l:f-hat} that $\tilde a$ lies in
the exotic class $\Syme$ from \S\ref{s:1/2}:
\begin{equation}
  \label{e:a-derivatives}
\partial^\alpha_{x,\xi}H_p^k \tilde a=\mathcal O((h/\tilde h)^{-|\alpha|/2}).
\end{equation}
Let $\widetilde A\in\Psie$ be any self-adjoint quantization of $\tilde a$;
note that $\WFh(\widetilde A)\subset U_K$. In fact,
$\WFh(\widetilde A)\subset \supp\tilde\chi\cap\iota(\widetilde K)$,
because by Lemma~\ref{l:f-hat}, for a fixed $(x,\xi)\not\in\iota(\widetilde K)$ and
$h/\tilde h$ small enough, we have $H_p \hat f(x,\xi)\geq C_{\hat f}^{-1}$
and thus $\chi_1(H_p\hat f(x,\xi))=0$.
In particular, recalling Lemma~\ref{l:escape}(4), we have
\begin{equation}
  \label{e:a-f0}
\WFh(\widetilde A)\cap\supp (H_pf_0)=\emptyset.
\end{equation}
%
%
\begin{lemm}\label{l:estimate-k-1}
For $t$ large enough, some constant $C_t$ and  $u\in C^\infty(X)$,
$$
\Real\langle\Imag(P_t(z)-ith\widetilde A) u,\Psi_K^2 u\rangle\leq
-C_t^{-1}h\|\Psi_Ku\|_{L^2}^2+\mathcal O_{t}(h\tilde h)\|u\|_{L^2}^2.
$$
\end{lemm}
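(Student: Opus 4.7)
The plan is to apply Lemma~\ref{l:localization} with $\Psi_1=\Psi_K$ to reduce the estimate to a commutator bound, and then combine $-t\Real\langle i[P(0),\widehat F]\Psi_Ku,\Psi_Ku\rangle$ with $-th\Real\langle\widetilde A\Psi_Ku,\Psi_Ku\rangle$. The key point is that near $\WFh(\Psi_K)$ the principal symbol $H_p\hat f$ of $h^{-1}\Real(i[P(0),\widehat F])$ may vanish or be negative along the trapped set, but the identity~\eqref{e:chi-better} provides $\chi_1(H_p\hat f)+H_p\hat f\geq C_{\hat f}^{-1}/2$, bounded strictly away from zero, so the combined operator will admit a non-sharp G\r arding estimate. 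This is exactly why the auxiliary $\widetilde A$ is introduced here.

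First, since $\widetilde A$ is self-adjoint, $\Imag(P_t(z)-ith\widetilde A)=\Imag P_t(z)-th\widetilde A$. Lemma~\ref{l:localization} applies with $\Psi_1=\Psi_K$: by Lemma~\ref{l:microlocal-partition}(4), $\WFh(T_sQT_s^{-1})\cap\WFh(\Psi_K)=\emptyset$, so the hypothesis $T_sQT_s^{-1}=\pm Q_0^*Q_0$ near $\WFh(\Psi_K)$ holds trivially with either sign. This produces an upper bound involving $\langle\Imag(T_sP(z)T_s^{-1})\Psi_Ku,\Psi_Ku\rangle$, which is $\mathcal O(h)\|\Psi_Ku\|_{L^2}^2$ since $\Imag(T_sP(z)T_s^{-1})\in h\Psi^1$ and $\Psi_K$ is compactly microlocalized, plus $-t\Real\langle i[P(0),F]\Psi_Ku,\Psi_Ku\rangle$ modulo $\mathcal O_t(h\tilde h)\|u\|^2$. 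To move $\widetilde A$ past $\Psi_K$ I would use that $[\widetilde A,\Psi_K]\in h^{1/2}\tilde h^{1/2}\Psie$ (as $\tilde a$ satisfies~\eqref{e:a-derivatives}), so that $\Real\langle\widetilde Au,\Psi_K^2u\rangle=\Real\langle\widetilde A\Psi_Ku,\Psi_Ku\rangle+\mathcal O(h^{1/2}\tilde h^{1/2})\|u\|^2$; after multiplying by $th$ the error becomes $\mathcal O(h^{3/2}\tilde h^{1/2})\|u\|^2\leq\mathcal O(h\tilde h)\|u\|^2$ thanks to $h\leq\tilde h$.

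The core of the argument combines the $\widehat F$ part of the commutator with the $\widetilde A$ term. Writing $F=\widehat F+M\log(1/h)F_0$, on $\WFh(\Psi_K)$ one has $\hat\chi=\tilde\chi=1$, so the operator $\mathcal B:=h^{-1}\Real(i[P(0),\widehat F])+\widetilde A\in\Psie$ has principal symbol $H_p\hat f+\chi_1(H_p\hat f)+\mathcal O(\tilde h)$; by~\eqref{e:chi-better} this satisfies $\tilde\sigma(\mathcal B)\geq C_{\hat f}^{-1}/2-\mathcal O(\tilde h)>0$ near $\WFh(\Psi_K)$ for $\tilde h$ small, so Lemma~\ref{l:garding-1/2} gives
$$
\Real\langle(t\Real(i[P(0),\widehat F])+th\widetilde A)\Psi_Ku,\Psi_Ku\rangle\geq C^{-1}th\|\Psi_Ku\|^2-\mathcal O_t(h^\infty)\|u\|^2.
$$
For the $F_0$ piece, Lemma~\ref{l:microlocal-partition}(8) gives $H_pf_0\geq 0$ on $\WFh(\Psi_K)$, so sharp G\r arding~\eqref{e:sharpg} applied to the compactly microlocalized operator $\Real(i[P(0),F_0])$, whose principal symbol $hH_pf_0$ is nonnegative, shows the corresponding contribution to $-t\Real\langle i[P(0),F]\Psi_Ku,\Psi_Ku\rangle$ is at most $\mathcal O_t(h^2\log(1/h))\|\Psi_Ku\|^2+\mathcal O_t(h^\infty)\|u\|^2$, hence negligible.

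Assembling these bounds will yield an estimate of the form $(C+\mathcal O_t(h\log(1/h)))h\|\Psi_Ku\|^2-C^{-1}th\|\Psi_Ku\|^2+\mathcal O_t(h\tilde h)\|u\|^2$, and choosing $t$ large enough that $C^{-1}t$ exceeds twice the constant from the $\Imag(T_sP(z)T_s^{-1})$ term gives the claim with $C_t^{-1}$ a fixed positive constant. The main difficulty will be exploiting~\eqref{e:chi-better} to upgrade a possibly indefinite principal symbol to a strictly positive one via the auxiliary operator $\widetilde A$, while verifying that the exotic symbol errors produced by this combination remain $\mathcal O(\tilde h)$ (and so are dominated by the fixed constant $C_{\hat f}^{-1}/2$) rather than $\mathcal O(1)$.
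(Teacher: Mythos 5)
Your proposal follows the same route as the paper: apply Lemma~\ref{l:localization} with $\Psi_1=\Psi_K$, bound the $\Imag(T_sP(z)T_s^{-1})$ term by $C_1h\|\Psi_Ku\|^2$ with $C_1$ independent of $t$, dispose of the $F_0$ commutator via sharp G\r arding on $H_pf_0\ge0$, and then combine $i[P(0),\widehat F]$ with $h\widetilde A$ so that the symbol $H_p\hat f+\chi_1(H_p\hat f)$ is bounded below via~\eqref{e:chi-better}, after which non-sharp G\r arding (Lemma~\ref{l:garding-1/2}) and a large choice of $t$ finish the argument. This matches the paper's proof, including the treatment of $[\widetilde A,\Psi_K]$ as an $\mathcal O(h^{3/2}\tilde h^{1/2})\le\mathcal O(h\tilde h)$ error and the observation that $\hat\chi=\tilde\chi=1$ on $\WFh(\Psi_K)$.
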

\begin{proof}
By Lemma~\ref{l:localization}, and using that
$[\widetilde A,\Psi_K]=\mathcal O(h^{1/2}\tilde h^{1/2})_{\Psie}$ by Lemma~\ref{l:1/2-properties}(6),
we see that it suffices
to estimate
$$
\begin{gathered}
\langle (\Imag(T_sP(z)T_s^{-1}))\Psi_K u,\Psi_K u\rangle
-Mt\log(1/h)\Real\langle i[P(0),F_0]\Psi_K u,\Psi_K u\rangle\\
-t\Real\langle i[P(0),\widehat F]\Psi_K u,\Psi_K u\rangle
-th\langle\widetilde A\Psi_Ku,\Psi_Ku\rangle.
\end{gathered}
$$
Since $\Imag(T_sP(z)T_s^{-1}) \in h \Psi^1$ and $\Psi_K$ is compactly microlocalized, the first term can be estimated by $C_1h\|\Psi_K u\|_{L^2}^2$, where $C_1$ is independent of $t$.
Since $\sigma(h^{-1}i[P(0),F_0])=H_pf_0\geq 0$ near $\WFh(\Psi_K)$ by Lemma~\ref{l:microlocal-partition}(8),
the second term is $\leq
\mathcal O_t(h^2\log(1/h))\|u\|_{L^2}^2$ by sharp G\r arding inequality~\eqref{e:sharpg}. Therefore,
it suffices to pick $t$ large enough
and prove that for some constant $C_2$ independent of $t$, we have
\begin{equation}\label{e:psi-k-internal}
-\Real((i[P(0),\widehat F]+h\widetilde A)\Psi_K u,\Psi_K u)\leq -C_2^{-1}h\|\Psi_K u\|_{L^2}^2+\mathcal O(h^\infty)\|u\|_{L^2}^2.
\end{equation}
Near $\WFh(\Psi_K) \subset U_K$, $\hat \chi = 1$ and $H_p(\hat\chi \hat f)=H_p\hat f=\mathcal O(1)_{\Syme}$; therefore,
by Lemma~\ref{l:kangaroo}(3),
$i[P(0),\widehat F]+h\widetilde A\in h\Psie$
and, when multiplied by $h^{-1}$, its principal symbol is $H_p\hat f+\chi_1(H_p\hat f)+\mathcal O(\tilde h)$; by~\eqref{e:chi-better}
this symbol is positive and it remains to apply Lemma~\ref{l:garding-1/2}.
\end{proof}
%
%
We will now fix $t$ and forget the dependence of the remainders on it.

It is finally time to use second microlocalization
and construct the operator $A$.
Let $\widehat P\in\Psi^2$ be any self-adjoint operator
elliptic near the fiber infinity $S^*X$ and 
whose principal symbol $\hat p$
is equal to $p$ in $U_K$. Take a function $\chi\in C_0^\infty(\mathbb R)$
equal to 1 near 0 and put
\begin{equation}
  \label{e:finally}
A=\chi((\tilde h/h)\widehat P)\widetilde A,\
\widetilde P_t(z)=P_t(z)-ithA.
\end{equation}
We use the ellipticity of $\widetilde P_t(z)$ away from the energy surface
to estimate the difference $A-\widetilde A$:
%
%
\begin{lemm}\label{l:estimate-k-aux}
For  $u\in C^\infty(X)$, and any $N$
$$
\|\Real (A-\widetilde A) u\|_{L^2}\leq \mathcal O(\tilde h/h)\|\widetilde P_t(z)u\|_{\Hh^{-N}}
+\mathcal O(\tilde h)\|u\|_{L^2}.
$$
\end{lemm}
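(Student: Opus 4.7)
The plan is to use that $\chi((\tilde h/h)\widehat P)-1$ vanishes where $\widehat P$ has small spectrum, so the difference $A-\widetilde A$ is effectively microlocalized on the elliptic set of $\widehat P$ and can be controlled by $\widetilde P_t(z)u$. Concretely, since $\chi\equiv 1$ near $0$, write $\chi(\lambda)-1=\lambda\, g(\lambda)$ with $g\in C^\infty(\mathbb R)\cap L^\infty(\mathbb R)$ (smooth at $0$ because $\chi-1$ vanishes in a neighborhood of $0$). By the spectral theorem, $g((\tilde h/h)\widehat P)$ commutes with $\widehat P$ and is uniformly bounded on every $\Hh^s(X)$, so
\[
A-\widetilde A=(\tilde h/h)\, g((\tilde h/h)\widehat P)\,\widehat P\,\widetilde A.
\]

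The next step is to replace $\widehat P\,\widetilde A$ by $P_t(z)\,\widetilde A$ up to an $\mathcal O_t(h)$ operator-norm error. Choose $\Psi\in\Psic(X)$ equal to $1$ microlocally near $\WFh(\widetilde A)$ and with $\WFh(\Psi)\subset U_K$; since $\hat p=p$ on $U_K$, the product $(\widehat P-P(0))\Psi$ has vanishing principal symbol, hence lies in $h\Psic$, giving $(\widehat P-P(0))\widetilde A=\mathcal O(h)$. Moreover $Q\,\widetilde A=\mathcal O(h^\infty)$, because $\WFh(Q)\subset\{\mu\leq-\epsilon\}$ is disjoint from $\WFh(\widetilde A)\subset\{\mu>\sqrt{5\delta}\}$ by~\eqref{e:vkdelta}. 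Feeding these into~\eqref{e:conj-dec-approx} yields $\widehat P\,\widetilde A=P_t(z)\,\widetilde A+\mathcal O_t(h)$; the potentially dangerous $M\log(1/h)$-prefactor from $F_0$ in \eqref{e:conj-dec-approx} is harmless here because $H_p f_0\equiv 0$ near $\iota(\widetilde K)\cap U_K$ by Lemma~\ref{l:escape}(4), so $[P(0),F_0]\widetilde A=\mathcal O(h^2)$.

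I then commute $P_t(z)$ across $\widetilde A$. By definition~\eqref{e:a-tilde} and Lemma~\ref{l:f-hat}, $H_p\tilde a=\mathcal O(1)_{\Syme}$, so Lemma~\ref{l:1/2-properties}(7) gives $[P(0),\widetilde A]=\mathcal O(h)_{\Psie}$; the same vanishing of $H_p f_0$ near $\WFh(\widetilde A)$, together with $[\Psie,\Psie]=\mathcal O(\tilde h)_{\Psie}$ applied to the $\widehat F$-part, yields $[P_t(z),\widetilde A]=\mathcal O_t(h)$. Substituting $P_t(z)u=\widetilde P_t(z)u+ithAu$ and bounding the contribution $ith\cdot g((\tilde h/h)\widehat P)\widetilde A A u$ directly (the triple product is bounded on $L^2$), I arrive at
\[
(A-\widetilde A)u=-(\tilde h/h)\, g((\tilde h/h)\widehat P)\,\widetilde A\,\widetilde P_t(z)u+\mathcal O_t(\tilde h)\|u\|_{L^2}.
\]
Since $\widetilde A$ is compactly microlocalized and $g((\tilde h/h)\widehat P)$ preserves Sobolev norms uniformly, $\|g((\tilde h/h)\widehat P)\widetilde A v\|_{L^2}\leq C_N\|v\|_{\Hh^{-N}}$ for each $N$, giving the claimed bound on $\|(A-\widetilde A)u\|_{L^2}$. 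Finally, passing to $\Real(A-\widetilde A)=(A-\widetilde A)-\tfrac12(A-A^*)$, the correction is controlled by $A-A^*=[\chi((\tilde h/h)\widehat P),\widetilde A]=\mathcal O(\tilde h)$, which follows from Lemma~\ref{l:second-mic}(2) since $H_{\hat p}\tilde\sigma(\widetilde A)=H_p\tilde a=\mathcal O(1)_{\Syme}$, and this error is absorbed into the $\mathcal O(\tilde h)\|u\|_{L^2}$ term.

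The main obstacle is the $\log(1/h)$ bookkeeping. Both the expansion~\eqref{e:conj-dec-approx} of $P_t(z)$ and the commutator $[P_t(z),\widetilde A]$ contain contributions from $M\log(1/h)F_0$ that are a priori only $\mathcal O(h\log(1/h))$ in norm; once multiplied by the prefactor $\tilde h/h$ these would produce an unbounded $\mathcal O(\tilde h\log(1/h))$ error. The resolution, invoked repeatedly above, is that $F_0$ was designed so that $H_p f_0\equiv 0$ identically near $\iota(\widetilde K)\cap U_K$, which upgrades every $F_0$-term that ultimately meets $\widetilde A$ from $\mathcal O(h\log(1/h))$ to $\mathcal O(h^2\log(1/h))=o(h)$; the $\widehat F$-commutator is then controlled at the $\mathcal O(h\tilde h)$ level by the exotic calculus.
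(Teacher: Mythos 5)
Your proof is correct and follows essentially the same route as the paper: you factor $\chi(\lambda)-1=\lambda g(\lambda)$ to pull out a power of $\widehat P$, use $\hat p=p$ on $U_K$, $Q\widetilde A=\Resh$, and $\WFh(\widetilde A)\cap\supp(H_pf_0)=\emptyset$ (i.e.~\eqref{e:a-f0}) to kill the $\log(1/h)$-heavy $F_0$ terms, and control the real-part correction by $[\chi((\tilde h/h)\widehat P),\widetilde A]=\mathcal O(\tilde h)$ via Lemma~\ref{l:second-mic}(2). The only difference is organizational (you push $P_t(z)$ past $\widetilde A$ and then split off $ithA$, while the paper commutes $\widehat P$ past $\widetilde A$ and then identifies $\widehat P$ with $T_s(P(0)-iQ)T_s^{-1}$ microlocally before invoking \eqref{e:conj-dec} and \eqref{e:conj-comm}), which does not change the substance of the argument.
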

\begin{proof}
Since both $\widetilde A$ and $\chi((\tilde h/h)\widehat P)$ are self-adjoint, we get
$$
\Real(A-\widetilde A)={1\over 2}[\widetilde A,\chi((\tilde h/h)\widehat P)]
-(1-\chi((\tilde h/h)\widehat P))\widetilde A.
$$
Now, by~\eqref{e:a-derivatives} and Lemma~\ref{l:second-mic}(2),
$[\widetilde A,\chi((\tilde h/h)\widehat P)]=\mathcal O(\tilde h)$,
so we can drop the commutator term.
Write $\chi(\lambda)-1=\lambda\psi(\lambda)$, where
$\psi$ is a bounded function. 
By the functional calculus,
$$
-(1-\chi((\tilde h/h)\widehat P))\widetilde A=\psi((\tilde h/h)\widehat P)(\tilde h/h)\widehat P\widetilde A;
$$
since $\psi((\tilde h/h)\widehat P)$ is bounded
on $L^2$ uniformly in $h,\tilde h$, it is enough to prove that
$$
\|\widehat P\widetilde A u\|_{L^2}\leq \mathcal O(1)\|\widetilde P_t(z)u\|_{\Hh^{-N}}
+\mathcal O(h)\|u\|_{L^2}.
$$
Since $[\widehat P,\widetilde A]=\mathcal O(h)$ by~\eqref{e:a-derivatives} and Lemma~\ref{l:1/2-properties}(7), this reduces to
$$
\|\widetilde A\widehat P u\|_{L^2}\leq \mathcal O(1)\|\widetilde P_t(z)u\|_{\Hh^{-N}}
+\mathcal O(h)\|u\|_{L^2}.
$$
Now, $\widehat P=T_s(P(0)-iQ)T_s^{-1}+\mathcal O(h)$ microlocally in $U_K$; therefore, it suffices to show that
$$
\|\widetilde AT_s(P(0)-iQ)T_s^{-1}u\|_{L^2}\leq \mathcal O(1)\|\widetilde P_t(z)u\|_{\Hh^{-N}}
+\mathcal O(h)\|u\|_{L^2}.
$$
The latter estimate follows from~\eqref{e:conj-dec}, $P_t(z) = \widetilde P_t(z) + \mathcal O(h)$, \eqref{e:conj-comm},
and the fact that
$[P(0)-iQ,F]=\mathcal O(h)$ microlocally near $\WFh(\widetilde A)$; indeed, $Q=0$
and $[P(0),\widehat F]=\mathcal O(h)$ microlocally on $U_K \supset \WFh(\widetilde A)$, and $[P(0),F_0]=\mathcal O(h^2)$
microlocally near $\WFh(\widetilde A)$ by~\eqref{e:a-f0}.
\end{proof}
%
%
We can now combine the previous two lemmas to prove
%
%
\begin{lemm}\label{l:estimate-k}
For $t$ large enough, some constant $C_t$, and  $u\in C^\infty(X)$,
$$
\Real\langle\Imag \widetilde P_t(z) u,\Psi_K^2 u\rangle\leq
-C_{t}^{-1}h\|\Psi_Ku\|_{L^2}^2+\mathcal O_{t}(h\tilde h)\|u\|_{L^2}^2
+\mathcal O_{t}(\tilde h)\|\widetilde P_t(z)u\|_{\Hh^{-N}}\|u\|_{L^2}.
$$
\end{lemm}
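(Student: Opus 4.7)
The plan is to write $\widetilde P_t(z) = (P_t(z)-ith\widetilde A) - ith(A-\widetilde A)$ so that the statement reduces directly to Lemma~\ref{l:estimate-k-1} applied to the first term and Lemma~\ref{l:estimate-k-aux} applied to the error $A-\widetilde A$. Since $A = \chi((\tilde h/h)\widehat P)\widetilde A$, the difference $A-\widetilde A$ is precisely the part that is killed by ellipticity of $\widehat P$ (hence of $\widetilde P_t(z)$) off the energy surface, which is exactly what Lemma~\ref{l:estimate-k-aux} exploits.

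More concretely, first I would note that for a bounded operator $B$ we have $\Imag(-ithB) = -th\Real B$, so
\begin{equation*}
\Imag \widetilde P_t(z) = \Imag (P_t(z)-ith\widetilde A) - th\Real(A-\widetilde A).
\end{equation*}
Therefore
\begin{equation*}
\Real\langle \Imag \widetilde P_t(z) u, \Psi_K^2 u\rangle = \Real\langle \Imag(P_t(z)-ith\widetilde A)u,\Psi_K^2 u\rangle - th\Real\langle \Real(A-\widetilde A)u,\Psi_K^2 u\rangle.
\end{equation*}
The first term on the right is bounded by $-C_t^{-1}h\|\Psi_K u\|_{L^2}^2+\mathcal O_t(h\tilde h)\|u\|_{L^2}^2$ by Lemma~\ref{l:estimate-k-1} (taking $t$ large enough, as required there).

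For the second term, Cauchy--Schwarz together with $L^2$-boundedness of $\Psi_K$ gives
\begin{equation*}
|th\Real\langle \Real(A-\widetilde A)u,\Psi_K^2 u\rangle| \leq Ct h\,\|\Real(A-\widetilde A)u\|_{L^2}\,\|u\|_{L^2}.
\end{equation*}
Applying Lemma~\ref{l:estimate-k-aux} to the first factor yields
\begin{equation*}
th\,\|\Real(A-\widetilde A)u\|_{L^2} \leq \mathcal O_t(\tilde h)\|\widetilde P_t(z)u\|_{\Hh^{-N}} + \mathcal O_t(h\tilde h)\|u\|_{L^2},
\end{equation*}
and hence
\begin{equation*}
|th\Real\langle \Real(A-\widetilde A)u,\Psi_K^2 u\rangle| \leq \mathcal O_t(\tilde h)\|\widetilde P_t(z)u\|_{\Hh^{-N}}\|u\|_{L^2} + \mathcal O_t(h\tilde h)\|u\|_{L^2}^2.
\end{equation*}
Adding the two estimates gives the desired bound.

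There is no real obstacle here since both input lemmas do all of the work; the only subtlety is the bookkeeping on $\Imag(-ithB)=-th\,\Real B$ (used to reduce the difference $\widetilde P_t(z)-(P_t(z)-ith\widetilde A)$ to $\Real(A-\widetilde A)$), and the observation that the factor of $h$ produced by the coupling constant $th$ combines with the $\tilde h/h$ from Lemma~\ref{l:estimate-k-aux} to produce precisely the claimed $\mathcal O_t(\tilde h)$ and $\mathcal O_t(h\tilde h)$ remainders. The new $\|\widetilde P_t(z)u\|_{\Hh^{-N}}\|u\|_{L^2}$ term, absent from Lemma~\ref{l:estimate-k-1}, is exactly the price of promoting $\widetilde A$ to $A$, and it will be absorbed by the left-hand side in the resolvent estimate of \S\ref{s:ultimate1}.
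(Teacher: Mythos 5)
Your proof is correct and follows exactly the paper's own argument: decompose $\Imag \widetilde P_t(z) = \Imag(P_t(z)-ith\widetilde A) - th\Real(A-\widetilde A)$, then apply Lemma~\ref{l:estimate-k-1} to the first term and Lemma~\ref{l:estimate-k-aux} (together with Cauchy--Schwarz and the $\mathcal O(1)$ boundedness of $\Psi_K$) to the second. The explicit bookkeeping of the $th\cdot(\tilde h/h)=t\tilde h$ factor is the only detail the paper leaves implicit.
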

\begin{proof}
The left-hand side is
$$
\Real\langle\Imag(P_t(z)-ith\widetilde A)u,\Psi_K^2u\rangle
-th\Real\langle\Real(A-\widetilde A)u,\Psi_K^2u\rangle;
$$
the first term is estimated by Lemma~\ref{l:estimate-k-1} and
the second term is estimated by Lemma~\ref{l:estimate-k-aux}.
\end{proof}
%
%

%
%
\subsection{Proof of Lemma~\ref{l:main}(1)}
  \label{s:ultimate1}

First, let $\Psi_1\in\{\Psi_{L_\pm},\Psi_{0\pm}\}$. Then
$$
2\Psi_1^*(\Real A)=[\Psi_1^*,\chi((\tilde h/h)\widehat P)]\widetilde A
+\chi((\tilde h/h)\widehat P)\Psi_1^*\widetilde A
+\Psi_1^*\widetilde A\chi((\tilde h/h)\widehat P).
$$
This operator is $\mathcal O(\tilde h)$, as the commutator above is $\mathcal O(\tilde h)$
by Lemma~\ref{l:second-mic}(2) and $\Psi_1^*\widetilde A=\mathcal O(h^\infty)$
by Lemma~\ref{l:microlocal-partition}(7).
Hence Lemmas~\ref{l:estimate-l-pm} and~\ref{l:estimate-u0}
are valid for $\widetilde P_t(z)$ in place of $P_t(z)$.
Now, put
\begin{equation}\label{e:Z}
Z=\Psi_{L_+}^2+\Psi_{0+}^2+\Psi_K^2-\Psi_{L_-}^2-\Psi_{0-}^2;
\end{equation}
then $Z=\pm 1$ microlocally on $\widetilde\Sigma_\pm$
by Lemma~\ref{l:microlocal-partition}(2).
For $u\in C^\infty(X)$, we have
$$
\Imag\langle\widetilde P_t(z)u,Zu\rangle=\langle\Imag\widetilde P_t(z)u, Zu\rangle
+{1\over 2i}\langle u,([\widetilde P_t(z),Z]+(Z-Z^*)\widetilde P_t(z))u\rangle.
$$
However, by~\eqref{e:conj-dec-approx}, Lemma~\ref{l:1/2-properties}(6), and
the fact that the principal symbol of $Z$ is real,
\[
[P_t(z),Z]+(Z-Z^*)P_t(z)=\mathcal O(h)_{\Psi^1}+\mathcal O(h^{3/2}\tilde h^{1/2}\log(1/h))_{\Psi^{\comp}_{1/2}},
\]
and it is microlocalized outside of $\widetilde\Sigma_+\cup\widetilde\Sigma_-$, that is,
on the elliptic set of $\Psi_E$. Also, $[A,Z]+(Z-Z^*)A=\mathcal O(\tilde h)_{L^2\to L^2}$
by Lemma~\ref{l:second-mic}(2). Therefore, by
Lemmas~\ref{l:estimate-l-pm}, \ref{l:estimate-u0}, and~\ref{l:estimate-k}, for $t$
large enough, we have 
\begin{equation}\label{e:big}
\begin{gathered}
\Imag\langle\widetilde P_t(z)u,Zu\rangle=\langle\Imag\widetilde P_t(z)u,Zu\rangle
+\mathcal O(h)\|\Psi_E u\|_{\Hh^{1/2}}^2
+\mathcal O(h\tilde h)\|u\|_{L^2}^2\\\leq
-C^{-1}h(\|\Psi_{L_+}u\|_{\Hh^{1/2}}^2+\|\Psi_{0+}u\|_{\Hh^{1/2}}^2+\|\Psi_Ku\|_{\Hh^{1/2}}^2
+\|\Psi_{L_-}u\|_{\Hh^{1/2}}^2+\|\Psi_{0-}u\|_{\Hh^{1/2}}^2)\\
+\mathcal O(h\tilde h)\|u\|_{L^2}^2+\mathcal O(h)\|\Psi_E u\|_{\Hh^{1/2}}^2
+\mathcal O(\tilde h)\|\widetilde P_t(z) u\|_{\Hh^{-N}}\|u\|_{L^2}.
\end{gathered}
\end{equation}
Combining this with Lemma~\ref{l:microlocal-partition}(3), we get
$$
\|u\|_{\Hh^{1/2}}^2\leq \mathcal O(1)\|\Psi_Eu\|_{\Hh^{1/2}}^2+\mathcal O(\tilde h)\|u\|_{L^2}^2+\mathcal O(h^{-1})\|\widetilde P_t(z)u\|_{\Hh^{-1/2}}
\|u\|_{\Hh^{1/2}}
$$
and therefore for $\tilde h$ small enough,
\begin{equation}\label{l:ultimate-internal}
\|u\|_{\Hh^{1/2}}\leq \mathcal O(1)\|\Psi_Eu\|_{\Hh^{1/2}}+\mathcal O(h^{-1})\|\widetilde P_t(z)u\|_{\Hh^{-1/2}}.
\end{equation}
Finally, by Lemma~\ref{l:microlocal-partition}(1) and the elliptic estimate~\eqref{e:elliptic},
$$
\|\Psi_Eu\|_{\Hh^{1/2}}\leq \mathcal O(1)\|(P(0)-iQ)u\|_{\Hh^{-1/2}}+\mathcal O(h^\infty)\|u\|_{\Hh^{1/2}};
$$
combining this with~\eqref{e:conj-dec-approx} and using $\widetilde P_t(z) = P_t(z) + \mathcal O(h)$, we get
\begin{equation}\label{e:elll}
\|\Psi_E u\|_{\Hh^{1/2}}\leq \mathcal O(1)\|\widetilde P_t(z)u\|_{\Hh^{-1/2}}+\mathcal O(h\log(1/h))\|u\|_{\Hh^{1/2}};
\end{equation}
substituting this into~\eqref{l:ultimate-internal} 
and removing the $\mathcal O(h\log(1/h))$ error, we obtain~\eqref{e:main-estimate}.\qed
%
%

%
%
\subsection{Proof of Lemma~\ref{l:main}(2)}
  \label{s:ultimate2}

We follow the proof of part~(1), but with an additional
positive term coming from $\Imag z>0$. Let $Z$ be as in~\eqref{e:Z};
applying~\eqref{e:big} to $\Real z$ instead of $z$ and dropping the negative terms on the right hand side,
we get
\begin{equation}\label{e:im-aux}
\Imag\langle\widetilde P_t(\Real z)u,Zu\rangle\leq
\mathcal O(h\tilde h)\|u\|_{L^2}^2
+\mathcal O(h)\|\Psi_E u\|_{\Hh^{1/2}}^2
+\mathcal O(\tilde h)\|\widetilde P_t(\Real z)u\|_{\Hh^{-1/2}}\|u\|_{L^2}. 
\end{equation}
Now, by Lemma~\ref{l:vasy-1}(6)
$$
P(z)-P(\Real z)=\mathcal O(|\Imag z|)_{\Psi^1},
$$
and so
\begin{equation}\label{e:main21}
T_s(P(z)-P(\Real z))T_s^{-1}=i\Imag z \,\partial_zP(0)
+\mathcal O(|\Imag z|^2+h|\Imag z|)_{\Psi^1}.
\end{equation}
The conjugation by $e^{tF}$ maps  $\Psi^1$
to $\Psi^1+\Psie$ continuously, by the Bony--Chemin
theorem (see the proof of Lemma~\ref{l:conjugation}). Moreover,
we have by Lemma~\ref{l:kangaroo}(3)
\begin{equation}\label{e:main22}
[F,\partial_z P(0)]=\mathcal O(h^{1/2}\tilde h^{1/2}|\Imag z|)_{\Psie}.
\end{equation}
Using $\widetilde P_t(z)-\widetilde P_t(\Real z) = P_t(z) - P_t(\Real z)$, the expansion \eqref{e:ptexpansion}, \eqref{e:main21}, \eqref{e:main22} and 
proceeding as in the proof of Lemma~\ref{l:conjugation}, we get
$$
\widetilde P_t(z)-\widetilde P_t(\Real z)=i\Imag z \,\partial_zP(0)
+\mathcal O(|\Imag z|^2+h^{1/2}\tilde h^{1/2}|\Imag z|)_{\Psi^1+\Psie}.
$$
Therefore, by~\eqref{e:im-aux} and since $\Imag z\geq C_0h$
$$
\begin{gathered}
\Imag\langle\widetilde P_t(z)u, Zu\rangle\leq 
\Imag z\Real\langle \partial_z P(0)u,Zu\rangle
+ \mathcal O(h)\|\Psi_Eu\|_{\Hh^{1/2}}^2\\
+\mathcal O(\tilde h)\|\widetilde P_t(z)u\|_{\Hh^{-1/2}}\|u\|_{L^2}
+\mathcal O(|\Imag z|^2+\tilde h|\Imag z|)\|u\|_{\Hh^{1/2}}^2.
\end{gathered}
$$
Now Lemma~\ref{l:microlocal-partition}(3) and Lemma~\ref{l:microlocal-partition} (9)
imply that $\Psi_E$ is elliptic on $\{\Real(\lxir^{-1}\sigma (Z^*\partial_z P(0))) \ge 0\}$, so by the non-sharp G\r arding inequality \eqref{e:nonsharpg}  we get
$$
\Real\langle \partial_z P(0)u,Zu\rangle\leq -C^{-1} \|u\|_{\Hh^{1/2}}^2
+C\|\Psi_E u\|_{\Hh^{1/2}}^2.
$$
Thererefore, since $ C_0 h \le \Imag z \le \varepsilon$,
$$
\|u\|_{\Hh^{1/2}}^2\leq C(\|\Psi_E u\|_{\Hh^{1/2}}^2+
(\Imag z)^{-1}\|\widetilde P_t(z) u\|_{\Hh^{-1/2}}\|u\|_{\Hh^{1/2}})
+\mathcal O(\tilde h+|\Imag z|)\|u\|_{\Hh^{1/2}}^2.
$$
Combining this with~\eqref{e:elll} and the fact that $\tilde h$ and $\Imag z$
are small, we get~\eqref{e:improved-estimate}.\qed
%
%

%
%
\subsection{Proof of Lemma~\ref{l:main}(3)}
  \label{s:ultimate3}

By Lemma~\ref{l:approximation} and~\eqref{e:finally},
it suffices to show that
$$
\begin{gathered}
V_R=\{\exp(tH_{\hat p})(x,\xi)\mid
|t|\leq R,\
(x,\xi)\in (\supp\tilde a\cap \hat p^{-1}(0))+B_{\hat p^{-1}(0)}(R(h/\tilde h)^{1/2})\}
\end{gathered}
$$
has, as a subset of $\hat p^{-1}(0)$, $2n-1$ dimensional volume
$\mathcal O((h/\tilde h)^{n-1-\nu})$.
Here $2\nu+1$ is bigger than the upper Minkowski
dimension of $K$, or equal to it in the case of a trapped set of pure dimension.
By the definition~\eqref{e:a-tilde} of $\tilde a$ and the fact that $\supp \chi_1 \subset (-\infty,C_{\hat f}^{-1})$ together with Lemma~\ref{l:f-hat},
we see that $\supp\tilde a\subset
\iota(\widetilde K)+B(C_{\hat f}(h/\tilde h))^{1/2}$, so that
$$
V_R\subset\{\exp(tH_{\hat p})(x,\xi)\mid
|t|\leq R,\
(x,\xi)\in\hat p^{-1}(0)\cap
(\iota(\widetilde K)+B((R+C_{\hat f})(h/\tilde h)^{1/2}))\}.
$$
However, note that $\iota(\widetilde K)$ is invariant under $\exp(tH_{\hat p})$;
therefore, there exists a constant $R'$ depending on $R$ such that
$$
V_R\subset {\hat p}^{-1}(0)\cap(\iota(\widetilde K)+B(R'(h/\tilde h)^{1/2})).
$$
By~\eqref{e:p-p0} and since $p_0+1$ is a homogeneous polynomial of degree 2 in the fibers,
$\widetilde K$ is diffeomorphic to the product of $K=\widetilde K\cap p^{-1}(0)$ and an interval;
therefore, for some $R''$,
$$
V_R\subset \iota(K)+B_{\hat p^{-1}(0)}(R''(h/\tilde h)^{1/2}).
$$
By the definition~\eqref{e:minkdef}, the volume of an $\varepsilon$-neighborhood of $\iota(K)$ is $\mathcal O(\varepsilon^{2(n-1-\nu)})$; thus
$$
\Vol_{\hat p^{-1}(0)}(V_R)\leq C(h/\tilde h)^{n-1-\nu}.\qed
$$
%
%

\appendix
\section{Quasifuchsian convex cocompact groups}
\label{s:quasifuchsian}

In this Appendix we describe in more detail the construction of the groups used
in the examples in Figure~\ref{f:puredim}. Recall that a finitely generated discrete group 
of M\"obius 
transformations of the Riemann sphere $\mathbb C \cup \{\infty\}$
is \textit{Fuchsian} if it keeps invariant some disk or half-plane.
Let $\Gamma_0$ be the Fuchsian group generated by 
$\{\mathcal A_1,\mathcal  B_1,\mathcal  A_2,\mathcal  B_2\}$, where all the transformations
preserve the unit disk, and $\mathcal A_1$ maps the exterior of the disk
$C_1$ onto the interior
of the disk $C_3$, $\mathcal B_1$ maps the exterior of $C_2$ onto the interior of $C_4$, and so on
(see Figure~\ref{f:puredim2}). If $\Gamma_0$ acts on the unit disk model of $\mathbb H^2$, then  
$\Gamma_0 \backslash \mathbb H^2$ is a compact surface of genus $2$ 
(see e.g.~\cite[\S4.3, Example C]{k}).

%
\begin{figure}[h]
\includegraphics[width=6cm]{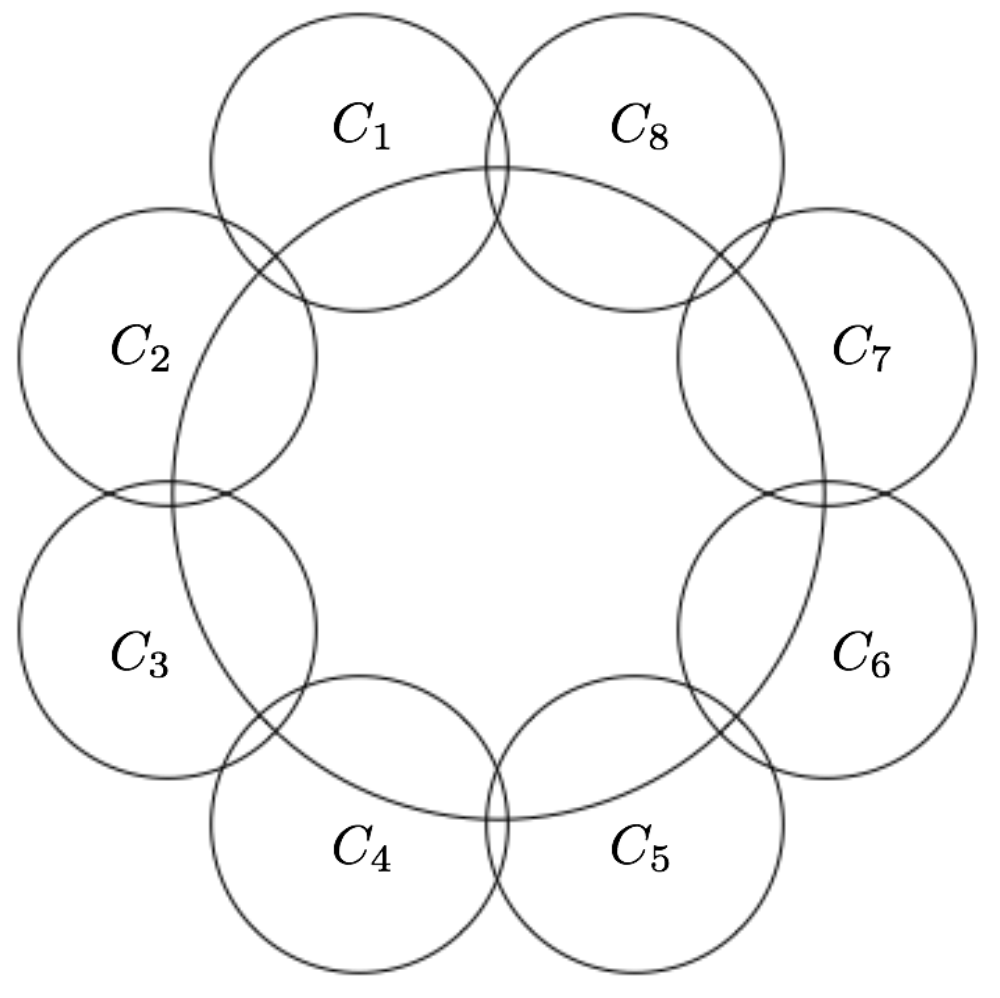}
\hspace{1cm}
\includegraphics[width=6cm]{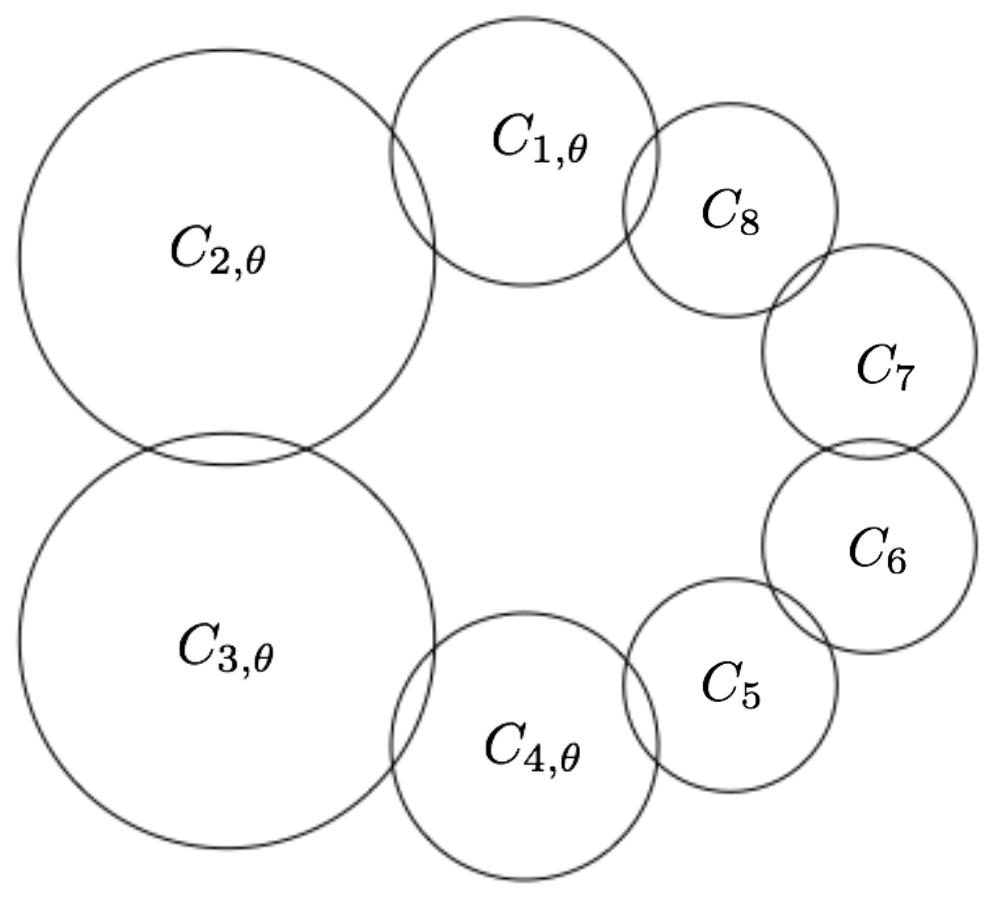}
\caption{(Quasifuchsian bending.)
The circles $C_1,\dots,C_8$ are orthogonal to the unit circle 
and enclose a regular hyperbolic octagon of area $4 \pi$. The 
circles $C_{1,\theta},\dots C_{4,\theta}$ are the images of 
$C_1,\dots,C_4$ under $M_\theta$.}
\label{f:puredim2}
\end{figure}
%

If  $\Gamma_0$ instead acts on  
$\mathbb H^3$ by M\"obius transformations 
on the sphere at infinity ($\mathcal A_1$ extends to an isometry of $\mathbb H^3$
by mapping the half space whose boundary at infinity is the exterior of $C_1$ 
to the  half space whose boundary at infinity is the interior of $C_3$, and so on)
then
$\Gamma_0 \backslash \mathbb H^3$ is isometric to  
$\mathbb R \times \Gamma_0 \backslash \mathbb H^2$ with metric
$dr^2 + (\cosh^2 r )dS$, where $dS$ is the metric on $\Gamma_0 \backslash \mathbb H^2$,
 and  $\Gamma_0 \backslash \mathbb H^3$ is
convex cocompact with
limit set the unit circle, 
and $\delta_{\Gamma_0} = 1$.

 Let 
$\Gamma_\theta = \langle M_\theta \mathcal A_1 M_\theta^{-1}, 
M_\theta \mathcal B_1 M_\theta^{-1}, \mathcal A_2, \mathcal B_2 \rangle$,
where $M_\theta$ is the rotation of 
the Riemann sphere
$\mathbb C \cup \{\infty\}$
 by angle $\theta$ which 
fixes $\{i, -i\}$ and moves $\mathbb R$
 to the left (so that $M_\theta \mathcal A_1 M_\theta^{-1}$ maps the exterior of 
 $C_{1,\theta}$ onto the interior
of $C_{3,\theta}$, and so on). This is a \textit{quasifuchsian
bending} of $\Gamma_0$ in the sense of
\cite[\S VIII.E.3]{mas}.  When 
$\theta \ne 0$ the group is no longer Fuchsian because e.g 
 $\operatorname{Tr} M_\theta \mathcal A_1 M_\theta^{-1} \mathcal A_2 \not \in \mathbb R$, and hence 
$\Gamma_\theta$
is not a group of isometries of $\mathbb H^2$. Nonetheless, it is still
a  group of isometries of $\mathbb H^3$ and,
for $|\theta| \ne 0$ small enough,
$\Gamma_\theta \backslash \mathbb H^3$ is still convex cocompact 				and
diffeomorphic to $\mathbb R \times \Gamma_0 \backslash \mathbb H^2$,
 but the metric is no longer a warped product and the limit
set $\Lambda_{\Gamma_\theta}$ is now a \textit{quasicircle} with 
dimension
$\delta_{\Gamma_\theta} \in(1,2)$~\cite{bowen,su2,bj}.
In Figure~\ref{f:puredim} we plot $\Lambda_{\Gamma_\theta}$
for $\theta  = 0.5$,
 using Mathematica code based on that of \cite[Appendix]{geerlings}.

\section{The hyperbolic cylinder}
  \label{s:picture}

In this Appendix we consider the hyperbolic cylinder
$M=(-1,1)_r\times \mathbb S^1_{\tilde y}$ with  metric 
$$
g={dr^2\over (1-r^2)^2}+{d\tilde y^2\over 1-r^2}.
$$
We explain how this asymptotically hyperbolic manifold fits
into the general framework of~\cite{v2} and why Figure~\ref{f:vasy}
represents the phase space picture for the modified operator.

Theorems~\ref{l:theorem-weak} and~\ref{l:theorem-strong} will apply with 
$\nu=\delta_\Gamma = 0$: note that
$M \simeq \langle z \mapsto e^{2\pi}z \rangle \backslash \mathbb H^2$, where we use the upper 
half plane model of $\mathbb H^2$. In this case the resonances are actually known to
lie on a lattice~\cite[Appendix]{gzjfa}. More generally, when as in this case the trapped 
set consists of a single hyperbolic orbit, the resonances are asymptotic to a 
lattice~\cite{g-s}.

First, note that we can bring the metric to the form~\eqref{e:as-hyp}
near $\{r=\pm 1\}$ by taking
$$
\tilde x=2\sqrt{1\mp r\over 1\pm r},\
g={d\tilde x^2\over\tilde x^2}+\bigg(1+{\tilde x^2\over 4}\bigg)^2{d\tilde y^2\over \tilde x^2}.
$$
Then a boundary defining function of $\overline M_{\even}$ is given by
$$
\mu=1-r^2.
$$
(Strictly speaking, for the calculations in \S\ref{s:ah} and \cite{v2} to go through without changes,
we need $\mu = \tilde x^2$ near the conformal
boundary; however, our $\mu$ makes the formulas  simpler and as
$\mu=\tilde x^2(1+\mathcal O(\tilde x^2))$, the analysis is the same.)
The Laplacian on $M$ is
$$
\Delta_{g}=(1-r^2)^2D_r^2+ir(1-r^2)D_r+(1-r^2)D_{\tilde y}^2.
$$
To simplify the formula for the modified Laplacian~\eqref{e:p1def},
we put $e^\phi=\mu^{1/2}$.
We have
$$
\begin{gathered}
P(z)=\mu^{-5/4}\mu^{i(z+1)/(2h)}(h^2(\Delta_g-1/4)-(z+1)^2)\mu^{-i(z+1)/(2h)}\mu^{1/4}\\
=\mu(hD_r)^2+2(z+1)r(hD_r)+D_{\tilde y}^2-(z+1)^2+\mathcal O(h)_{\Psi^1}.
\end{gathered}
$$
This operator extends to $X=\mathbb R_r\times \mathbb S^1_{\tilde y}$
(in the rest of the paper, and in \cite{v2}, $X$ is compact, but we will not need this here).
Note that for $\mu > 0$ it is elliptic (Laplacian-like) but for
$\mu<0$ it is hyperbolic (d'Alembertian-like).
Take coordinates $(r,\tilde y,\zeta,\tilde \eta)$  on $T^*X$, with $\zeta$ dual to $r$ and
 $\tilde \eta$ dual to $\tilde y$.
We use the momentum $\zeta$ instead of the momentum
$\tilde \xi=-\zeta/(2r)$, dual to $\mu$, to avoid a coordinate singularity at $r=0$.
The principal symbol of $P(0)$ is
\begin{equation}\label{e:pex}
p=\mu\zeta^2+2r\zeta+\tilde\eta^2-1,
\end{equation}
and the Hamiltonian flow is
\begin{equation}
  \label{e:hpex}
H_p=2(\mu\zeta+r)\partial_r
+2\zeta(r\zeta-1)\partial_\zeta
+2\tilde\eta \partial_{\tilde y}.
\end{equation}
Our phase space $\overline{T}^*X$ is as in \S\ref{s:prelim.basics} and is a ball bundle on $X$, and we denote a typical point by $(x,\xi)$. We now study the characteristic set $\{\lxir^{-2}p=0\}$ and the rescaled Hamiltonian
flow $\lxir^{-1}H_p$, beginning with the behavior near  fiber
infinity (the prefactors $\lxir^{-2}$ and $\lxir^{-1}$ make the symbol and flow extend smoothly to fiber infinity, $S^*X = \partial \overline{T}^*X)$. We use the  coordinates $\check\zeta=\lxir^{-1}\zeta$, 
$\check\eta=\lxir^{-1}\tilde\eta$ on the fibers in $\overline T^*X$;
then $(\check\zeta,\check\eta)$ lies on the circle of radius $ \lxir^{-1} |\xi|$ and
$$
\lxir^{-2}p=\mu\check\zeta^2+2r\lxir^{-1}\check\zeta+\check\eta^2-\lxir^{-2}.
$$
On $S^*X$, we have $\lxir^{-1}=0$ and thus the characteristic set is given
by
$$
\{\lxir^{-2}p=0\}\cap S^*X=\{\mu\check\zeta^2+\check\eta^2=0\}.
$$
For $\mu>0$, this equation has no solutions, which corresponds to the 
characteristic set not touching  fiber infinity. For $\mu\leq 0$,
we have
$$
\begin{gathered}
\{\lxir^{-2}p=0\}\cap S^*X=\Sigma'_+\cup \Sigma'_-,\\
\Sigma'_\pm=\{\mu\leq 0,\ \lxir^{-1}=0,\ \check\zeta=\mp \sgn r/\sqrt{1-\mu},\
\check\eta^2=-\mu/(1-\mu)\},
\end{gathered}
$$
because $\check\zeta^2 + \check\eta^2 = 1$ on $S^*X$.
In particular, we have
$$
L_\pm=\Sigma'_\pm\cap \{\mu=0\}=\{\mu=0,\ \lxir^{-1}=0,\ \check\zeta=\mp\sgn r,\
\check\eta=0\}.
$$
On $\{\check\zeta\neq 0\} \supset \Sigma'_+ \cup \Sigma'_-$, we can pass to the
system of coordinates
$$
(r,\tilde y,\tilde\rho=|\zeta|^{-1},\hat\eta=\tilde\rho\tilde\eta).
$$
Near $\Sigma'_\pm$ we have $\zeta=\mp \sgn r \tilde\rho^{-1}$ and thus, 
using $\partial_\zeta = \pm \sgn r \tilde \rho(\tilde \rho \partial_{\tilde \rho} + \hat \eta 
\partial_{\hat \eta})$,
\begin{equation}\label{e:hpcyl}
\tilde\rho H_p=\mp2\sgn r((\mu\mp |r|\tilde\rho)\partial_r
-(r \pm \tilde \rho \sgn r)(\tilde\rho \partial_{\tilde\rho}+\hat\eta \partial_{\hat\eta}))
+2\hat\eta \partial_{\tilde y}.
\end{equation}
Since $\mu = \tilde\rho = \hat\eta= 0$ on $L_\pm$, we see that $L_\pm$ consists 
of fixed points for $\tilde\rho H_p$.
We also get
$$
\tilde\rho H_p\mu|_{\Sigma'_\pm}=\pm 4|r|\mu.
$$
Therefore, the flow lines on $\Sigma'_+\cap\{\mu<0\}$ go to $\mu=-\infty$ in the
forward direction and to $L_+$ in the backward direction, while the flow
lines on $\Sigma'_-\cap\{\mu<0\}$ go to $\mu=-\infty$ in the backward direction
and to $L_-$ in the forward direction. This is displayed in Figure~\ref{f:funny}.
%
%
\begin{figure}
\includegraphics{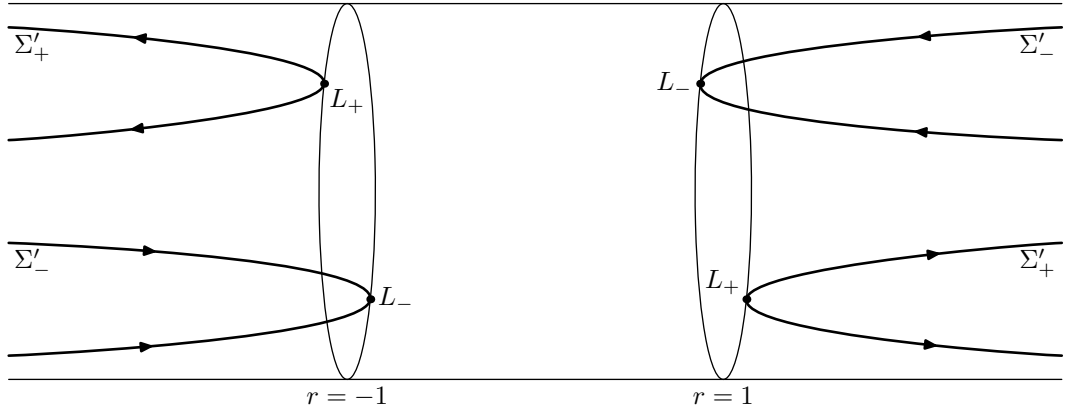}
\caption{Global dynamics of the Hamiltonian flow on the fiber infinity.}
\label{f:funny}
\end{figure}
%
%

We can also use \eqref{e:hpcyl} study the dynamics of the flow of $H_p$ on $\overline T^*M$, not just on
$S^*M$, near $L_\pm$. Omitting the $\tilde y$ variable as the flow does not depend on it,
we find that
$$
\tilde\rho H_p
\begin{pmatrix}\mu\\\tilde\rho\\\hat\eta\end{pmatrix}=
\begin{pmatrix}
\pm 4&-4&0\\
0&\pm 2&0\\
0&0&\pm 2
\end{pmatrix}
\begin{pmatrix}\mu\\\tilde\rho\\\hat\eta\end{pmatrix}
+\mathcal O(\mu^2+\tilde\rho^2+\hat\eta^2).
$$
We see that $L_+$ is a source and $L_-$ is a sink. In fact, the 
eigenvectors of the linearized flow at $L_\pm$ are
$\partial_\mu$ with eigenvalue $\pm 4$,
$2 \partial_\mu\pm \partial_{\tilde\rho}$ with eigenvalue $\pm 2$,
and $\partial_{\hat\eta}$ with eigenvalue $\pm 2$.
The behavior of the linearized system is pictured on Figure~\ref{f:vasy}
(the horizontal coordinate is $r$, and the vertical coordinate 
$\tilde \zeta = \zeta / \langle \zeta \rangle$ is a compactification of
$\zeta$).

We now study the semiclassical behavior, that is, dynamics in the
interior $T^*X$ of $\overline T^*X$. First of all, we fix $r$ and study the
set of solutions in $(\zeta,\tilde\eta)$ to the characteristic equation
$$
\mu\zeta^2+2r\zeta+\tilde\eta^2-1 =  \mu(\zeta + r/\mu)^2+\tilde\eta^2-(1 + r^2/\mu)=0.
$$
This is an ellipse when $\mu>0$, a parabola when $\mu=0$, and a hyperbola when $\mu<0$.
Therefore, in $T^*X$ the characteristic set has one connected component for $\mu\geq 0$
and two components for $\mu<0$; on the other hand, in $\overline T^*X$ it has one connected
component for $\mu>0$ and two components for $\mu\leq 0$, the additional connected
component for $\mu=0$ being exactly $L_-$, and the intersections of the connected
components with $S^*X$ being exactly $\Sigma'_\pm$. We then see, as in Figure~\ref{f:chset},
that the characteristic
set $\{\lxir^{-2}p=0\}\subset \overline T^*X$ can be split into two components
$\Sigma_+$ and $\Sigma_-$ (the latter consisting of two pieces, corresponding
to $\pm r\geq 1$), so that $\Sigma'_\pm=\Sigma_\pm\cap S^*X$
and $\Sigma_-\subset \{\mu\leq 0\}$. More precisely, we note that near $\{\mu=0\}$,
the characteristic set does not intersect the surface
$$
\{\tilde\xi=-1/2\}=\{\zeta=r\}\subset \overline T^*X,
$$
where $\tilde\xi = -\zeta/(2r)$ is the dual variable to $\mu$.
Therefore, for $\varepsilon$ small enough, we can define $\Sigma_\pm$ in 
$\{|\mu|<\varepsilon\}$ as
$$
\Sigma_\pm\cap\{|\mu|<\varepsilon\}=\{\pm (\tilde \xi+1/2)>0\}\cap \{\lxir^{-2}p=0\}
=\{\mp r(\zeta-r)>0\}\cap \{\lxir^{-2}p=0\}.
$$
With that definition, $\Sigma_- \subset \{-\varepsilon< \mu \le 0\}$ for some $\varepsilon > 0$; hence,
we can extend $\Sigma_\pm$ to $\{\mu>-\varepsilon\}$ by requiring that
$\Sigma_-\cap \{\mu\geq\varepsilon\}=\emptyset$
and $\Sigma_+\cap \{\mu\geq\varepsilon\}=\{\lxir^{-2}p=0\}\cap\{\mu\geq\varepsilon\}$.
%
%
\begin{figure}
\includegraphics{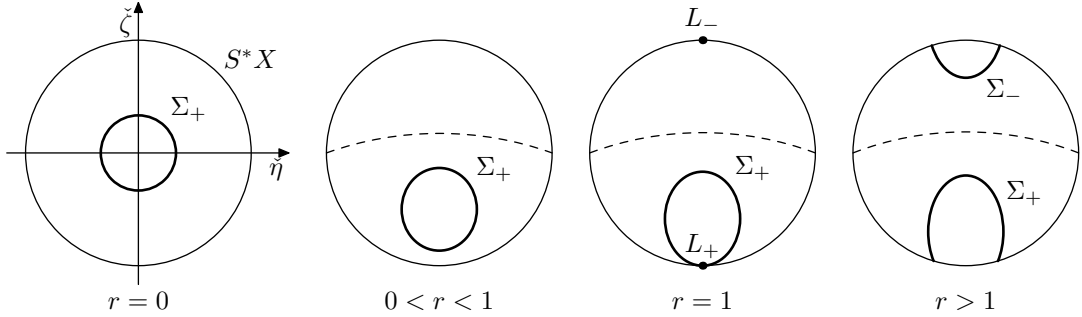}
\caption{The characteristic set in the radial compactified fiber
for various values of $r$. The dashed line is $\{\zeta=r\}$, separating
$\Sigma_+$ from $\Sigma_-$.}
\label{f:chset}
\end{figure}
%
%

It remains to study the Hamiltonian flow in $T^*X$. If $\mu=0$,
we have $H_p\mu=-4r^2<0$; therefore, the flow lines on 
$\Sigma_+\setminus L_+ = \{\lxir^{-2}p=0\} \setminus  S^*X$
only cross $\{\mu=0\}$ in the direction of decreasing $\mu$. Together with the behavior
of the linearized flow near $L_\pm$ studied before, this gives the behavior
of the flow near $\{\mu=0\}$, as in Lemma~\ref{l:vasy-2}. It remains to analyze the behavior
of the flow in $\{\mu>0\}$. This can be related to the geodesic flow on the original
manifold; we then see that the trapped set corresponds to two trapped trajectories
$$
\iota(K)=\{r=\zeta=0,\ |\tilde\eta|=1\}
$$
while the incoming/outgoing tails (in the sense of \eqref{e:gammadef}) are given by
$$
\begin{gathered}
\iota(\Gamma_+)=\{\zeta = 0,\ |\tilde\eta|=1\},\\
\iota(\Gamma_-)=\{\zeta=-2 r/\mu,\ |\tilde\eta|=1\}.
\end{gathered}
$$
Note that $\iota(\Gamma_+)$ continues smoothly across $\{\mu=0\}$ and
 $\iota(\Gamma_-)$ converges backwards to $L_+$.

All components of Figure~\ref{f:vasy} are now in place. The horizontal
direction corresponds to $r$ (with $r$ increasing as we move to the right),
with two vertical lines marking $\partial \overline M_{\even}=\{r=\pm 1\}$, the conformal
boundary. The vertical direction corresponds to a compactification of the
momentum $\zeta$; the corresponding coordinate
$\tilde\zeta=\zeta/\langle\zeta\rangle$ is well-defined away from $\{\lxir^{-1}=\check\zeta=0\}$
on $\overline T^*X$, and thus on the whole characteristic set. The top and
bottom edges of the picture then correspond to $\{\pm\check\zeta>0,\ \lxir^{-1}=0\}\subset S^*X$.
 
The explicit formulas~\eqref{e:pex} and~\eqref{e:hpex} for $p$ and $H_p$ allow us to
define the escape functions $f_0$ and $\hat f$, constructed in the general case
in Lemmas~\ref{l:escape} and~\ref{l:f-hat}, more directly and explicitly here. 
We begin with $\hat f$, where we will use the fact that no conditions are imposed outside of
a small neighborhood of $\iota(\widetilde K)$. Observe first that
\[
\iota(\widetilde K) = \{r=\zeta = 0\},
\]
so we will be interested in estimates valid for $r$ and $\zeta$ sufficiently small.
Following~\cite[\S 4.2]{w-z},~\cite[\S 7]{sj-z},~\cite[\S 5]{sj}, let
\[
\varphi_+ = \zeta^2, \qquad \varphi_- = (\mu\zeta + 2r)^2,
\]
be functions measuring the distance squared 
to $\iota(\Gamma_+)$ and $\iota(\Gamma_-)$ respectively.
We then have
\[
\begin{gathered}
H_p \varphi_+ = 4\zeta^2(r\zeta - 1) = - 4  \varphi_+
 (1 +  \mathcal O(r^2 + \zeta^2)),\\
H_p \varphi_- = 4(\mu\zeta + 2r)^2(1-r\zeta) = 4\varphi_-(1 + \mathcal O(r^2 + \zeta^2)).
\end{gathered}
\]
(Near $\iota(\widetilde K)$ such estimates can be deduced from the
hyperbolicity of $\iota(\widetilde K)$ but in this  example we
can compute the derivatives directly).
Consequently
\begin{equation}\label{e:varphidiff}
H_p ( \varphi_- - \varphi_+ ) =  4 ( \varphi_- +
   \varphi_+ ) +  \mathcal O(r^4 + \zeta^4) .
\end{equation}
Hence, near $\iota(\widetilde K)$, $H_p ( \varphi_- -  \varphi_+ )  \ge (\varphi_- +   \varphi_+)/C $, 
which is 
a nonnegative function vanishing precisely on $\iota(\widetilde K)$.
To obtain a  lower bound of $1/C$ off
 a neighborhood of size $(h/\tilde h) ^{1/2}$ of $\iota(\widetilde K)$,
as asserted in Lemma~\ref{l:f-hat},
 we take the 
following `logarithmic flattening' of 
$\varphi_- - \varphi_+$:
\[
\hat f = \log((h/\tilde h) +  \varphi_-) - \log((h/\tilde h) +\varphi_+).
\]
Then, for $r$ and $\zeta$ sufficiently small,
\[
\begin{gathered}
H_p \hat f = 4\frac{\varphi_-(1+ \mathcal O(r^2 + \zeta^2))}{(h/\tilde h) +  \varphi_-}
+ 4 \frac {\varphi_+
 (1 +  \mathcal O(r^2 + \zeta^2))}{(h/\tilde h) +\varphi_+}\\
 \ge 2\frac{\varphi_-}{(h/\tilde h)+\varphi_-}
+ 2 \frac {\varphi_+
 }{(h/\tilde h)+\varphi_+},
\end{gathered}
\]
and this is uniformly bounded from below off of a 
neighborhood of size $(h/\tilde h) ^{1/2}$ of $\iota(\widetilde K)$.
The upper bounds on derivatives in  Lemma~\ref{l:f-hat} follow
from similar arguments.

To construct $f_0$ we begin with a near-global escape function
based on $\varphi_- - \varphi_+$:
\[
f_{00} = r \zeta + \frac 1 2 r^2.
\]
This function is strictly increasing along flowlines everywhere in 
$T^*X \cap \{|\mu| \le 1\} \setminus \iota(\widetilde K)$:
\[
H_p f_{00} = 2 \zeta^2  + 2 \mu \zeta r + 2 r^2 \ge \zeta^2 + r^2.
\]
 To obtain
$f_0$ we  precompose and
multiply by a smooth cutoff as in Lemma~\ref{l:escape}.

\noindent\textbf{Acknowledgments.}
We are very grateful to Maciej Zworski for suggesting this
project and for helpful and encouraging conversations, as well as
for many comments on earlier drafts of this paper. Thanks
also to Andr\'as Vasy for several detailed discussions about his
method of analysis on asymptotically hyperbolic spaces, to
St\'ephane Nonnenmacher for his interest in our proof and for many
questions asked during our visit to CEA Saclay, and to Colin
Guillarmou for his help with Appendix~\ref{s:quasifuchsian}. Thanks finally to the two anonymous referees for many helpful comments and suggestions.

We are also grateful for NSF support from
a postdoctoral fellowship (KD) and grant DMS-1201417 (SD), and for
the hospitality of the Universit\'e Paris Nord in Spring
2011.


\def\arXiv#1{\href{http://arxiv.org/abs/#1}{arXiv:#1}}


\begin{thebibliography}{0}

\bibitem[Al]{ivana} Ivana~Alexandrova,
{\em Semi-classical wavefront set and Fourier integral operators,\/}
Can. J. Math. {\bf 60}:2(2008), 241--263.

\bibitem[BiJo]{bj} Christopher J. Bishop and Peter W. Jones, \textit{Hausdorff dimension and Kleinian groups}, Acta Math., \textbf{179}:1(1997), 1--39.

\bibitem[BoCh]{bonychemin} Jean-Michel Bony and Jean-Yves Chemin, \textit{Espaces fonctionnels associ\'es au calcul de Weyl-H\"ormander}, Bull. Soc. Math. France \textbf{122}:1(1994), 77--118.

\bibitem[BoPe]{bp} David Borthwick and Peter Perry, \textit{Scattering poles for asymptotically hyperbolic manifolds}, Trans. Amer. Math. Soc. \textbf{354}:3(2002), 1215--1231.

\bibitem[Bo]{bowen} Rufus Bowen, \textit{Hausdorff dimension of quasi-circles}, Inst. Hautes Etudes Sci. Publ. Math. \textbf{50}(1979), 11--25.

\bibitem[BuOl]{bun} Ulrich Bunke and Martin Olbrich, \textit{Group cohomology and the singularities of the Selberg zeta function associated to a Kleinian group}, Ann. of Math. (2) \textbf{149}:2(1999), 627--689.

\bibitem[DaVa]{d-v2} Kiril Datchev and Andr\'as Vasy,
{\em Propagation through trapped sets and
semiclassical resolvent estimates,\/} to appear in Ann. Inst. Fourier (Grenoble), preprint,
\arXiv{1010.2190}.

\bibitem[DiSj]{d-s} Mouez Dimassi and Johannes Sj\"ostrand,
{\em Spectral asymptotics in the semi-classical limit,\/} London Mathematical Society Lecture Note Series, \textbf{268},
Cambridge University Press, 1999.

\bibitem[Dy12a]{zeeman} Semyon Dyatlov,
{\em Asymptotic distribution of quasi-normal modes for
Kerr--de Sitter black holes,\/} Ann. Henri Poincar\'e, \textbf{13}:5(2012), 1101--1156.

\bibitem[Dy12b]{dyatlov} Semyon Dyatlov, \textit{Resonance projectors, expansions, and asymptotics for normally hyperbolic trapped sets}, in preparation.

\bibitem[DyGu]{d-g} Semyon Dyatlov and Colin Guillarmou, \textit{Microlocal limits of plane waves and Eisenstein functions,} preprint, \arXiv{1204.1305}.

\bibitem[FaTs]{ft} Fr\'ederic Faure and Masato Tsujii, \textit{Prequantum transfer operator for Anosov diffeomorphism,}
(Preliminary version), preprint, \arXiv{1206.0282}.

\bibitem[Fr]{fr} David Fried, \textit{The zeta functions of Ruelle and Selberg}, Ann. Ecole Norm. Sup. \textbf{19}:4(1986), 491--517.

\bibitem[Ge]{geerlings} Jacob K. Geerlings, \textit{Limit sets of Kleinian groups: properties, parameters, and pictures}, undergraduate thesis, Emory University, 2009, \url{http://pid.emory.edu/ark:/25593/1b7z9}.

\bibitem[G\'eSj]{g-s} Christian G\'erard and Johannes Sj\"ostrand,
{\em Semiclassical resonances generated by a closed trajectory of hyperbolic type,\/}
Comm. Math. Phys. \textbf{108}:3(1987), 391--421.

\bibitem[GrZw]{grahamzworski} C. Robin Graham and Maciej Zworski,
{\em Scattering matrix in conformal geometry,\/}
Invent. Math. \textbf{152}:1(2003), 89--118.

\bibitem[GrSj]{gr-s} Alain Grigis and Johannes Sj\"ostrand,
{\em Microlocal analysis for differential operators: an introduction,\/} London Mathematical Society Lecture Note Series, \textbf{196},
Cambridge University Press, 1994.

\bibitem[Gu05a]{gui} Colin~Guillarmou,
{\em Meromorphic properties of the resolvent on asymptotically hyperbolic manifolds,\/}
Duke Math. J. \textbf{129}:1(2005), 1--37.

\bibitem[Gu05b]{gui2} Colin~Guillarmou, 
{\em Resonances and scattering poles on asymptotically hyperbolic manifolds,\/}
Math. Res. Lett. \textbf{12}:1(2005), 103--109.

\bibitem[GuMa]{gm} Colin~Guillarmou and Rafe~Mazzeo, 
{\em Spectral analysis of the Laplacian on geometrically finite hyperbolic manifolds,\/}
Invent. Math. \textbf{187}:1(2012), 99--144.

\bibitem[GuSt90]{gui-st1} Victor~Guillemin and Shlomo~Sternberg,
{\em Geometric asymptotics,\/} AMS, 1990.

\bibitem[GuSt10]{gui-st2} Victor Guillemin and Shlomo~Sternberg,
{\em Semi-classical analysis}, lecture notes,
\url{http://www-math.mit.edu/~vwg/semiclassGuilleminSternberg.pdf}.

\bibitem[GuLiZw]{glz} Laurent~Guillop\'e, Kevin K.~Lin and~Maciej~Zworski,
{\em The Selberg zeta function for convex co-compact Schottky groups,\/}
Comm. Math. Phys. \textbf{245}:1(2004), 149--176.

\bibitem[GuZw95a]{gzasy} Laurent Guillop\'e and Maciej Zworski, \textit{Polynomial bounds on the number of resonances for some complete spaces of constant negative curvature near infinity.} Asymptotic Anal. \textbf{11}:1(1995), 1--22. 

\bibitem[GuZw95b]{gzjfa} Laurent Guillop\'e and Maciej Zworski, \textit{Upper bounds on the number of resonances for non-compact Riemann surfaces}, J. Funct. Anal. \textbf{129}:2(1995), 364--389.

\bibitem[GuZw97]{gz} Laurent Guillop\'e and Maciej Zworski, \textit{Scattering asymptotics for Riemann surfaces}, Ann. of Math. (2) \textbf{145}:3(1997), 597--660.

\bibitem[H\"oIV]{ho4} Lars~H\"ormander, {\em The Analysis of               
Linear Partial Differential Operators. IV. Fourier Integral Operators,\/}
Springer Verlag, 1994.

\bibitem[JaNa]{jn} Dmitry~Jakobson and~Fr\'ed\'eric~Naud,
{\em Lower bounds for resonances of infinite-area Riemann surfaces,\/}
Anal. PDE \textbf{3}:2(2010), 207--225.

\bibitem[KaHa]{kh} Anatole Katok and Boris Hasselblatt, \textit{Introduction to the modern theory of dynamical systems.}
Encyclopedia of Mathematics and its Applications, \textbf{54}. Cambridge University Press, 1995.

\bibitem[Ka]{k} Svetlana Katok, \textit{Fuchsian groups}, Chicago Lectures in Math., University of Chicago Press, 1992.

\bibitem[Kl]{kli} Wilhelm Klingenberg, \textit{Riemannian geometry}, de Gruyter Stud. Math., \textbf{1}, De Gruyter, 1982.

\bibitem[LuSrZw]{lsz} Wentao Lu, Srinivas Sridhar and Maciej Zworski, \textit{Fractal Weyl laws for chaotic open systems}. Phys. Rev. Lett. \textbf{91}:15(2003), 154101.

\bibitem[Ma]{mas} Bernard Maskit, \textit{Kleinian groups}, Grundleheren Math. Wiss., \textbf{287}, Springer Verlag, 1988.

\bibitem[MaMe]{mm} Rafe R.~Mazzeo and~Richard B.~Melrose,
{\em Meromorphic extension of the resolvent on complete spaces with
asymptotically constant negative curvature,\/}
J. Func. Anal. \textbf{75}:2(1987), 260--310.

\bibitem[Me83]{mel} Richard Melrose, \textit{Polynomial bounds on the number of scattering poles},  J. Funct. Anal. \textbf{53}(1983), 287--303.

\bibitem[Me94]{mel2} Richard Melrose, \textit{Spectral and scattering theory for the Laplacian on asymptotically Euclidean spaces}, Spectral and scattering theory,  Lecture Notes in Pure and Appl. Math. \textbf{161}(1994), 85--130.

\bibitem[Na]{n} Fr\'ed\'eric Naud, {\em Density and localization of resonances for convex co-compact hyperbolic surfaces}, preprint, \arXiv{1203.4378}.

\bibitem[Ni]{nic} Peter J. Nicholls, {\em The ergodic theory of discrete groups,\/} London Mathematical Society Lecture Note Series, \textbf{143},
Cambridge University Press, 1989.

\bibitem[No]{nonn} St\'ephane~Nonnenmacher, {\em Spectral problems in open quantum chaos,} Nonlinearity \textbf{24}:12(2011), R123--R167.

\bibitem[NoSjZw1]{nszj} St\'ephane~Nonnenmacher, Johannes~Sj\"ostrand and~Maciej~Zworski,
{\em From open quantum systems to open quantum maps,\/}
Comm. Math. Phys. \textbf{304}:1(2011), 1--48.

\bibitem[NoSjZw2]{nszj2} St\'ephane Nonnenmacher, Johannes~Sj\"ostrand and~Maciej~Zworski,
{\em Fractal Weyl law for open quantum chaotic maps,\/} to appear in Ann. of Math. (2), preprint,
\arXiv{1105.3128}.

\bibitem[NoZw]{nz} St\'ephane Nonnenmacher and~Maciej~Zworski, \textit{Distribution of resonances for open quantum  maps}, Comm. Math. Phys. \textbf{269}:2(2007), 311--365.

\bibitem[Pa]{p1} Samuel J. Patterson, \textit{The limit set of a Fuchsian group}, Acta Math. \textbf{136}:1(1976), 241--273.

\bibitem[PaPe]{pp} Samuel J. Patterson and Peter A. Perry, \textit{The divisor of Selberg's zeta function for Kleinian groups}, Duke Math. J. \textbf{106}:2(2001), 321--390.

\bibitem[S\'aZw]{sabzwo} Ant\^onio S\'a Barreto and Maciej Zworski, \textit{Distribution of resonances for spherical black holes}, Math. Res. Lett. \textbf{4}:1(1997), 103--121.

\bibitem[Sj90]{sj} Johannes~Sj\"ostrand,
{\em Geometric bounds on the density of resonances for semiclassical problems,\/}
Duke Math. J. \textbf{60}:1(1990), 1--57.

\bibitem[Sj02]{sres} Johannes Sj\"ostrand, \textit{Lectures on resonances}, lecture notes, \url{http://math.u-bourgogne.fr/IMB/sjostrand/Coursgbg.pdf}

\bibitem[SjZw]{sj-z} Johannes~Sj\"ostrand and Maciej~Zworski,
{\em Fractal upper bounds on the density of semiclassical resonances,\/}
Duke Math. J. \textbf{137}:3(2007), 381--459.

\bibitem[Su79]{sul} Dennis Sullivan, \textit{The density at infinity of a discrete group of hyperbolic motions.} Publ. IHES \textbf{50}(1979), 172--202.

\bibitem[Su84]{su2} Dennis Sullivan, \textit{Entropy, Hausdorff measures new and old and limit sets of geometrically finite Kleinian groups}, Acta Math. \textbf{153}:1(1984), 259--277.

\bibitem[Ta]{tay} Michael E.~Taylor, \textit{Partial Differential Equations II.
Qualitative studies of linear equations,\/}
Springer, 1996.

\bibitem[Ti]{tit} Edward C.~Titchmarsh,
{\em The theory of functions,\/} second edition,
Oxford University Press, 1939.

\bibitem[Va10]{v1} Andr\'as~Vasy,
{\em Microlocal analysis of asymptotically hyperbolic
and Kerr--de Sitter spaces,\/}, to appear in Invent. Math., preprint,
\arXiv{1012.4391v2}.

\bibitem[Va11]{v2} Andr\'as~Vasy,
{\em Microlocal analysis of asymptotically hyperbolic spaces
and high energy resolvent estimates,\/}  Inverse Problems and Applications, Inside Out II, Math. Sci. Res. Inst. Publ.
 \textbf{60}(2012), 487--538.

\bibitem[VaZw]{v-z} Andr\'as~Vasy and Maciej~Zworski,
{\em Semiclassical estimates in asymptotically Euclidean scattering,\/}
Comm. Math. Phys. \textbf{212}:1(2000), 205--217.

\bibitem[V\~uNg]{svn} San V\~u Ng\d oc,
{\em Syst\`emes int\'egrables semi-classiques: du local au global,\/}
Panoramas et Synth\`eses 22, 2006.

\bibitem[WuZw00]{wz} Jared Wunsch and Maciej Zworski, \textit{Distribution of resonances for asymptotically Euclidean manifolds}, J. Diff. Geometry. \textbf{55}:1(2000), 43--82.

\bibitem[WuZw11]{w-z} Jared~Wunsch and Maciej~Zworski,
{\em Resolvent estimates for normally hyperbolic trapped sets,\/}
 Ann. Henri Poincar\'e, \textbf{12}:7(2011), 1349--1385.

\bibitem[Zw99]{z} Maciej~Zworski,
{\em Dimension of the limit set and the density of resonances
for convex co-compact hyperbolic surfaces,\/}
Invent. Math. \textbf{136}:2(1999), 353--409.

\bibitem[Zw]{e-z} Maciej~Zworski,
{\em Semiclassical analysis,\/}
Graduate Studies in Mathematics \textbf{138}, AMS, 2012.
\end{thebibliography}
\end{document}